\theoremstyle{plain}
\newtheorem{Thm}{Theorem}[subsection]
\newtheorem{Cor}[Thm]{Corollary}
\newtheorem{Prop}[Thm]{Proposition}
\newtheorem{Lem}[Thm]{Lemma}
\newtheorem{Thm'}{Theorem}[section]
\newtheorem{Prop'}[Thm']{Proposition}
\newtheorem{Lem'}[Thm']{Lemma}
\newtheorem{Cor'}[Thm']{Corollary}
\theoremstyle{definition}
\newtheorem{Def}[Thm]{Definition}
\newtheorem{Rem}[Thm]{Remark}
\newtheorem{Emp}[Thm]{}
\newtheorem{Not}[Thm]{Notation}
\newtheorem{Emp'}[Thm']{}
\numberwithin{equation}{section}
\newcommand{\fqbar}{\overline{\fq}}
\newcommand{\om}{\omega}
\newcommand{\un}[1]{\underline{#1}}
\newcommand{\ov}{\overline}
\newcommand{\fq}{\B{F}_q}
\newcommand{\B}[1]{\mathbb#1}
\newcommand{\cal}[1]{\mathcal{#1}}
\newcommand{\C}[1]{\cal#1}
\newcommand{\sr}{\operatorname{sr}}
\newcommand{\tu}{\operatorname{tu}}
\newcommand{\isom}{\overset {\thicksim}{\to}}
\newcommand{\Om}{\Omega}
\newcommand{\si}{\sigma}
\newcommand{\lra}{\longrightarrow}
\newcommand{\hra}{\hookrightarrow}
\newcommand{\wt}{\widetilde}
\newcommand{\wh}{\widehat}
\newcommand{\Gm}{\Gamma}
\newcommand{\gm}{\gamma}
\newcommand{\ka}{\kappa}
\newcommand{\dt}{\delta}
\newcommand{\Dt}{\Delta}
\newcommand{\bs}{\backslash}
\newcommand{\m}{^{\times}}
\newcommand{\al}{\alpha}
\newcommand{\la}{\lambda}
\newcommand{\rl}[1]{Lemma \ref{L:#1}}
\newcommand{\rn}[1]{Notation \ref{N:#1}}
\newcommand{\rp}[1]{Proposition \ref{P:#1}}
\newcommand{\rr}[1]{Remark \ref{R:#1}}
\newcommand{\re}[1]{\ref{E:#1}}
\newcommand{\rco}[1]{Corollary \ref{C:#1}}
\newcommand{\rt}[1] {Theorem \ref{T:#1}}
\newcommand{\sm}{\smallsetminus}
\newcommand{\be}{\infty}
\newcommand{\form}[1]{(\ref{Eq:#1})}
\newcommand{\inv}{\operatorname{inv}}
\newcommand{\Ker}{\operatorname{Ker}}
\newcommand{\Spec}{\operatorname{Spec}}
\newcommand{\Aut}{\operatorname{Aut}}
\newcommand{\Ad}{\operatorname{Ad}}
\newcommand{\Gal}{\operatorname{Gal}}
\newcommand{\Tr}{\operatorname{Tr}}
\newcommand{\Fr}{\operatorname{Fr}}
\newcommand{\Lie}{\operatorname{Lie}}
\newcommand{\Int}{\operatorname{Int}}
\newcommand{\ad}{\operatorname{ad}}
\newcommand{\Id}{\operatorname{Id}}
\newcommand{\rk}{\operatorname{rk}}
\newcommand{\sgn}{\operatorname{sgn}}
\newcommand{\Hom}{\operatorname{Hom}}
\newcommand{\lan}{\langle}
\newcommand{\ran}{\rangle}
\newcommand{\vk}{{\varkappa}}
\newcommand{\mcB}{\mathcal B}
\newcommand{\mcE}{\mathcal E}
\newcommand{\mcG}{\mathcal G}
\newcommand{\mcH}{\mathcal H}
\newcommand{\mcL}{\mathcal L}
\newcommand{\mcM}{\mathcal M}
\newcommand{\mcO}{\mathcal O}
\newcommand{\mcT}{\mathcal T}
\newcommand{\mcU}{\mathcal U}
\newcommand{\fa}{\mathfrak a}
\newcommand{\fb}{\mathfrak b}
\newcommand{\fm}{\mathfrak m}
\begin{document}

\title[On endoscopic transfer of Deligne--Lusztig functions]%
{On endoscopic transfer of Deligne--Lusztig functions}

\author{David Kazhdan}
\author{Yakov Varshavsky}
\address{Institute of Mathematics\\
Hebrew University\\
Givat-Ram, Jerusalem,  91904\\
Israel}
\email{kazhdan@math.huji.ac.il, vyakov@math.huji.ac.il }

\thanks{The work was partially supported by
THE ISRAEL SCIENCE FOUNDATION (Grants No. 555/04 and 1438/06)}
\date{\today}

\begin{abstract}
In this paper we prove the fundamental lemma for Deligne--Lusztig functions. Namely, for every
Deligne--Lusztig
function $\phi$ on a $p$-adic group $G$ we write down an explicit linear combination
$\phi^H$ of  Deligne--Lusztig functions on an endoscopic group $H$ such that $\phi$ and $\phi^H$
have ``matching orbital integrals''. In particular, we prove a conjecture of Kottwitz \cite{Ko4}.
More precisely, we do it under some mild restriction on $p$.
\end{abstract}
\maketitle

\section*{Introduction}
Let $G$ be a connected reductive group over a local non-archimedean field $F$ of characteristic
zero, and let $H$ be an endoscopic group for $G$.
It follows from a combination of results of Langlands--Shelstad \cite{LS,LS2}, Ngo \cite{Ngo} and
Waldspurger \cite{Wa2,Wa3} that for every locally compact function with compact support $\phi\in C^{\be}_c(G(F))$
on $G(F)$ there exists a function $\phi^H\in C^{\be}_c(H(F))$ on $H(F)$ such that $\phi$ and $\phi^H$ have
``matching orbital integrals''. In this case we say that $\phi^H$ is an {\em endoscopic transfer} of $\phi$.

Despite of the fact that an endoscopic transfer is not unique, we expect that for all ``interesting''
functions $\phi$ one can define explicitly ``the best'' endoscopic transfer.

For example, in the case when both $G$ and $H$ are unramified, the famous fundamental lemma
(which was conjectured by Langlands and proved recently by Ngo \cite{Ngo}) asserts that every
spherical function $\phi$ on $G(F)$  has  unique spherical endoscopic transfer $\phi^H$ on $H(F)$.
Moreover, the correspondence $\phi\mapsto \phi^H$ is an explicit algebra homomorphism.

In his Seattle lecture \cite{Ko4}, Kottwitz suggested  a conjectural candidate for $\phi^H$
in the case when $G$ is split adjoint (hence $H$ is split) and $\phi$ is an unipotent Deligne--Lusztig
function, that is, $\phi$ is supported on $G(\C{O})$ and is equal to the inflation of the character of an
unipotent  Deligne--Lusztig virtual representation of $G(\fq)$. Namely, Kottwitz conjectured that in this case,
$\phi^H$ is an explicit linear combination of unipotent Deligne--Lusztig functions on $H(F)$ supported on $H(\C{O})$.

In this paper we prove a generalization of Kottwitz' conjecture under some mild restriction on the residual
characteristic of $F$. Namely, we prove it in the case when $G$ splits over an unramified extension of $F$ and
$\phi$ is an arbitrary Deligne--Lusztig function supported on an arbitrary parahoric subgroup.
Notice that our assumption on the residual  characteristic of $F$ is essential for our method, while
the assumption that $G$ splits over an unramified extension of $F$ was made only to simplify the exposition.

 Our proof is based on a theorem of Waldspurger \cite{Wa1,Wa2} asserting that if
$f^H\in C_c^{\be}(\Lie H(F))$ is an endoscopic transfer of $f\in C_c^{\be}(\Lie G(F))$, then the
Fourier transfer $\C{F}(f^H)$  is an endoscopic transfer of $e_{\psi}(H,G)\C{F}(f)$ for certain
sign $e_{\psi}(H,G)$ defined by Weil \cite{We}. Notice that the theorem of Waldspurger is
based on the fundamental lemma for unit elements.

Besides of our use of Waldspurger's theorem (whose proof is global),
our argument is purely local. Using the topological Jordan decomposition and the reduction formula
of Deligne--Lusztig \cite{DL}, we reduce to the topologically unipotent case. Then using
quasi-logarithm maps we reduce the problem to the corresponding problem about Lie algebras. Finally
the assertion follows from a combination of the Springer hypothesis about Green functions
(see \cite{KV}), the theorem of Waldspurger described above, and the explicit calculation of  the sign $e_{\psi}(H,G)$
in the unramified case.


Our paper is organized as follows. In Section 1 we recall basic notation and results from the
theory of endoscopy, while in Section 2 we introduce Deligne--Lusztig functions, and formulate our
Main Theorem.

In Sections 3--6 we introduce basic ingredients of the proof.
Namely, in Section 3 we formulate and prove a Lie algebra analog of the Main theorem.
Next, in section 4 we describe the theorem of Waldspurger which was mentioned above and calculate explicitly
the sign  $e_{\psi}(G,H)$. Then, in Section 5 we study quasi-logarithms, which were introduced in \cite{KV}.
More precisely, we show how every quasi-logarithm for a group induces a quasi-logarithm for its
endoscopic group and prove that the quasi-logarithms preserve the transfer factors. Finally, in
Section 6 we study the topological Jordan decomposition and prove the reduction formula
to the topologically unipotent case.

Section 7 is devoted to the proof of the Main theorem. First we do it in the topologically
unipotent case, using results of Sections 3--5 and the Springer hypothesis about
Green functions. Then we deduce the general case using results of Section 6.

In final Section 8 we show the properties of the transfer factors, which were used
in the previous sections. For this we observe that the transfer
factor $\Dt_{III_2}$ vanishes on tamely ramified topologically unipotent elements.
To prove this fact we show first a compatibility of the local Langlands correspondence for tori 
with field extensions.

We thank Labesse, who explained to us how to deduce \rl{abllc} from results of \cite{Lab}.

\section*{Notation and conventions}
 $F$ is always a field, $F^{ser}$ a separably closure of $F$, $\ov{F}$ an algebraic closure of
$F$, and $\Gm=\Gm_F$ the absolute Galois group of $F$.

In most of the paper (except in 1.1, 5.1 and 6.1), $F$ is a local non-archimedean
field of characteristic zero, $F^{nr}$ the maximal unramified extension of $F$, $\C{O}=\C{O}_F$
the ring of integers of $F$, $\fm\subset\C{O}$ the maximal ideal,  $\C{O}^{nr}$ the ring of integers of $F^{nr}$,
$\B{F}_q:=\C{O}/\fm$ the residue field of $F$, and $p$  the characteristic of $\fq$.

Let $G$ be a connected reductive group over $F$, $G^{sc}$ the simply connected covering of
the derived group of $G$, $Z(G)$ the center of $G$, $G^{ad}:=G/Z(G)$ the adjoint group of $G$, 
$W_G$ the Weyl group of $G$, and $\mcG$ the Lie algebra of $G$.
In most of the paper (except in 1.1--2.1, 4.1, 5.1, 6.1--6.2 and 8.1--8.2)
we assume that $G$ splits over $F^{nr}$.

\section{Preliminaries on Endoscopy}

\subsection{Endoscopy and stable conjugacy}
\begin{Emp} \label{E:not}
{\bf Notation.}
(a) We denote by $G^{\sr}(F)\subset G(F)$ (resp.
$\mcG^{\sr}(F)\subset \mcG(F)$) the set of {\em strongly regular}
elements of $G(F)$ (resp. $\C{G}(F)$), that is, elements
$\gm\in G(F)$ (resp. $x\in \mcG(F)$) such that the centralizer
$G_{\gm}\subset G$ (resp. $G_{x}\subset G$) is a torus.

(b) We set $c_G:=\Spec F[G]^G$ (resp. $c_{\mcG}:=\Spec F[\mcG]^G$), where $G$ acts on $G$ (resp. $\mcG$)
by conjugation, and denote by $\chi_G:G\to c_G$ and $\chi_{\mcG}:\mcG\to c_{\mcG}$ the canonical quotient
morphisms.

(c) A {\em splitting} of a (quasi-split) group $G$ over $F$ is a triple
$Spl_G=(B,T,\{x_{\al}\}_{\al})$, where $G\supset B\supset T$ are a Borel subgroup and
a maximal torus of $G$ defined over $F$,  $x_{\al}$ is a non-zero element of the root space
$\mcG_{\al}(F^{sep})$ for each simple root $\al$ of $(G,B,T)$ such that the set
$\{x_{\al}\}\subset\C{G}(F^{sep})$ is $\Gm$-invariant.
\end{Emp}

\begin{Emp} \label{E:inner}
{\bf Inner twistings.}
(a) By an {\em inner twisting of $G$}, we mean an isomorphism $\varphi:G\to G'$ over $F^{sep}$,
where $G'$ is a reductive group over $F$ and for each
$\si\in\Gm$, the automorphism $\varphi^{-1}\circ{}^{\si}\varphi$ of $G_{F^{sep}}$ is inner.

(b) Two inner twistings $\varphi:G\to G'$ and  $\varphi':G\to G''$ are called
{\em equivalent}, if there exists an $F$-isomorphism $f:G'\isom G''$ such that
$\varphi'^{-1}\circ f\circ\varphi$ is an inner automorphism of $G_{F^{sep}}$.

(c) Each inner twisting $\varphi$ induces  isomorphisms $c_G\isom c_{G'}$ and
$c_{\mcG}\isom c_{\mcG'}$, which we consider as equalities
$c_G= c_{G'}$ and $c_{\mcG}=c_{\mcG'}$.

(d) Note that there exists unique up to an equivalence inner twisting $\varphi:G\to G^*$
such that $G^*$ is quasi-split. In this case we say that $\varphi$ is {\em quasi-split}.

(e) We say that an inner twisting $\varphi$ is {\em trivial}, if
$G'=G$ and $\varphi$ is the identity.
\end{Emp}

\begin{Emp} \label{E:stconj}
{\bf Stable conjugacy.} Let $\varphi:G\to G'$ be an inner twisting.

(a) Two embeddings of maximal tori
$\fa:T\hra G$ and $\fa':T\hra G'$ (resp. two elements
$\gm\in G^{\sr}(F)$ and $\gm\in G'^{\sr}(F)$, resp. $x\in \mcG^{\sr}(F)$ and $x'\in \mcG'^{\sr}(F)$)
are called {\em $\varphi$-stably conjugate} or simple {\em stably conjugate},
if there exists  $g\in G'(F^{sep})$ such that $g\varphi(\fa)g^{-1}=\fa'$
(resp. $g\varphi(\gm) g^{-1}=\gm'$, resp. $\Ad g(\varphi(x))=x'$). Note that
this happens if and only if $\chi_G\circ \fa=\chi_{G'}\circ \fa'$
(resp. $\chi_G(\gm)=\chi_{G'}(\gm')$, resp. $\chi_{\mcG}(x)=\chi_{\mcG'}(x')$).

(b) If $\varphi$ is trivial, then in the situation of (a) we consider an invariant  $\inv(\fa,\fa')\in H^1(F,T)$
(resp. $\inv(\gm,\gm')\in H^1(F,G_{\gm})$, resp. $\inv(x,x')\in H^1(F,G_{x})$) defined to be a class of the
cocycle $\si\mapsto g^{-1}{}^{\si}g$. Notice that this cohomology class is independent of
a choice of $g$ (compare \cite[$\S$3]{Ko1}).

(c) If $\varphi:G\to G^*$ is quasi-split, then each $\fa:T\hra G$ (resp.
$\gm\in G^{\sr}(F)$, resp. $x\in \mcG^{\sr}(F)$) has a stable conjugate $\fa^*:T\hra G^*$
(resp. $\gm^*\in (G^*)^{\sr}(F)$, resp. $x\in  (\mcG^*)^{\sr}(F)$) (use \cite[Cor. 2.2]{Ko1}).
\end{Emp}

\begin{Emp} \label{E:dual}
{\bf Langlands dual group.}  Let $\wh{G}$ be the complex connected Langlands dual group of
$G$. Then there exists a natural action $\Gm$ on $\wh{G}$, which is unique up to a conjugacy. In particular,
there is a canonical $\Gm$-action on $Z(\wh{G})$ (compare \cite[1.5]{Ko2}).

(a)  Each inner twisting $\varphi:G\to G'$ gives rise an isomorphism $\wh{\varphi}:\wh{G}'\isom\wh{G}$,
defined up to a conjugacy, and hence to a canonical $\Gm$-equivariant isomorphism
$Z_{\wh{\varphi}}:Z(\wh{G}')\isom Z(\wh{G})$ (see \cite[1.8]{Ko2}).

(b) Each embedding of a maximal torus $\fa:T\hra G$ gives rise to a $\Gm$-invariant conjugacy class of
embeddings of maximal tori  $\wh{\fa}:\wh{T}\hra \wh{G}$. In particular, it gives rise to a canonical
 $\Gm$-equivariant embedding $Z_{\wh{\fa}}:Z(\wh{G})\hra\wh{T}$.
Moreover, embeddings $\fa:T\hra G$ and $\fa':T\hra G$ are stably conjugate if and only if the corresponding embeddings of dual tori $\wh{T}\hra \wh{G}$ are conjugate (compare \cite[1.5]{Ko2}).

(c) Conversely, if $G$ is quasi-split, then every  $\Gm$-invariant conjugacy class of
embeddings of maximal tori  $\wh{\fa}:\wh{T}\hra \wh{G}$ comes from an embedding of a maximal torus $\fa:T\hra G$,
unique up to a stable conjugacy (use \cite[Cor. 2.2]{Ko1}).
\end{Emp}

\begin{Emp} \label{E:endosc}
{\bf Endoscopic triples.}
(a) By an {\em endoscopic triple} for $G$ we mean a triple $\C{E}=(H,\ka_0,\eta_0)$,
where $H$ is a quasi-split reductive group over $F$, $\ka_0$ is an element of $Z(\wh{H})^{\Gm}$,
and $\eta_0$ is an embedding $\wh{H}\hra\wh{G}$ such that
$\eta_0(\wh{H})=(\wh{G}_{\eta_0(\ka_0)})^0$ and the conjugacy class of $\eta_0$ is $\Gm$-equivariant
(compare \cite[7.4]{Ko2}). The group $H$ is also called an {\em endoscopic group for $G$}.

(b) We say that two endoscopic triples  $\C{E}=(H,\ka_0,\eta_0)$ and $\C{E}'=(H',\ka'_0,\eta'_0)$ for $G$
are {\em equivalent}, if there exists an $F$-isomorphism $f:H'\isom H$ such that
$Z_{\wh{f}}(\ka_0)=\ka'_0$ (compare \re{dual} (a)), and $\eta'_0\circ\wh{f}:\wh{H}\hra\wh{G}$
is conjugate to $\eta_0$.

(c) For each embedding of a maximal torus $\fa_H:T\hra H$,  we denote by $\ka_{\fa_H}\in\wh{T}^{\Gm}$
the image of $\ka_0$ under $Z_{\wh{\fa_H}}:Z(\wh{H})^{\Gm}\hra \wh{T}^{\Gm}$ (see \re{dual} (b)).

(d) For each inner twisting $\varphi:G\to G'$ an endoscopic triple
$\C{E}$ for $G$ gives rise to an endoscopic triple $\mcE_{\varphi}:=(H,\ka_0,\wh{\varphi}\circ\eta_0)$ for $G'$,
whose equivalence class depends only on the equivalence class of $\varphi$.
If $\varphi$ is quasi-split, we write $\mcE^*$ instead of $\mcE_{\varphi}$.
\end{Emp}

\begin{Emp} \label{E:estconj}
{\bf $\C{E}$-stable conjugacy.}  Let $\C{E}=(H,\ka_0,\eta_0)$ be an endoscopic triple for $G$, and let
 $\varphi:G\to G^*$ be a quasi-split inner twisting.

(a) Each embedding $\fa_H:T\hra H$ of a
maximal torus gives rise to a stable conjugacy of embeddings of a maximal torus $\fa^*:T\hra G^*$.
Namely,  $\fa^*$ is the stable conjugacy class of embeddings $T\hra G^*$ such that
$\wh{\fa}^*:\wh{T}\hra\wh{G}^*$ is conjugate to the composition
$\wh{T}\overset{\wh{a_H}}{\lra}\wh{H}\overset{\eta_0}{\lra}\wh{G}\overset{\wh{\varphi}^{-1}}{\lra}\wh{G}^*$
(see \re{dual} (b), (c)).

In this case, we say that embeddings $\fa_H$ and $\fa^*$ are {\em $(\C{E},\varphi)$-stably conjugate} or
simply {\em $\C{E}$-stably conjugate}.

(b) Two embeddings of  maximal tori $\fa_H:T\hra H$ and  $\fa:T\hra G$
are called {\em $\C{E}$-stably conjugate}, if the exists a stably conjugate  $\fa^*:T\hra G^*$ of $\fa:T\hra G$,
which is $\C{E}$-stably conjugate to $\fa_H$.

In this situation, we say that $\fa^{(3)}=(\fa,\fa^*,\fa_H)$ is a triple of $\C{E}$-stably conjugate
embeddings of maximal tori.

(c)  We say that an endoscopic triple $\C{E}$ for $G$ is
{\em consistent}, if  there exist  $\C{E}$-stably conjugate embeddings of maximal tori $\fa_H:T\hra H$ and
$\fa:T\hra G$ for some torus $T$. In particular, every endoscopic triple for $G$ is consistent, if
$G$ is quasi-split (by (a)).

(d) Recall that there exists unique finite morphism
$\nu=\nu_{\mcE}:c_H\to c_G$ (resp. $\nu=\nu_{\mcE}:c_{\mcH}\to c_{\mcG}$) such that
for each maximal torus $T_H\subset H$ and each embedding
$\iota:T_H\isom T\subset G^*$, $\C{E}$-stably conjugate to the inclusion $T_H\hra H$, the
following diagram is commutative:

\begin{equation} \label{Eq:nu}
\CD
        T_H  @>\iota>>                    T\\
        @V{\chi_H}VV                        @V{\chi_G}VV\\
        c_H  @>\nu>>                    c_G.
\endCD
\end{equation}
We also have a similar commutative diagram for Lie algebras.
\end{Emp}

\begin{Emp} \label{E:gstr}
{\bf $G$-strongly regular elements.}  Let $\varphi:G\to G'$ be an inner twisting.

(a)  In the situation of \re{endosc}, we say that two elements $\gm_H\in H^{\sr}(F)$ and $\gm\in G^{\sr}(F)$
(resp. $x_H\in \mcH^{\sr}(F)$ and $x\in\mcG^{\sr}(F)$) are {\em $\C{E}$-stably conjugate},
if there exists an isomorphism $\iota: H_{\gm_H}\isom G_{\gm}\subset G$
(resp.  $\iota: H_{x_H}\isom G_{x}\subset G$) which is $\C{E}$-stably conjugate to the inclusion
$\iota_H:H_{\gm_H}\hra H$ (resp. $\iota_H:H_{x_H}\hra H$) such that $\iota(\gm_H)=\gm$
(resp. $d\iota(x_H)=x$). Note that such an $\iota$ is automatically unique.

By \re{estconj} (d), this happens if and only if  $\nu\circ\chi_H(\gm_H)=\chi_G(\gm)$ (resp.
$\nu\circ\chi_{\mcH}(x_H)=\chi_{\mcG}(x)$).

(b) For each $\C{E}$-stably conjugate elements $\gm_H\in H^{\sr}(F)$ and $\gm\in G^{\sr}(F)$
(resp. $x_H\in \mcH^{\sr}(F)$ and $x\in\mcG^{\sr}(F)$), we denote by
$\ka_{\gm,\gm_H}\in \wh{G_{\gm}}^{\Gm}$ (resp. $\ka_{x,x_H}\in \wh{G_{x}}^{\Gm}$)
the image of  $\ka_0\in Z(\wh{H})^{\Gm}$ under the canonical embedding
$Z(\wh{H})^{\Gm}\hra\wh{H_{\gm_H}}^{\Gm}\overset{\wh{\iota}^{-1}}{\to}\wh{G_{\gm}}^{\Gm}$
(resp. $Z(\wh{H})^{\Gm}\hra\wh{H_{x_H}}^{\Gm}\overset{\wh{\iota}^{-1}}{\to}\wh{G_{x}}^{\Gm}$).

(c) We say that $\gm_H\in H^{\sr}(F)$ (resp. $x_H \in \mcH^{\sr}(F)$) is
{\em $G$-strongly regular} and write $\gm_H\in H^{G-\sr}(F)$ (resp. $x_H\in \mcH^{G-\sr}(F)$),
if it has an $\C{E}$-stable conjugate $\gm\in (G^*)^{\sr}(F)$ (resp. $x\in(\mcG^*)^{\sr}(F)$).
\end{Emp}

\subsection{Transfer factors}

\begin{Emp} \label{E:eet}
{\bf  Extended endoscopic triples.} Let $W_F\subset\Gm_F$ be the Weil group of $F$, and
${}^LG:=\wh{G}\rtimes W_F$ the Langlands dual group of $G$.

By an {\em extended endoscopic triple for $G$} we mean a data
$\wt{\mcE}=(\mcE,\eta,Spl_{G^*},\fa_0^{(3)})$, consisting of

$\bullet$ a consistent endoscopic triple $\C{E}=(H,\ka_0,\eta_0)$ for $G$ (see \re{estconj} (c));


$\bullet$ an embedding $\eta:{}^L H\hra{}^L G$ over $W_F$, whose restriction to $\wh{H}$ is
$\eta_0:\wh{H}\hra\wh{G}$;

$\bullet$ a splitting $Spl_{G^*}=(B^*,T^*, \{x_{\al}\}_{\al})$ of $G^*$ over $F$;

$\bullet$ a triple $\fa_0^{(3)}$ of $\C{E}$-stable conjugate embeddings of maximal tori
$\fa_0:T_0\hra G$, $\fa^*_0:T_0\hra G^*$ and $\fa_{H,0}:T_0\hra H$ for some torus $T_0$.
\end{Emp}

\begin{Rem} \label{R:eet}
(a) Note that a splitting $Spl_{G^*}$ always exists, because $G^*$ is quasi-split, and a triple $\fa_0^{(3)}$
always exists, because $\C{E}$ was assumed to be consistent.

(b) The embedding $\eta$ does not always exist. However, it exists, if the derived group of $G$ is simply connected
(see \cite[Prop. 1]{La1}), or if both $G$ and $H$ split over $F^{nr}$ (see \cite[Lem. 6.1]{Ha}).

(c) If $G$ is quasi-split, we always assume that $\fa_0^*=\fa_0$.
For an arbitrary $G$, we denote by $\wt{\mcE}^*$ an extended endoscopic triple
$(\mcE^*,\eta,Spl_{G^*},(\fa_0^*,\fa_0^*,\fa_{H,0}))$ for $G^*$.
\end{Rem}

\begin{Emp} \label{E:setup}
Let $\wt{\mcE}$ be an extended endoscopic triple for $G$, and let $\gm_H\in H^{G-\sr}(F)$ and $\gm\in G^{\sr}(F)$
(resp. $x_H\in \mcH^{G-\sr}(F)$ and $x\in \mcG^{\sr}(F)$) be a pair of $\C{E}$-stable conjugate elements.

We set $T:=G_{\gm}$ (resp. $T:=G_{x}$) and  $T_H:=H_{\gm_H}$ (resp. $T_H:=H_{x_H}$),
denote by $\iota$ the inclusion $T\hra G$, by $\iota_H$ the canonical isomorphism $T\isom T_H\subset H$ from \re{gstr} (a), and let $\ka_{\iota_H}\in \wh{T}^{\Gm}$ be as \re{endosc} (c).

Now we recall the definition of the transfer factors $\Dt^{\wt{\mcE}}(\gm_H,\gm)$
(resp.  $\Dt^{\wt{\mcE}}(x_H,x)$) of Langlands--Shelstad (\cite{LS}), which we will sometimes denote simply by
$\Dt(\cdot,\cdot)$.

We fix an embedding $\iota^*:T\hra G^*$, which is stably conjugate to $\iota$ (use \re{stconj} (c)), and set $\gm^*:=\iota^*(\gm)$
(resp. $x^*=d\iota^*(x)$).
\end{Emp}

\begin{Emp} \label{E:adata}
{\bf $a$-data.} By an {\em $a$-data} for $T\subset G$, we mean a collection
$\{a_{\al}\in \ov{F}\m\}_{\al\in R(G,T)}$ such that $a_{-\al}=-a_{\al}$ and $a_{\si(\al)}=\si(a_{\al})$ for all
$\al\in R(G,T)$ and $\si\in\Gm$, where $R(G,T)$ denotes the set of roots of $G$ relative to $T$.

Langlands and Shelstad associated to each $a$-data $\{a_{\al}\}$ for $T$
a cohomology class $\inv(a)\in H^1(F,T)$ (depending on an embedding $\iota^*$ and a splitting $Spl_{G^*}$ of $G^*$),
which is equal to the image of $\la(T_{sc})\in H^1(F,T_{sc})$ in the notation of \cite[(3.2)]{LS}.
\end{Emp}

\begin{Emp} \label{E:chidata}
{\bf $\chi$-data.}
For each $\al\in R(G,T)$, we denote by  $F_{\al}\subset F$ and $F_{\pm \al}\subset F$ the
fields of rationality of $\al$ and the set $\{\al,-\al\}$, respectively. In particular, we have
$F_{\al}\supset F_{\pm \al}$ and $[F_{\al}:F_{\pm \al}]\leq 2$.

By a {\em $\chi$-data} for $T\subset G$, we mean a collection
$\{\chi_{\al}:F_{\al}\m\to\B{C}\m\}_{\al\in R(G,T)}$ of continuous homomorphisms such that
$\chi_{-\al}=\chi_{\al}^{-1}$,  $\chi_{\si(\al)}=\chi_{\al}\circ\si^{-1}$ for all $\al\in R(G,T)$ and $\si\in\Gm$, and such that
$\chi_{\al}|_{ F_{\pm \al}\m}$ is the quadratic character corresponding to the
quadratic extension $F_{\al}/F_{\pm \al}$, if
$F_{\al}\neq F_{\pm \al}$.

Notice that $\chi$-data always exist. Moreover, if each quadratic extension
$F_{\al}/F_{\pm \al}$ is tamely ramified (resp. unramified), we can assume that
each $\chi_{\al}$ is tamely ramified (resp. unramified), that is, trivial on
$1+\frak{m}_{F_{\al}}$
(resp. $\C{O}_{F_{\al}}\m$). In this case, we say that the $\chi$-data are
tamely ramified
(resp. unramified). In particular, tamely ramified $\chi$-data always exist, if $p\neq 2$.

Langlands and Shelstad associated to each $\chi$-data $\{\chi_{\al}\}$ for $T$
a continuous cohomology class $\inv(\chi)\in H_c^1(W_F,\wh{T})$, which is denoted by $\bf{a}$ in \cite[(3.5)]{LS}.
 \end{Emp}

\begin{Emp} \label{E:inv}
{\bf The invariant.}
We fix an inner twisting $\varphi:G\to G'$ and two triples
$\fa_i^{(3)}=(\fa_i,\fa'_i,\fa_{H,i});\;i=1,2$ of
$\C{E}$-stably conjugate embeddings of maximal tori $\fa_i:T_i\hra G$, $\fa'_i:T_i\hra G'$ and $\fa_{H,i}:T_i\hra H$.

We denote by  $\inv(\fa_1^{(3)},\fa_2^{(3)})=\inv_{\C{E}}(\fa_1^{(3)},\fa_2^{(3)})\in\B{C}\m$ the invariant
$\lan\frac{\fa_2,\fa'_2;\fa_{H,2}}{\fa_1,\fa'_1;\fa_{H,1}}\ran$ defined in \cite[1.5.5]{KV}
(its definition was motivated by \cite[(4.3)]{LS}). This invariant
has the following properties (see \cite[Lem. 1.5.7 and Lem. 2.4.5]{KV}).

(i) $\inv(\fa_1^{(3)},\fa_2^{(3)})$ depends only on the conjugacy classes of the $\fa_i$'s and the $\fa'_i$'s and
the stable conjugacy classes of the $\fa_{H,i}$'s.

(ii) Assume that $T_1=T_2(=T)$ and $\fa_{H,1}=\fa_{H,2}(=\fa_H)$. Then $\fa_1,\fa_2:T\hra G$ and
$\fa'_1,\fa'_2:T\hra G'$ are stable conjugate, and we have an equality
\[
\inv(\fa_1^{(3)},\fa_2^{(3)})=\lan \inv(\fa'_1,\fa'_2),\ka_{\fa_H}\ran\lan \inv(\fa_1,\fa_2),\ka_{\fa_H}\ran^{-1},
\]
where  the pairing is induced by the Tate--Nakayama duality for tori
$H^1(F,T)^D\isom\pi_0(\wh{T}^{\Gm})$.

(iii) If $\varphi$ is trivial, then $\inv(\fa_1^{(3)},\fa_2^{(3)})=
\lan \inv(\fa_2,\fa'_2),\ka_{\fa_{H,2}}\ran\lan \inv(\fa_1,\fa'_1),\ka_{\fa_{H,1}}\ran^{-1}$.

(iv) (transitivity) Let $\fa_3^{(3)}=(\fa_3,\fa'_3,\fa_{H,3})$ be a third triple of $\C{E}$-stably conjugate
embeddings of maximal tori. Then
$\inv(\fa_1^{(3)},\fa_3^{(3)})=\inv(\fa_1^{(3)},\fa_2^{(3)})\inv(\fa_2^{(3)},\fa_3^{(3)})$.

(v) Let $\pi:G_0\to G$ be a quasi-isogeny, that is, $\pi$ induces an isomorphism of adjoint groups
$\pi^{ad}:G_0^{ad}\isom G^{ad}$. Then $\varphi$ lifts to an inner twisting $\varphi_0:G_0\to G'_0$,
$\C{E}$ lifts to an endoscopic triple $\C{E}_0$ for $G_0$, each $\fa_i^{(3)}$ lifts to a triple
 $\fa_{i,0}^{(3)}$, and we have an equality $\inv_{\C{E}_0}(\fa_{1,0}^{(3)},\fa_{2,0}^{(3)})=
\inv_{\C{E}}(\fa_1^{(3)},\fa_2^{(3)})$.

(vi) Let $(s,s',s_H)$ be a triple of $\C{E}$-stable
conjugate semi-simple elements $s\in G(F)$, $s'\in G'(F)$ and  $s_H\in H(F)$
such that $H_{s_H}^0:=(H_{s_H})^0$ is quasi-split, and
let $\fa_i^{(3)}=(\fa_i,\fa'_i,\fa_{H,i});\;i=1,2$ be two triples of $\C{E}_s$-stably conjugate embeddings of
maximal tori
$\fa_i:T_i\hra G^0_s$, $\fa'_i:T_i\hra G'^0_{s'}$, $\fa_{H,i}:T_i\hra H^0_{s_H}$.

Then triples $\fa_i^{(3)}$ give rise to triples of $\C{E}$-stably conjugate embeddings of maximal tori
$\fa_i:T_i\hra G^0_s\hra G$, $\fa'_i:T_i\hra G'^0_{s'}\hra G'$, $\fa_{H,i}:T_i\hra H^0_{s_H}\hra H$,
and we have an equality $\inv_{\C{E}_s}(\fa_1^{(3)},\fa_2^{(3)})=\inv_{\C{E}}(\fa_1^{(3)},\fa_2^{(3)})$.
\end{Emp}

\begin{Emp} \label{E:deftr}
{\bf Definition of the transfer factors.} We fix $a$-data and $\chi$-data for $T$.
Langlands and Shelstad defined the transfer factor $\Dt(\gm_H,\gm)\in\B{C}\m$
(resp. $\Dt(\gm_H,\gm)\in\B{C}\m$) to be the product
$\Dt=\Dt_I\Dt_{II}\Dt_{III_1}\Dt_{III_2}\Dt_{IV}$, where

$\bullet$ $\Dt_I(\gm_H,\gm)$ (resp.  $\Dt_I(x_H,x)$) is the Tate--Nakayama
pairing $\lan\inv(a),\ka\ran$, where $\ka=\ka_{\gm,\gm_H}$ (resp. $\ka=\ka_{x,x_H}$);


$\bullet$ $\Dt_{II}(\gm_H,\gm)=\prod_{\al}\chi_{\al}(\frac{\al(\gm)-1}{a_{\al}})$
(resp.   $\Dt_{II}(x_H,x)=\prod_{\al}\chi_{\al}(\frac{\al(x)}{a_{\al}})$), where the
product is taken over a set of representatives of $\Gm$-orbits on $R(G,T)\sm R(H,T_H)$.

$\bullet$  $\Dt_{III_1}(\gm_H,\gm)$ (resp.  $\Dt_{III_1}(x_H,x)$) equals
$\inv(\fa_0^{(3)},\iota^{(3)})$, there $\iota^{(3)}$ is the triple of $\C{E}$-stable conjugate embeddings
$(\iota,\iota^*,\iota_H)$, chosen in \re{setup}.

$\bullet$ $\Dt_{III_2}(x_H,x)=1$, while
$\Dt_{III_2}(\gm_H,\gm)$ equals the Langlands pairing $\lan\inv(\chi_{\al}),\gm\ran$, whose definition we will
recall in \ref{SS:abllc}.

$\bullet$  $\Dt_{IV}(\gm_H,\gm)= \prod_{\al}|\al(\gm)-1|^{1/2}$ (resp.
$\Dt_{IV}(x_H,x)= \prod_{\al}|\al(x)|^{1/2}$), where the product is taken over all
$\al\in R(G,T)\sm R(H,T_H)$.
\end{Emp}

\begin{Rem}  \label{R:tf}
(a) $\Dt$ is independent of a choice of $\iota^*$, $a$-data, and $\chi$-data.

(b) The $\chi$-data, and hence a choice of $\eta$ is not needed for the transfer factors for
Lie algebras. In particular, the transfer factors for Lie algebras are always defined.

(c) Another choice of a splitting $Spl_{G^*}$ results in a multiplication of $\Dt$
by a non-zero constant. In the case of groups, another choice of an embedding $\eta$ results
in a multiplication of  $\Dt$ by a character $H(F)\to \B{C}\m$.
\end{Rem}

\begin{Emp} \label{E:proptf}
{\bf Properties of the transfer factors.}
(a) $\Dt(\gm_H,\gm)$ (resp. $\Dt(x_H,x)$) depends only on the stable conjugacy class of
$\gm_H$  (resp. $x_H$) and on the $G(F)$-conjugacy class of $\gm$ (resp. $x$) (see \re{inv} (i)).

(b) For each stable conjugate $\gm'$ of  $\gm$ (resp. $x'$ of $x$) we have an equality \\
$\Dt(\gm_H,\gm')=\Dt(\gm_H,\gm)\lan\inv(\gm,\gm'),\ka_{\gm,\gm_H}\ran$
(resp. $\Dt(x_H,x')=\Dt(x_H,x)\lan\inv(x,x'),\ka_{x,x_H}\ran$ (see  \re{gstr} (b) and \re{inv} (ii),(iv)).

(c) If $\fa_0^{(3)}$ is replaced by another triple  $\fa_1^{(3)}$ (and $\wt{\mcE}$ by the corresponding extended
endoscopic triple), then $\Dt^{\wt{\mcE}}$ is multiplied by $\inv(\fa_1^{(3)},\fa_0^{(3)})$.

(d)  If $G$ is quasi-split and $\fa_0^*=\fa_0$ (compare \rr{eet} (c)), then the transfer factor $\Dt^{\wt{\mcE}}$
is independent of $\fa_0^{(3)}$ (see (c) and \re{inv} (iii)). Moreover, in this case we always assume that
$\iota^*=\iota$, hence $\Dt_{III_1}=1$ (by \re{inv} (iii)).

(e) For an arbitrary $G$, we have  $\Dt^{\wt{\mcE}}(\gm_H,\gm)=\inv(\fa_0^{(3)},\iota^{(3)})
\Dt^{\wt{\mcE}^*}(\gm_H,\gm^*)$, where $\wt{\mcE}^*$ was defined in \rr{eet} (c) (use(d)).
\end{Emp}

\subsection{Endoscopic transfer}

\begin{Emp}
{\bf Orbital integrals.}

(a) Denote by $C_c^{\be}(G(F))$ (resp. $C_c^{\be}(\mcG(F))$) the
space of complex locally constant functions with compact support.
For each $\gm\in G^{\sr}(F)$ (resp. $x\in\mcG^{\sr}(F)$), we
denote by $dg_{\gm}$ (resp. $dg_x$) the Haar measure on
$G_{\gm}(F)$ (resp. $G_x(F)$) such that the measure of the maximal
compact subgroup is $1$. We also fix a Haar measure $dg$ on
$G(F)$.

(b) For each $\phi\in C_c^{\be}(G(F))$ (resp. $\phi\in C_c^{\be}(\mcG(F))$) and
$\gm\in G^{\sr}(F)$ (resp. $x\in \mcG^{\sr}(F)$), we denote by
$O_{\gm}(\phi)$ (resp. $O_{x}(\phi)$) the orbital integral
\[
\int_{G(F)/G_{\gm}(F)}\phi(g\gm g^{-1})\frac{dg}{dg_{\gm}}
\left(\text{resp. } \int_{G(F)/G_{x}(F)}\phi(\Ad g(x))
\frac{dg}{dg_x}\right).
\]

(c) For each $\phi$ and $\gm$ (resp. $x$) as in (b) and every
$\kappa\in\wh{G_{\gm}}^{\Gm}$ (resp.
$\kappa\in\wh{G_{x}}^{\Gm}$), we denote by
$O^{\ka}_{\gm}(\phi)$ (resp. $O^{\ka}_{x}(\phi)$) the
$\ka$-orbital integral
\[
\sum_{\gm'}\lan\inv(\gm,\gm'),\ka\ran O_{\gm'}(\phi)\left(\text{
resp. } \sum_{x'}\lan\inv(x,x'),\ka\ran O_{x'}(\phi)\right),
\]
where the sum runs over a set of representatives of the
$G(F)$-conjugacy classes in $G(F)$ (resp. $\mcG(F)$), which are
stably conjugate to $\gm$ (resp. $x$).

 In the case $\ka=1$, we usually write $O^{st}$ instead of
 $O^{\ka}$ and call it {\em stable orbital integral}.

(d) We say that $\phi\in C_c^{\be}(G(F))$ (resp. $\phi\in C_c^{\be}(\mcG(F))$) is {\em $\C{E}$-unstable}, if
for every $\C{E}$-stably conjugate pair $\gm_H\in H^{\sr}(F)$, $\gm\in G^{\sr}(F)$ (resp.
$x_H\in \mcH^{\sr}(F)$, $x\in \mcG^{\sr}(F)$), we have
$O^{\ka_{\gm,\gm_H}}_{\gm}(\phi)=0$ (resp. $O^{\ka_{x,x_H}}_{x}(\phi)=0$).
\end{Emp}

%

\begin{Emp} \label{E:transf}
{\bf Endoscopic transfer.}
Fix an extended endoscopic triple $\wt{\mcE}$ for $G$ and a Haar measure $dh$ of $H(F)$.

(a)  For each $\phi\in C^{\be}_c(G(F))$ and $\gm_H\in H^{G-\sr}(F)$
(resp. $\phi\in C^{\be}_c(\mcG(F))$ and $x_H\in \mcH^{G-\sr}(F)$), we denote by
$O^{\wt{\mcE}}_{\gm_H}(\phi)$
(resp. $O^{\wt{\mcE}}_{x_H}(\phi)$) the sum
$\sum_{\gm}\Dt^{\wt{\mcE}}(\gm_H,\gm) O_{\gm}(\phi)$ (resp. $\sum_{x}\Dt^{\wt{\mcE}}(x_H,x) O_{x}(\phi)$),
 where $\gm$ (resp. $x$) runs over a set of representatives of the
$G(F)$-conjugacy classes  in $G(F)$ (resp. $\mcG(F)$) which are
$\C{E}$-stably conjugate to $\gm_H$ (resp. $x_H$).

(b) By \re{proptf} (b), we have an equality $O^{\wt{\mcE}}_{\gm_H}(\phi)=
\Dt^{\wt{\mcE}}(\gm_H,\gm)O^{\ka_{\gm,\gm_H}}_{\gm}(\phi)$
(resp. $O^{\wt{\mcE}}_{x_H}(\phi)=\Dt^{\wt{\mcE}}(x_H,x)O^{\ka_{x,x_H}}_{x}(\phi)$)
for each $\C{E}$-stably conjugate $\gm\in G^{\sr}(F)$ of $\gm_H$
(resp.  $x\in \mcG^{\sr}(F)$ of $x_H$).

(c) We say that $\phi^H\in C^{\be}_c(H(F))$ (resp. $\phi^H\in C^{\be}_c(\mcH(F))$) is an
{\em $\wt{\mcE}$-transfer} or an {\em endoscopic transfer} of $\phi\in C^{\be}_c(G(F))$
(resp. $\phi\in C^{\be}_c(\mcG(F))$), if for each element $\gm_H\in H^{G-\sr}(F)$
(resp.  $x_H\in \mcH^{G-\sr}(F)$) we have an
equality $O^{st}_{\gm_H}(\phi^H)=O^{\wt{\mcE}}_{\gm_H}(\phi)$
(resp. $O^{st}_{x_H}(\phi^H)=O^{\wt{\mcE}}_{x_H}(\phi)$).

\end{Emp}


\section{Main Theorem}
\subsection{Deligne--Lusztig functions}

\begin{Emp} \label{E:BT}
{\bf Bruhat--Tits building.}
Let  ${\vk}$ be a point of the (non-reduced) Bruhat--Tits building $\C{B}(G)$ of $G$.

(a)  We denote by $G_{\vk}\subset G(F)$ \label{gx} (resp. $\C{G}_{\vk}\subset\C{G}$)
the corresponding  parahoric subgroup (resp. subalgebra), and by
$G_{\vk^+}\subset G_{\vk}$ (resp. $\C{G}_{\vk^+}\subset \C{G}_{\vk}$)  the
pro-unipotent (resp. pro-nilpotent) radical of $G_x$ (resp. of $\C{G}_x$) (compare \cite{MP1}).

(b) We denote by $\un{G}_{\vk}$  the canonical
smooth connected group scheme over $\C{O}$ defined by Bruhat--Tits (see \cite{BT} or \cite{Land}),
whose generic fiber is $G$, and $\un{G}_{\vk}(\C{O})=G_{\vk}$,
and let $\ov{G}_{\vk}$ be the special fiber of $\un{G}_{\vk}$.

(c) We denote by $L_{\vk}$  \label{lx} the maximal reductive quotient $(\ov{G}_{\vk})_{red}=\ov{G}_{\vk}/R_u(\ov{G}_{\vk})$.
We have canonical identifications $L_{\vk}(\fq)=G_{\vk}/G_{{\vk}^+}$ and
$\C{L}_{\vk}:=\Lie L_{\vk}=\C{G}_{\vk}/\C{G}_{{\vk}^+}$ (see \cite{MP1}).
For every $g\in G_{\vk}$ and $x\in\C{G}_{\vk}$, we put
$\ov{g}:=gG_{{\vk}^+}\in L_{\vk}(\fq)$ \label{ovg} and $\ov{x}:=x+\C{G}_{{\vk}^+}\in\C{L}_{\vk}(\fq)$.

(d) If $G=T$ is an unramified torus, then the group scheme $\un{T}_{\vk}$ is independent of
${\vk}\in\C{B}(T)$ and coincides with the canonical $\C{O}$-structure of $T$.
We denote by $\ov{T}:=\ov{T}_{\vk}$ the corresponding torus over $\fq$.
The functor $T\mapsto\ov{T}$ defines an equivalence of categories between unramified tori over $F$ and
tori over $\fq$.

\end{Emp}
\begin{Emp} \label{E:DL}
{\bf Deligne--Lusztig functions.}
Consider a triple $(\vk,\ov{\fa},\theta)$, where $\vk\in\C{B}(G)$,
$\ov{\fa}:\ov{T}\hra L_{\vk}$ is an embedding of a maximal torus, and
$\theta:\ov{T}(\fq)\to\B{C}\m$ is a character.

We denote by $\ov{\phi}_{\ov{\fa},\theta}$ the character of the virtual
representation  Deligne--Lusztig representation $R^{\theta}_{\ov{\fa}(\ov{T})}$ of $L_{\vk}(\fq)$
(see \cite{DL}), and let $\phi^G_{\ov{\fa},\theta}\in C^{\be}_c(G(F))$ be the
function supported on $G_{\vk}$ such that
$\phi_{\ov{\fa},\theta}^G(g):=\ov{\phi}_{\ov{\fa},\theta}(\ov{g})$ for all $g\in G_{\vk}$.
\end{Emp}

\subsection{Formulation of the result}
\begin{Emp}
The goal of this paper is to find explicitly an endoscopic transfer of Deligne--Lusztig functions
$\phi^G_{\ov{\fa},\theta}$. For simplicity, we restrict ourself to the case when $G$ is split
over $F^{nr}$.

To formulate our Main Theorem, we need the following lemma, whose
proof will be given in \re{pfmaxtor}.
\end{Emp}

\begin{Lem} \label{L:maxtor}
Assume that $G$ splits over $F^{nr}$, let $T$ be an unramified torus over $F$, and
$\ov{T}$ be the corresponding torus over $\fq$.

(a) Every embedding of a maximal torus $\fa:T\hra G$ defines an embedding
$\C{B}(T)\hra\C{B}(G)$, which we also denote by $\fa$, and an embedding of a maximal torus
$\ov{\fa}:\ov{T}\hra L_{\vk}$ for every $\vk\in \fa(\C{B}(T))\subset\C{B}(G)$.

(b) Conversely, for every $\vk\in\C{B}(G)$ and every embedding of a maximal torus
$\ov{\fa}:\ov{T}\hra L_{\vk}$, there exists an embedding of a maximal torus $\fa:T\hra G$ such that
 $\vk\in \fa(\C{B}(T))$ and $\ov{\fa}$ comes from $\fa$ as in (a). Moreover, $\fa$ is unique up to a
$G_{\vk^+}$-conjugacy.

(c) Let $\vk^*\in \C{B}(G^*)$ be a hyperspecial vertex. Then for every embedding of a maximal torus
$\fa:T\hra G$ there exists a stable conjugate $\fa^*:T\hra G^*$ of $\fa$ such that $\vk^*\in \fa^*(\C{B}(T))$.
Moreover, $\fa^*$ is unique up to a $G^*_{\vk^*}$-conjugacy.
\end{Lem}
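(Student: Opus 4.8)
The plan is to reduce everything to the structure theory of Bruhat–Tits buildings for unramified groups, combined with the dictionary between unramified tori over $F$ and tori over $\fq$ recorded in \re{BT} (d). First I would treat part (a): given $\fa:T\hra G$, the image $\fa(T)$ is a maximal torus of $G$ which, like $G$, splits over $F^{nr}$; base-changing to $F^{nr}$ one gets a maximal split torus of $G_{F^{nr}}$, and the functoriality of the Bruhat–Tits building under the unramified base change $F\to F^{nr}$, together with Galois descent, yields a canonical embedding $\C{B}(T)=\C{B}(T)^{\Gm_{F^{nr}/F}}\hra\C{B}(G)$; since $T$ is unramified (hence $\un{T}_{\vk}$ is the canonical $\C{O}$-model, independent of $\vk$, by \re{BT} (d)), this embedding is $\Gm$-equivariant and descends. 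For a point $\vk$ in the image, the reduction $\un{\fa}_{\vk}$ of $\fa$ over $\C{O}$ is an embedding of the $\C{O}$-torus $\un{T}$ into $\un{G}_{\vk}$, and passing to special fibers and then to the maximal reductive quotient $L_{\vk}$ gives the required $\ov{\fa}:\ov{T}\hra L_{\vk}$; here one must check that $\ov{T}\to\ov{G}_{\vk}$ composed with $\ov{G}_{\vk}\surj L_{\vk}$ remains a closed immersion onto a maximal torus, which follows because a torus has no nontrivial unipotent subgroups, so it meets $R_u(\ov{G}_{\vk})$ trivially, and dimension/rank count finishes it.

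For part (b) I would argue by lifting. Given $\vk\in\C{B}(G)$ and $\ov{\fa}:\ov{T}\hra L_{\vk}$, the torus $\ov{T}\subset L_{\vk}$ is the special fiber of a unique maximal $\C{O}$-torus $\un{S}\subset\un{G}_{\vk}$: one uses smoothness of $\un{G}_{\vk}$ over $\C{O}$ and Hensel's lemma to lift a maximal torus through the reductive-quotient map $\un{G}_{\vk}\to L_{\vk}$ (this is where a mild restriction on $p$—ensuring smoothness of schemes of maximal tori—would enter if needed, but since $G$ splits over $F^{nr}$ the relevant group schemes are reductive over $\C{O}^{nr}$ and the lifting is standard). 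The generic fiber $S$ of $\un{S}$ is a maximal torus of $G$, and since $\un{S}$ is a torus over $\C{O}$ it is unramified, with $\ov{S}=\ov{T}$; choosing an isomorphism $T\isom S$ realizing the given identification $\ov{T}=\ov{S}$ (using the equivalence of \re{BT} (d)) produces $\fa:T\hra G$, and by construction $\vk\in\fa(\C{B}(T))$ with $\ov{\fa}$ induced. Uniqueness up to $G_{\vk^+}$-conjugacy: if $\fa_1,\fa_2$ both work, then $\ov{\fa}_1,\ov{\fa}_2$ have the same image $\ov{T}$, so by conjugacy of maximal tori in the connected smooth group $\ov{G}_{\vk}$ — more precisely of the two lifts $\un{S}_1,\un{S}_2$ — there is $g\in\un{G}_{\vk}(\C{O})=G_{\vk}$ conjugating one to the other; adjusting by $N_{G_{\vk}}(\un{S}_1)$ and reducing mod $\vk^+$ one sees the ambiguity is exactly $G_{\vk^+}$, since any element of $G_{\vk}$ normalizing $\un{S}_1$ and inducing the identity on $\ov{S}_1$ lies in $G_{\vk^+}$.

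For part (c), pick the quasi-split inner twisting $\varphi:G\to G^*$ and use \re{dual} (b),(c): the embedding $\fa:T\hra G$ determines a $\Gm$-invariant conjugacy class of $\wh{\fa}:\wh{T}\hra\wh{G}=\wh{G}^*$, which by \re{dual} (c) comes from an embedding $\fa^*:T\hra G^*$ unique up to stable conjugacy, i.e. $\fa^*$ is stably conjugate to $\fa$. Since $T$ is unramified and $G^*$ is quasi-split and split over $F^{nr}$, part (b) applied at the hyperspecial vertex $\vk^*$ (whose associated $L_{\vk^*}$ is the reductive group $\ov{G}^*_{\vk^*}$ itself) shows that within the stable conjugacy class of $\fa^*$ there is a representative whose building contains $\vk^*$: indeed $\ov{T}$ embeds as a maximal torus of $\ov{G}^*_{\vk^*}$ (all maximal tori of a reductive group over $\fqbar$ with an $\fq$-structure of the right type occur, and the unramified torus $T$ realizes the given one), and then (b) upgrades this to an honest embedding $T\hra G^*$ through $\vk^*$ in the prescribed stable class. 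Uniqueness up to $G^*_{\vk^*}$-conjugacy is again part (b)'s uniqueness clause combined with the fact that $\vk^*$ is a vertex, so $G^*_{\vk^*}=\un{G}^*_{\vk^*}(\C{O})$ with $\ov{G}^*_{\vk^*}$ reductive and $G^*_{\vk^{*+}}$ the kernel of reduction.

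The main obstacle I anticipate is the lifting step in part (b): making precise that a maximal torus of the special fiber $L_{\vk}=(\ov{G}_{\vk})_{red}$ lifts to a maximal $\C{O}$-torus of the Bruhat–Tits group scheme $\un{G}_{\vk}$, and that this lift stays a torus (not merely a subgroup scheme of multiplicative type) and is unramified. One has to pass through $R_u(\ov{G}_{\vk})$, invoke smoothness of $\un{G}_{\vk}/\C{O}$ and of the scheme of maximal tori, and control everything Galois-equivariantly over $\C{O}^{nr}$; the hypothesis that $G$ splits over $F^{nr}$ is exactly what makes $\un{G}_{\vk}\times_{\C{O}}\C{O}^{nr}$ have connected reductive special fiber after quotienting by the unipotent radical, so that the standard lifting of tori in reductive group schemes applies and then descends. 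Everything else is bookkeeping with the Tate–Nakayama-free parts of \re{dual} and the equivalence in \re{BT} (d).
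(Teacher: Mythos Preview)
Your treatment of (a) matches the paper's. For (b), you take a genuinely different route: the paper first handles the split case (pick any split $\fb:T\hra G$ through $\vk$, then conjugate by a lift to $G_{\vk}$ of an element of $L_{\vk}(\fq)$ carrying $\ov{\fb}$ to $\ov{\fa}$), and deduces the general case by Galois descent, using the uniqueness statement together with the profinite Lang theorem for the pro-unipotent group $G_{\vk^+}$. Your direct SGA3-style lifting of maximal tori is a reasonable alternative and sidesteps the descent, but as you anticipate, the lifting through $R_u(\ov{G}_{\vk})$ and then through $\un{G}_{\vk}$ needs care; the paper's argument is more elementary in that it never invokes the scheme of maximal tori.

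There is a real gap in your argument for (c). For uniqueness you appeal to (b), but (b) only says that two embeddings through $\vk^*$ with \emph{equal} reductions $\ov{\fa}^*_1=\ov{\fa}^*_2$ are $G^*_{\vk^{*+}}$-conjugate. Two stably conjugate embeddings $\fa^*_1,\fa^*_2:T\hra G^*$ both passing through $\vk^*$ have no a priori reason to have equal, or even $L_{\vk^*}(\fq)$-conjugate, reductions; this is precisely what must be established. The paper does it via the Chevalley morphism: stable conjugacy gives $\chi_{G^*}\circ\fa^*_1=\chi_{G^*}\circ\fa^*_2$, which by \rl{surj}(a) is an equality of $\C{O}$-morphisms $T_{\C{O}}\to c_{G^*,\C{O}}$; on special fibers this reads $\chi_{L_{\vk^*}}\circ\ov{\fa}^*_1=\chi_{L_{\vk^*}}\circ\ov{\fa}^*_2$, i.e.\ $\ov{\fa}^*_1$ and $\ov{\fa}^*_2$ are stably conjugate in $L_{\vk^*}$, hence $L_{\vk^*}(\fq)$-conjugate by Lang's theorem, and then (b) finishes. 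Your existence argument for (c) has the dual gap: you lift \emph{some} embedding $\ov{T}\hra\ov{G}^*$ via (b) but never verify that the resulting $\fa^*$ lies in the stable conjugacy class of the original $\fa$; the paper instead handles the split case directly (all split maximal tori of $G^*$ are conjugate) and then runs the same Galois-descent-via-Lang argument as in (b).
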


\begin{Not} \label{N:stconj}
(a) Let $\vk\in\C{B}(G)$ and $\vk^*\in\C{B}(G^*)$. Embeddings of maximal tori
$\ov{\fa}:\ov{T}\hra L_{\vk}$ and $\ov{\fa}^*:\ov{T}\hra L_{\vk^*}$ are called {\em stably conjugate}, if
the corresponding embeddings  $\fa:T\hra G$ and  $\fa^*:T\hra G^*$ from \rl{maxtor} (b) are
stably conjugate.

(b) We fix a hyperspecial vertex $\vk^*\in\mcB(G^*)$, set $\ov{G}:=L_{\vk^*}$, and choose an embedding of a
maximal torus $\ov{\fa}^*:\ov{T}\hra \ov{G}$ which is stably conjugate to $\ov{\fa}$.
Then $\ov{\fa}^*$ is unique up a conjugacy (by \rl{maxtor} (c)).

\end{Not}

\begin{Emp} \label{E:unr}
{\bf The unramified case.} Let $\C{E}=(H,\ka_0,\eta_0)$ be an endoscopic triple for $G$ such that
$H$ is unramified.

(a) We fix a hyperspecial vertex $\vk^H\in \C{B}(H)$, set $\ov{H}:=L_{\vk^H}$,  and normalize
Haar measures on $G(F)$ and $H(F)$ such that $dg(G_{\vk})=dh(H_{\vk^H})=1$.

(b) We fix an extended endoscopic triple $\C{E}^*=(\C{E},\eta, Spl_{G^*}, \fa_0^{(3)})$ for $G$, where

$\bullet$ the splitting  $Spl_{G^*}=(B^*,T^*,\{x_{\al}\})$  defines a splitting of $\un{G}_{\vk^*}^*$ over $\C{O}$;

$\bullet$ the embedding $\eta:{}^L H\hra{}^L G$ is {\em unramified}, that is,
$\eta$ descends to an embedding $\wh{H}\rtimes Gal(E/F)\hra\wh{G}\rtimes Gal(E/F)$
for some finite unramified extension $E/F$ (such an embedding always exists by
\cite[Lem. 6.1]{Ha}).

(c) We choose embeddings of maximal tori $\fa:T\hra G$ and  $\fa^*:T\hra G^*$ corresponding
to $\ov{\fa}$ and  $\ov{\fa}^*:\ov{T}\hra \ov{G}$ (defined in \rn{stconj} (b)) as in  \rl{maxtor} (b).
Note that $\fa$ and $\fa^*$ are defined uniquely up to a conjugacy.

(d) We say that  embeddings of maximal tori $\ov{\fa}_H:\ov{T}\hra \ov{H}$ and
$\ov{\fa}:\ov{T}\hra L_{\vk}$ are {\em $\C{E}$-stably conjugate}, if the
corresponding embeddings  $\fa_H:T\hra H$ and  $\fa:T\hra G$ from \rl{maxtor} (b) are
$\C{E}$-stably conjugate.

(e) We set
\[
\phi^H_{\ov{\fa},\theta}:=
\sum_{\ov{\fa}_H}\inv_{\mcE}(\fa_0^{(3)},\fa^{(3)})\phi_{\ov{\fa}_H,\theta}^H,
\]
where

$\bullet$ the sums runs over a set of representatives of the set of conjugacy classes of
embeddings $\ov{\fa}_H:\ov{T}\hra\ov{H}$ of maximal tori, which are
$\C{E}$-stable conjugate to $\ov{\fa}$;

$\bullet$ $\fa_H:T\hra H$ is an embedding of a maximal torus, which corresponds to $\ov{\fa}_H$ by
\rl{maxtor} (b), $\fa^{(3)}$ is a triple of $\C{E}$-stably conjugate embeddings $(\fa,\fa^*,\fa_H)$, and the
invariant $\inv_{\mcE}(\fa_0^{(3)},\fa^{(3)})\in\B{C}\m$ was defined in \re{inv}.

$\bullet$  $\phi^H_{\ov{\fa}_H,\theta}$ is the
Deligne--Lusztig function on $H(F)$ supported on $H_{\vk^H}$ (see \re{DL}).

In particular, we define $\phi^H_{\ov{\fa},\theta}$ to be zero,
if there are no embeddings  $\ov{\fa}_H:\ov{T}\hra\ov{H}$, which are
$\C{E}$-stably conjugate to $\ov{\fa}$.
\end{Emp}

Now we are ready to formulate our main result.

\begin{Thm} \label{T:main}
Assume that $G$ splits over $F^{nr}$, $p$ does not divide the order of the Weyl group $W_G$ of $G$,
and there exists $\ov{t}\in\ov{\mcT}(\fq)$ such that $d\ov{\fa}^*(\ov{t})\in\ov{\mcG}^{\sr}(\fq)$.

(a) If $H$ is ramified, then the function $\phi^G_{\ov{\fa},\theta}$ is $\C{E}$-unstable.

(b) If $H$ is unramified, then in the notation of \re{unr}, function $\phi^H_{\ov{\fa},\theta}$ is an
endoscopic transfer of $\phi^G_{\ov{\fa},\theta}$.
\end{Thm}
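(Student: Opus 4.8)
The plan is to follow the strategy sketched in the introduction, reducing step by step to a Lie-algebra statement already available to us. First I would invoke the topological Jordan decomposition (Section 6): every topologically semisimple-times-unipotent element $\gm = su$ in a parahoric $G_\vk$ has $s$ a topologically semisimple element whose connected centralizer $G_s^0$ is again a reductive group with its own parahoric structure, and $u$ topologically unipotent in $G_s^0$. The Deligne--Lusztig reduction formula of \cite{DL}, in the form recalled in Section 6, expresses the germ of the orbital integral $O_\gm(\phi^G_{\ov\fa,\theta})$ in terms of Deligne--Lusztig functions on $L_{\vk,s}$, the reductive quotient attached to $s$. This is compatible with endoscopy: an endoscopic datum $\C{E}$ for $G$ together with a choice of $s_H \in H$ matching $s$ produces an endoscopic datum $\C{E}_s$ for $G_s^0$, and property \re{inv}~(vi) of the invariant guarantees that the transfer factors on $G_s^0$ agree (up to the fixed sign) with those on $G$. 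So the matching of orbital integrals for general strongly regular $\gm_H, \gm$ follows from the matching at topologically unipotent elements, applied to all the $G_s^0$'s simultaneously.

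Second, in the topologically unipotent case I would use the quasi-logarithm maps of Section 5: a quasi-logarithm $\Phi \colon G_\vk \to \C{G}_\vk$ (reducing to an isomorphism of the topologically unipotent set onto the topologically nilpotent set, compatible with $L_\vk$ and $\C{L}_\vk$ over $\fq$) carries $\phi^G_{\ov\fa,\theta}$ to the corresponding Lie-algebra Deligne--Lusztig function on $\C{G}_\vk$, and — by the results of Section 5 showing that quasi-logarithms induce quasi-logarithms on the endoscopic group and preserve transfer factors — it intertwines the whole endoscopic transfer picture for $G$ with that for $\C{G}$. So the group statement of \rt{main} on topologically unipotent elements follows from the corresponding Lie-algebra statement, i.e. the Lie-algebra analogue of the Main Theorem proved in Section 3.

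Third, for the Lie-algebra statement itself, I would apply the theorem of Waldspurger from Section 4: if $f^H$ is an endoscopic transfer of $f$ on the Lie algebra, then $\C{F}(f^H)$ is an endoscopic transfer of $e_\psi(H,G)\,\C{F}(f)$. Combined with the Springer hypothesis on Green functions (as formulated in \cite{KV}), which identifies the Fourier transform of a Lie-algebra Deligne--Lusztig function with (up to sign and a known scalar) another such function attached to the dual torus data, this turns the desired matching into the matching of the ``$\delta$-functions at the nilpotent Deligne--Lusztig data'' — which is exactly handled by the explicit computation of the sign $e_\psi(H,G)$ in the unramified case (Section 4) together with the bookkeeping of the invariants $\inv_\C{E}(\fa_0^{(3)},\fa^{(3)})$ appearing in the definition of $\phi^H_{\ov\fa,\theta}$ in \re{unr}. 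When $H$ is ramified, the same argument shows $\C{F}(\phi^G_{\ov\fa,\theta})$ is $\C{E}$-unstable because there is no unramified torus in $H$ producing a nonzero dual contribution, whence $\phi^G_{\ov\fa,\theta}$ is $\C{E}$-unstable by Fourier inversion; this gives part (a). The hypothesis that $p$ does not divide $|W_G|$ is what makes the quasi-logarithm available and the topological Jordan decomposition well-behaved, and it also guarantees the existence of tamely ramified $\chi$-data, used implicitly when passing between the group and Lie-algebra transfer factors.

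The main obstacle I expect is the precise matching of normalizations: keeping track of the transfer factor $\Dt_{III_1} = \inv(\fa_0^{(3)}, \iota^{(3)})$ and the comparison of the reference triples $\fa_0^{(3)}$ under the reductions (topological Jordan decomposition, then quasi-logarithm, then Fourier transform via Waldspurger) so that the constants $\inv_\C{E}(\fa_0^{(3)},\fa^{(3)})$ built into the definition of $\phi^H_{\ov\fa,\theta}$ come out exactly right, with no residual sign or scalar. The compatibility of the $\Dt_{III_2}$ factor — which Section 8 shows vanishes on tamely ramified topologically unipotent elements, itself a consequence of the compatibility of the local Langlands correspondence for tori with field extensions (our \rl{abllc}) — is the technical lemma that makes the group-to-Lie-algebra reduction clean, and I would isolate it as the one genuinely delicate ingredient.
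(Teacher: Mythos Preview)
Your plan is correct and follows the paper's proof essentially step for step: topological Jordan decomposition and the Deligne--Lusztig reduction formula (\rl{redkap}, \rco{trfac}, \rp{trfac}) reduce to the topologically unipotent case, which in turn is handled by quasi-logarithm plus Springer hypothesis plus Waldspurger plus the elementary Lie-algebra matching \rp{Lie}. One small clarification worth making as you write it out: the Springer hypothesis (\rt{Spr}) is not a Lie-algebra-internal Fourier duality but rather the bridge from group to Lie algebra --- it identifies $\phi^G_{\ov\fa,\theta}(u)$, for $u$ topologically unipotent, with a constant times $\C{F}(\dt^G_{\ov\fa,\ov t})(\Phi(u))$ --- so the order is Springer (to land on $\C{F}(\dt)$), then Waldspurger (to reduce to matching of the $\dt$'s themselves), then \rp{Lie}; this is exactly the organization of the proof of \rt{topun}.
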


\begin{Rem}
(a) Our assumptions holds, if $p$ is greater than the Coxeter number of $G$.

(b) The assertion should be true without any restriction on $p$, but our method does
not work in the general case.

(c) In the case when $G$ is split adjoint, $\vk$ is a hyperspecial vertex and $\theta=1$, \rt{main}
was conjectured by Kottwitz (\cite{Ko4}).

\end{Rem}

\section{Lie algebra analog of the Main Theorem}

\subsection{Preparations.}

\begin{Emp} \label{E:cg}
Notice that since $G$ splits over $F^{nr}$, its Cartan torus $T_G$ is unramified,
hence $T_G$ has a natural $\C{O}$-structure  $T_{G,\C{O}}$. We equip $c_{G}$ and
$c_{\mcG}$ with the induced $\C{O}$-structures
$c_{G,\C{O}}=W_G\bs T_{G,\mcO}$ and $c_{\C{G},\C{O}}=W_G\bs\C{T}_{G,\mcO}$,
denote by  $\ov{c}_{G}$ and  $\ov{c}_{\C{G}}$ their special fibers.
Note that $\ov{c}_{G}$ (resp. $\ov{c}_{\C{G}}$) is canonically isomorphic to $c_{L_{\vk}}$
(resp. $c_{\C{L}_{\vk}}$), if $\vk$ is hyperspecial.
\end{Emp}

\begin{Lem} \label{L:surj}
(a) For every $\vk\in \C{B}(G)$, the Chevalley morphisms  $\chi_G$ and $\chi_{\C{G}}$ extend uniquely to
morphisms $\chi_{G,\vk}:\un{G}_{\vk}\to c_{G,\C{O}}$ and  $\chi_{\mcG,\vk}:\un{\mcG}_{\vk}\to c_{\mcG,\C{O}}$ over
$\C{O}$ and induce morphisms $\ov{\chi}_{G,\vk}:\mcL_{\vk}\to \ov{c}_G$ and
$\ov{\chi}_{\mcG,\vk}:L_{\vk}\to \ov{c}_{\mcG}$
over $\fq$. Moreover,  $\ov{\chi}_{G,\vk}$ (resp. $\ov{\chi}_{\mcG,\vk}$) coincides with
$\chi_{L_{\vk}}$ (resp. $\chi_{\mcL_{\vk}}$), if $\vk$ is hyperspecial.

(b) If $\vk$ is hyperspecial, then the maps $\chi_{G,\vk}(\mcO):G_{\vk}\to c_G(\C{O})$
and $\chi_{\mcG,\vk}(\mcO):\mcG_{\vk}\to c_{\mcG}(\C{O})$ are surjective.
\end{Lem}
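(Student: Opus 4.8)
The plan is to prove uniqueness of the extensions first, then to reduce to the case when $G$ is split over $F$, and then to treat a hyperspecial vertex and an arbitrary parahoric separately.

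\emph{Uniqueness and reduction to the split case.} Since $\un{G}_{\vk}$ and $\un{\mcG}_{\vk}$ are flat and affine over $\C{O}$, their coordinate rings inject into $F[G]$ and $F[\mcG]$, so a morphism to the affine $\C{O}$-scheme $c_{G,\C{O}}$ (resp. $c_{\mcG,\C{O}}$) extending $\chi_G$ (resp. $\chi_{\mcG}$) is unique if it exists. By uniqueness such an extension, if it exists over $\C{O}^{nr}$, is automatically $\Gm_F$-equivariant and, spreading out to a finite unramified $\C{O}_E/\C{O}$ over which the descent datum is fppf, descends to $\C{O}$. Hence we may assume $G$ split and fix a split maximal torus $T$ with $\vk\in\C{B}(T)\subset\C{B}(G)$.

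\emph{A hyperspecial vertex.} If $\vk=\vk_0$ is hyperspecial, then $\un{G}_{\vk_0}$ is a reductive $\C{O}$-group scheme with maximal torus $T_{G,\C{O}}$ and Weyl group $W_G$ (see \re{cg}); under the hypothesis $p\nmid|W_G|$ the Chevalley--Steinberg restriction theorem holds over $\C{O}$, i.e. restriction to $T_{G,\C{O}}$ identifies the $\Ad$-invariant functions on $\un{G}_{\vk_0}$ with $\C{O}[T_{G,\C{O}}]^{W_G}=\C{O}[c_{G,\C{O}}]$, and likewise for the Lie algebra; this produces $\chi_{G,\vk_0}$ and $\chi_{\mcG,\vk_0}$. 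Their reductions modulo $\fm$ are conjugation-invariant morphisms $\ov{G}_{\vk_0}=L_{\vk_0}\to\ov c_G$ (resp. $\ov{\mcG}_{\vk_0}=\mcL_{\vk_0}\to\ov c_{\mcG}$) restricting to the quotient maps on $\ov{T}$, hence equal to $\chi_{L_{\vk_0}}$ (resp. $\chi_{\mcL_{\vk_0}}$), since $\ov c_G\isom c_{L_{\vk_0}}$ and $W_{L_{\vk_0}}=W_G$ by \re{cg}. Part (b) now follows from the existence over $\C{O}$, again using $p\nmid|W_G|$, of a Steinberg section of $\chi_{G,\vk_0}$ and a Kostant section of $\chi_{\mcG,\vk_0}$, which on $\C{O}$-points split $\chi_{G,\vk_0}(\C{O})$ and $\chi_{\mcG,\vk_0}(\C{O})$.

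\emph{An arbitrary parahoric.} Fix a Chevalley system for $(G,T)$ and a uniformizer $\varpi$, and take $\vk_0$ to be the origin of the apartment $\C{B}(T)$ (a hyperspecial vertex). By Moy--Prasad theory (\cite{MP1}), $\un{\mcG}_{\vk}=\mcT_{G,\C{O}}\oplus\bigoplus_{\al}\varpi^{m_{\al}}\mcG_{\al,\C{O}}$ with $m_{\al}=\lceil-\al(\vk)\rceil$, and $\un{G}_{\vk}$ has a dense open ``$\vk$-adapted big cell'' $\un{U}^{-}_{\vk}\times T_{G,\C{O}}\times\un{U}^{+}_{\vk}$ in which the $\al$-coordinate is $\varpi^{-m_{\al}}u_{\al}$, where $u_{\al}$ is the Chevalley coordinate on $U_{\al}$. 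Let $f\in\C{O}[c_{G,\C{O}}]=\C{O}[T_{G,\C{O}}]^{W_G}$ (resp. $f\in\C{O}[c_{\mcG,\C{O}}]$); by the hyperspecial case applied to $\vk_0$, $\chi_G^{*}f$ (resp. $\chi_{\mcG}^{*}f$) has coefficients in $\C{O}$ when expressed in the coordinates $u_{\al}$ (resp. the dual Chevalley coordinates $y_{\al}$ on $\mcG_{\al}$). Being $\Ad(T)$-invariant, $\chi_G^{*}f$ and $\chi_{\mcG}^{*}f$ involve only monomials whose exponent vector $(b_{\al})_{\al\in R(G,T)}$ satisfies $\sum_{\al}b_{\al}\al=0$. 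Writing $m_{\al}=-\al(\vk)+\ep_{\al}$ with $\ep_{\al}=\lceil-\al(\vk)\rceil+\al(\vk)\in[0,1)$, we obtain
\[
\sum_{\al}m_{\al}b_{\al}=-\Big\langle\vk,\sum_{\al}b_{\al}\al\Big\rangle+\sum_{\al}\ep_{\al}b_{\al}=\sum_{\al}\ep_{\al}b_{\al}\ \ge\ 0,
\]
so replacing each $u_{\al}$ (resp. $y_{\al}$) by $\varpi^{-m_{\al}}u_{\al}$ (resp. $\varpi^{-m_{\al}}y_{\al}$) preserves integrality of the coefficients. Therefore $\chi_G^{*}f\in\C{O}[\un{G}_{\vk}]$ (one tests integrality at the generic point of the special fibre, which lies in the big cell and along which $\un{G}_{\vk}$ is normal) and $\chi_{\mcG}^{*}f\in\C{O}[\un{\mcG}_{\vk}]$, which yields $\chi_{G,\vk}$ and $\chi_{\mcG,\vk}$. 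Their reductions modulo $\fm$ are conjugation-invariant and, by a Jordan decomposition argument, factor through the maximal reductive quotients, giving $\ov\chi_{G,\vk}\colon L_{\vk}\to\ov c_G$ and $\ov\chi_{\mcG,\vk}\colon\mcL_{\vk}\to\ov c_{\mcG}$. The crux of the whole argument is this non-hyperspecial case: in general there is no hyperspecial point in the closure of the facet of $\vk$, so one cannot deduce it from the hyperspecial model via a morphism of integral models, and one is forced to control by hand the powers of $\varpi$ occurring in $\chi^{*}f$ — which is precisely what the weight-zero property of $\Ad(T)$-invariant functions together with the Moy--Prasad bound $m_{\al}+\al(\vk)\in[0,1)$ make possible.
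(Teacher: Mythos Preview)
Your proof is essentially correct but follows a genuinely different route from the paper's, and it imposes the hypothesis $p\nmid|W_G|$ which the lemma does not assume and the paper's argument does not need.

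For (a) the paper uses a point-set criterion (\rl{defo}): a morphism from a smooth affine $\C{O}$-scheme to an affine $\C{O}$-scheme extends over $\C{O}$ as soon as it carries a set of $\C{O}^{nr}$-points with Zariski-dense reduction into $\C{O}^{nr}$-points. After passing to $F^{nr}$, the hyperspecial case follows because the union of $G_{\vk}$-conjugates of $T(\C{O}^{nr})$ has dense reduction; the Iwahori case follows from the inclusion $G_{\vk}\subset G_{\vk'}$ into a hyperspecial parahoric; and the general parahoric case follows from the reverse inclusion $G_{\vk'}\subset G_{\vk}$ of an Iwahori together with $G^{ad}$-invariance of $\chi_G$. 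Your remark that ``in general there is no hyperspecial point in the closure of the facet of $\vk$, so one cannot deduce it from the hyperspecial model via a morphism of integral models'' misidentifies the obstruction: the paper does reduce to the hyperspecial case, but through these parahoric containments rather than through facet closures. Your alternative---Chevalley--Steinberg restriction over $\C{O}$ at a hyperspecial point, then an explicit Moy--Prasad coordinate analysis using that $\Ad(T)$-invariant polynomials are supported on weight-zero monomials---is a valid and rather elegant replacement, but it costs you the hypothesis $p\nmid|W_G|$, and the step ``by a Jordan decomposition argument, factor through the maximal reductive quotients'' is a bit quick (the paper checks directly that $\ov{\chi_G(g)}=\ov{\chi_G(gh)}$ for $h\in G_{\vk^+}$, using the Iwahori--hyperspecial reduction again).

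For (b) the paper restricts $\chi_{G,\vk}$ to the open regular locus, which is smooth over $c_{G,\C{O}}$, and then combines Hensel's lemma with the fact that each fiber of $\chi^{reg}_{L_{\vk}}$ over $\fq$ is a single $L_{\vk}$-orbit and hence has an $\fq$-point by Lang's theorem. Your appeal to Steinberg and Kostant sections over $\C{O}$ gives the same conclusion but again under $p\nmid|W_G|$ (and, for Steinberg's section, an implicit simply-connectedness hypothesis).

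In short: both approaches work, but the paper's is more elementary and proves the lemma as stated; yours is more explicit but proves a slightly weaker statement.
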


\begin{proof}
We will prove the assertions for $\chi_G$, while the proof for $\chi_{\mcG}$ is similar.

(a) The proof is based on the following assertion.

\begin{Lem} \label{L:defo}
Let $X_{\C{O}}$ and $Y_{\C{O}}$ be two affine schemes over $\C{O}$ of finite type, where $X_{\C{O}}$ is
smooth over $\C{O}$. Then a morphism of generic fibers $f:X\to Y$ extends (uniquely) to a morphism
$f_{\C{O}}:X_{\C{O}}\to Y_{\C{O}}$ over $\C{O}$ if and only if we have an inclusion
$f(X(\C{O}^{nr}))\subset Y(\C{O}^{nr})$.

Furthermore, this happens if and only if we have an inclusion
$f(S)\subset Y(\C{O}^{nr})$ for some subset $S\subset X(\C{O}^{nr})$ whose reduction $\ov{S}\subset X(\fqbar)$ is Zariski dense.
\end{Lem}

\begin{proof}
The argument of \cite[Prop. 0.3]{Land} works without changes.
\end{proof}

Now we return to the proof of \rl{surj}. Extending scalars to $F^{nr}$ and using \rl{defo}, it will suffice to show that
we have an inclusion $\chi_G(G_{\vk})\subset c_{G}(\mcO^{nr})$, and that we have an equality
$\ov{\chi_G(g)}=\ov{\chi_G(gh)}\in\ov{c}_{G}(\fq)$ for every
$g\in G_{\vk}$ and every $h\in G_{\vk^+}$.

Assume first that $\vk$ is hyperspecial, and choose a maximal $F^{nr}$-split subtorus $T\subset G$ such that
$T(\mcO^{nr})\subset G_{\vk}$. Consider the subset $S:=\cup_{g\in G_{\vk}}gT(\mcO^{nr})g^{-1}\subset  G_{\vk}$.
Then $\ov{S}=\cup_{\ov{g}\in L_{\vk}(\fqbar)}\ov{g}\ov{T}(\fqbar)\ov{g}^{-1}$ is Zariski dense in
$\ov{G}_{\vk}$. By the definition of the $\mcO^{nr}$-structure of $c_G$, we have
$\chi_G(T(\mcO^{nr}))\subset c_{G}(\mcO^{nr})$. Hence $\chi_G(S)\subset c_{G}(\mcO^{nr})$,
because $\chi_G$ is $G^{ad}$-invariant. Then the inclusion
$\chi_G(G_{\vk})\subset c_{G}(\mcO^{nr})$ follows from \rl{defo}, while
the equality $\ov{\chi_G(g)}=\ov{\chi_G(gh)}\in\ov{c}_{G}(\fq)$ is clear.

Next we assume that $\vk$ is Iwahori, that is, $G_{\vk}\subset G(F)$ is an Iwahori subgroup.
Choose a hyperspecial
$\vk'\in\C{B}(G)$ such that  $G_{\vk}\subset G_{\vk'}$. Then we have an inclusion
$\chi_G(G_{\vk})\subset\chi_G(G_{\vk'})\subset c_{G}(\mcO^{nr})$, and for each  $g\in G_{\vk}$ and $h\in G_{\vk^+}$
we have $\ov{\chi_G(g)}=\chi_{L_{\vk'}}(\ov{g})$ and $\ov{\chi_G(gh)}=\chi_{L_{\vk'}}(\ov{gh})$. Hence
the equality $\ov{\chi_G(gh)}=\ov{\chi_G(g)}$ follows from the fact that
$G_{\vk}/G_{\vk'^+}\subset L_{\vk'}(\fqbar)$ is a Borel subgroup of  $L_{\vk'}(\fqbar)$, while
$G_{\vk^+}/G_{\vk'^+}\subset L_{\vk'}(\fqbar)$ is its unipotent radical.

For an arbitrary $\vk$, we choose $\vk'\in\C{B}(G)$ such that
$G_{\vk'}\subset G_{\vk}$ is an Iwahori subgroup. Then $ G_{\vk}=\cup_{g\in G_{\vk}} gG_{\vk'}g^{-1}$ and
$\chi_G$ is $G^{ad}$-invariant. Hence the inclusion  $\chi_G(G_{\vk})\subset c_{G}(\mcO^{nr})$ for
$\vk$ follows from that for $\vk'$, and it is enough to show the equality
$\ov{\chi_G(g)}=\ov{\chi_G(gh)}$ for $g\in G_{\vk'}$ and $h\in G_{\vk^+}$. Since  $G_{\vk^+}\subset G_{\vk'^+}$,
this follows from the corresponding equality for $\vk'$.

(b) We will show a stronger assertion which says that the restriction
$\chi^{reg}_{G,\vk}(\mcO):\un{G}^{reg}_{\vk}(\mcO)\to c_G(\C{O})$, where
$\un{G}^{reg}_{\vk}\subset\un{G}_{\vk}$
denotes the set of regular elements of $\un{G}_{\vk}$, is surjective.
Recall that the morphism $\chi^{reg}_{G,\vk}:\un{G}^{reg}_{\vk}\to c_G$ is smooth, hence
by Hensel's lemma, it will suffice to show the surjectivity of corresponding morphism
$\chi^{reg}_{{L_{\vk}}}(\fq):L_{\vk}^{reg}(\fq)\to c_{L_{\vk}}(\fq)$. However for each
$c\in  c_{{L_{\vk}}}(\fq)$, the preimage $(\chi^{reg}_{{L_{\vk}}})^{-1}(c)\subset L_{\vk}^{reg}$
is a single ${L_{\vk}}$-conjugacy class, defined over $\fq$. Thus $(\chi^{reg}_{{L_{\vk}}})^{-1}(c)$ is a homogeneous space over a connected group ${L_{\vk}}$ defined
over a finite field $\fq$. Hence it has an $\fq$-rational point by a version of
Lang's theorem.
\end{proof}

\begin{Emp} \label{E:pfmaxtor}
\begin{proof}[Proof of \rl{maxtor}]
(a) The existence of the embedding $\C{B}(T)\hra\C{B}(G)$ is standard (see, for example, \cite[2.2]{DB}).
Next for each $\vk\in \fa(\C{B}(T))\subset\C{B}(G)$, the embedding $\fa$ induces  an embedding
$\fa_{\vk}:T_{\C{O}}\hra \un{G}_{\vk}$ over $\C{O}$, whose special fiber gives the embedding
$\ov{\fa}:\ov{T}\hra \ov{G}_{\vk}\to L_{\vk}$.

(b) The uniqueness assertion is proven, for example, in \cite[Lem. 2.2.2]{DB}. Though the existence assertion
is proven in the course of the proof of \cite[Prop. 5.1.10]{BT} (compare \cite[Lem. 2.3.1]{DB}),
their argument is rather involved, so we give a much more elementary argument instead.

Assume first that $T$ (and hence also $G$) splits over $F$. Then there exists an embedding
$\fb:T\hra G$ such that $\vk\in \fb(\C{B}(T))\subset\C{B}(G)$, and we denote by $\ov{\fb}:\ov{T}\hra L_{\vk}$
the corresponding embedding over $\fq$. Now the assertion follows from the fact that $\ov{\fa}$ and
$\ov{\fb}$ are $L_{\vk}(\fq)$-conjugate and the projection $G_{\vk}\to L_{\vk}(\fq)$ is surjective.

Let now $T$ be general. By the proven above, there exists an embedding $\fa$ over some finite
unramified extension $E$ of $F$. Let $\si\in Gal(E/F)$ be the Frobenius element. Then
by the uniqueness, there exists $g\in G_{\vk^+}(\C{O}_E)$ such that
${}^{\si}\fa=g^{-1}\fa g$. By a profinite version of Lang's theorem, there exists
$h$ in the profinite completion $G_{\vk^+}(\wh{\C{O}^{nr}})$ such that $g=h^{-1}{}^{\si}h$.
Then the conjugate embedding $\fa':=h\fa h^{-1}:T\hra G$ is defined over $F$ and satisfies
$\ov{\fa}'=\ov{\fa}$.

(c) To show the uniqueness of $\fa^*$, we can assume that $G=G^*$ and $\vk^*=\vk$.
Let $\fa_1,\fa_2:T\hra G$ be a pair of stably
conjugate embeddings such that $\vk\in \fa_1(\C{B}(T))\cap  \fa_2(\C{B}(T))$, and we want to show that $\fa_1$ and
$\fa_2$ are $G_{\vk}$-conjugate.

By the stable conjugacy of $\fa_1$ and $\fa_2$ we get that $\chi_G\circ \fa_1=\chi_G\circ \fa_2$,
thus we have an equality  $\chi_{G,\vk}\circ \fa_1=\chi_{G,\vk}\circ \fa_2$ of morphisms $T_{\C{O}}\to c_{G,\C{O}}$
over $\C{O}$. Hence we have an equality of morphisms of special fibers
$\chi_{L_{\vk}}\circ \ov{\fa}_1=\chi_{L_{\vk}}\circ \ov{\fa}_2:\ov{T}\to c_{L_{\vk}}$
(use \rl{surj} (a)), which implies that
$\ov{\fa}_1,\ov{\fa}_2:\ov{T}\hra L_{\vk}$ are stably conjugate. Thus $\ov{\fa}_1$ and $\ov{\fa}_2$ are
$L_{\vk}(\fq)$-conjugate (by Lang's theorem), hence $\fa_1$ and $\fa_2$ are $G_{\vk}$-conjugate by (b).

To show the existence of $\fa^*$, we assume first that $T$ is split. In this case, the assertion follows from
the fact that all split maximal tori in $G^*$ are conjugate. To deduce the general case, we now argue  as in (b).
\end{proof}
\end{Emp}

\subsection{Lie algebra analog of the Main Theorem}

\begin{Emp} \label{E:srtreg}
{\bf Notation.}
(a) Denote by $\C{T}^{\sr}_{G,\C{O}}\subset \C{T}_{G,\C{O}}$ be the set of all $t$ which are not fixed by
any non-trivial $w\in W_G$, set $c^{\sr}_{G,\C{O}}:=
W_G\bs \C{T}^{\sr}_{\mcG,\C{O}}\subset c^{\sr}_{\mcG,\C{O}}$,
and denote its special fiber by  $\ov{c}^{\sr}_{G}$.

(b) We say that $x\in \C{L}_{\vk}$ is {\em $G$-strongly regular} and write
$x\in \C{L}^{G-\sr}_{\vk}$, if $\ov{\chi}_{\mcG,\vk}(x)\in \ov{c}^{\sr}_{\mcG}$.
\end{Emp}

\begin{Lem} \label{L:gstr}
In the notation of \ref{N:stconj} (b),  for each $\ov{t}\in\C{T}(\fq)$ we have
$d\ov{\fa}(\ov{t})\in \mcL_{\vk}^{G-\sr}(\fq)$
if and only if  $d\ov{\fa}^*(\ov{t})\in \ov{\mcG}^{\sr}(\fq)$.
\end{Lem}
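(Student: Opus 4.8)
The plan is to rewrite each of the two conditions as a condition on the image of $\ov{t}$ in $\ov{c}_{\mcG}(\fq)$ under the appropriate Chevalley morphism, and then to check that these two images coincide. For the left hand side this is immediate: by the definition of $G$-strong regularity (\re{srtreg} (b)), $d\ov{\fa}(\ov{t})\in\mcL^{G-\sr}_{\vk}(\fq)$ is equivalent to $\ov{\chi}_{\mcG,\vk}(d\ov{\fa}(\ov{t}))\in\ov{c}^{\sr}_{\mcG}$. For the right hand side, since $\vk^*$ is hyperspecial, \rl{surj} (a) identifies $\ov{\chi}_{\mcG^*,\vk^*}$ with $\chi_{\ov{\mcG}}:\ov{\mcG}\to c_{\ov{\mcG}}$ and (by \re{cg}) $\ov{c}^{\sr}_{\mcG}$ with $c^{\sr}_{\ov{\mcG}}$; and since $d\ov{\fa}^*(\ov{t})$ lies in the Lie algebra of the maximal torus $\ov{\fa}^*(\ov{T})$ of $\ov{G}$, it is semisimple, so (using $p\nmid|W_G|$ as in \rt{main}, so that $p$ is good and the stabilizers in $W_G$ of points of $\ov{\mcT}$ are generated by the reflections they contain) the structure theory of reductive groups shows that it is strongly regular precisely when no root of $(\ov{G},\ov{\fa}^*(\ov{T}))$ vanishes on it, i.e. precisely when $\chi_{\ov{\mcG}}(d\ov{\fa}^*(\ov{t}))\in c^{\sr}_{\ov{\mcG}}$. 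Thus the lemma is reduced to the equality
\[
\ov{\chi}_{\mcG,\vk}(d\ov{\fa}(\ov{t}))=\ov{\chi}_{\mcG^*,\vk^*}(d\ov{\fa}^*(\ov{t}))\ \in\ \ov{c}_{\mcG}(\fq).
\]

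To prove this equality I would descend it from the integral models. Let $\fa:T\hra G$ and $\fa^*:T\hra G^*$ be the embeddings of maximal tori attached to $\ov{\fa}$ and $\ov{\fa}^*$ by \rl{maxtor} (b); by the choice in \rn{stconj} (b) they are stably conjugate, hence $\chi_{\mcG}\circ d\fa=\chi_{\mcG^*}\circ d\fa^*$ as morphisms $\mcT\to c_{\mcG}=c_{\mcG^*}$ (by the Lie algebra form of \re{stconj} (a); here $c_{\mcG,\C{O}}=c_{\mcG^*,\C{O}}$ since the inner twisting $\varphi:G\to G^*$ identifies the abstract Cartan tori, compatibly with their canonical $\C{O}$-structures, see \re{inner} (c) and \re{cg}). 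By the construction in the proof of \rl{maxtor} (a), $\fa$ and $\fa^*$ extend to embeddings $T_{\C{O}}\hra\un{G}_{\vk}$ and $T_{\C{O}}\hra\un{G}^*_{\vk^*}$ over $\C{O}$, inducing $d\fa:\mcT_{\C{O}}\hra\un{\mcG}_{\vk}$ and $d\fa^*:\mcT_{\C{O}}\hra\un{\mcG}^*_{\vk^*}$, and by \rl{surj} (a) the maps $\chi_{\mcG}$, $\chi_{\mcG^*}$ extend to $\chi_{\mcG,\vk}$, $\chi_{\mcG^*,\vk^*}$ over $\C{O}$. Since $\mcT_{\C{O}}$ is flat over $\C{O}$, its generic fibre is schematically dense, so the two $\C{O}$-morphisms $\chi_{\mcG,\vk}\circ d\fa$ and $\chi_{\mcG^*,\vk^*}\circ d\fa^*$ from $\mcT_{\C{O}}$ to the separated $\C{O}$-scheme $c_{\mcG,\C{O}}$, which agree on the generic fibre by the previous step, agree (this is the density argument used for \rl{defo}). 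Passing to special fibres gives $\ov{\chi}_{\mcG,\vk}\circ d\ov{\fa}=\ov{\chi}_{\mcG^*,\vk^*}\circ d\ov{\fa}^*$ as morphisms $\ov{\mcT}\to\ov{c}_{\mcG}$, and evaluating at $\ov{t}\in\ov{\mcT}(\fq)$ gives the displayed equality.

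The genuinely delicate point is the second reduction in the first paragraph, namely that strong regularity of a semisimple element lying in a Cartan subalgebra is detected by the Chevalley map of $\ov{G}$: this fails in bad characteristic --- for instance for $\ov{\mcG}=\mathfrak{sl}_2$ with $p=2$ the scheme $c^{\sr}_{\ov{\mcG}}$ is empty while strongly regular elements are simply absent --- and it is exactly here that the restriction $p\nmid|W_G|$ is needed. Everything else is a routine passage of the identity $\chi_{\mcG}\circ d\fa=\chi_{\mcG^*}\circ d\fa^*$ from the generic to the special fibre, using the integral models provided by \rl{surj} and \rl{maxtor}.
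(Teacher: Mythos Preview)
Your proof is correct and follows essentially the same approach as the paper: both reduce to the equality $\chi_{\mcG}\circ d\fa = \chi_{\mcG^*}\circ d\fa^*$ coming from stable conjugacy of $\fa$ and $\fa^*$, and then pass to the special fibre. The paper does this by lifting $\ov{t}$ to a point $t\in\mcT(\mcO)$ and comparing $\chi_{\mcG}(d\fa(t))$ with $\chi_{\mcG}(d\fa^*(t))$ inside $c^{\sr}_{\mcG}(\mcO)$, whereas you establish the equality of the morphisms $\ov{\mcT}\to\ov{c}_{\mcG}$ directly via density of the generic fibre; you are also more explicit than the paper about why $d\ov{\fa}^*(\ov{t})\in\ov{\mcG}^{\sr}$ is equivalent to its Chevalley image lying in $\ov{c}^{\sr}_{\mcG}$, a point the paper leaves implicit.
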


\begin{proof}
Choose a lift $t\in\C{T}(\mcO)$ of $\ov{t}\in\C{T}(\fq)$ and an embedding $\fa^*:T\hra G$ corresponding
to $\ov{\fa}^*:\ov{T}\hra \ov{G}=L_{\vk^*}$ as in \rl{maxtor} (b).
 Then  $d\ov{\fa}(\ov{t})\in \mcL_{\vk}^{G-\sr}(\fq)$
if and only if $\chi_{\mcG}(d\fa(t))\in c^{\sr}_{\mcG}(\C{O})$, while
$d\ov{\fa}^*(\ov{t})\in \ov{\mcG}^{\sr}(\fq)$
if and only if $\chi_{\mcG}(d\fa^*(t))\in c^{\sr}_{\mcG}(\C{O})$. Hence the assertion follows from the fact that
$\fa$ and $\fa^*$ are stably conjugate.
\end{proof}

\begin{Emp} \label{E:notlie}
{\bf Set-up.} (a) Fix $\ov{t}\in\C{T}(\fq)$ such that $d\ov{\fa}(\ov{t})\in \mcL_{\vk}^{G-\sr}(\fq)$,
denote by $\ov{\Om}_{\ov{\fa},\ov{t}}\subset \mcL_{\vk}(\fq)$
the $\Ad (L_{\vk}(\fq))$-orbit of $d\ov{\fa}(\ov{t})$, by $\Om_{\ov{\fa},\ov{t}}\subset \C{G}_{\vk}\subset \C{G}(F)$
the preimage of $\ov{\Om}_{\ov{\fa},\ov{t}}$, and let  $\ov{\dt}_{\ov{\fa},\ov{t}}$ and $\dt^G_{\ov{\fa},\ov{t}}$  be the
characteristic functions of $\ov{\Om}_{\ov{\fa},\ov{t}}$ and $\Om_{\ov{\fa},\ov{t}}$, respectively.
We also fix an embedding $\fa:T\hra G$ as in \rl{maxtor} (b).

(b) In the situation of \re{unr}, we consider the function
$\dt^H_{\ov{\fa},\ov{t}}:=\sum_{\ov{\fa}_H}\inv_{\mcE}(\fa_0^{(3)},\fa^{(3)})
\dt^H_{\ov{\fa}_H,\ov{t}}$, where $\ov{\fa}_H$ runs over a set of representatives
of the set of conjugacy classes of embeddings $\ov{\fa}_H:\ov{T}\hra\ov{H}$, which are
$\mcE$-stably conjugate to $\ov{\fa}$, and $\dt^H_{\ov{\fa}_H,\ov{t}}$ is as in (a).
\end{Emp}

\begin{Lem} \label{L:Lie}
(a) For an element $x\in \mcG_{\vk}$, we have $x\in\Om_{\ov{\fa},\ov{t}}$ if and only
if there exists an $F$-isomorphism $\fa_x:T\isom G_x\subset G$ such that $\fa_x$ is $G_{\vk}$-conjugate to
$\fa:T\hra G$,  and $t:=d\fa_x^{-1}(x)\in\C{T}(\C{O})$ is a lift of $\ov{t}$.

(b) For each  $x\in\Om_{\ov{\fa},\ov{t}}$ and $g\in G(F^{sep})$ such that $\Ad g(x)\in \Om_{\ov{\fa},\ov{t}}$,
we have $g\in G_{\vk}G_x(F^{sep})$.

(c) In the situation of \re{unr}, an element $x_H\in\C{H}^{G-\sr}(F)$ is $\C{E}$-stably conjugate to
an element  $x\in \Om_{\ov{\fa},\ov{t}}$ if and only if there exists an embedding  $\ov{\fa}_H:\ov{T}\hra \ov{H}$,
$\mcE$-stably conjugate to $\ov{\fa}$, such that $x_H$ is stably conjugate to an element
of  $\Om_{\ov{\fa}_H,\ov{t}}$. Moreover, the conjugacy class of such an $\ov{\fa}_H$
is uniquely determined by $x_H$.
\end{Lem}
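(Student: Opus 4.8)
The plan is to reduce all three statements to the corresponding facts about the reductive quotient $L_{\vk}$ over $\fq$ together with the compatibility results already established in \rl{surj}, \rl{maxtor} and \rl{gstr}. For part (a), the ``if'' direction is essentially the definition of $\Om_{\ov{\fa},\ov{t}}$: if $\fa_x$ is $G_{\vk}$-conjugate to $\fa$ and $t=d\fa_x^{-1}(x)$ lifts $\ov t$, then reducing mod $\C{G}_{\vk^+}$ shows $\ov x=d\ov{\fa}_x(\ov t)$ lies in the $\Ad(L_{\vk}(\fq))$-orbit of $d\ov{\fa}(\ov t)$, since $\ov{\fa}_x$ and $\ov{\fa}$ are $L_{\vk}(\fq)$-conjugate by \rl{maxtor}(b). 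Conversely, given $x\in\Om_{\ov{\fa},\ov{t}}$, I would first produce an embedding $\fa_{\ov x}:\ov T\hra L_{\vk}$ realizing $\ov x$ as $d\fa_{\ov x}(\ov t)$ (possible because $\ov x$ is $G$-strongly regular, so $G_{\ov x}$ is a torus of the right type, conjugate over $\fqbar$ to $\ov{\fa}(\ov T)$, and $\fq$-rational by Lang), observe that $\fa_{\ov x}$ is $L_{\vk}(\fq)$-conjugate to $\ov{\fa}$, lift $\fa_{\ov x}$ to an embedding $\fa_x:T\hra G$ with $\vk\in\fa_x(\C{B}(T))$ via \rl{maxtor}(b), and conjugate by an element of $G_{\vk}$ (using surjectivity of $G_{\vk}\to L_{\vk}(\fq)$) so that $d\fa_x^{-1}(x)\in\C{T}(\C{O})$ reduces to $\ov t$. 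The strong regularity of $\ov x$ is exactly what guarantees $G_x$ is a torus and that $d\fa_x^{-1}(x)$ is genuinely in $\C{T}$, not merely in $\C{G}_{\fa_x}$.

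For part (b), I would argue as follows. Given $x,x'=\Ad g(x)\in\Om_{\ov{\fa},\ov{t}}$, apply part (a) to get $F$-isomorphisms $\fa_x,\fa_{x'}:T\isom G_x, G_{x'}$, each $G_{\vk}$-conjugate to $\fa$, with $d\fa_x^{-1}(x)=d\fa_{x'}^{-1}(x')=t$ a fixed lift of $\ov t$ (after adjusting $g$ on the left by $G_{\vk}$ we may assume the two lifts agree; here one uses that a strongly regular $t$ has trivial stabilizer in $W_G$, so the lift is pinned down). Then $\Ad g$ carries $\fa_x(T)$ to $\fa_{x'}(T)$ and intertwines the two embeddings up to an element of the Weyl group; strong regularity forces that Weyl element to be trivial, so $g$ normalizes nothing extra and $\fa_{x'}^{-1}\circ\Ad g\circ\fa_x$ is an automorphism of $T$ fixing $t$, hence (again by strong regularity) the identity. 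Writing $\fa_x=h\fa h^{-1}$ and $\fa_{x'}=h'\fa h'^{-1}$ with $h,h'\in G_{\vk}$, this says $h'^{-1}gh\in G_x(F^{sep})$, i.e. $g\in G_{\vk}G_x(F^{sep})$, which is the claim.

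For part (c), the ``only if'' direction: if $x_H$ is $\C{E}$-stably conjugate to $x\in\Om_{\ov{\fa},\ov{t}}$, then $\nu_{\mcE}(\chi_{\mcH}(x_H))=\chi_{\mcG}(x)$ lies in $c^{\sr}_{\mcG}(\C{O})$, so reducing mod $\fm$ and invoking the integral $\nu$-diagram of \re{estconj}(d) together with \rl{surj}(a), one gets that $\ov x_H$ is $\mcE$-stably conjugate (in the sense of \re{unr}(d)) to $\ov x$, hence to $\ov{\fa}(\ov t)$; building $\fa_{\ov x_H}:\ov T\hra\ov H$ as in part (a) produces the required $\ov{\fa}_H$, and $x_H\in\Om_{\ov{\fa}_H,\ov t}$ up to stable conjugacy by part (a) applied to $H$. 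The ``if'' direction runs the same commutative diagram in reverse. Uniqueness of the conjugacy class of $\ov{\fa}_H$ follows from \rl{maxtor}(b) (the embedding $\fa_H:T\hra H$ recovered from $x_H$ is unique up to $H_{\vk^H}$-conjugacy) combined with the rigidity of $\mcE$-stable conjugacy classes of tori over $\fq$.

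\medskip

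The main obstacle I anticipate is part (b): making precise the ``the intertwining Weyl element must be trivial'' step and, relatedly, the bookkeeping needed to arrange that the two lifts $d\fa_x^{-1}(x)$ and $d\fa_{x'}^{-1}(x')$ literally coincide rather than merely being $W_G$-conjugate lifts of $\ov t$. This is where the hypothesis $d\ov{\fa}(\ov t)\in\mcL_{\vk}^{G-\sr}(\fq)$ does all the work — it is what lets one pass from ``centralizers are $\fqbar$-conjugate tori'' to ``there is a canonical $F$-isomorphism'' — and one must be careful that $p\nmid |W_G|$ is not secretly needed here (it is not, for this lemma; it enters later). The descent-from-$F^{nr}$-to-$F$ arguments, handled by profinite Lang as in the proof of \rl{maxtor}(b), are routine but should be invoked explicitly when producing the $F$-rational $\fa_x$.
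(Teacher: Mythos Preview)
Your overall architecture is right --- reduce to the reductive quotient and invoke \rl{maxtor} --- and part (b) is essentially the paper's argument. But there are two genuine gaps.

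\textbf{Part (a).} After you lift $\fa_{\ov{x}}$ to $\fa_x:T\hra G$ via \rl{maxtor}(b), you write ``$d\fa_x^{-1}(x)\in\C{T}(\C{O})$'' as if $x$ automatically lies in $d\fa_x(\C{T})$. It does not: you only know $\ov{x}\in d\ov{\fa}_x(\ov{\C{T}})$, and surjectivity of $G_{\vk}\to L_{\vk}(\fq)$ lets you conjugate $\ov{x}$ into $d\ov{\fa}_x(\ov{\C{T}})$, not $x$ itself. The paper fills this by a Hensel argument: the map $\mu:(t',g)\mapsto\Ad g(d\fa(t'))$ from $\C{T}_{\C{O}}\times\un{G}_{\vk}$ to $\un{\C{G}}_{\vk}$ is smooth at $(t,1)$ (checked on the special fiber using $G$-strong regularity of $\ov{t}$), so $x$ lies in $\Ad(G_{\vk^+})(d\fa(t)+\frak{m}\C{T}(\C{O}))$. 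Equivalently, one first shows $G_x$ is an unramified torus (because $\chi_{\C{G}}(x)\in c^{\sr}_{\C{G}}(\C{O})$ and $\C{T}^{\sr}_{G,\C{O}}\to c^{\sr}_{\C{G},\C{O}}$ is \'etale), lifts the reduction isomorphism $\ov{T}\isom\ov{G}_x$ to $\fa_x:T\isom G_x$ via \re{BT}(d), and then proves $\vk\in\C{B}(G_x)$ by the same Hensel step. Without this, you have an embedding of $T$ but no reason it is the centralizer of $x$.

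\textbf{Part (c), forward direction.} You write ``reducing mod $\frak{m}$'' and speak of $\ov{x_H}$, but $x_H$ is an arbitrary element of $\C{H}^{G-\sr}(F)$, not assumed to lie in $\C{H}_{\vk^H}$. What you can reduce is $\chi_{\C{H}}(x_H)\in c_{\C{H}}(\C{O})$, not $x_H$. The paper's route avoids this: from $x\in\Om_{\ov{\fa},\ov{t}}$ part (a) gives $\fa_x:T\isom G_x$; the $\C{E}$-stable conjugacy gives the canonical $\iota:H_{x_H}\isom G_x$ of \re{gstr}(a); composing yields $\fa_{x_H}:=\iota^{-1}\circ\fa_x:T\isom H_{x_H}$, which is $\C{E}$-stably conjugate to $\fa$ and satisfies $d\fa_{x_H}^{-1}(x_H)=t$; finally \rl{maxtor}(c) moves $\fa_{x_H}$ to a stably conjugate $\fa_H$ with $\vk^H\in\fa_H(\C{B}(T))$, and $d\fa_H(t)\in\Om_{\ov{\fa}_H,\ov{t}}$ is the desired stable conjugate of $x_H$. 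Your $\nu$-diagram approach could be salvaged by first replacing $x_H$ with a stable conjugate in $\C{H}_{\vk^H}$ via \rl{surj}(b), but as written it does not parse.

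\textbf{Part (b).} Your worry about arranging the two lifts of $\ov{t}$ to coincide is a red herring. The paper bypasses it by comparing $\fa'_x:=g\fa_xg^{-1}$ and $\fa_y$ (both $F$-isomorphisms $T\isom G_y$, stably conjugate, hence differing by some $w\in W_G$) on the \emph{reduction}: both send $\ov{t}$ to $\ov{y}$, so $w(\ov{y})=\ov{y}$ with $\ov{y}\in\C{L}_{\vk}^{G-\sr}(\fq)$, forcing $w=1$. No lift-matching is needed.
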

\begin{proof}
(a) The ``only if'' assertion is clear, so we can assume that $x\in \Om_{\ov{\fa},\ov{t}}$.
Replacing $x$ by its $G_{\vk}$-conjugate, we can assume that $\ov{x}=d\ov{\fa}(\ov{t})$.
Since $x\in \C{G}_{\vk}$, we have $\chi_{\mcG}(x)\in c_{\mcG}(\mcO)$ (by \rl{surj} (a)), and
$\ov{\chi_{\mcG}(x)}=\ov{\chi}_{\mcG,\vk}(\ov{x})\in  \ov{c}^{\sr}_{\mcG}(\fq)$, hence
$\chi_{\mcG}(x)\in c^{\sr}_{\mcG}(\mcO)$. Since the projection
$\chi_{\C{G}}:\C{T}^{\sr}_{G,\mcO}\to c_{\C{G},\mcO}^{\sr}$ is \'etale, the group scheme
$\un{G}_x$ becomes isomorphic to
the unramified torus $T_G$ over some \'etale covering of $\C{O}$. Thus $G_x$ is an unramified torus.

Since $\ov{\fa}$ gives an isomorphism between reductions $\ov{T}\isom \ov{G}_x$, it lifts to unique
isomorphism $\fa_x:T\isom G_x$ over $\C{O}$. It remains to show that $\fa_x$ is $G_{\vk^+}$-conjugate to $\fa$.
Equivalently, it will suffice to show that $x$ is  $G_{\vk^+}$-conjugate
to an element of $d\fa(\C{T})$ or that $\vk\in \mcB(G_x)$ (by \rl{maxtor} (b)).

To show that $\vk\in \mcB(G_x)$, we can extend scalars to $F^{nr}$. As in the proof of \rl{surj} (a),
one reduces to the case when $\vk$ is hyperspecial.

We claim that for any lift $t\in \mcT(\mcO)$ of $\ov{t}$, we have
$x\in\Ad(G_{\vk^+})(t+\frak{m}\mcT(\mcO))$. Consider the morphism $\mu:\mcT_{\mcO}\times \un{G}_{\vk}\to \C{G}_{\vk}$
given by the rule $\mu(t,g):=\Ad g(d\fa(t))$. By Hensel's lemma, it will suffice to show that $\mu$ is smooth at
the point $(t,1)$. But this follows from the smoothness of the corresponding morphism
$\ov{\mu}:\ov{\mcT}^{\sr}\times L_{\vk}\to \C{L}_{\vk}$ over $\fq$.

(b)  Set $y:=\Ad g(x)$. By (a), $x$ gives rise to an embedding $\fa_x:T\isom G_x\subset G$, hence
to an embedding $\fa'_x=g\fa_xg^{-1}:T\isom G_y\subset G$. We have to show that $\fa_x$ and $\fa'_x$ are
$G_{\vk}$-conjugate. By (a), $y$ gives rise to an embedding $\fa_y:T\isom G_y\subset G$, which is  $G_{\vk}$-conjugate of $\fa_x$.
It thus suffices to show that $\fa'_x=\fa_y$.

Notice first that the isomorphisms
 $\ov{\fa}'_x:\ov{T}\isom\ov{G}_{y}$ and $\ov{\fa}_y:\ov{T}\isom\ov{G}_{y}$ satisfy
$d\ov{\fa}'_x(\ov{t})=d\ov{\fa}_y(\ov{t})=\ov{y}$. On the other hand, $\fa'_x$ and $\fa_y$ are stably conjugate, hence there
exists $w\in W_G\subset\Aut(G_y)$ such that $\fa'_x=w\circ \fa_y$. It follows that
$w(\ov{y})=\ov{y}\in \mcL_{\vk}^{G-\sr}(\fq)$, which implies that $w=1$, thus  $\fa'_x=\fa_y$.

(c) Assume first that $x_H\in\C{H}^{G-\sr}(F)$ is $\C{E}$-stably conjugate to
an element  $x\in \Om_{\ov{\fa},\ov{t}}$. Then by (a), there exists an isomorphism
$\fa_{x_H}:T\isom H_{x_H}\subset H$ stably conjugate to $\fa:T\hra G$ such that $t:=d\fa_{x_H}^{-1}(x_H)\in\mcT(\mcO)$ is a
lift of $\ov{t}$. By \rl{maxtor} (c), there exists a stably conjugate embedding $\fa_H:T\hra H$ such that
$\vk^H\in \fa_H(\C{B}(T))$. Then the reduction $\ov{\fa}_H:\ov{T}\hra \ov{H}$ is an $\C{E}$-stable conjugate of $\ov{\fa}$,
and $d\fa(t)\in\Om_{\ov{\fa}_H,\ov{t}}$ is a stable conjugate of $x_H$.

Conversely, assume that  $x_H\in\C{H}^{G-\sr}(F)$ is stably conjugate to
an element of some $\Om_{\ov{\fa}_H,\ov{t}}$. Then  by (a) there exists unique isomorphism
$\fa_{x_H}:T\isom H_{x_H}\subset H$ stably conjugate to $\fa:T\hra G$ such that $t:=d\fa_{x_H}^{-1}(x_H)$ is a lift
of $\ov{t}$. Then $x_H$ is $\C{E}$-stably conjugate to an element $x:=d\fa(t)\in \mcG_{\vk}$, which by definition
belongs to   $\Om_{\ov{\fa},\ov{t}}$.

Finally, assume that  $x_H\in \Om_{\ov{\fa}_H,\ov{t}}$ is stably conjugate to  $y_H\in \Om_{\ov{\fb}_H,\ov{t}}$ for some
embeddings $\ov{\fa}_H,\ov{\fb}_H:\ov{T}\hra \ov{H}$, which are $\mcE$-stably conjugate to $\ov{\fa}$, and
we want to show that  $\ov{\fa}_H$ and $\ov{\fb}_H$ are  $\ov{H}(\fq)$-conjugate.

Let  $\fa_H,\fb_H:T\hra H$ be lifts of  $\ov{\fa}_H,\ov{\fb}_H$ from \rl{maxtor} (b), and let $h\in H(F^{sep})$ be such that
$y_H=\Ad h (x_H)$. By (a), there exist embeddings $\fa_{x_H}:T\isom H_{x_H}\subset H$ and $\fa_{y_H}:T\isom H_{y_H}\subset H$
which are $H_{\vk^H}$-conjugate to $\fa_H$ and $\fb_H$, respectively. Arguing as in (b), we conclude that
$\fa_{y_H}=h \fa_{x_H}h^{-1}$, thus $\fa_{x_H}$ and $\fa_{y_H}$ (and hence also $\fa_H$ and $\fb_H$) are stably conjugate.
It follows that $\fa_H$ and $\fb_H$ are $H_{\vk^H}$-conjugate
(by  \rl{maxtor} (c)), thus  $\ov{\fa}_H$ and $\ov{\fb}_H$ are  $\ov{H}(\fq)$-conjugate.
\end{proof}

\begin{Lem} \label{L:trfaclie}
In the notation of \rl{Lie} (c), assume that $x_H\in \Om_{\ov{\fa}_H,\ov{t}}$ is $\C{E}$-stably
conjugate to  $x\in \Om_{\ov{\fa},\ov{t}}$. Then we have an equality
$\Dt^{\wt{\mcE}}(x_H,x)=\inv_{\mcE}(\fa_0^{(3)},\fa^{(3)})$.
\end{Lem}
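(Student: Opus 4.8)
The plan is to evaluate the five Langlands--Shelstad factors in
$\Dt^{\wt{\mcE}}(x_H,x)=\Dt_I\Dt_{II}\Dt_{III_1}\Dt_{III_2}\Dt_{IV}$ (see \re{deftr}) one at a time and show that all of them except $\Dt_{III_1}$ are trivial, while $\Dt_{III_1}$ contributes precisely $\inv_{\mcE}(\fa_0^{(3)},\fa^{(3)})$; recall $\Dt_{III_2}(x_H,x)=1$ already by definition in the Lie algebra case. The whole point is to exploit the integrality built into the condition $x\in\Om_{\ov{\fa},\ov{t}}$. First I would unpack it via \rl{Lie} (a): there is an $F$-isomorphism $\fa_x\colon T\isom G_x\subset G$, conjugate under $G_{\vk}$ to $\fa$, extending to an isomorphism of $\C{O}$-models and carrying a lift $t\in\mcT(\C{O})$ of $\ov{t}$ to $x$. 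By the hypothesis $d\ov{\fa}(\ov{t})\in\mcL_{\vk}^{G-\sr}(\fq)$ together with \rl{surj} (a), the class $\ov{\chi}_{\mcG,\vk}(d\ov{\fa}(\ov{t}))$ is the image of $\ov{t}$ under the quotient by $W_G$ and lies in $\ov{c}^{\sr}_{\mcG}$; hence $\ov{t}$ is fixed by no non-trivial element of $W_G$, in particular by no reflection $s_{\al}$, so $d\al(\ov{t})\neq 0$ and therefore $\al(x)=d\al(t)\in\C{O}_{F_{\al}}\m$ for every $\al\in R(G,T)$. Consequently $\Dt_{IV}(x_H,x)=\prod_{\al}|\al(x)|^{1/2}=1$.

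Next I would choose the auxiliary data to be unramified. Since $T=G_x$ splits over $F^{nr}$, each $F_{\al}$ is unramified over $F$, so unramified $\chi$-data for $T$ exist; and, since $R(G,T)\neq\emp$ forces $2\mid|W_G|$ and hence $p\neq 2$ under the standing hypothesis $p\nmid|W_G|$ (the case $R(G,T)=\emp$ being trivial), we may choose the $a$-data $\{a_{\al}\}$ with all $a_{\al}\in\C{O}_{F_{\al}}\m$. With these choices $\al(x)/a_{\al}\in\C{O}_{F_{\al}}\m$, so $\Dt_{II}(x_H,x)=\prod_{\al}\chi_{\al}(\al(x)/a_{\al})=1$. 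Moreover, for unit $a$-data and the integral splitting $Spl_{G^*}$ of \re{unr} (b), the Langlands--Shelstad cocycle representing $\la(T_{sc})$ takes values in $T_{sc}(\C{O}^{nr})$ and is inflated from $\Gal(F^{nr}/F)$; since $H^1(\Gal(F^{nr}/F),T_{sc}(\C{O}^{nr}))=0$ by Lang's theorem applied to the special fiber of the canonical integral model of the unramified torus $T_{sc}$, we get $\inv(a)=1$, whence $\Dt_I(x_H,x)=\lan\inv(a),\ka_{x,x_H}\ran=1$.

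Finally I would identify $\Dt_{III_1}(x_H,x)=\inv(\fa_0^{(3)},\iota^{(3)})$ (see \re{deftr}) with the desired invariant. In the triple $\iota^{(3)}=(\iota,\iota^*,\iota_H)$ of \re{setup}, the embedding $\iota\colon T=G_x\hra G$ is $G_{\vk}$-conjugate to $\fa$ (again \rl{Lie} (a)), and $\iota_H\colon T\isom H_{x_H}\hra H$ is, by \rl{Lie} (a) applied to $H$ and $x_H\in\Om_{\ov{\fa}_H,\ov{t}}$, $H_{\vk^H}$-conjugate, hence stably conjugate, to $\fa_H$. As $\Dt$ does not depend on the choice of $\iota^*$ (\rr{tf} (a)), I may choose $\iota^*$ with $\vk^*\in\iota^*(\C{B}(T))$ via \rl{maxtor} (c); then $\iota^*$ and the embedding $\fa^*$ of \re{unr} (c) are both stable conjugates of $\iota$ whose image in $\C{B}(G^*)$ contains $\vk^*$, hence $G^*_{\vk^*}$-conjugate by the uniqueness part of \rl{maxtor} (c). By \re{inv} (i) the invariant $\inv$ depends only on the conjugacy classes of the first two members of a triple and the stable conjugacy class of the third, so $\inv(\fa_0^{(3)},\iota^{(3)})=\inv_{\mcE}(\fa_0^{(3)},\fa^{(3)})$. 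Multiplying the five factors gives $\Dt^{\wt{\mcE}}(x_H,x)=\inv_{\mcE}(\fa_0^{(3)},\fa^{(3)})$.

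I expect the main obstacle to be the triviality of $\Dt_I$: one must trace through the Langlands--Shelstad construction of $\la(T_{sc})$ to verify that, with unit $a$-data and the integral splitting, the relevant cocycle is unramified and integral, so that Lang's theorem kills it, together with the (routine but necessary) check that unit $a$-data and unramified $\chi$-data can be chosen simultaneously. Everything else, including the bookkeeping identification of $\Dt_{III_1}$ via \rl{maxtor} (c), is straightforward given the results already established.
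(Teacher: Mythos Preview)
Your proof is correct and follows essentially the same approach as the paper's own argument: compute each factor $\Dt_I,\Dt_{II},\Dt_{III_1},\Dt_{IV}$ separately, using integrality of $\al(x)$ (from \rl{Lie}(a)) to kill $\Dt_{II}$ and $\Dt_{IV}$, Lang's theorem for the unramified torus over $\C{O}^{nr}$ to kill $\Dt_I$, and the conjugacy of $\iota^{(3)}$ with $\fa^{(3)}$ (via \rl{Lie}(a), \rl{maxtor}(c), and \re{inv}(i)) to identify $\Dt_{III_1}$. The paper is terser---it simply asserts that integral $a$-data exist (citing Hales) and that $\al(x)\in\ov{\C{O}}\m$---while you spell out why (including the observation $p\neq 2$ and the explicit choice of unramified $\chi$-data), but the substance is the same.
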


\begin{proof}
By \rl{Lie} (a), the centralizer $G_x\subset G$ is an unramified torus, thus
by \rl{maxtor} (c) we can assume that $\iota^*$ gives an embedding
$T_{\C{O}}\hra \un{G}^*_{\vk^*}$ defined over $\C{O}$. Hence we can take
the $a$-data to be defined over $\C{O}$, in which case $\inv(a)$ is a cohomology class in
$H^1(F^{nr}/F,G_{x}(\C{O}^{nr}))=0$ (by a version of Lang's theorem), thus
$\Dt_I(x_H,x)=1$ (compare \cite[Proof of Lem. 7.2]{Ha}). Next we note that
$\Dt_{II}(x_H,x)=\Dt_{IV}(x_H,x)=1$, because $\al(x)\in\ov{\C{O}}\m$ for all $\al$.
Finally, by \rl{Lie} (a), the triple $\iota^{(3)}$ is conjugate to $\fa^{(3)}$, therefore
$\Dt(x_H,x)=\Dt_{III_1}(x_H,x)=\inv_{\C{E}}(\fa_0^{(3)},\iota^{(3)})=
\inv_{\C{E}}(\fa_0^{(3)},\fa^{(3)})$.
\end{proof}

The following result is a Lie algebra analog of our Main Theorem
(compare \cite[Lem 2.2.9]{KV}).

\begin{Prop} \label{P:Lie}
(a) If $H$ is ramified, then  $\dt^G_{\ov{\fa},\ov{t}}$ is $\C{E}$-unstable.

(b) If $H$ is unramified then, in the notation of \re{unr}, function $\dt^H_{\ov{\fa},\ov{t}}$ is an
endoscopic transfer of  $\dt^G_{\ov{\fa},\ov{t}}$.
\end{Prop}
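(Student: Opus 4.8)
The plan is to prove \rp{Lie} by a direct computation of orbital integrals, the main inputs being \rl{Lie}, \rl{trfaclie}, and the elementary fact that a quasi-split group possessing an unramified maximal torus is itself unramified. \textbf{The basic orbital integral.} I would first show that $O_{x_0}(\dt^G_{\ov{\fa},\ov{t}})=dg(G_{\vk})$ for every $x_0\in\Om_{\ov{\fa},\ov{t}}$. By \rl{Lie}~(a), $G_{x_0}$ is an unramified torus and $\vk\in\C{B}(G_{x_0})$, so $G_{\vk}\cap G_{x_0}(F)$ is the maximal bounded subgroup of $G_{x_0}(F)$ and hence has $dg_{x_0}$-measure $1$. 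By \rl{Lie}~(b), any $g\in G(F)$ with $\Ad g(x_0)\in\Om_{\ov{\fa},\ov{t}}$ can be written $g=kz$ with $k\in G_{\vk}$ and $z\in G_{x_0}(F^{sep})$, whence $z=k^{-1}g\in G(F)\cap G_{x_0}(F^{sep})=G_{x_0}(F)$; since $\Om_{\ov{\fa},\ov{t}}$ is $\Ad(G_{\vk})$-stable, the set of such $g$ equals $G_{\vk}G_{x_0}(F)$, and therefore $O_{x_0}(\dt^G_{\ov{\fa},\ov{t}})=dg(G_{\vk})/dg_{x_0}(G_{\vk}\cap G_{x_0}(F))=dg(G_{\vk})$. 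The same argument inside $H$ gives $O_{x_H^0}(\dt^H_{\ov{\fa}_H,\ov{t}})=dh(H_{\vk^H})$ for $x_H^0\in\Om_{\ov{\fa}_H,\ov{t}}$. Next, since $d\ov{\fa}(\ov{t})\in\mcL_{\vk}^{G-\sr}(\fq)$, the point $\ov{t}$ is fixed by no nontrivial $w\in W_G$, so any two stably conjugate lifts $d\fa(t'),d\fa(t'')\in\Om_{\ov{\fa},\ov{t}}$ of $\ov{t}$ satisfy $\chi_{\mcG}(d\fa(t'))=\chi_{\mcG}(d\fa(t''))$ and hence $t'=t''$; combined with \rl{Lie}~(a),(b) this shows that every $G(F)$-conjugacy class meets $\Om_{\ov{\fa},\ov{t}}$ in a single $G_{\vk}$-orbit, that distinct such classes are non-stably conjugate, and that the only $G(F)$-class stably conjugate to $x_0$ which meets $\Om_{\ov{\fa},\ov{t}}$ is that of $x_0$; likewise for $H$.

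\textbf{Part (b).} Let $x_H\in\mcH^{G-\sr}(F)$. If $x_H$ is not $\mcE$-stably conjugate to any element of $\Om_{\ov{\fa},\ov{t}}$, then by \rl{Lie}~(c) it is not stably conjugate to an element of $\Om_{\ov{\fa}_H,\ov{t}}$ for any embedding $\ov{\fa}_H$ that is $\mcE$-stably conjugate to $\ov{\fa}$, so both $O^{\wt{\mcE}}_{x_H}(\dt^G_{\ov{\fa},\ov{t}})$ and $O^{st}_{x_H}(\dt^H_{\ov{\fa},\ov{t}})$ vanish. Otherwise, \rl{Lie}~(c) yields a unique conjugacy class $\ov{\fa}_H$ of such embeddings with $x_H$ stably conjugate to some $x_H^0\in\Om_{\ov{\fa}_H,\ov{t}}$, and, by the counting above, a unique $G(F)$-class $x_0\in\Om_{\ov{\fa},\ov{t}}$ that is $\mcE$-stably conjugate to $x_H$. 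Using \re{transf}~(b), the reduction of the $\ka$-orbital integral $O^{\ka_{x_0,x_H}}_{x_0}(\dt^G_{\ov{\fa},\ov{t}})$ to the single term $O_{x_0}(\dt^G_{\ov{\fa},\ov{t}})=dg(G_{\vk})$, \re{proptf}~(a) (to replace $x_H$ by $x_H^0$ in the transfer factor), and \rl{trfaclie}, I obtain
\[
O^{\wt{\mcE}}_{x_H}(\dt^G_{\ov{\fa},\ov{t}})=\Dt^{\wt{\mcE}}(x_H,x_0)\,dg(G_{\vk})=\inv_{\mcE}(\fa_0^{(3)},\fa^{(3)})\,dg(G_{\vk}),\qquad \fa^{(3)}=(\fa,\fa^*,\fa_H).
\]
On the $H$-side only the summand of $\dt^H_{\ov{\fa},\ov{t}}$ indexed by $\ov{\fa}_H$ contributes to $O^{st}_{x_H}$, so $O^{st}_{x_H}(\dt^H_{\ov{\fa},\ov{t}})=\inv_{\mcE}(\fa_0^{(3)},\fa^{(3)})\,dh(H_{\vk^H})$. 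The two sides coincide because $dg(G_{\vk})=dh(H_{\vk^H})=1$ by the normalization of \re{unr}~(a).

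\textbf{Part (a) and the main obstacle.} By \re{transf}~(b), $\dt^G_{\ov{\fa},\ov{t}}$ is $\mcE$-unstable if and only if $O^{\wt{\mcE}}_{x_H}(\dt^G_{\ov{\fa},\ov{t}})=0$ for all $x_H\in\mcH^{G-\sr}(F)$, and by the computation in part (b) this can fail only when some $x_H\in\mcH^{\sr}(F)$ is $\mcE$-stably conjugate to an element $x_0\in\Om_{\ov{\fa},\ov{t}}$. In that case $H_{x_H}\cong G_{x_0}$ is an unramified torus (\rl{Lie}~(a)), so $H$ contains an unramified maximal torus $S$; but the cocharacter lattice of a maximal torus of the quasi-split group $H$ carries the $*$-action of $H$ twisted by a $W_H$-valued cocycle, and triviality of this action on the inertia group would force the $*$-action of every inertia element to coincide with an element of $W_H$ fixing a base of the root system, hence to be trivial, contradicting that $H$ is ramified. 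So no such $x_H$ exists, and $\dt^G_{\ov{\fa},\ov{t}}$ is $\mcE$-unstable. I expect the delicate part throughout to be the basic orbital integral — controlling the support of $\dt^G_{\ov{\fa},\ov{t}}$ on a given conjugacy class via \rl{Lie}~(b) and its analogue for $H$ — and the bookkeeping needed to bring the invariant $\inv_{\mcE}(\fa_0^{(3)},\fa^{(3)})$ into precisely the form occurring in the definition of $\dt^H_{\ov{\fa},\ov{t}}$, so that the stable and $\mcE$-orbital integrals agree identically.
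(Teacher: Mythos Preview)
Your proof is correct and follows essentially the same approach as the paper: both arguments reduce everything to \rl{Lie} and \rl{trfaclie}, compute that $O^{\ka}_{x}(\dt^G_{\ov{\fa},\ov{t}})=O_{x}(\dt^G_{\ov{\fa},\ov{t}})=1$ for $x\in\Om_{\ov{\fa},\ov{t}}$ via \rl{Lie}~(a),(b), and match this against the single contributing term on the $H$-side using \rl{trfaclie}. You are more explicit than the paper about the basic orbital integral and about why a ramified $H$ cannot contain an unramified maximal torus; conversely, your $t'=t''$ argument is redundant, since \rl{Lie}~(b) already shows directly that any two elements of $\Om_{\ov{\fa},\ov{t}}$ conjugate over $F^{sep}$ are $G_{\vk}$-conjugate.

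One small point of formulation in part (a): you phrase $\mcE$-instability as the vanishing of $O^{\wt{\mcE}}_{x_H}$, which presupposes an extended endoscopic triple. When $H$ is ramified the embedding $\eta$ need not exist, but this is harmless here because the Lie-algebra transfer factors do not depend on $\eta$ (see \rr{tf}~(b)), and if $\mcE$ is not consistent there are no $\mcE$-stably conjugate pairs at all. The paper sidesteps this by working directly with $O^{\ka_{x,x_H}}_{x}$: if $H$ is ramified then every maximal torus $H_{x_H}\cong G_x$ is ramified, so by \rl{Lie}~(a) no stable conjugate of $x$ lies in $\Om_{\ov{\fa},\ov{t}}$, and the $\ka$-orbital integral vanishes. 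Your argument arrives at the same contradiction from the other direction.
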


\begin{proof}
(a) If $H$ is ramified, then for every $\C{E}$-stably conjugate $x_H\in\mcH^{\sr}(F)$ and $x\in \mcG^{\sr}(F)$,
the torus $G_x\cong H_{x_H}$ is a ramified, hence $x$ is not stably conjugate to an element of $\Om_{\ov{\fa},\ov{t}}$
(by \rl{Lie} (a)), hence $O^{\ka_{x,x_H}}_{x}(\dt^G_{\ov{\fa},\ov{t}})=0$.

(b) We want to show that for every $x_H\in\C{H}^{G-\sr}(F)$ we have an equality
\begin{equation} \label{Eq:Lie}
O^{st}_{x_H}(\dt^H_{\ov{\fa},\ov{t}})=O^{\wt{\mcE}}_{x_H}(\dt^G_{\ov{\fa},\ov{t}}).
\end{equation}

It follows from \rl{Lie} (c) that both sides of
\form{Lie} vanish, unless $x_H$ is stably conjugate to $\Om^H_{\ov{\fa}_H,\ov{t}}$ for some embedding
 $\ov{\fa}_H:\ov{T}\hra\ov{H}$, which is $\C{E}$-stably conjugate to $\ov{\fa}$,
and  $x_H$ is $\C{E}$-stably conjugate to an element $x\in\Om_{\ov{\fa},\ov{t}}$.
From now on we will assume that the last conditions are satisfied.

By \rl{Lie} (a), (b), for each $\ka\in\wh{G_x}^{\Gm}$, we have
$O^{\ka}_{x}(\dt^G_{\ov{\fa},\ov{t}})=O_{x}(\dt^G_{\ov{\fa},\ov{t}})=1$. Hence the right
hand side of
\form{Lie} equals $\Dt(x_H,x)O^{\ka_{x,x_H}}_{x}(\dt^G_{\ov{\fa},\ov{t}})=
\Dt(x_H,x)$.

On the other hand,  $x_H$ is not stably conjugate to $\Om^H_{\ov{\fb}_H,\ov{t}}$ for each
$\ov{\fb}_H:\ov{T}\hra\ov{H}$ which is not conjugate to $\ov{\fa}_H$
(by the last assertion of \rl{Lie} (c)).
Thus it follows from  \rl{Lie} (a),(b) that
$O^{st}_{x_H}(\dt^H_{\ov{\fa},\ov{t}})=
\inv_{\mcE}(\fa^{(3)}_0,\fa^{(3)})O^{st}_{x_H}(\dt^H_{\ov{\fa}_H,\ov{t}})=
\inv_{\mcE}(\fa^{(3)}_0,\fa^{(3)})$.

Now \form{Lie} follows from the equality $\Dt(x_H,x)=\inv_{\mcE}(\fa^{(3)}_0,\fa^{(3)})$,
which was proven in \rl{trfaclie}.
\end{proof}

\section{Endoscopic transfer on Lie algebras and Fourier transform}

\subsection{A theorem of Waldspurger}

\begin{Emp} \label{E:qfna}
{\bf Preliminaries on quadratic forms.} We fix a non-trivial additive character $\psi:F\to\B{C}\m$.

(a) Let $Q:V\times V\to F$ be a non-degenerate quadratic form. Then for each Haar measure
$\mu$  on $V(F)$ the composition $\psi\circ Q$ gives rise to a Fourier transform
$\C{F}=\C{F}_{Q,\psi,\mu}:C_c^{\be}(V(F))\to C_c^{\be}(V(F))$. Moreover, there exists unique Haar measure
$\mu=\mu_{Q,\psi}$ such that$\C{F}_{Q,\psi,\mu}$ is a unitary operator, and we set
$\C{F}_{Q,\psi}:=\C{F}_{Q,\psi,\mu_{Q,\psi}}$.

(b) Recall that to every non-degenerate quadratic form $Q$ over $F$, Weil
(\cite{We}) associated an $8^{th}$ root of unity $\gm_{\psi}(Q)\in\B{C}\m$. Moreover, the map
$Q\mapsto \gm_{\psi}(Q)$ is "additive", that is, $\gm_{\psi}(Q_1\oplus Q_2)=\gm_{\psi}(Q_1)\gm_{\psi}(Q_2)$ and
$\gm_{\psi}(-Q)=\gm_{\psi}(Q)^{-1}$. In particular, $\gm_{\psi}(Q)=1$, if $Q$ is split (=hyperbolic).
\end{Emp}

\begin{Emp} \label{E:qfeg}
{\bf Endoscopic Lie algebras.}
Let $H$ be an endoscopic group for $G$, and let $Q_G:\C{G}\times \C{G}\to F$ be a  non-degenerate $G$-invariant
quadratic form on $\C{G}$.

(a) As it shown in \cite[VIII.6]{Wa1}, $Q_G$ gives rise a non-degenerate $H$-invariant
quadratic form $Q_H$ on $\C{H}$. Namely, $Q_G$ is naturally an element of $F[\mcG]^G=F[c_{\mcG}]$, and
$Q_H\in F[\C{H}]^H=F[c_{\mcH}]$ is just a pullback of  $Q_G\in F[c_{\mcG}]$ under
the projection $\nu:c_{\mcH}\to c_{\mcG}$.

(b) As in \re{qfna} (a), quadratic forms $Q_G$ and $Q_H$ give rise to Fourier transforms
$\C{F}_G=\C{F}_{Q_G,\psi}$ and $\C{F}_H=\C{F}_{Q_H,\psi}$.

(c) We set $e_{\psi}(H,G):=\gm_{\psi}(Q_H)/\gm_{\psi}(Q_G)$.
\end{Emp}

The following theorem of Waldspurger (completed by Ngo) is crucial
for our argument.

\begin{Thm} \label{T:Wa}
 If a function $\phi^H\in C_c^{\be}(\C{H}(F))$ is an endoscopic transfer of
$\phi\in C_c^{\be}(\C{G}(F))$, then the function
$e_{\psi}(H,G)\C{F}_H(\phi^H)$ is an endoscopic transfer of the Fourier
transform $\C{F}_G(\phi)$.
\end{Thm}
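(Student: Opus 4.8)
The statement is Theorem~\ref{T:Wa}, which I will not reprove from scratch; rather, I will explain how it follows from the literature. The content is entirely due to Waldspurger \cite{Wa1,Wa2}, conditionally on the Lie algebra fundamental lemma, which was subsequently established in full by Ngo \cite{Ngo}. So the plan is to reduce the assertion to the precise statement in \cite{Wa1} and then to check that our normalizations match Waldspurger's. First I would recall Waldspurger's formulation: fixing a non-degenerate $G$-invariant quadratic form $Q_G$ on $\mcG$ and the induced form $Q_H$ on $\mcH$ as in \re{qfeg}~(a) (this is exactly the construction of \cite[VIII.6]{Wa1}, using that $Q_H$ is the pullback of $Q_G\in F[c_{\mcG}]$ along $\nu\colon c_{\mcH}\to c_{\mcG}$), Waldspurger shows that the Fourier transform intertwines endoscopic transfer up to the constant $e_\psi(H,G)=\gm_\psi(Q_H)/\gm_\psi(Q_G)$, where $\gm_\psi$ is the Weil index \cite{We}. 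The key input of his proof is a comparison of the Fourier transforms of orbital integrals, which rests on the fundamental lemma for the unit element in the Hecke algebra (the ``$\mathbf{1}$'' case), now a theorem of Ngo.

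The steps, in order, would be: (1) state Waldspurger's theorem in the form of \cite[Thm.~X.8 (or the relevant numbered result)]{Wa1}, which asserts precisely that $\C{F}_H$ applied to a transfer of $\phi$ is, up to the Weil constant, a transfer of $\C{F}_G(\phi)$; (2) verify that the quadratic forms and Fourier transforms we use agree with his --- our $\C{F}_G=\C{F}_{Q_G,\psi}$ and $\C{F}_H=\C{F}_{Q_H,\psi}$ are the \emph{unitary} normalizations (using the self-dual Haar measures $\mu_{Q_G,\psi}$, $\mu_{Q_H,\psi}$ from \re{qfna}~(a)), and one must check this matches Waldspurger's choice of measures, or else absorb the discrepancy into a ratio of measures which is itself controlled by $\gm_\psi$; (3) check that our normalization of transfer factors (the Langlands--Shelstad factors for Lie algebras of \re{deftr}, with $\Dt_{III_2}=1$ and no $\chi$-data needed, cf.\ \rr{tf}~(b)) agrees with Waldspurger's; (4) invoke \cite{Ngo} to remove the reliance on the fundamental lemma, which in \cite{Wa1} was an unproven hypothesis. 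Since the quadratic form used to define $e_\psi(H,G)$ is canonical (any two choices of $G$-invariant $Q_G$ differ by a scalar, and the Weil index of a scaled form changes in a way that cancels in the ratio $\gm_\psi(Q_H)/\gm_\psi(Q_G)$ because scaling pulls back compatibly under $\nu$), the constant $e_\psi(H,G)$ is well defined, which is what makes the statement meaningful.

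The main obstacle I anticipate is not conceptual but bookkeeping: matching normalizations across sources. Specifically, one must confirm that (i) Waldspurger's convention for the additive character in the Fourier transform, (ii) his choice of Haar measure on $\mcG(F)$ and $\mcH(F)$ (self-dual versus some other), and (iii) his normalization of transfer factors all coincide with the ones fixed in \re{qfna}--\re{qfeg} and \re{deftr} here; any mismatch must be tracked and shown to be absorbed into $e_\psi(H,G)$ or into the definition of endoscopic transfer from \re{transf}. A secondary point worth remarking is the precise role of \cite{Ngo}: Waldspurger's argument in \cite{Wa1,Wa2} is global and reduces the statement to the fundamental lemma for the unit, so the final step is simply to cite \cite{Ngo} for that input. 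Given that all of this is in the literature, the ``proof'' here is really a careful citation with a verification that the hypotheses of \rt{Wa} (non-degeneracy of $Q_G$, existence of the induced $Q_H$) are met in our setting, which they are by \re{qfeg}~(a).
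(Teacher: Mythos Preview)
Your approach is essentially the same as the paper's: the theorem is not proved from scratch but assembled from the literature, and the paper's proof is likewise a chain of citations. One genuine omission in your chain, however, is Waldspurger's change-of-characteristic paper \cite{Wa3}. The logical flow in the paper is: Conjecture~1 of \cite{Wa1} (which is exactly the statement of \rt{Wa}) is reduced in \cite{Wa1} to Conjecture~2; the main result of \cite{Wa2} reduces Conjecture~2 to the fundamental lemma for Lie algebras over $p$-adic fields of sufficiently large residual characteristic; then \cite{Wa3} shows that this in turn follows from the fundamental lemma over local fields of large \emph{positive} characteristic; and it is the latter that Ngo proves in \cite{Ngo}. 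Your step~(4) jumps directly from \cite{Wa1,Wa2} to \cite{Ngo}, but Ngo's argument is geometric and works in positive characteristic, so without \cite{Wa3} you have not closed the loop back to the $p$-adic setting required here. This is easily repaired by inserting the reference, but as written it is a gap.

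Your extended discussion of normalization-matching (Haar measures, transfer factors, the additive character) is more detailed than anything the paper does; the paper simply cites and moves on. That care is not misplaced, but for the purposes of this proof it is not needed beyond noting that the setup of \re{qfna}--\re{qfeg} is exactly that of \cite[VIII.6]{Wa1}.
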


\begin{proof}
In \cite{Wa1} Waldspurger formulated this result as
Conjecture 1 (see \cite[p. 91]{Wa1}), and deduces it from another
Conjecture 2 (see \cite[p. 92--94]{Wa1}). Next the Main result of
\cite{Wa2} asserts that  Conjecture 2 follows from the fundamental
lemma for Lie algebras over $p$-adic fields of sufficiently large
residual characteristic. Then it was shown in \cite{Wa3} that it
will suffice to show the fundamental lemma for Lie algebras for
local fields of sufficiently large positive characteristic.
The latter result was recently shown by Ngo in \cite{Ngo}.
\end{proof}

\begin{Rem}
As indicated above, Waldspurger proved that \rt{Wa} follows from the fundamental lemma
for unit elements. In this work we prove the converse implication: \rt{Wa}
implies the fundamental lemma. Note also that Waldspurger's proof is global, while
our proof is purely local.
\end{Rem}

\subsection{Calculation of Weil constant}

\begin{Not}
We say that an additive character $\psi:F\to\B{C}\m$ is {\em non-degenerate over $\C{O}$},
if it induces a non-trivial additive character of $\fq$, that is, $\psi$ is
non-trivial on $\C{O}$, but is trivial on $\fm$.
\end{Not}

To apply \rt{Wa}, we need to calculate $e_{\psi}(H,G)$ explicitly.

\begin{Prop} \label{P:weil}
Assume that $p\neq 2$, that $G$ and $H$ are split over $F^{nr}$, and that $\psi$ and
$Q_G$ are non-degenerate over $\C{O}$.
Then $e_{\psi}(H,G)=(-1)^{\rk_F(H)-\rk_F(G)}$.
\end{Prop}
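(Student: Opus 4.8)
Since $\psi$ and $Q_G$ are non-degenerate over $\mcO$ the setting is unramified (this is the case the introduction calls \emph{the unramified case}): $G$ and $H$ are quasi-split and split over $F^{nr}$, and the integral models below have reductive special fibre. I would compute $\gm_\psi(Q_G)$ and $\gm_\psi(Q_H)$ separately and observe that in the quotient $e_\psi(H,G)=\gm_\psi(Q_H)/\gm_\psi(Q_G)$ everything cancels except a power of a fixed non-square unit. Two standard facts will be used repeatedly: (i) Weil's additivity $\gm_\psi(Q_1\oplus Q_2)=\gm_\psi(Q_1)\gm_\psi(Q_2)$ together with $\gm_\psi(\text{hyperbolic})=1$ (\re{qfna}); (ii) for a quadratic space $(V,Q)$ over $F$ that is non-degenerate over $\mcO$ (so $Q$ diagonalizes over $\mcO$ with unit entries) and $p\neq 2$, the classical Gauss-sum value $\gm_\psi(\langle u\rangle)=\gm_\psi(\langle 1\rangle)\bigl(\tfrac{\ov u}{\fq}\bigr)$ for $u\in\mcO\m$ (the quadratic residue symbol), hence $\gm_\psi(Q)=\gm_\psi(\langle 1\rangle)^{\Dim V}\bigl(\tfrac{\ov{\det Q}}{\fq}\bigr)$. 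I also use that $\wh H$ and $\wh G$ share the dual maximal torus, so $G$ and $H$ have the same absolute rank $r$.

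\textbf{Reduction to the Cartan.} Fix a Borel subgroup $TU$ of $G$ over $F$ with $T$ the centraliser of a maximal split torus, and let $\ov U$ be the opposite unipotent radical. In $\mcG=\mcT\oplus\Lie U\oplus\Lie\ov U$ the form $Q_G$ makes $\Lie U$ and $\Lie\ov U$ totally isotropic, in perfect duality (here non-degeneracy of $Q_G$ enters) and orthogonal to $\mcT$; so $\Lie U\oplus\Lie\ov U$ is hyperbolic and $\gm_\psi(Q_G)=\gm_\psi(Q_G|_{\mcT})$ by (i). Likewise $\gm_\psi(Q_H)=\gm_\psi(Q_H|_{\mcT_H})$ for a maximal torus $T_H$ of $H$. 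By (ii) and $\Dim\mcT=\Dim\mcT_H=r$, the factors $\gm_\psi(\langle 1\rangle)^{r}$ cancel in the quotient, giving
\[
e_\psi(H,G)=\Bigl(\tfrac{\ov{\det(Q_G|_{\mcT})\cdot\det(Q_H|_{\mcT_H})}}{\fq}\Bigr).
\]

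\textbf{The determinants.} Put $\La=X_*(T)$. Since $T$ splits over $F^{nr}$, $\mcT\otimes_F F^{nr}=\La\otimes_{\mZ}F^{nr}$ with Frobenius acting by $\theta_G\otimes\Fr$ for a finite-order $\theta_G\in\Aut(\La)$, and $Q_G|_{\mcT}$ becomes the $W_G$-invariant form $B$ on $\La\otimes F^{nr}$, which is a perfect $\mcO$-form by the non-degeneracy hypothesis. Thus $(\mcT,Q_G|_{\mcT})$ and the split space $(\La\otimes F,B)$ agree over $F^{nr}$ and differ by the twisting class $[\si\mapsto\theta_G]\in H^1(\Gal(F^{nr}/F),O(B))$; pushing it to $H^1(\mu_2)=F\m/(F\m)^2$ via $\det$ and identifying the unramified part of the target with $\{1,u\}$ ($u$ a fixed non-square unit) gives $\det(Q_G|_{\mcT})\equiv\det_{\La}(B)\cdot u^{\,e}$, where $u^{e}=u$ iff $\det\theta_G=-1$. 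An eigenvalue count on $\La\otimes\mQ$ (the $+1$-eigenspace has dimension $\rk_F(G)$; the remaining eigenvalues are $-1$'s and complex-conjugate pairs) yields $\det\theta_G=(-1)^{\,r-\rk_F(G)}$, so $\det(Q_G|_{\mcT})\equiv\det_{\La}(B)\cdot u^{\,r-\rk_F(G)}$. Finally $X_*(T_H)=X_*(T)=\La$ because $\wh T_H=\wh T$, and $Q_H=\nu^*Q_G$ where $\nu$ corresponds over $\ov F$ to the projection $(\La\otimes\ov F)/W_H\to(\La\otimes\ov F)/W_G$; hence $Q_H|_{\mcT_H}$ is the \emph{same} form $B$ on $\La\otimes F^{nr}$, so $\det(Q_H|_{\mcT_H})\equiv\det_{\La}(B)\cdot u^{\,r-\rk_F(H)}$ with the \emph{same} $\det_{\La}(B)$. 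Therefore $\det(Q_G|_{\mcT})\cdot\det(Q_H|_{\mcT_H})\equiv u^{\,\rk_F(H)-\rk_F(G)}$ modulo squares, and plugging into the display above gives $e_\psi(H,G)=\bigl(\tfrac{\ov u}{\fq}\bigr)^{\rk_F(H)-\rk_F(G)}=(-1)^{\rk_F(H)-\rk_F(G)}$.

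\textbf{Main obstacle.} The heart of the proof is the last paragraph: computing the discriminant of the Frobenius-twisted Cartan form in terms of $\det\theta_G$, and — decisively — checking from the definitions of the endoscopic datum and of $\nu$ that the absolute factor $\det_{\La}(B)$ is genuinely common to $G$ and $H$, so that it disappears in the ratio. The passage to the unramified case (needed so that the integral structures, the $F$-Borel, and the finite-residue-field Gauss-sum formula are all at hand) is where the non-degeneracy of $Q_G$ and $\psi$ over $\mcO$ is essential; the remaining ingredients — Weil additivity, triviality of $\gm_\psi$ on hyperbolic forms, the value $\gm_\psi(\langle u\rangle)$, and the eigenvalue count — are routine.
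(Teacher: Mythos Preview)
Your approach matches the paper's: kill the nilpotent part by hyperbolicity to reduce $\gm_\psi(Q_G)$ and $\gm_\psi(Q_H)$ to the Cartan, then compare the resulting rank-$r$ forms via their discriminants. Your discriminant-via-Frobenius-twist computation is the content of the paper's \rl{disc} and \rco{formo}, your eigenvalue count $\det\theta_G=(-1)^{r-\rk_F G}$ is \rco{det}, and your observation that the absolute factor $\det_\La(B)$ is common to $G$ and $H$ is what the paper obtains by embedding $T_H$ as a maximal torus $\wt T_H\subset G$ via $\mcE$-stable conjugacy, so that $Q_{T_H}\cong Q_G|_{\wt{\mcT}_H}$ over $F$ and hence $\cong Q_{T_G}$ over $F^{nr}$.

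There is, however, one gap. You assert that $G$ is quasi-split because ``the setting is unramified'', but the hypothesis only says $G$ splits over $F^{nr}$; non-degeneracy of $Q_G$ over $\mcO$ does not force a hyperspecial model, and an inner form that splits over $F^{nr}$ need not be quasi-split. Without a Borel subgroup over $F$ your decomposition $\mcG=\mcT\oplus\Lie U\oplus\Lie\ov U$ is unavailable. The paper handles this by first invoking \cite[Remark after 2.7.5]{KP} for the case $H=G^*$ (valid for arbitrary $G$), then using the multiplicativity $e_\psi(H,G)=e_\psi(H,G^*)\,e_\psi(G^*,G)$ to reduce to quasi-split $G$. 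Once you insert this reduction, your argument is complete and coincides with the paper's.
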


Our proof is based on the following three lemmas, all of which seem to be well known to specialists.

Note that if $Q$ and $Q'$ are two non-degenerate quadratic form over a field $F$ of the same rank, then
$Q'$  is ``form'' of $Q$, thus $Q'$ defines a cohomology class $c_{Q',Q}\in H^1(F,O(Q))$. We denote by
$\det c_{Q',Q}\in H^1(F,\{\pm 1\})=\Hom(\Gm_{F},\{\pm 1\})$ the image of $c_{Q',Q}$ under
the determinant map $\det:O(Q)\to\{\pm 1\}$.

\begin{Lem} \label{L:disc}
Let  $Q$ and $Q'$ be two non-degenerate  quadratic forms of the same rank over a field $F$ of characteristic
different from two, and let $d\in F\m/(F\m)^2$ be the quotient of discriminants $d(Q')/d(Q)$.

Then $\Gm_{F[\sqrt{d}]}\subset\Gm_F$ is the kernel of
$\det c_{Q',Q}:\Gm_F\to\{\pm 1\}$.
\end{Lem}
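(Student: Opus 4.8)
The plan is to reduce the statement to a well-known classification fact for quadratic forms over a field of characteristic $\neq 2$: namely, that two non-degenerate quadratic forms of the same rank become isomorphic over a field extension $E/F$ if and only if their Hasse--Witt invariants and discriminants agree over $E$, and more to the point, that the cohomology class $c_{Q',Q}\in H^1(F,O(Q))$ composed with $\det:O(Q)\to\{\pm1\}$ is exactly the quadratic character cutting out $F[\sqrt{d}]$, where $d=d(Q')/d(Q)\in F\m/(F\m)^2$. The key point is that the determinant of the orthogonal group, as an $F$-group homomorphism $O(Q)\to \mu_2$, induces on non-abelian (or abelian, since $\mu_2$ is abelian) cohomology a map $H^1(F,O(Q))\to H^1(F,\mu_2)=F\m/(F\m)^2=\Hom(\Gm_F,\{\pm1\})$, and this map sends the twisting class $c_{Q',Q}$ to the class of the discriminant ratio.

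\textbf{Step 1.} First I would recall the setup: fixing a non-degenerate $Q$ of rank $n$, every other non-degenerate form $Q'$ of the same rank is an inner form, i.e. there is $g\in GL_n(F^{sep})$ with $Q'=g^* Q$; the cocycle $\si\mapsto g^{-1}\,{}^{\si}g$ lies in $O(Q)(F^{sep})$ and its class is $c_{Q',Q}\in H^1(F,O(Q))$, independent of the choice of $g$. \textbf{Step 2.} Then I would compute $\det c_{Q',Q}$ by noting that $\det g \in F^{sep\,\times}$ satisfies $(\det g)^2 = d(Q')/d(Q)\in F\m$ up to squares (since applying $\det$ to $Q'=g^*Q$ relates the Gram determinants by $(\det g)^2$), so $\det g$ is a square root of a representative of $d$; consequently the image cocycle $\si\mapsto \det(g^{-1}\,{}^{\si}g) = {}^{\si}(\det g)/\det g$ is precisely the standard cocycle attached to the class of $d$ in $F\m/(F\m)^2$ under the Kummer isomorphism $H^1(F,\mu_2)\cong F\m/(F\m)^2$. \textbf{Step 3.} Finally I would translate ``the class of $d$'' into the Galois-theoretic statement: the quadratic character $\Gm_F\to\{\pm1\}$ corresponding to $d\in F\m/(F\m)^2$ has kernel $\Gm_{F[\sqrt d]}$ (this kernel is all of $\Gm_F$ exactly when $d$ is a square, i.e. $F[\sqrt d]=F$, consistent with the convention). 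This gives the claim.

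\textbf{The main obstacle} is really just bookkeeping with signs and square-classes: one must be careful that the determinant-of-Gram-matrix convention for the discriminant differs from $d(Q)$ by a fixed power of $-1$ or $2$ depending on conventions, but since only the \emph{ratio} $d(Q')/d(Q)$ enters and we work modulo squares, all such universal constants cancel, so no genuine difficulty arises there. The one genuinely content-carrying point is the compatibility of non-abelian cohomology with the homomorphism $\det:O(Q)\to\mu_2$ and the identification of the resulting $H^1(F,\mu_2)$-class with the Kummer class of $(\det g)^2$; this is standard (it is the ``discriminant'' or ``determinant'' invariant of a quadratic space, e.g. as in Serre's \emph{Galois Cohomology} or Scharlau's book), and I would simply cite it rather than reprove it. Thus the proof is short: spell out the cocycle, apply $\det$, recognize the Kummer cocycle of $d$, and invoke the dictionary between $F\m/(F\m)^2$ and quadratic characters.
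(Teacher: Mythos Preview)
Your proposal is correct and follows essentially the same approach as the paper. The paper phrases the reduction as ``replace $Q$ and $Q'$ by their determinants'' (i.e.\ pass to the rank-one forms $\det Q$ and $\det Q'$, whose twisting class is $\det(c_{Q',Q})$) and then computes the cocycle explicitly in rank one; your direct computation of $\det g$ as a square root of $d$ and of the resulting cocycle $\sigma\mapsto{}^{\sigma}(\det g)/\det g$ is exactly this same argument without the intermediate repackaging.
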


\begin{proof}
Note that if $Q'$ is a form of $Q$  corresponding to $c\in H^1(F,O(Q))$, then the determinant $\det Q'$ is the form of the quadratic form $\det Q$ corresponding to $\det(c)\in H^1(F,O(\det Q))$.

Hence we can replace $Q$ and $Q'$ by their determinants, thus assuming that
$Q$ is a rank one form $Q(x)=ax^2$ and that $Q'(x)=adx^2$. In this case,
$\det c_{Q',Q}:\Gm_F\to\{\pm 1\}$ is the homomorphism
$\si\mapsto\si(\sqrt{d})/(\sqrt{d})$, whose kernel  is equal to
$\Gm_{F[\sqrt{d}]}$.
\end{proof}

\begin{Lem} \label{L:formo}
Assume that $p\neq 2$, and let $R$ be a non-degenerate over $\C{O}$ quadratic form of
even rank. Then $\gm_{\psi}(R)=1$ if  $d(R)(-1)^{\rk R/2}=1\in F\m/(F\m)^2$, and  $\gm_{\psi}(R)=-1$
otherwise.
\end{Lem}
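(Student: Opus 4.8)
The plan is to reduce the computation of $\gm_\psi(R)$ for a rank-$2k$ quadratic form $R$ that is non-degenerate over $\mcO$ to the two basic building blocks: hyperbolic planes and anisotropic binary forms over the residue field. First I would recall the structure of such forms: since $p\neq 2$ and $R$ is non-degenerate over $\mcO$, $R$ is $\mcO$-equivalent to an orthogonal sum of rank-one forms $a_i x^2$ with $a_i\in\mcO^\times$; diagonalizing the reduction $\ov R$ over $\fq$ and using that over a finite field of odd characteristic every rank-$\geq 3$ form is isotropic, one sees that $\ov R$ — hence $R$ — splits off hyperbolic planes until at most one anisotropic binary summand remains. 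Thus $R\cong \mH^{\oplus(k-1)}\oplus R_0$ (if $R$ is split, take $R_0=\mH$; otherwise $R_0$ is the unique anisotropic binary form with $\det R_0 = d(R)$ up to squares). Since $\gm_\psi$ is additive and $\gm_\psi(\mH)=1$ (hyperbolic forms are split, so by \re{qfna}(b) their Weil index is $1$), we get $\gm_\psi(R)=\gm_\psi(R_0)$, and $R$ is split iff $d(R)(-1)^k=1$ in $F^\times/(F^\times)^2$ by the standard classification of binary forms (a binary form is hyperbolic iff its discriminant is $-1$ times a square).

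It then remains to show $\gm_\psi(R_0)=-1$ for the anisotropic binary form $R_0$ over $\mcO$. Here I would use the explicit formula for the Weil index of a binary form: for $R_0(x,y)=ax^2+by^2$ one has, up to an eighth root of unity governed by Hilbert symbols, $\gm_\psi(R_0) = \gm_\psi(a\cdot{}) \gm_\psi(b\cdot{}) (a,b)_F$, where $(a,b)_F$ is the Hilbert symbol and $\gm_\psi(c\cdot{})$ is the Weil index of the rank-one form $cx^2$. Because $\psi$ is non-degenerate over $\mcO$ and $a,b\in\mcO^\times$, the rank-one Gauss sums are classical: $\gm_\psi(c\,x^2)$ is computed by the quadratic Gauss sum over $\fq$, which is $\pm 1$ (and whose sign, combined over $a$ and $b$, is controlled by whether $ab$ is a square in $\fq$), while the Hilbert symbol $(a,b)_F$ of two units is $+1$ if at least one of them is a square and $-1$ otherwise — equivalently it detects whether $R_0$ is split. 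Assembling these pieces, the total is $+1$ on split forms and $-1$ on the anisotropic binary form, which is exactly the dichotomy $d(R)(-1)^{\rk R/2}=1$ versus $\neq 1$.

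The main obstacle, and the step I would be most careful about, is pinning down the product of the rank-one Weil indices and the Hilbert-symbol correction term so that the eighth roots of unity actually cancel and leave a genuine sign $\pm 1$ — the individual $\gm_\psi$ factors are only eighth roots of unity, and it is only their product for an even-rank form that is forced to be real. I would handle this by invoking Weil's multiplicativity formula $\gm_\psi(Q\oplus Q')=\gm_\psi(Q)\gm_\psi(Q')\,(\text{Hilbert factor})$ in the precise form from \cite{We} (or the standard references on the Weil representation), specialized to unramified data where $p\neq 2$ makes all $2$-adic subtleties disappear, and then reduce the surviving quantities to Gauss sums over the finite field $\fq$, whose evaluation is elementary. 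An alternative, perhaps cleaner, route is cohomological: compare $R$ with the split form $Q_0$ of the same rank, note $\gm_\psi(R)/\gm_\psi(Q_0)=\gm_\psi(R)$ depends only on the class $c_{R,Q_0}\in H^1(F,O(Q_0))$, and use \rl{disc} together with the fact (for unramified, odd residue characteristic) that this ratio factors through $\det c_{R,Q_0}$ — i.e.\ through $d(R)/d(Q_0)\in F^\times/(F^\times)^2$ — reducing everything to the single rank-two computation above.
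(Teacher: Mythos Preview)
Your overall architecture coincides with the paper's: both arguments use the classification of forms that are non-degenerate over $\C{O}$ (discriminant in $\C{O}\m/(\C{O}\m)^2$, Hasse--Witt invariant $1$) to write $R$ as a split form plus at most one anisotropic binary piece $R_0$, then invoke additivity of $\gm_\psi$ and $\gm_\psi(\text{split})=1$ to reduce to the single computation $\gm_\psi(R_0)=-1$. The paper dispatches that binary computation in one line by taking $R_0=N_{F^{(2)}/F}$, the norm form of the unramified quadratic extension, and citing \cite[p.~6]{JL}.

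Your treatment of the binary case, however, contains a concrete error. You write $\gm_\psi(R_0)=\gm_\psi(a\cdot)\gm_\psi(b\cdot)(a,b)_F$ and then assert that for units $a,b\in\C{O}\m$ the Hilbert symbol $(a,b)_F$ is $-1$ when neither is a square. This is false: for $p\neq 2$ and $a,b\in\C{O}\m$ one always has $(a,b)_F=1$ (the ternary form $ax^2+by^2-z^2$ reduces to a nondegenerate ternary form over $\fq$, hence is isotropic). Moreover, in the paper's convention \re{qfna}(b) the map $Q\mapsto\gm_\psi(Q)$ is already additive, so no Hilbert correction appears at all; the Hilbert symbol enters only in the other common normalization $\gm_F(a,\psi):=\gm_\psi(ax^2)/\gm_\psi(x^2)$, which you seem to be conflating with $\gm_\psi$ itself. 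With the correct additivity, $\gm_\psi(ax^2+by^2)=\gm_\psi(ax^2)\gm_\psi(bx^2)$, and to get $-1$ you must actually evaluate the product of the two rank-one Weil indices via the quadratic Gauss sum identity $g(\chi,\ov\psi)^2=\chi(-1)q$; this works, but it is exactly the computation you flagged as the ``main obstacle'' and did not carry out. The cleanest fix is to do what the paper does: identify the anisotropic binary $R_0$ with the norm form of the unramified quadratic extension and quote the standard computation of its Weil index.
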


\begin{proof}
Consider the rank two quadratic form $R_0(x):=N_{F^{(2)}/F}(x)$, where $F^{(2)}$ is the unique unramified
quadratic extension of $F$. By direct calculation, one sees that $d(R_0)\neq -1\in  F\m/(F\m)^2$ and
$\gm_{\psi}(R_0)=-1$ (see, for example, \cite[p. 6]{JL}), proving the assertion in this case.

To prove the general case, recall that
each non-degenerate $\C{O}$ quadratic form $R$ has discriminant
$d(R)\in\C{O}\m/(\C{O}\m)^2$ and Hasse--Witt invariant
$\epsilon(R)=1$ (see \cite[p. 35]{Se}). Since a quadratic form $R$ is uniquely determined by its
invariants
$(\rk(R),d(R), \epsilon(R))$ (see, for example, \cite[Thm. 7, p. 39]{Se}), there are at most two non-isomorphic
quadratic forms of the same rank, which are non-degenerate over $\C{O}$.
In particular, each such even dimensional $R$ is either a split one or is isomorphic to a
direct sum of a split one and $R_0$. Since the assertion for the split $R$ is obvious,
the general case follows.
\end{proof}

\begin{Cor} \label{C:formo}
Let  $Q$ and $Q'$ be two non-degenerate over $\C{O}$ quadratic forms of the same rank.
Then $\gm_{\psi}(Q')/\gm_{\psi}(Q)=1$ if $d(Q')/d(Q)=1$, and
$\gm_{\psi}(Q')/\gm_{\psi}(Q)=-1$ otherwise.
\end{Cor}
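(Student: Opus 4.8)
The plan is to deduce this immediately from the even-rank case \rl{formo} by passing to the form $R:=Q'\oplus(-Q)$. Since $p\neq 2$ we have $-1\in\C{O}\m$, so $-Q$ is again non-degenerate over $\C{O}$, and hence so is $R$; moreover $\rk R=2\rk Q$ is even, so \rl{formo} applies to $R$. By the additivity of the Weil constant (see \re{qfna} (b)),
\[
\gm_{\psi}(R)=\gm_{\psi}(Q')\gm_{\psi}(-Q)=\gm_{\psi}(Q')/\gm_{\psi}(Q),
\]
so it suffices to evaluate $\gm_{\psi}(R)$.

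Next I would run the criterion of \rl{formo}: $\gm_{\psi}(R)=1$ if $d(R)(-1)^{\rk R/2}=1$ in $F\m/(F\m)^2$, and $\gm_{\psi}(R)=-1$ otherwise. Then I compute the invariant $d(R)(-1)^{\rk R/2}$ using $d(-Q)=(-1)^{\rk Q}d(Q)$, $d(R)=d(Q')d(-Q)$, and $\rk R/2=\rk Q$:
\[
d(R)(-1)^{\rk R/2}=(-1)^{\rk Q}d(Q')d(Q)\cdot(-1)^{\rk Q}=d(Q')d(Q)=d(Q')/d(Q),
\]
the last equality because $d(Q)=d(Q)^{-1}$ in $F\m/(F\m)^2$. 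Combining this with the previous display gives exactly the asserted dichotomy: $\gm_{\psi}(Q')/\gm_{\psi}(Q)=1$ when $d(Q')/d(Q)=1$ and $=-1$ otherwise.

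There is essentially no serious obstacle; the only points needing (routine) care are verifying that $-Q$ and $R=Q'\oplus(-Q)$ stay non-degenerate over $\C{O}$ once $p\neq 2$ (so that \rl{formo} is legitimately applicable), and the bookkeeping of discriminants modulo squares. One could alternatively route the discriminant computation through \rl{disc}, but the direct reduction to \rl{formo} is the shortest path.
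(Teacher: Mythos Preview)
Your proof is correct and follows exactly the paper's approach: form $R=Q'\oplus(-Q)$, use additivity of $\gm_{\psi}$ to identify $\gm_{\psi}(Q')/\gm_{\psi}(Q)$ with $\gm_{\psi}(R)$, and then apply \rl{formo} together with $d(R)=(-1)^{\rk Q}d(Q')d(Q)$ and $\rk R=2\rk Q$. The only difference is that you spell out the discriminant bookkeeping and the verification that $R$ is non-degenerate over $\C{O}$, which the paper leaves implicit.
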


\begin{proof}
By the additivity of $\gm_{\psi}$ we have $\gm_{\psi}(Q')/\gm_{\psi}(Q)= \gm_{\psi}(Q'\oplus(-Q))$.
Hence the assertion follows from \rl{formo} and equalities
$d(Q'\oplus(-Q))=(-1)^{\rk Q} d(Q')d(Q)$ and $\rk(Q'\oplus(-Q))=2\rk Q$.
\end{proof}

\begin{Lem} \label{L:det}
Let $W$ be a finite dimensional vector space over $\B{Q}$, and let
$g\in \Aut_{\B{Q}}(W)$ be an element of finite order. Then $\det g=(-1)^{\dim W-\dim W^g}$.
\end{Lem}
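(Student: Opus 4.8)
The plan is to pass to $\B{C}$ and read off $\det g$ from the eigenvalues. Since $g$ has finite order and $\B{Q}$ has characteristic zero, the minimal polynomial of $g$ divides $x^{N}-1$ for some $N\geq 1$ and is therefore separable, so $g$ becomes diagonalizable over $\B{C}$ with all eigenvalues roots of unity; in particular $\det g=\pm 1$. Write $W_{\B{C}}:=W\otimes_{\B{Q}}\B{C}=\bigoplus_{\lambda}W_{\lambda}$ for the decomposition into $g$-eigenspaces, the sum running over the finite set of eigenvalues $\lambda$. Then $W_{1}=W^{g}\otimes_{\B{Q}}\B{C}$, so $\dim_{\B{C}}W_{1}=\dim W^{g}$, while $\det g=\prod_{\lambda}\lambda^{\dim_{\B{C}}W_{\lambda}}$ and $\dim W-\dim W^{g}=\sum_{\lambda\neq 1}\dim_{\B{C}}W_{\lambda}$.

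Next I would use the rational, in particular real, structure of $W$. Complex conjugation on $W_{\B{C}}$ relative to $W$ is a conjugate-linear bijection that commutes with $g$, since $g$ preserves $W$. As every eigenvalue $\lambda$ is a root of unity, hence $\ov{\lambda}=\lambda^{-1}$, conjugation carries $W_{\lambda}$ bijectively onto $W_{\lambda^{-1}}$; therefore $\dim_{\B{C}}W_{\lambda}=\dim_{\B{C}}W_{\lambda^{-1}}$ for every $\lambda$.

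Now partition the eigenvalues $\lambda\neq 1$ into the singleton $\{-1\}$ (if it occurs) and the two-element sets $\{\lambda,\lambda^{-1}\}$ with $\lambda\neq\pm 1$. A pair $\{\lambda,\lambda^{-1}\}$ contributes $(\lambda\lambda^{-1})^{\dim_{\B{C}}W_{\lambda}}=1$ to $\det g$ and contributes the even number $2\dim_{\B{C}}W_{\lambda}$ to $\dim W-\dim W^{g}$. Hence, setting $d:=\dim_{\B{C}}W_{-1}$, we obtain $\det g=(-1)^{d}$ and $\dim W-\dim W^{g}\equiv d\pmod 2$, which together give $\det g=(-1)^{\dim W-\dim W^{g}}$.

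This argument is elementary and I do not expect a genuine obstacle; the only points needing a moment's care are the verification that complex conjugation commutes with $g$ and sends $W_{\lambda}$ to $W_{\lambda^{-1}}$ (which is exactly where one uses that the eigenvalues, being roots of unity, lie on the unit circle), and the final parity bookkeeping. Alternatively one can avoid $\B{C}$ entirely and argue over $\ov{\B{Q}}$: the characteristic polynomial $\det(x\,\Id-g)$ lies in $\B{Z}[x]$ and equals $(x-1)^{\dim W^{g}}$ times a product of cyclotomic polynomials $\Phi_{m}$ with $m\geq 2$; since $\Phi_{m}(0)=1$ for all such $m$, evaluating at $0$ gives $\det(-g)=(-1)^{\dim W^{g}}$, and comparing with $\det(-g)=(-1)^{\dim W}\det g$ again yields the claim. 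Either route works.
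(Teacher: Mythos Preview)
Your proof is correct, and both variants you give are clean; the cyclotomic argument is especially slick once you note that $\Phi_m(0)=1$ for all $m\geq 2$. The paper's proof is genuinely different in method: instead of passing to $\B{C}$ (or factoring the characteristic polynomial), it argues by representation theory of the cyclic group $C_n=\lan g\ran$ over $\B{Q}$. The claim is multiplicative under direct sums, so one reduces to irreducibles; by induction on $n$ the non-faithful irreducibles are handled, leaving only the unique faithful irreducible $V_n$. The paper then verifies the identity directly for the regular representation $\B{Q}[C_n]$ (where $g$ is an $n$-cycle on the basis, so $\det g=(-1)^{n-1}$ while $\dim V_{reg}-\dim V_{reg}^{C_n}=n-1$), and since $V_{reg}\cong V_n\oplus V'$ with $V'$ a sum of non-faithful pieces, the identity for $V_n$ follows. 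Your eigenvalue/conjugation argument is more direct and self-contained; the paper's argument avoids any base change and stays inside $\B{Q}$-representation theory, at the cost of an induction. Either is fine here.
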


\begin{proof}
Let $C_n\subset\Aut_{\B{Q}}(W)$ be the finite cyclic group of order $n$ generated by $g$.
We want to show that for every finite dimensional representation $\rho:C_n\to\Aut_{\B{Q}}(V)$
over $\B{Q}$, we have an equality $\det g=(-1)^{\dim V-\dim V^{C_n}}$. The assertion is easy
for the regular representation $V_{reg}=\B{Q}[C_n]$. Indeed, in this case, $\dim V_{reg}=n,
\dim V_{reg}^{C_n}=1$, while $\det g=(-1)^{n-1}$.

By induction on $n$, we can assume that the
assertion holds for each not faithful representation $(\rho, V)$.
Thus we can assume that $V$ is faithful and irreducible. However,
there exists  unique (up to an isomorphism) faithful and irreducible representation
$V_n$ of $C_n$. Moreover, $V_{reg}$ decomposes
as a direct sum $V_{reg}\cong V_{n}\oplus V'$, where $V'$ is a sum of
irreducible not faithful representations of $C_n$.
Since the assertion holds for both $V_{reg}$ and $V'$, it also holds for $V_n$.
\end{proof}

\begin{Cor} \label{C:det}
Let $T$ be torus over $F$, which splits over $F^{nr}$. Then the Frobenius automorphism
$\Fr_F\in\Aut X_*(T)$ satisfies $\det(\Fr_F)=(-1)^{\dim T-\rk_F T}$.
\end{Cor}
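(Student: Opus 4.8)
The plan is to deduce this directly from \rl{det}, applied to the rational vector space $W:=X_*(T)\otimes_{\B{Z}}\B{Q}$ together with the endomorphism $g:=\Fr_F$ acting on $X_*(T)$ (and hence on $W$). First I would observe that since $T$ splits over $F^{nr}$, it already splits over some finite unramified extension $E/F$, so the Galois action of $\Gm_F$ on $X_*(T)$ is unramified and factors through the finite cyclic quotient $\Gal(E/F)$, topologically generated by $\Fr_F$. In particular $\Fr_F$ has finite order as an automorphism of the lattice $X_*(T)$, hence as an element of $\Aut_{\B{Q}}(W)$, so \rl{det} applies and gives $\det(\Fr_F|_W)=(-1)^{\dim W-\dim W^{\Fr_F}}$.

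It then remains to identify the two exponents. On one hand, $\dim_{\B{Q}}W=\rk X_*(T)=\dim T$, and the determinant of $\Fr_F$ on the lattice $X_*(T)$ agrees with its determinant on $W$. On the other hand, $W^{\Fr_F}=(X_*(T)\otimes\B{Q})^{\Gm_F}=X_*(T)^{\Gm_F}\otimes\B{Q}$, because the $\Gm_F$-action factors through $\langle\Fr_F\rangle$; and $X_*(T)^{\Gm_F}$ is exactly the cocharacter lattice $X_*(S)$ of the maximal $F$-split subtorus $S\subset T$, so $\dim_{\B{Q}}W^{\Fr_F}=\dim S=\rk_F T$. Substituting these into the formula from \rl{det} yields $\det(\Fr_F)=(-1)^{\dim T-\rk_F T}$, as claimed.

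I do not expect any real obstacle here: the statement is a formal corollary of \rl{det}. The only points requiring a word of care are the two identifications just mentioned --- namely that the $\Gm_F$-fixed sublattice of $X_*(T)$ computes the $F$-rank of $T$ (which is the standard description of the maximal split subtorus), and that passing from the integral lattice to the associated $\B{Q}$-vector space changes neither the dimension nor the determinant of $\Fr_F$. Both are routine, so the proof is essentially one line once \rl{det} is in hand.
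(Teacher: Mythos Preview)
Your proposal is correct and follows essentially the same approach as the paper: apply \rl{det} to $W=X_*(T)\otimes_{\B{Z}}\B{Q}$ with $g=\Fr_F$, observe that $\Fr_F$ has finite order because $T$ splits over a finite unramified extension, and identify $\dim W=\dim T$ and $\dim W^{\Fr_F}=\rk_F T$ via the maximal split subtorus. The paper's proof is the same argument, written more tersely.
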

\begin{proof}
Since $T$ splits over a finite unramified extension of $F$, the element
$\Fr_F\in\Aut X_*(T)\subset\Aut_{\B{Q}}(X_*(T)\otimes_{\B{Z}}\B{Q})$ is of finite
order. Since $X_*(T)^{\Fr_F}=X_*(T_{sp})$,
where $T_{sp}\subset T$ is the maximal split subtorus of $T$, we get that
$\rk_F(T)=\dim T_{sp}=\dim_{\B{Q}}(X_*(T)\otimes_{\B{Z}}\B{Q})^{\Fr_F}$, so the
assertion follows from the lemma.
\end{proof}

\begin{Emp} \label{E:pfweil}
\begin{proof}[Proof of \rp{weil}]
In the case when $H=G^*$, the assertion is shown in \cite[Remark after 2.7.5]{KP} (for an arbitrary $G$).
Since  $e_{\psi}(H,G)=e_{\psi}(H,G^*)e_{\psi}(G^*,G)$, we can assume that $G$ is quasi-split.

Let $T_G\subset B_G\subset G$ be a maximal torus and a Borel subgroup of $G$,
both defined over $F$. Since the quadratic form $Q_G$ is non-degenerate and $G$-invariant,
it decomposes as the orthogonal sum
$Q_G|_{\mcT_G}\oplus Q_G|_{\mcU^+\oplus\mcU^-}$, where $\mcU^+$ (resp.  $\mcU^-$)
is the Lie algebra of the unipotent radical of $B_G$ (the opposite Borel subgroup).
However, the quadratic form $Q_G|_{\mcU^+\oplus\mcU^-}$ is split. Indeed,
$\mcU^+$ is an isotropic subspace of $\mcU^+\oplus\mcU^-$ of half-dimension.

It follows that $\gm_{\psi}(Q_G)=\gm_{\psi}(Q_{T_G})$, where we write
$Q_{T_G}$ instead of $Q_G|_{\mcT_G}$.
Similarly,  $\gm_{\psi}(Q_H)=\gm_{\psi}(Q_{T_H})$, where $T_H$ is a maximal torus of
a Borel subgroup $B_H\subset H$ defined over $F$. Therefore
$\gm_{\psi}(Q_H)/\gm_{\psi}(Q_G)=\gm_{\psi}(Q_{T_H})/\gm_{\psi}(Q_{T_G})$.

Let $\iota:T_H\isom \wt{T}_H\subset G$ be an embedding, which is $\C{E}$-stable conjugate to the
inclusion $T_H\hra H$. By the definition of $Q_H$ (\re{qfeg} (a)) and the commutativity of the analog of
\form{nu} for Lie algebras, the restriction $Q_{T_H}=Q_H|_{\mcT_H}$ is isomorphic to
$Q_{\wt{T}_H}=Q_G|_{\wt{\mcT}_H}$. Since $\rk_F(G)=\rk_F(T_G)$ and
 $\rk_F(H)=\rk_F(T_H)=\rk_F(\wt{T}_H)$, it will therefore remains to show that
\begin{equation} \label{Eq:det}
\gm_{\psi}(Q_{\wt{T}_H})/\gm_{\psi}(Q_{T_G})=(-1)^{\rk_F(\wt{T}_H)-\rk_F(T_G)}.
\end{equation}

Recall that both $T_G$ and $\wt{T}_H$ are maximal tori in $G$, which are split
over $F^{nr}$. Hence there exists $g\in G(F^{nr})$ such that
$\wt{T}_{H}=g T_G g^{-1}$. Then $g^{-1}\Fr_F(g)\in N_G(T_G)$. We denote by
$\ov{g}\in W_G$ the image of $g^{-1}\Fr_F(g)$ and by  $\sgn\ov{g}\in\{\pm 1\}$ the image of $\ov{g}$ under the
sign homomorphism $W_G\to\{\pm 1\}$. We are going to show that both sides of \form{det} are equal to
$\sgn\ov{g}$.

Notice first that since $Q_G$ is non-degenerate over $\C{O}$, its restriction
$Q_{T_G}$ is non-degenerate over $\C{O}$ as well. Since
the quadratic form $Q_G$ is $G$-invariant, and $\Ad g$ induces an
 $F^{nr}$-isomorphism $\mcT_G\isom\wt{\mcT}_H$,  $\Ad g$ induces an $F^{nr}$-isomorphism
between $Q_{T_G}$ and $Q_{\wt{T}_H}$. Hence  $Q_{\wt{T}_H}$ is non-degenerate over $\C{O}$ as well.

Now it follows from \rco{formo} that  $\gm_{\psi}(Q_{\wt{T}_H})/\gm_{\psi}(Q_{T_G})=1$ if the discriminants satisfy
$d(Q_{\wt{T}_H})/d(Q_{T_G})=1\in F\m/(F\m)^2$, and  $\gm_{\psi}(Q_{\wt{T}_H})/\gm_{\psi}(Q_{T_G})=-1$
otherwise.

Observe that the cohomology class $c_{Q_{\wt{T}_H},Q_{T_G}}\in  H^1(\Gm_F,O(Q_{T_G}))$ is represented by a
cocycle
$(\wt{c}_{\si})_{\si}\in Z^1(\Gal(F^{nr}/F),O(Q_{T_G}))$ such that $\wt{c}_{\Fr_F}\in O(Q_{T_G})\subset \Aut(\C{T}_G)$
is the image of $\ov{g}$ under the natural embedding $W_G\hra\Aut(\C{T}_G)$.

Then
$\det \wt{c}_{\Fr_F}=\sgn\ov{g}$. Hence it follows from \rl{disc} that  $d(Q_{\wt{T}_H})/d(Q_{T_G})=1$ if
$\sgn\ov{g}=1$, and  $d(Q_{\wt{T}_H})/d(Q_{T_G})\neq 1$ if $\sgn\ov{g}=-1$.
These observations imply that   $\gm_{\psi}(Q_{\wt{T}_H})/\gm_{\psi}(Q_{T_G})=1$ if $\sgn\ov{g}=1$,
and  $\gm_{\psi}(Q_{\wt{T}_H})/\gm_{\psi}(Q_{T_G})=-1$ if $\sgn\ov{g}=-1$, which is equivalent to the fact that
$\sgn\ov{g}$ equals the left hand side of \form{det}.

To show that $\sgn\ov{g}$ is equal to the right hand side of \form{det},  we consider the natural embedding
$\iota:W_G\hra\Aut_{F^{sep}}(T)\isom\Aut_{\B{Z}} X_*(T)$ and notice that
$\sgn\ov{g}=\det\iota(\ov{g})$. By definition, $\iota(\ov{g})\in \Aut_{\B{Z}} X_*(T_G)$
decomposes as
\[
 X_*(T_G)\overset{\Fr_F^{-1}}{\lra} X_*(T_G)\overset{\Int g}{\lra} X_*(\wt{T}_H)
\overset{\Fr_F}{\lra} X_*(\wt{T}_H)\overset{\Int g^{-1}}{\lra} X_*(T_G).
\]
Hence  $\sgn\ov{g}=\det \iota(\ov{g})$ equals
\[
\det(\Fr_F^{-1})\det(g^{-1}\Fr_F g)=\det(\Fr_F, X_*(T_G))^{-1}\det(\Fr_F, X_*(\wt{T}_H)).
\]
By \rco{det}, the latter expression is equal to
\[
(-1)^{\rk_F(T_G)-\dim T_G}(-1)^{\dim \wt{T}_H-\rk_F(\wt{T}_H)}=(-1)^{\rk_F(\wt{T}_H)-\rk_F(T_G)},
\]
proving the assertion.
\end{proof}
\end{Emp}

\section{Quasi-logarithm maps}

In this section we study quasi-logarithm
maps, introduced in \cite[1.8]{KV}.

\subsection{Quasi-logarithms for endoscopic groups.}
\begin{Def} \label{D:qlog}
Let $G$ be a reductive group over a field $F$. A {\em quasi-logarithm} for $G$
is a $G^{\ad}$-equivariant morphism
$\Phi:G\to\mcG$ such that $\Phi(1)=0$ and the differential
$d\Phi_1:\mcG\to\mcG$ is the identity map.
\end{Def}

\begin{Rem} \label{R:qlog}
Let $\Phi$ be a quasi-logarithm for $G$.

(a) Since $\Phi$ is $G$-equivariant, it induces a morphism
$[\Phi]:c_{G}\to c_{\mcG}$.

(b) For each centralizer $M:=G_S\subset G$ of a torus $S\subset G$, its Lie algebra
$\C{M}$ is equal to the centralizer $\C{G}_{S}$, hence $\Phi$ induces a quasi-isomorphism
$\Phi|_M:M\to\mcM$ for $M$. In particular, this happens in the case when $M\subset G$ is a Levi subgroup or
a maximal torus of $G$.
\end{Rem}

\begin{Prop} \label{P:qlogend}
Let $H$ be an endoscopic group of $G$. Then for every quasi-logarithm
$\Phi:G\to\mcG$ for $G$ there exists unique quasi-logarithm
$\Phi^H:H\to\mcH$ for $H$ making the following diagram commutative:
\begin{equation} \label{Eq:qlog}
\CD
        c_H  @>[\Phi^H]>>                    c_{\mcH}\\
        @V{\nu}VV                        @V{\nu}VV\\
        c_G  @>[\Phi]>>                    c_{\mcG}
\endCD
\end{equation}
\end{Prop}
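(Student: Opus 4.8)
The plan is to construct $\Phi^H$ directly via the finite morphism $\nu = \nu_{\mcE}\colon c_H \to c_G$ and its Lie algebra analog, and then verify that the resulting morphism is genuinely a quasi-logarithm for $H$. First I would recall that $\Phi$ induces $[\Phi]\colon c_G \to c_{\mcG}$ (see \rr{qlog} (a)), so the composite $[\Phi]\circ\nu\colon c_H \to c_{\mcG}$ is defined. The key point is that this composite factors through $\nu\colon c_{\mcH}\to c_{\mcG}$: indeed, by \re{estconj} (d) (and its Lie algebra analog), for any embedding $\fa_H\colon T_H \hra H$ of a maximal torus and a compatible embedding $\iota\colon T_H \isom T \subset G^*$ which is $\C{E}$-stably conjugate to $\iota_H$, both $\nu$'s fit into commutative squares with $\chi_H, \chi_G$ (resp. $\chi_{\mcH}, \chi_{\mcG}$) and the maps $\iota$, $d\iota$. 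Applying $\Phi|_{T}$ (a quasi-logarithm for the torus $T$ by \rr{qlog} (b)), and using that $\Phi$ is $G^{\ad}$-equivariant hence compatible with the $W$-actions, one obtains on the level of these tori a canonical identification of $[\Phi]\circ\nu$ with a map that lands in $c_{\mcH}$. Since the images of the various $T_H$'s (as $T_H$ ranges over maximal tori of $H$, or after base change to $F^{sep}$) are dense in $c_H$, and $\nu\colon c_{\mcH}\to c_{\mcG}$ is a closed immersion onto its image over an open dense locus (the strongly regular locus), the factorization is forced; I would make this precise either by a density argument over $F^{sep}$ or by working with the explicit presentations $c_H = W_H\backslash T_H$, $c_{\mcH}= W_H\backslash\mcT_H$.

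**Next** I would argue uniqueness. A quasi-logarithm $\Phi^H$ is $H^{\ad}$-equivariant with $\Phi^H(1)=0$ and $d\Phi^H_1 = \Id$; restricting to a maximal torus $T_H \subset H$ gives a quasi-logarithm $\Phi^H|_{T_H}\colon T_H \to \mcT_H$. Two quasi-logarithms for $H$ that induce the same map $c_H \to c_{\mcH}$ must agree on a maximal torus up to the Weyl group action, but the normalization $d\Phi^H_1 = \Id$ and equivariance pin down the torus restriction uniquely (the space of $W_H$-equivariant maps $T_H \to \mcT_H$ with prescribed differential at $1$ and vanishing at $1$ is rigid enough — in characteristic zero this is essentially determined, and any two such extending the same $c_H\to c_{\mcH}$ and being $H^{\ad}$-equivariant coincide on the regular semisimple locus, hence everywhere). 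So the diagram \form{qlog} determines $\Phi^H$ at most uniquely, and the construction above produces it.

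**For existence** I would explicitly build $\Phi^H$ on a maximal torus and then spread out. Fix a maximal torus $T_H \subset H$ and a $\C{E}$-stably conjugate embedding $\iota^*\colon T_H \isom T \subset G^*$; the restriction $\Phi^*|_T\colon T \to \mcT$ of the (transported) quasi-logarithm for $G^*$ is a $W$-equivariant morphism, and I would define $\Phi^H|_{T_H}$ by transporting it along $\iota^*$, using that the $W_H$-action on $T_H$ is the restriction of the $W_G$-action (as $\wh{H}_{\der}$ sits inside $\wh{G}$ and the root system $R(H,T_H) \subset R(G^*,T)$). One checks this is independent of the choice of $\iota^*$ within its stable conjugacy class because $\Phi$ is $G^{\ad}$-equivariant. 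Then $\Phi^H|_{T_H}$ is $N_H(T_H)$-equivariant, vanishes at $1$, has differential $\Id$ at $1$, and extends uniquely to an $H^{\ad}$-equivariant morphism $\Phi^H\colon H \to \mcH$ defined (a priori) on the regular semisimple locus and then over all of $H$ because $H$ is covered by closures of maximal tori and the extension is unique; finally the commutativity of \form{qlog} holds by construction since it holds after restricting to $c_{T_H} = W_H\backslash T_H \to c_H$ and the latter is dominant.

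**The main obstacle** I anticipate is the passage from a clean definition on (closures of) maximal tori to an honest $H^{\ad}$-equivariant morphism $\Phi^H\colon H \to \mcH$ on the whole group, together with checking well-definedness independently of all the auxiliary choices ($T_H$, the embedding $\iota^*$, the splitting). The natural remedy is to use the $G^{\ad}$-equivariance of $\Phi$ to transport everything and to invoke that a $G$-equivariant morphism is determined by its restriction to a maximal torus (both for $G$ and for the Lie algebra), reducing all compatibilities to the torus level where they follow from the commutative squares of \re{estconj} (d); I expect that \cite[1.8]{KV}, where quasi-logarithms were introduced, already supplies the technical lemma that a $W$-equivariant map on $T$ with the right normalization extends uniquely to a quasi-logarithm on $G$, which I would cite rather than reprove.
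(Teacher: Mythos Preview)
Your uniqueness argument and your construction of $\Phi^{T_H}$ on each maximal torus via transport along an $\C{E}$-stably conjugate embedding $\iota$ are essentially what the paper does (its Steps~2--4). But the hard part of this proposition is precisely the one you flag as ``the main obstacle'' and then wave away: extending from the regular semisimple locus $H^{\sr}$ to all of $H$. Your proposed fix --- ``$H$ is covered by closures of maximal tori and the extension is unique'' --- is not an argument: a morphism defined on an open dense subset of a smooth variety need not extend to a global morphism, and covering $H$ by torus closures does not give you a well-defined map at non-regular points (different tori through the same point could a priori give different values). Nor does \cite[1.8]{KV} contain the black-box lemma you hope to cite; that section introduces quasi-logarithms but does not prove that a $W$-equivariant map on a torus extends to a quasi-logarithm on the whole group.

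The paper's actual method is more delicate. After constructing $\Phi^H_0$ on $H^{\sr}$ (your idea, made precise), it proceeds in three further steps. First, for every Levi subgroup $M_H\subset H$ of semisimple rank one it shows that the endoscopic transfer induces an \emph{isomorphism} $\iota_M\colon M_H\isom M$ to a rank-one Levi of $G$, so $\Phi|_M$ transports to an honest morphism $\Phi^{M_H}\colon M_H\to\mcM_H$ extending the torus-level definition. Second, it introduces the open $H''\subset H^{reg}$ of elements whose semisimple part kills at most one root; every $h\in H''$ lies in some rank-one Levi $M_H$, and $H\sm H''$ has codimension two. Third, it compactifies the target, $\mcH\hookrightarrow\B{P}(\mcH\oplus F)$: since $H$ is smooth, $\Phi^H_0$ extends to a morphism $\Phi'\colon H'\to\B{P}(\mcH\oplus F)$ off a codimension-two locus, and comparing $\Phi'$ with the $\Phi^{M_H}$'s on $H'\cap H''$ forces $\Phi'(H')\subset\mcH$. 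Now $\Phi'$ is a morphism $H'\to\mcH$ with $H\sm H'$ of codimension two, so by normality it extends to $H$. This rank-one plus codimension-two strategy is the missing idea in your plan.
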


\begin{proof}
To make the proof more structural, we divide it into steps.

{\bf Step 1.} Notice first that we can extend scalars to $F^{sep}$. Indeed,
the existence and the uniqueness of $\Phi^H$ over  $F^{sep}$ implies  that
${}^{\si}\Phi^H=\Phi^H$ for each $\si\in \Gm_F$, thus  $\Phi^H$ is defined over $F$.
From now on we extend scalars to $F^{sep}$, thus assuming that $F$ is
separably closed.

{\bf Step 2. Uniqueness I.}
We claim that for every  maximal torus  $T_H\subset H$ there exists at most  unique
 quasi-logarithm $\Phi^{T_H}:T_H\to\mcT_H$ such that the diagram
\begin{equation} \label{Eq:qlog1}
\CD
        T_H  @>\Phi^{T_H}>>                    \mcT_H\\
        @V{\nu\circ\chi_H}VV                        @V{\nu\circ\chi_{\mcH}}VV\\
        c_G  @>[\Phi]>>                    c_{\mcG}.
\endCD
\end{equation}
is commutative.

Indeed, let $\Phi_1$ and $\Phi_2$ be two quasi-logarithms $T_H\to\mcT_H$ making the diagram \form{qlog1}
commutative. First we will show that there exists $w\in W_G$ such that  $\Phi_2=w\circ \Phi_1$.

By the commutativity of the Lie algebra version of \form{nu}, the composition
 $\nu\circ\chi_{\mcH}:\mcT_H\to  c_{\mcG}$ decomposes as $\mcT_H\isom\mcT\to W_G\bs\mcT= c_{\mcG}$. Hence
 the field of rational functions $F(\mcT_H)$ is a Galois extension of $F(c_{\mcG})$ with the Galois group $W_G$. Since
quasi-logarithms $\Phi_1$ and  $\Phi_2$ are dominant, they induce embeddings of fields
$\Phi_1^*,\Phi_2^*:F(\mcT_H)\hra F(T_H)$, extending $[\Phi]^*:F(c_{\mcG})\hra F(c_G)$.
Hence there exists $w\in W_G$ such that  $\Phi_2^*=\Phi_1^*\circ w^*$, thus  $\Phi_2=w\circ \Phi_1$.

Now since both $\Phi_1$ and $\Phi_2$ are quasi-logarithms, the differential  $dw\in\Aut(\C{G})$
has to be the identity morphism. It follows that $w=1$, thus $\Phi_1=\Phi_2$.

{\bf Step 3. Uniqueness II.}
Let $\Phi:H\to\mcH$ be any quasi-logarithm such that the diagram \form{qlog} is
commutative. Then for every  maximal torus  $T_H\subset H$ the quasi-logarithm
$\Phi^{T_H}:=\Phi^H|_{T_H}:T_H\to\mcT_H$ (use \rr{qlog} (b)) induces a commutative diagram
\form{qlog1}. By Step 2, the restriction  $\Phi^H|_{T_H}$ is unique, hence the uniqueness 
of $\Phi^H$ follows from the fact that the union $\cup T_H\subset H$ is Zariski dense.

{\bf Step 4. Existence: first reduction.}
For each maximal torus  $T_H\subset H$, we choose an embedding
$\iota:T_H\isom T\subset G$ which is  $\C{E}$-stably conjugate to the inclusion
$T_H\hra H$, and consider the composition
\[
\Phi^{T_H}_{\iota}:T_H\overset{\iota}{\lra}T\overset{\Phi|_T}{\lra}\mcT
\overset{d\iota^{-1}}{\lra}\mcT_H\subset\mcH
\]
(use \rr{qlog} (b)). Since $\iota$ is unique up to a $G$-conjugacy and $\Phi$ is $G$-equivariant, the constructed morphism $\Phi^{T_H}_{\iota}$ is independent of $\iota$, so we will denote it
simply by  $\Phi^{T_H}$.

Since $\Phi|_T:T\to \mcT$ is a quasi-logarithm for $T$, $\Phi^{T_H}$ is a
quasi-logarithm for $T_H$. By the construction of  $\Phi^{T_H}$, we have a commutative diagram
\begin{equation} \label{Eq:qlog2}
\CD
        T_H   @>\Phi^{T_H}>>                    \mcT_H\\
        @V{\Int h}VV                        @V{\Ad h}VV\\
h T_H h^{-1}  @>\Phi^{hT_H h^{-1}}>>                    \Ad h(\mcT_H).
\endCD
\end{equation}
 for each $h\in H(F)$. In particular, the morphism $\Phi^{T_H}:T_H\to\mcT_H$ is $N_H(T_H)$-equivariant.

We claim that to prove the existence assertion of the proposition
it will suffice to show the existence of a morphism $\Phi^H:H\to\mcH$
such that $\Phi^H|_{T_H}=\Phi^{T_H}$ for each maximal torus $T_H$.

Indeed, since $\cup T_H\subset H$ is Zariski dense, it follows from \form{qlog2} that such a $\Phi^H$  is automatically $H^{ad}$-equivariant.
Next, conditions $\Phi^H(1)=0$ and $d\Phi^H_1=\Id$ follow from the fact that
each $\Phi^{T_H}$ is a quasi-logarithm for $T_H$ and that the
$\cup \mcT_H$ spans (and is actually dense in) $\mcH$. Finally, to show the commutativity of
\form{qlog} it suffices to show the commutativity of \form{qlog1}. The latter
follows from the commutativity of \form{nu} and its Lie algebra analog.

{\bf Step 5. Rank one case.}
We claim that for every Levi subgroup $M_H\subset H$ of semisimple rank one,
there exists a morphism $\Phi^{M_H}:M_H\to\mcM_H\subset\mcH$ such that
$\Phi^{M_H}|_{T_H}=\Phi^{T_H}$ for each maximal torus $T_H\subset M_H$.

To construct $\Phi^{M_H}$, we choose a maximal torus $T_H\subset M_H$, an embedding
$\iota:T_H\isom T\subset G$ as before, and set $M$ to be the centralizer $M:=G_{\iota(Z(M_H)^0)}$.

Then $M$ is a Levi subgroup of $G$ of semisimple rank one, and $\iota$ induces an isomorphism
$\iota_M:M_H\isom M$, unique up to a conjugacy. To show it, we have to check that
$\iota$ induces a bijection
 $R\,{\check{}}(\iota):R\,\check{}(M_H,T_H)\hra R\,\check{}(M,T)$ on coroots.

Since  $M_H$ is an endoscopic group for $M$,
$\iota$ induces an injection on coroots $R\,\check{}(\iota):R\,\check{}(M_H,T_H)\hra R\,\check{}(M,T)$.
On the other hand, since  $M_H\subset H$ of semisimple rank one, we see that $R\,\check{}(M_H,T_H)$
consists of two elements and $Z(M_H)^0\subset T_H$  is of codimension one. Hence
$\iota(Z(M_H)^0)\subset T$ is of codimension one, therefore $R\,\check{}(M,T)$ consists
of at most two elements. Thus $R\,\check{}(\iota)$ is a bijection.

We claim that the composition
\[
\Phi^{M_H}:M_H\overset{\iota_M}{\lra}M\overset{\Phi|_M}{\lra}\mcM
\overset{d\iota_M^{-1}}{\lra}\mcM_H\subset\mcH
\]
satisfies $\Phi^{M_H}|_{T_H}=\Phi^{T_H}$ for each maximal torus $T'_H\subset M_H$.
For this we have to check that for every maximal torus $T'_H\subset M_H\subset H$, the embedding
$T'_H\hra M_H\overset{\iota_M}{\lra}M\hra G$ is $\C{E}$-stably conjugate to the inclusion
$T'_H\hra M_H\hra H$. To show the latter assertion, we can replace $T'_H$ by its $M_H$-conjugate $T_H$,
in which case the assertion follows from the definition of $\iota_M$.

{\bf Step 6. Existence: second reduction.}
To show the existence of $\Phi^H$ from Step 4, we will show first the existence of
a morphism  $\Phi^H_0:H^{\sr}\to\mcH$ such that
$\Phi^H_0|_{T^{\sr}_H}=\Phi^{T_H}|_{T^{\sr}_H}$ for each $T_H$, where we
write $T^{\sr}_H$ instead of $H^{\sr}\cap T_H$.

Note that the map $[h,t]\mapsto hth^{-1}$ induces an isomorphism
$(H\times T^{\sr}_H)/N_H(T_H)\isom H^{\sr}$. Since
$\Phi^{T_H}:T_H\to\mcT_H$ is $N_H(T_H)$-equivariant (by \form{qlog2}), the morphism
$(h,t)\mapsto \Ad(h)(\Phi(t)):H\times T^{\sr}_H\to\mcH$ descends to a morphism
 $\Phi^H_0:H^{\sr}\to\mcH$ which by \form{qlog2} extends each
$\Phi^{T_H}|_{T^{\sr}_H}$.

It now remains to show that  $\Phi^H_0$ extends to a morphism
$\Phi^H:H\to\mcH$. Indeed, such an extension necessarily satisfy that
$\Phi^H|_{T_H}$ and $\Phi^{T_H}$ have equal restrictions to $T^{\sr}_H$,
thus $\Phi^H|_{T_H}=\Phi^{T_H}$, because  $T^{\sr}_H$ is Zariski dense in $T_H$.

The existence of an extension of $\Phi_0^H$ to $H$ is obvious when $H$ is a torus,
and it was shown in Step 5 when $H$ is of semisimple rank one.
Hence we can assume that the semisimple rank of $H$ is at least two.

Moreover, since  $H$ is smooth, hence normal, it will suffice to show that
$\Phi^H_0$ extends to a morphism  $\Phi':H'\to\mcH$  for some open subset
$H'\supset H^{\sr}$ of $H$ such that  $H\sm H'\subset H$ has codimension two.

{\bf Step 7. Open subset.}
Denote by $T''_H\subset T_H$ the set of all $t\in T_H$ such that there exists at
most one root $\al:T_H\to\B{G}_m$  of $H$ such that  $\al(t)=1$. We set
$c''_H:=W_H\bs T''_H\subset W_H\bs T_H=c_H$, and
$H'':=\chi_H^{-1}(c''_H)\cap H^{reg}\subset H^{reg}$.

The complement $H\sm H''\subset H$ is of codimension two. Indeed, since
$H\sm H^{reg}\subset H$ is of codimension three, it remains to show that
$H^{reg}\sm H''\subset H^{reg}$ is of
codimension two. However $T_H\sm T''_H\subset T_H$ (and hence also $c_H\sm c''_H\subset c_H$)
is of codimension two, so the assertion follows from the fact that
$\chi_H|_{H^{reg}}:H^{reg}\to c_H$ is flat.

We claim that each $h\in H''$ lies in a Levi subgroup $M_H\subset H$ of semisimple rank
one. Indeed, let $h=su$ be the Jordan decomposition of $h$, and set $M_H:=H_s^0$.
Since $h\in H''$, we get that $\chi_H(s)=\chi_H(h)\in c''_H$, hence $M_H\subset H$ is a Levi subgroup of
$H$ of semisimple rank $\leq 1$. Also $h$ lies in $M_H$. Indeed,
$s\in H_s^0$ (because $s$ is semisimple, hence lies in a maximal torus), and
$u\in H_s^0$ (because $u\in  H_s$ and $u$ is unipotent).

{\bf Step 8. Completion of the proof.}
Consider the compactification $\B{P}(\mcH\oplus F)=\mcH\cup\B{P}(\mcH)$ of
$\mcH$. Since $H$ is smooth, the morphism $\Phi^H_0:H^{\sr}\to\mcH\subset \B{P}(\mcH\oplus F)$
extends (uniquely) to a morphism
$\Phi':H'\to\B{P}(\mcH\oplus F)$ for some open subset $H'\supset H^{\sr}$ of $H$ such that
$H\sm H'\subset H$ is of codimension at least two. Since the open $H''$ defined in Step 7 has a
complement of codimension two, we can replace $H'$ by $H'\cap H''$, thus assuming that $H'\subset H''$.

 By Step 6, it suffices to show that
$\Phi'(H')$ is contained in $\mcH\subset \B{P}(\mcH\oplus F)$. In other words, it suffices 
to show that for every $h\in H'\subset H''$, we have $\Phi'(h)\in\mcH$.

By Step 7, $h$ lies in some  Levi subgroup $M_H$ of $H$ of semisimple rank one.
By  Step 5, $\Phi^H_0|_{M_H\cap H^{\sr}}$ extends to a morphism
$\Phi^{M_H}:M_H\to\mcM_H\subset\mcH\subset\B{P}(\mcH\oplus F)$.
Since $M_H\cap H^{\sr}\subset M_H$ is Zariski dense, we conclude that the restrictions
$\Phi^{M_H}|_{M_H\cap H'}$ and $\Phi'|_{M_H\cap H'}$ coincide. Hence $\Phi'(h)=\Phi^{M_H}(h)$
belongs to $\mcH$.
\end{proof}

\subsection{Quasi-logarithms defined over $\mcO$.}

Recall that a quasi-logarithm map $\Phi:G\to\mcG$ is said to be defined over $\C{O}$
if it extends to a morphism
$\Phi_{\vk}:\un{G}_{\vk}\to\C{G}_{\vk}$ for some $\vk\in\mcB(G)$
(compare \cite[Lem. 1.8.7]{KV}). Recall also that $c_G$ and $c_{\mcG}$ have natural $\C{O}$-structures
(see \re{cg}).

\begin{Lem} \label{L:qlo}
A quasi-logarithm $\Phi:G\to\C{G}$ is defined over $\mcO$ if and only if the induced morphism
$[\Phi]:c_G\to c_{\C{G}}$ is defined over $\mcO$.
\end{Lem}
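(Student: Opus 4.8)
The plan is to reduce the statement to the restriction of $\Phi$ to a maximal torus, where the comparison with $[\Phi]$ becomes transparent. I would first observe that both conditions depend only on $\Phi$ after base change to $F^{nr}$: by \rl{defo}, $\Phi$ is defined over $\C{O}$ if and only if $\Phi(\un{G}_{\vk}(\C{O}^{nr}))\subset\un{\mcG}_{\vk}(\C{O}^{nr})$ for a suitable $\vk$, while any $\C{O}^{nr}$-extension of $[\Phi]$ is unique, hence $\Gm_F$-invariant (as $[\Phi]$ is defined over $F$ and $c_{G,\C{O}^{nr}}$ is reduced), hence descends to $\C{O}$. So I may assume the residue field of $F$ is algebraically closed; in particular $G$ splits, and I fix a maximal torus $T\subset G$ together with a vertex $\vk$ lying in its apartment — hyperspecial when proving ``$[\Phi]$ over $\C{O}\Rightarrow\Phi$ over $\C{O}$'', and, for the reverse implication, one for which $\Phi$ extends to $\un{G}_{\vk}$ (which exists by hypothesis). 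Then $\un{T}_{\vk}$ is the canonical $\C{O}$-model of the split torus $T$ (see \re{BT} (d)): a closed subgroup scheme of $\un{G}_{\vk}$ whose Lie algebra $\un{\mcT}_{\vk}$ is a direct summand of $\un{\mcG}_{\vk}$. By \rr{qlog} (b), $\Phi$ restricts to a quasi-logarithm $\Phi|_{T}\colon T\to\mcT$, which is $N_{G}(T)$-equivariant, hence $W_{G}$-equivariant; and since $\chi_{G}|_{T}\colon T\to c_{G}$, $\chi_{\mcG}|_{\mcT}\colon\mcT\to c_{\mcG}$ are the $W_{G}$-quotient maps (Chevalley restriction), which over $\C{O}$ are identified with the restrictions of $\chi_{G,\vk}$, $\chi_{\mcG,\vk}$ with targets $c_{G,\C{O}}$, $c_{\mcG,\C{O}}$ (see \re{cg} and \rl{surj} (a)), the morphism $[\Phi]$ is precisely the one induced by $\Phi|_{T}$ on $W_{G}$-quotients. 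Thus it suffices to show that $\Phi$ is defined over $\C{O}$ if and only if $\Phi|_{T}$ is.

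For ``$\Phi$ over $\C{O}\Rightarrow[\Phi]$ over $\C{O}$'': given an extension $\Phi_{\vk}\colon\un{G}_{\vk}\to\un{\mcG}_{\vk}$ of $\Phi$, its restriction to $\un{T}_{\vk}$ has generic fibre landing in the closed subscheme $\mcT\subset\mcG$, so, $\un{T}_{\vk}$ being reduced, it factors through an extension $\Phi_{T}\colon\un{T}_{\vk}\to\un{\mcT}_{\vk}$ of $\Phi|_{T}$. By density of the generic fibre this extension is again $W_{G}$-equivariant, hence descends to a morphism $c_{G,\C{O}}\to c_{\mcG,\C{O}}$, which extends $[\Phi]$.

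The substantive direction is ``$[\Phi]$ over $\C{O}\Rightarrow\Phi$ over $\C{O}$'', where I take $\vk$ hyperspecial. First I would extend $\Phi|_{T}$ over $\C{O}$: the composite $\chi_{\mcG}\circ\Phi|_{T}=[\Phi]\circ\chi_{G}|_{T}\colon T\to c_{\mcG}$ extends over $\C{O}$ to a morphism $\un{T}_{\vk}\to c_{\mcG,\C{O}}$ by \rl{surj} (a) and the hypothesis. Let $Z$ be the scheme-theoretic closure in $\un{T}_{\vk}\times_{c_{\mcG,\C{O}}}\un{\mcT}_{\vk}$ of the graph of $\Phi|_{T}$: then $Z$ is integral, and $Z\to\un{T}_{\vk}$ is finite (since $\un{\mcT}_{\vk}\to c_{\mcG,\C{O}}$ is finite) and birational; as $\un{T}_{\vk}$ is regular, hence normal, Zariski's main theorem forces $Z\isom\un{T}_{\vk}$, and composing the inverse with the projection to $\un{\mcT}_{\vk}$ gives an extension $\Phi_{T}\colon\un{T}_{\vk}\to\un{\mcT}_{\vk}$ of $\Phi|_{T}$. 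To pass from $T$ to $G$, I would set $S:=\bigcup_{g\in\un{G}_{\vk}(\C{O}^{nr})}g\,\un{T}_{\vk}(\C{O}^{nr})\,g^{-1}\subset\un{G}_{\vk}(\C{O}^{nr})$; for $g\in\un{G}_{\vk}(\C{O}^{nr})$ and $t\in\un{T}_{\vk}(\C{O}^{nr})$ one has $\Phi(gtg^{-1})=\Ad(g)\bigl(\Phi_{T}(t)\bigr)\in\Ad(g)\bigl(\un{\mcT}_{\vk}(\C{O}^{nr})\bigr)\subset\un{\mcG}_{\vk}(\C{O}^{nr})$, because $\un{G}_{\vk}$ acts on $\un{\mcG}_{\vk}$. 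Since the reduction of $S$ is $\bigcup_{\ov{g}}\ov{g}\,\ov{T}\,\ov{g}^{-1}$, which is Zariski dense in the connected group $\ov{G}_{\vk}$ (exactly as in the proof of \rl{surj} (a)), \rl{defo} yields the required extension $\Phi_{\vk}\colon\un{G}_{\vk}\to\un{\mcG}_{\vk}=\mcG_{\vk}$.

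The step I expect to be the main obstacle is the extension of $\Phi|_{T}$ over $\C{O}$ in the last direction. Since no restriction on $p$ is available in this lemma, the quotient map $\mcT\to c_{\mcG}$ need not be \'etale (nor even tamely ramified), so one cannot extend it by a smoothness/Hensel argument of the kind used elsewhere in the paper; the normalization input via Zariski's main theorem is precisely what compensates for this. The remaining points — factoring through closed subschemes, descent of equivariant morphisms along the $W_{G}$-quotients, and the Zariski density of $\bigcup_{\ov{g}}\ov{g}\,\ov{T}\,\ov{g}^{-1}$ — are routine.
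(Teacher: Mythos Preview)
Your proposal is correct and follows essentially the same route as the paper: reduce to $F^{nr}$, fix a split maximal torus $T$ with $\vk$ in its apartment, use the $G^{ad}$-equivariance together with the dense set $S=\bigcup_g gT(\C{O}^{nr})g^{-1}$ and \rl{defo} to reduce ``$\Phi$ over $\C{O}$'' to ``$\Phi|_T$ over $\C{O}$'', and then compare $\Phi|_T$ with $[\Phi]$ through the commutative square \form{qlog3}. The only visible difference is packaging: the paper handles both directions at once with a single hyperspecial $\vk$ (tacitly using \cite[Lem.~1.8.7]{KV} that the notion is independent of $\vk$) and compresses your Zariski-main-theorem step into the one-line observation that the vertical arrows $T\to c_G$, $\mcT\to c_{\mcG}$ are \emph{finite} over $\C{O}$ --- from finiteness plus normality of $T_{\C{O}}$ the equivalence ``$\Phi|_T$ over $\C{O}\Leftrightarrow[\Phi]$ over $\C{O}$'' follows by the standard integrality argument, which is exactly what your schematic-closure/ZMT argument unwinds.
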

\begin{proof}
For the proof we can extend scalars to $F^{nr}$, choose a hyperspecial $\vk\in\mcB(G)$, and
a maximal $F^{nr}$-split torus $T\subset G$ such that $T(\mcO^{nr})\subset G_{\vk}(\mcO^{nr})$.
Then we have a commutative diagram
\begin{equation} \label{Eq:qlog3}
\CD
        T  @>\Phi|_T>>                    \mcT\\
        @V{\chi_G}VV                        @V{\chi_{\mcG}}VV\\
        c_G  @>[\Phi]>>                    c_{\mcG},
\endCD
\end{equation}
whose vertical arrows are finite morphisms over $\C{O}$. It follows that $[\Phi]$ is defined over $\C{O}$ if and only if
$\Phi|_T$ is defined over $\C{O}$. Thus we have to check that $\Phi$ extends to a morphism
$\Phi_{\vk}:\un{G}_{\vk}\to\C{G}_{\vk}$ if and only if $\Phi|_T$ is defined over $\C{O}$.

As in the proof of \rl{surj} (a) we set $S:=\cup_{g\in G_{\vk}(\mcO^{nr})}gT(\mcO^{nr})g^{-1}
\subset G_{\vk}(\mcO^{nr})$. Thus we have to show that $\Phi(S)\subset \mcG_{\vk}(\mcO^{nr})$
if and only if  $\Phi(T(\mcO^{nr}))\subset \mcT(\mcO^{nr})$ (compare \rl{defo}). But this follows
from the fact that $\Phi$ is $G^{ad}$-equivariant.
\end{proof}

\begin{Cor} \label{C:qlo}
If a quasi-logarithm $\Phi:G\to\C{G}$ is defined over $\mcO$ and $H$ is unramified, then the quasi-logarithm
$\Phi^H$ is defined over $\C{O}$ as well.
\end{Cor}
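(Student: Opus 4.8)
The plan is to reduce, via \rl{qlo}, to a statement about a single maximal torus, and then carry the $\mcO$-rationality of $\Phi$ across the embedding $\iota$ used to build $\Phi^H$ in the proof of \rp{qlogend}. Since $H$ is unramified it is quasi-split and splits over $F^{nr}$, so $c_H$ and $c_{\mcH}$ carry the $\mcO$-structures of \re{cg} and \rl{qlo} applies with $H$ in place of $G$. By \rl{qlo}, and more precisely by the argument in its proof (the analog for $H$ of the square \form{qlog3}, whose vertical arrows $\chi_H$ and $\chi_{\mcH}$ are finite over $\mcO$), it suffices to show that $\Phi^H|_{T_H}$ is defined over $\mcO$, where $T_H\subset H$ is the Cartan torus --- an unramified, $F^{nr}$-split maximal torus whose $\mcO^{nr}$-points lie in a hyperspecial parahoric subgroup of $H$. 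As the formation of $\Phi^H$ commutes with the base change $F\to F^{nr}$ (immediate from the uniqueness in \rp{qlogend}), I would then extend scalars to $F^{nr}$, so that $G$, $H$ and $T_H$ become split.

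Next I would choose $\iota$ suitably. Over $F^{nr}$ the $\mcE$-stable conjugacy class of the inclusion $T_H\hra H$ contains an embedding $\iota:T_H\isom T\subset G$; since $T_H$ is $F^{nr}$-split and $H^1(F^{nr},T_H)=0$, this $\mcE$-stable class is a single $G(F^{nr})$-conjugacy class, and its image $T=\iota(T_H)$ is a maximal split torus of $G$. Using the conjugacy of maximal split tori of the split group $G_{F^{nr}}$ and the fact that each such torus lies in a hyperspecial parahoric, I may, after replacing $\iota$ by a $G(F^{nr})$-conjugate, assume $T(\mcO^{nr})\subset G_{\vk}(\mcO^{nr})$ for some hyperspecial $\vk\in\mcB(G)$. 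By the construction of $\Phi^H$ in the proof of \rp{qlogend} (Step 4), $\Phi^H|_{T_H}$ is then the composition
\[
T_H\overset{\iota}{\lra}T\overset{\Phi|_T}{\lra}\mcT\overset{d\iota^{-1}}{\lra}\mcT_H .
\]

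Finally I would check that each of the three maps above is defined over $\mcO^{nr}$. The isomorphism $\iota$ of split tori over $F^{nr}$ is automatically defined over $\mcO^{nr}$ and identifies the canonical $\mcO^{nr}$-model of $T_H$ with that of $T$, namely $\un{T}_{\mcO^{nr}}\subset\un{G}_{\vk}$ (and likewise $d\iota$ on Lie algebras). And $\Phi|_T$ is defined over $\mcO^{nr}$ because $\Phi$ is defined over $\mcO$: by \rl{qlo} the map $[\Phi]:c_G\to c_{\mcG}$ is defined over $\mcO$, so (as in the proof of \rl{qlo}) $\Phi$ extends to $\un{G}_{\vk}\to\mcG_{\vk}$ over $\mcO^{nr}$, while $\Phi|_T$ takes values in $\mcT=\mcG^{T}$ by $G^{ad}$-equivariance and $\mcT_{\mcO^{nr}}$ is a closed subscheme of $\mcG_{\vk,\mcO^{nr}}$. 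Hence $\Phi^H|_{T_H}$ is defined over $\mcO^{nr}$; such an extension being unique (\rl{defo}), it is $\Gm_F$-invariant, hence defined over $\mcO$, as required.

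The step I expect to demand the most care is the choice of $\iota$: one must verify that over $F^{nr}$ the $\mcE$-stable conjugacy class of $T_H\hra H$ really does meet the family of maximal split tori of $G$ contained in hyperspecial parahorics, and keep track of the passage from the chosen hyperspecial point of $G$ to an extension of $\Phi$ over it. The other points --- base-change compatibility of $\Phi^H$, $\mcO^{nr}$-rationality of isomorphisms of split tori, and the descent from $\mcO^{nr}$ to $\mcO$ --- are routine.
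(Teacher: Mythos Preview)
Your argument is correct, but the paper's proof is considerably shorter and more conceptual. Instead of unpacking the explicit construction of $\Phi^H|_{T_H}$ through an embedding $\iota:T_H\hookrightarrow G$, the paper works entirely at the level of Chevalley spaces. It applies \rl{qlo} to $G$ to get that $[\Phi]:c_G\to c_{\mcG}$ is defined over $\mcO$; then, since both maps $\nu:c_H\to c_G$ and $\nu:c_{\mcH}\to c_{\mcG}$ are finite $\mcO$-morphisms, the commutative square \form{qlog} (which is the \emph{defining} property of $\Phi^H$) forces $[\Phi^H]:c_H\to c_{\mcH}$ to be defined over $\mcO$ as well; a second application of \rl{qlo}, now to $H$, finishes the proof. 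The point you are implicitly reproving by hand --- that the restriction of $\Phi^H$ to a split maximal torus of $H$ is $\mcO$-integral --- is exactly what the finiteness of $\nu$ over $\mcO$ encodes once one passes to invariants. Your approach has the virtue of being completely explicit and of avoiding the (mild) commutative-algebra step hidden in ``finite $\mcO$-morphism $\Rightarrow$ the lift is over $\mcO$'', but it costs you the choice of $\iota$, the base change to $F^{nr}$, and the descent back to $\mcO$, none of which the paper needs.
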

\begin{proof}
Since  $\Phi:G\to\C{G}$ is defined over $\mcO$, we get that $[\Phi]:c_G\to c_{\C{G}}$ is defined over $\mcO$. Next both $\nu:c_H\to c_G$ and $\nu:c_{\mcH}\to c_{\mcG}$ are finite $\mcO$-morphisms, therefore we deduce from the commutativity of \form{qlog} that  $[\Phi^H]:c_H\to c_{\C{H}}$ is defined over $\mcO$. Hence
$\Phi^H:H\to\C{H}$ is defined over $\mcO$ by \rl{qlo}.
\end{proof}


\begin{Emp} \label{E:tu}
{\bf Notation.} Recall that an element $u\in G(F)$ is called {\em topologically unipotent}, if the
sequence $\{u^{p^n}\}_n$ converges to $1$. We denote by $G(F)_{tu}\subset G(F)$ the set of
topologically unipotent elements of $G(F)$.
\end{Emp}

\begin{Rem} \label{R:tu}
When $p$ does not divide the order of $Z(G^{sc})$, our notion of $G(F)_{tu}$ coincides with that
of \cite[1.8.14]{KV} (use \rl{tu} (b) below). In particular, this happens when $p$ does not divide the
order of $W_G$.
\end{Rem}

\begin{Prop} \label{P:qlogst}
Assume that a quasi-logarithm $\Phi:G\to\C{G}$ is defined over $\mcO$.

(a) For each maximal torus $T\subset G$, each root $\al\in X^*(T)$ of $G$ (over $\ov{F}$),
and each $t\in T(F)\cap G^{\sr}(F)_{tu} $, we have
$\frac{d\al(\Phi(t))}{\al(t)-1}\in 1+\frak{m}_{\ov{F}}$.

(b) Assume that $p$ does not divide the order of $Z(G^{sc})$. Then  for each
$\C{E}$-stably conjugate elements $\gm_H\in H^{G-\sr}(F)_{tu}$ and $\gm\in G^{\sr}(F)_{tu}$,
we have  $\Phi^H(\gm_H)\in \mcH^{G-\sr}(F)$ and  $\Phi(\gm)\in \mcG^{\sr}(F)$. Moreover,
elements $\Phi^H(\gm_H)$ and $\Phi(\gm)$ are $\C{E}$-stably conjugate.

(c) If, in addition, $p$ does not divide the order of $W_G$, then we have an equality
$\Dt(\gm_H,\gm)=\Dt(\Phi^H(\gm_H),\Phi(\gm))$.
\end{Prop}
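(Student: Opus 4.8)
The plan is to prove (c) by checking that each of the five Langlands--Shelstad factors $\Dt_I,\Dt_{II},\Dt_{III_1},\Dt_{III_2},\Dt_{IV}$ takes the same value on the group pair $(\gm_H,\gm)$ and on the Lie algebra pair $(\Phi^H(\gm_H),\Phi(\gm))$, once a single common choice of auxiliary data has been fixed. The case $G=T$ a torus is trivial (empty root systems), so I assume $G$ is not a torus. First I would record, using part (b) and the construction of $\Phi^H$, that the two transfer factors are built from the same torus and the same embeddings. Indeed $\Phi|_T$ maps $T:=G_\gm$ into $\mcT$ (\rr{qlog} (b)), and since $\Phi(\gm)$ is strongly regular we get $G_{\Phi(\gm)}=T$; likewise $H_{\Phi^H(\gm_H)}=H_{\gm_H}=:T_H$. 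Moreover, by \rp{qlogend} (Step 4) the restriction $\Phi^H|_{T_H}$ equals $d\iota^{-1}\circ(\Phi|_T)\circ\iota$ for \emph{any} embedding $\iota\colon T_H\isom T\subset G$ that is $\C{E}$-stably conjugate to the inclusion $T_H\hra H$; taking $\iota$ to be the canonical isomorphism attached to the $\C{E}$-stable conjugacy of $\gm_H$ and $\gm$ (so $\iota(\gm_H)=\gm$) we obtain $d\iota(\Phi^H(\gm_H))=\Phi(\gm)$, so this same $\iota$ is also the canonical isomorphism attached to the Lie algebra pair. In particular $\ka_{\gm,\gm_H}=\ka_{\Phi(\gm),\Phi^H(\gm_H)}$ in $\wh{T}^{\Gm}$. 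I would then fix, once and for all, an embedding $\iota^*\colon T\hra G^*$ stably conjugate to the inclusion, a splitting $Spl_{G^*}$, an $a$-data for $T$, and a $\chi$-data for $T$; since $G$ splits over $F^{nr}$ the torus $T$ is unramified and every $F_\al/F_{\pm\al}$ is unramified, so the $\chi$-data may be taken unramified.

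With these common choices the five comparisons go as follows. For $\Dt_I=\lan\inv(a),\ka\ran$: the class $\inv(a)\in H^1(F,T)$ depends only on $T$, on $\iota^*$, on $Spl_{G^*}$ and on the $a$-data, not on $\gm$, so together with the equality of $\ka$'s above this gives equality of the $\Dt_I$'s. For $\Dt_{III_1}$: both equal $\inv(\fa_0^{(3)},\iota^{(3)})$ with $\iota^{(3)}=(\iota,\iota^*,\iota_H)$, and this triple is literally the same in the two cases. For $\Dt_{III_2}$: on the Lie algebra side it is $1$ by definition; on the group side, $\gm$ is topologically unipotent and $T=G_\gm$ is unramified, hence in particular tamely ramified, so $\Dt_{III_2}(\gm_H,\gm)=1$ by the vanishing of $\Dt_{III_2}$ on tamely ramified topologically unipotent elements proved in Section~8. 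Finally $\Dt_{II}$ and $\Dt_{IV}$, which run over ($\Gm$-orbits on, resp.\ all of) $R(G,T)\sm R(H,T_H)$: by part (a) applied to $t=\gm\in T(F)$, for each such root $\al$ the element $r_\al:=d\al(\Phi(\gm))/(\al(\gm)-1)$ lies in $(1+\fm_{\ov F})\cap F_\al\m=1+\fm_{F_\al}$; hence $|d\al(\Phi(\gm))|=|\al(\gm)-1|$ for every $\al$, giving equality of the $\Dt_{IV}$'s, and $\chi_\al(r_\al)=1$ since $\chi_\al$ is unramified, so the quotient of the $\Dt_{II}$'s equals $\prod_\al\chi_\al(r_\al)=1$. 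Multiplying the five factors yields $\Dt(\gm_H,\gm)=\Dt(\Phi^H(\gm_H),\Phi(\gm))$.

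I expect the only step needing genuine input beyond bookkeeping and parts (a)--(b) to be the triviality of $\Dt_{III_2}(\gm_H,\gm)$: this is precisely where the topological unipotence of $\gm$ enters, through the Langlands pairing $\lan\inv(\chi_\al),\gm\ran$, and it rests on the analysis of that pairing on tamely ramified topologically unipotent elements carried out in Section~8 (ultimately on the compatibility of the local Langlands correspondence for tori with field extensions). The remaining comparisons amount to choosing the auxiliary data coherently and reading off part (a); the one subtlety to watch there is that $\Phi$ and $\Phi^H$ are intertwined by one and the same isomorphism $\iota\colon T_H\isom T$, which I would extract from the proof of part (b) together with the defining formula for $\Phi^H|_{T_H}$ in \rp{qlogend}.
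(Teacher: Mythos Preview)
Your approach is essentially the paper's: fix common auxiliary data, observe $G_{\Phi(\gm)}=G_\gm$ and $H_{\Phi^H(\gm_H)}=H_{\gm_H}$, and compare the five factors one by one, using part (a) for $\Dt_{II}$ and $\Dt_{IV}$ and \rco{tame} for $\Dt_{III_2}$.

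There is one genuine slip. You assert that ``since $G$ splits over $F^{nr}$ the torus $T=G_\gm$ is unramified'', and then take unramified $\chi$-data. This implication is false: even for split $G$ (say $SL_2$ over $\B{Q}_p$, $p$ odd) there are topologically unipotent strongly regular $\gm$ whose centralizer is the norm-one torus of a \emph{ramified} quadratic extension. What is true is that $T$ is \emph{tamely} ramified: since $p\nmid|W_G|$ and $G$ splits over $F^{nr}$, the cocycle twisting a split maximal torus to $T$ takes values in $W_G$, so $T$ splits over an extension of degree dividing $|W_G|$, hence prime to $p$. This is also why $p\neq 2$ (any nontrivial $W_G$ has even order), so each quadratic extension $F_\al/F_{\pm\al}$ is tame and tamely ramified $\chi$-data exist. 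The paper accordingly takes the $\chi$-data tamely ramified rather than unramified.

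The repair is painless and your argument goes through unchanged once ``unramified'' is replaced by ``tamely ramified'' in the two places you use it: for $\Dt_{III_2}$ this is exactly the hypothesis of \rco{tame}, and for $\Dt_{II}$ a tamely ramified $\chi_\al$ still kills $r_\al\in 1+\fm_{F_\al}$.
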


\begin{proof}
(a) Extending scalars, we can assume that $T$ is split.
To show the assertion, we will show that the rational function
$f(x):=d\al\circ\Phi(x)/(\al(x)-1)$ on $T$ belongs to $\mcO[T]$ and
satisfies $f(1)=1$. If so, then $f(t)\in  1+\frak{m}$.
Indeed, since $t\in T(\mcO)$ and has reduction $\ov{t}=1$, we get that
$f(t)\in\mcO$ and that $\ov{f(t)}=\ov{f}(\ov{t})=\ov{f}(1)=\ov{f(1)}=1$.

We set $S:=(\Ker\al)^0$ and $M:=G_S$.
Since the restriction $\Phi|_M$ is a quasi-logarithm for $M$ (see \rr{qlog} (b)),
we can replace $G$ by $M$, thus assuming that $G$ is of semisimple rank one and that
$\Ker\al=Z(G)$.

Consider regular functions $g:=d\al\circ\Phi|_T$ and $h(t):=\al(t)-1$ on $T$.
Note that $h$ vanishes on
$\Ker\al=Z(G)\subset T$ (since $\Phi(Z(G))\subset Z(\mcG)\in\Ker d\al$) and that
$\Ker\al\subset T$ is a reduced subscheme (since the characteristic of $F$ is different from
two).  Hence the rational function $f=h/g$ is regular. Moreover, since  $d\Phi_1=\Id$, we get
$dh|_1=dg|_1(\neq 0)$, hence  $f(1)=1$.

Finally, since  $g,h\in \C{O}[T]$, $f=g/h\in F[T]$ and $\ov{h}\in\fq[\ov{T}]$ is not identically zero,
we get that  $f\in\mcO[T]$.

(b) Recall that $\Phi$ induces a $G^{ad}$-equivariant homeomorphism between $G(F)_{tu}$
and the set of topologically nilpotent elements of
$\mcG(F)$ (see \cite[Prop 1.8.16]{KV} and \rr{tu}). In particular, for every $\gm\in G^{\sr}(F)_{tu}$
we get that $G_{\Phi(\gm)}=G_{\gm}$ is a maximal torus, hence
$\Phi(\gm)\in\mcG^{\sr}(F)$.

If $\gm_H\in H^{G-\sr}(F)$ is
$\C{E}$-stably conjugate to $\gm$, then $\nu\circ\chi_{H}(\gm_H)=\chi_{G}(\gm)$. On the other hand,
$\chi_{\mcG}(\Phi(\gm))=[\Phi]\circ\chi_{G}(\gm)$ by the definition of $[\Phi]$, while
\[
\nu\circ \chi_{\mcH}(\Phi^H(\gm_H))=\nu\circ[\Phi^H]\circ\chi_{H}(\gm_H)=
[\Phi]\circ\nu\circ\chi_{H}(\gm_H)
\]
by the definition of $[\Phi^H]$ and the commutativity of \form{qlog}. This shows that
$\chi_{\mcG}(\Phi(\gm))=\nu\circ \chi_{\mcH}(\Phi^H(\gm_H))$, that is,
$\Phi^H(\gm_H)$ and $\Phi(\gm)$ are $\C{E}$-stably conjugate.

(c) will be proven in \re{qlogtr} below.
\end{proof}

\section{Reduction formula.}

\subsection{Centralizers of semisimple elements.}

\begin{Emp} \label{E:ssel}
{\bf Stable conjugacy of semisimple elements.}
(a) In the notation of \re{stconj}, two semisimple elements
$s\in G(F)$ and
 $s'\in G'(F)$ are called {\em stably conjugate}, if there exists $g\in G'(F^{sep})$
such that $g\varphi(s)g^{-1}=s'$ and the map $h\mapsto g\varphi(h)g^{-1}$ is an inner twisting
$\varphi_{s}:G_s^0\to {G'}^0_{s'}$. In this case we say that an inner twisting $\varphi_{s}$
is {\em compatible} with $\varphi$ (compare \cite[$\S$3]{Ko1}).

(b) Note that  semisimple elements $s\in G(F)$ and $s'\in G'(F)$ are stably conjugate, if there
exist stably conjugate embeddings of maximal tori $\fa:T\hra G$ and $\fa':T\hra G'$ and an element 
$t\in T(F)$ such that $s=\fa(t)$ and $s'=\fa'(t)$. Indeed, any  $g\in G'(F^{sep})$ such that
$g\varphi(\fa)g^{-1}=\fa'$ satisfies the property of (a).

 In particular, every semisimple $s\in G(F)$ has a
stable conjugate $s^*\in G^*(F)$ (by \re{stconj} (c)).

(c) Every semisimple $s\in G(F)$ has a stable conjugate $s^*\in G^*(F)$ such that
$(G^*)^0_{s^*}$ is quasi-split (by (b) and \cite[Lem 3.3]{Ko1}). In this case, $(G^*)^0_{s^*}$
is a quasi-split inner form of $G_s^0$. In such a situation we say that an element $s^*$ is
{\em quasi-split}.

\end{Emp}
\begin{Emp} \label{E:endcent}
{\bf Endoscopic triples for centralizers.}
(a) In the notation of \re{estconj}, two semisimple elements $s_H\in H(F)$ and
$s\in G(F)$ are called {\em $\C{E}$-stably conjugate}, if there exists a stable conjugate
$s^*\in G^*(F)$ of $s$, a pair of $\C{E}$-stably conjugate embedding of maximal tori
$\fa_H:T\hra H$ and $\fa^*:T\hra G^*$, and an element $t\in T(F)$ such that $s_H=\fa_H(t)$ and $s^*=\fa^*(t)$
(compare \cite[1.2]{LS2}).

(b) In the situation of (a), one can form an endoscopic triple
$\C{E}_s=((H_{s_H}^0)^*,\ka_{s},\eta_{s})$ for $G_s^0$ (depending on the inner twisting
$\varphi_s:G_s^0\to (G^*)^0_{s^*}$), where

$\bullet$  $\ka_s\in Z(\wh{H^0_{s_H}})^{\Gm}$ is the image of $\ka_{\fa_H}\in \wh{T}^{\Gm}$ under the
embedding $\wh{\fa_H}:\wh{T}\hra\wh{H^0_{s_H}}$ (defined up to a conjugacy), dual to $\fa_H$;

$\bullet$ $\eta_{s}$ is an embedding
$\wh{H^0_{s_H}}\hra \wh{G^0_s}$  such that the composition
$\wh{T}\overset{\wh{\fa_H}}{\lra}\wh{H^0_{s_H}}\overset{\eta_s}{\lra}\wh{G^0_s}
\overset{\wh{\varphi_s}^{-1}}{\lra}\wh{(G^*)^0_{s^*}}$ is conjugate to
$\wh{\fa}^*:\wh{T}\hra \wh{(G^*)^0_{s^*}}$.

(c) Assume that $s_H$ is quasi-split. Then embeddings of maximal tori
$\fa_H:T\hra H_{s_H}^0$ and $\fa:T\hra G_s^0$ are $\C{E}_s$-stably conjugate if and only
if $\fa_H(\fa^{-1}(s))=s_H$ and the corresponding embeddings of maximal tori
$\fa_H:T\hra H_{s_H}^0\hra H$ and $\fa:T\hra G_s^0\hra G$ are $\C{E}$-stably conjugate
(compare \cite[1.4]{LS2}).

(d) Assume that in the situation of (a) there exists an embedding of a maximal torus $\fa:T\hra G$
such that $\fa(t)=s$. Then $s$ is an $\C{E}$-stable conjugate of $s_H$ (by \re{ssel} (b)), and we can
take $\eta_s$ in (b) such that  the composition
$\wh{T}\overset{\wh{\fa_H}}{\lra}\wh{H^0_{s_H}}\overset{\eta_s}{\lra}\wh{G^0_s}$ is conjugate to
$\wh{\fa}:\wh{T}\hra \wh{G^0_{s}}$. In particular, the equivalence class of an
endoscopic triple $\C{E}_s$ from (b) depends only on $\C{E}$, $\fa_H$ and $\fa$.
\end{Emp}

\begin{Rem}
The existence of $\eta_{s}$ in \re{endcent} (b) follows from the fact that embedding $\eta_0$ identifies the set of roots
$R(\wh{H^0_{s_H}},\wh{T})=\{\al\in R(\wh{H},\wh{T})\,|\,\wh{\al}(t)=1\}$ with a subset
of $R(\wh{G^0_{s}},\wh{T})=\{\al\in R(\wh{G},\wh{T})\,|\,\wh{\al}(t)=1\}$.
\end{Rem}

\begin{Lem} \label{L:sscent}
Let $s\in G(F)$ be a semisimple element, and let $u\in G_s(F)$ be such that $\gm:=su\in G^{\sr}(F)$.
Then

(a) $u\in (G^0_s)^{\sr}(F)$ and $G_u\cap G_s=G_{\gm}=G_u\cap G^0_s$.

(b) Assume that $s'\in G'(F)$ is a stable conjugate of $s$, and
$u'\in (G'^0_{s'})^{\sr}(F)$ is a stable conjugate of $u$ (for some inner twisting
$G_s^0\to {G'}^0_{s'}$ compatible with $\varphi$). Then $\gm':=s'u'$ is a stable conjugate of $\gm$.

(c) Assume that $s_H\in H(F)$ is a quasi-split
$\C{E}$-stable conjugate of $s$, and
$u_H\in (H^0_{s_H})^{\sr}(F)$ is an $\C{E}_s$-stable conjugate of $u$ (for some endoscopic triple
$\C{E}_s$ as in \re{endcent} (b)). Then $\gm_H:=s_H u_H$ is an $\C{E}$- stable conjugate of $\gm$.
\end{Lem}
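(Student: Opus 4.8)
The plan is to settle parts (a) and (b) by unwinding the relevant definitions, and to reduce part (c) to the corresponding statement for tori via the endoscopic triple $\C{E}_s$ of \re{endcent}; the only step that I expect to require real care is showing that the element $\gm_H$ produced in (c) is again strongly regular. \emph{For (a)}: since $u\in G_s(F)$, the semisimple element $s$ commutes with $\gm=su$, so $s\in G_{\gm}$; as $\gm\in G^{\sr}(F)$, the group $T:=G_{\gm}$ is a maximal torus, and hence $u=s^{-1}\gm\in T$ as well. Being a connected commutative subgroup containing $s$ and $u$, $T$ lies in $G^0_s$ and in $G_u$, so
\[
T\subseteq G_u\cap G^0_s\subseteq G_u\cap G_s\subseteq G_{\gm}=T,
\]
which forces all four groups to coincide. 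This yields $G_u\cap G_s=G_{\gm}=G_u\cap G^0_s$; and since $u\in T(F)\subseteq G^0_s(F)$ has centralizer the torus $T$ inside $G^0_s$, we get $u\in(G^0_s)^{\sr}(F)$.

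\emph{For (b)} I would, using \re{ssel}, pick $g\in G'(F^{sep})$ with $g\varphi(s)g^{-1}=s'$ realizing the compatible inner twisting $\varphi_s\colon G^0_s\to{G'}^0_{s'}$, $h\mapsto g\varphi(h)g^{-1}$, and then (part (a) places $u$ in $(G^0_s)^{\sr}(F)$) pick $g'\in{G'}^0_{s'}(F^{sep})$ with $g'\varphi_s(u)g'^{-1}=u'$. Then $g'g\in G'(F^{sep})$ conjugates $\varphi(\gm)$ to $\gm'$: indeed $(g'g)\varphi(u)(g'g)^{-1}=g'\varphi_s(u)g'^{-1}=u'$, while $(g'g)\varphi(s)(g'g)^{-1}=g's'g'^{-1}=s'$ because $g'\in{G'}^0_{s'}$ centralizes $s'$; hence $(g'g)\varphi(\gm)(g'g)^{-1}=s'u'=\gm'$. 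As $\gm$ is strongly regular, so is $\gm'$ (its centralizer is an $F^{sep}$-conjugate of $\varphi(G_{\gm})$), and $\gm'=s'u'\in G'(F)$; thus $\gm$ and $\gm'$ are stably conjugate by \re{stconj}.

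\emph{For (c)}, using \re{endcent} (a), (b) I would fix a stable conjugate $s^*\in G^*(F)$ of $s$, embeddings $\fa_H\colon T_0\hra H$, $\fa^*\colon T_0\hra G^*$ and $t\in T_0(F)$ realizing the $\C{E}$-stable conjugacy of $s_H$ and $s$, and the resulting endoscopic triple $\C{E}_s$ for $G^0_s$. By part (a), $u\in(G^0_s)^{\sr}(F)$, the torus $T:=G_{\gm}=G_u\cap G^0_s$ is a maximal torus of $G$, and $s\in T$. The $\C{E}_s$-stable conjugacy of $u_H$ and $u$ then gives (by \re{gstr} (a), applied to $\C{E}_s$) the unique $F$-isomorphism $\iota\colon S\isom T\subseteq G^0_s$, where $S:=(H^0_{s_H})_{u_H}$, with $\iota(u_H)=u$ and with $\iota$ (viewed as an embedding $S\hra G^0_s$) $\C{E}_s$-stably conjugate to the inclusion $S\hra H^0_{s_H}$. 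Feeding $\iota$ and this inclusion into \re{endcent} (c) — which applies since $s\in T=\iota(S)$ — I obtain $\iota^{-1}(s)=s_H$ and that the induced embeddings $S\hra H^0_{s_H}\hra H$ and $S\overset{\iota}{\hra}G^0_s\hra G$ are $\C{E}$-stably conjugate. Since $\iota$ is a homomorphism of tori, $\gm_H=s_Hu_H=\iota^{-1}(s)\iota^{-1}(u)=\iota^{-1}(\gm)\in S(F)$, i.e. $\iota(\gm_H)=\gm$. Granting $\gm_H\in H^{\sr}(F)$ with $H_{\gm_H}=S$, the isomorphism $\iota\colon H_{\gm_H}\isom G_{\gm}\subseteq G$ is then $\C{E}$-stably conjugate to the inclusion $H_{\gm_H}\hra H$ and carries $\gm_H$ to $\gm$, so by \re{gstr} (a) the elements $\gm_H$ and $\gm$ are $\C{E}$-stably conjugate, as desired.

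The one genuine difficulty — the step I expect to be the main obstacle — is the strong regularity of $\gm_H$, i.e. the equality $H_{\gm_H}=S$. It does \emph{not} suffice to note that no root of $H$ relative to $S$ is trivial at $\gm_H$: this gives only $H^0_{\gm_H}=S$ and says nothing about the component group, which can be nontrivial when $H$ admits disconnected semisimple centralizers. My plan here is to use that, by the definition of an endoscopic group, $\iota$ identifies $R(H,S)$ with a subset of $R(G,T)$ and hence $W(H,S):=N_H(S)/S$ with a subgroup of $W(G,T):=N_G(T)/T$, compatibly with $\iota$. Since $\gm$ is strongly regular, $N_G(T)\cap G_{\gm}=T$ and no root of $G$ relative to $T$ is trivial at $\gm$; transporting these facts along $\iota$ (using $\iota(\gm_H)=\gm$) shows that no root of $H$ relative to $S$ is trivial at $\gm_H$ — so $H^0_{\gm_H}=S$ and hence $H_{\gm_H}\subseteq N_H(S)$ — and that no nontrivial element of $W(H,S)$ fixes $\gm_H$ — so $N_H(S)\cap H_{\gm_H}=S$. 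Combining, $H_{\gm_H}=S$ is a torus and $\gm_H\in H^{\sr}(F)$, which completes (c).
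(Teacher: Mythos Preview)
Your proof is correct and follows the same route as the paper's: parts (a) and (b) are identical in substance (you are just more explicit about the two-step conjugation $g'g$ in (b)), and in (c) the paper works with $\iota_H:=\iota^{-1}:G_{\gm}\hra H^0_{s_H}$ and invokes \re{endcent}(c) exactly as you do. The only difference is that the paper does not spell out the strong regularity of $\gm_H$ in $H$, treating it as understood; your Weyl-group argument (using $R(H,S)\subset R(G,T)$ and $W(H,S)\subset W(G,T)$ under $\iota$) correctly fills in this detail.
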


\begin{proof}
(a) Since $G_u\cap G_s\subset G_{\gm}$ and $G_{\gm}\subset G$ is a maximal torus of dimension
$\rk G=\rk G^0_s$, we get that
$G_u\cap G_s\subset G_s^0$ is a maximal torus. Hence $u\in (G^0_s)^{\sr}(F)$, and
$G_u\cap G_s=G_{\gm}=G_u\cap G^0_s$.

(b) By definition, there exists  $g\in G'(F^{sep})$ such that $g\varphi(s)g^{-1}=s'$ and
$g\varphi(u)g^{-1}=u'$. Hence  $g\varphi(\gm)g^{-1}=\gm'$.

(c) By our assumptions, there exists an embedding $\iota_H:G_{\gm}\hra H^0_{s_H}$  which is
$\mcE_s$-stably conjugate to the inclusion $G_{\gm}\hra G^0_s$ such that $\iota_H(u)=u_H$.
Then  $\iota_H(s)=s_H$, and the composition $\iota_H:G_{\gm}\hra H^0_{s_H}\hra H$ is $\mcE$-stably
conjugate to the inclusion $G_{\gm}\hra G^0_s\hra G$ (by \re{endcent} (c)).
Hence $\gm_H=\iota_H(\gm)$ is $\C{E}$-stably conjugate to $\gm$.
\end{proof}
\subsection{ Topological Jordan decomposition}
\begin{Emp} \label{E:tjd}
{\bf Notation.}
An element $\gm\in G(F)$ is called {\em compact}, if the closure of the cyclic subgroup
$\ov{\lan\gm\ran}\subset G(F)$ is compact. Recall that for every compact $\gm\in G(F)$
there exists unique decomposition $\gm =su=us$ such that $s=\gm_s$ is of finite order prime to $p$,
and $u=\gm_u$ is topologically unipotent (see, for example, \cite{Ka2}, \cite{Wa0} or \cite{Ha}).
Moreover, both $s$ and $u$ lie in $\ov{\lan\gm\ran}$. In particular,  $s, u\in G_{\vk}$ for each $\gm\in  G_{\vk}$.

This decomposition is called the {\em topological Jordan decomposition} and will be sometimes abbreviated as $TJD$.
\end{Emp}

\begin{Lem} \label{L:cent}
Let $s\in G_{\vk}$ be a semisimple element of finite order prime to $p$. Then

(a) the connected  centralizer $G_s^0$ splits over $F^{nr}$, $\vk$ lies in $\C{B}(G_s^0)=\C{B}(G)^s$,
and the parahoric subgroup $(G_s^0)_{\vk}$ equal $G_{\vk}\cap G_s^0(F)$.

(b) If $\vk\in \C{B}(G)$ is a hyperspecial vertex, then $G_s^0$ is quasi-split, and $\vk$ is a
hyperspecial vertex of $\C{B}(G_s^0)$. Moreover, every stable conjugate $s'\in G_{\vk}$ of $s$
is  $G_{\vk}$-conjugate to $s$.
\end{Lem}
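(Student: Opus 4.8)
\medskip

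\noindent\textbf{Proof proposal.}
The plan is to reduce both parts to properties of scheme-theoretic centralizers over $\C{O}$ and to Lang's theorem, the hypothesis that the order $n$ of $s$ is prime to $p$ being used in the form: $\lan s\ran\cong\B{Z}/n$ is linearly reductive over $\C{O}$ (it is \'etale, and $F(\mu_n)/F$ is unramified because $\mu_n\subset F^{nr}$). Consequently the centralizer $\un{G}_{\vk,s}\subset\un{G}_{\vk}$ of $s$ in the Bruhat--Tits group scheme is smooth over $\C{O}$, being the fixed-point subscheme of a linearly reductive group acting on a smooth scheme. Let $\un{G}_{\vk,s}^0$ be its fibrewise identity component, a smooth affine $\C{O}$-group scheme with generic fibre $G_s^0$ whose special fibre is the connected centralizer of the reduction $\ov{s}\in\ov{G}_{\vk}(\fq)$, again semisimple of order prime to $p$.

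\medskip

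\noindent
For part (a) I would proceed in three steps. \emph{Splitting.} Over $\C{O}^{nr}$ the special fibre of $\un{G}_{\vk,s}^0$ is smooth, connected and affine, and its maximal torus has dimension equal to the rank of $G$ (a maximal torus of $\ov{G}_{\vk}$ passing through $\ov{s}$ lies inside it); lifting such a maximal torus along the smooth structure morphism gives an $\C{O}^{nr}$-torus $\mathcal{T}\subset\un{G}_{\vk,s}^0\otimes\C{O}^{nr}$ whose generic fibre is a maximal torus of $G_s^0\otimes F^{nr}$, and a torus admitting a model over the strictly Henselian ring $\C{O}^{nr}$ is split; hence $G_s^0$ splits over $F^{nr}$. \emph{Buildings.} Since $s\in G_{\vk}$ fixes $\vk$, we have $\vk\in\C{B}(G)^s$; the building $\C{B}(G_s^0)$ embeds into $\C{B}(G)$ as $G_s^0$ has maximal rank, with image inside $\C{B}(G)^s$ since $s\in Z(G_s^0)$ acts trivially on $\C{B}(G_s^0)$; and the fixed-point theorem for automorphisms of order prime to $p$, which I will quote from \cite{Ha}, gives the equality $\C{B}(G)^s=\C{B}(G_s^0)$ together with the identification of the Bruhat--Tits group scheme of $G_s^0$ at $\vk$ with $\un{G}_{\vk,s}^0$. \emph{Parahorics.} Combining these, $\vk\in\C{B}(G_s^0)$ and, taking $\C{O}$-points, $(G_s^0)_{\vk}=\un{G}_{\vk,s}^0(\C{O})=G_{\vk}\cap G_s^0(F)$.

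\medskip

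\noindent
For part (b), suppose $\vk$ is hyperspecial, so $\un{G}_{\vk}$ is a reductive $\C{O}$-group scheme and $\ov{G}_{\vk}=L_{\vk}$. Then $\un{G}_{\vk,s}^0$ is reductive over $\C{O}$; its special fibre $(L_{\vk})_{\ov{s}}^0$ is connected reductive over $\fq$, hence quasi-split by Lang's theorem, and since a reductive $\C{O}$-group scheme is quasi-split whenever its special fibre is (Borel subgroups lift), $G_s^0$ is quasi-split and $\vk$ is a hyperspecial vertex of $\C{B}(G_s^0)$. For the last assertion, let $s'\in G_{\vk}$ be a stable conjugate of $s$. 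Stable conjugacy forces $\chi_G(s)=\chi_G(s')$, so by \rl{surj} and the identification $\ov{c}_G\cong c_{L_{\vk}}$ of \re{cg} we get $\chi_{L_{\vk}}(\ov{s})=\chi_{L_{\vk}}(\ov{s'})$; since $\ov{s},\ov{s'}$ are semisimple they are conjugate over $\fqbar$, and because the $1$-cocycle realizing the stable conjugacy of $s$ and $s'$ takes values in the connected centralizer $G_s^0$ (this is part of the definition in \re{ssel}), its reduction lies in $(L_{\vk})_{\ov{s}}^0$ and so is a coboundary by Lang's theorem; hence $\ov{s}$ and $\ov{s'}$ are $L_{\vk}(\fq)$-conjugate. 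Lifting a conjugating element to $h\in G_{\vk}$ and replacing $s$ by $hsh^{-1}$, I may assume $\ov{s}=\ov{s'}$; then the transporter $\{g\in\un{G}_{\vk}\mid gsg^{-1}=s'\}$ is a smooth $\C{O}$-scheme (a torsor under $\un{G}_{\vk,s}$) containing $1$ in its special fibre, so Hensel's lemma yields an $\C{O}$-point $g\in G_{\vk}$ with $gsg^{-1}=s'$.

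\medskip

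\noindent
The step I expect to require the most care is the last one in (b): one must ensure that passing to the reduction modulo $\vk$ does not merge distinct rational conjugacy classes, i.e.\ that the stable-conjugacy invariant of $(s,s')$ really lives in the \emph{connected} centralizer $G_s^0$ --- where Lang's theorem applies --- rather than only in $G_s$, whose component group need not be trivial; this is precisely where the definition of stable conjugacy of semisimple elements in \re{ssel} is used in an essential way. The remaining inputs --- smoothness, and in the hyperspecial case reductivity, of $\un{G}_{\vk,s}^0$ over $\C{O}$, and the compatibility of Bruhat--Tits theory with centralizers of elements of order prime to $p$ --- are standard and I would cite them (e.g.\ from \cite{Ha} and the references therein) rather than reprove them.
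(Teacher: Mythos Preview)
The paper does not prove this lemma at all: its proof reads in full ``(a) was proven in \cite[Lem 2.3.8]{KV}, (b) was proven in \cite[7.1]{Ko3}.'' So you are supplying far more than the paper does, and your outline for (a) and for the first two claims of (b) follows the standard route (linear reductivity of $\lan s\ran$ giving smoothness of the scheme-theoretic centralizer over $\C{O}$; lifting of tori and Borels over a henselian base; the prime-to-$p$ fixed-point theorem for buildings). That part is fine and is essentially what one finds in the cited references.

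There is, however, a genuine gap in your argument for the last assertion of (b). You write that ``the $1$-cocycle realizing the stable conjugacy of $s$ and $s'$ takes values in $G_s^0$, \emph{its reduction} lies in $(L_{\vk})_{\ov{s}}^0$''. But that cocycle is $\si\mapsto g^{-1}\,{}^{\si}g$ for some $g\in G(F^{sep})$, and there is no reduction map $G_s^0(F^{sep})\to (L_{\vk})_{\ov{s}}^0(\fqbar)$; nothing forces $g$ to lie in $\un{G}_{\vk}(\C{O}^{nr})$. You yourself flag this step as the delicate one, but the sketch does not actually bridge it. The fix is to reverse the order of the argument: first use Hensel's lemma on the smooth transporter scheme to produce $g_0\in\un{G}_{\vk}(\C{O}^{nr})$ with $g_0 s g_0^{-1}=s'$ (this uses only that $\ov{s},\ov{s}'$ are geometrically conjugate). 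The cocycle of $g_0$ then lies in $\un{G}_{\vk,s}(\C{O}^{nr})$, and its image in $H^1(F,G_s)$ coincides with the stable-conjugacy class, since both classify the same $G_s$-torsor (the transporter). Because the latter comes from $H^1(F,G_s^0)$ by the definition in \re{ssel}, its image in $H^1(F,\pi_0(G_s))$ vanishes. Now use that the component group $\pi_0(\un{G}_{\vk,s})$ agrees with $\pi_0(G_s)$ (both are computed from the same root datum) to conclude that the cocycle of $g_0$ can be modified into $\un{G}_{\vk,s}^0(\C{O}^{nr})$, where Lang's theorem kills it. This gives $g\in G_{\vk}$ with $gsg^{-1}=s'$ directly, and your subsequent Hensel step becomes unnecessary. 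This is in substance Kottwitz's argument in \cite[7.1]{Ko3}.
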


\begin{proof}

(a) was proven in \cite[Lem 2.3.8]{KV}, (b) was proven in \cite[7.1]{Ko3}.
\end{proof}

\begin{Lem} \label{L:tjd}
Let $\gm\in G^{sr}(F)$ be a compact element with TJD  $\gm=su$.

(a) We have $u\in (G^0_s)^{\sr}(F)$, and $G_u\cap G_s=G_{\gm}=G_u\cap G^0_s$.

(b) If $\gm'\in G'^{\sr}(F)$ (resp.  $\gm_H\in H^{\sr}(F)$) is a stable (resp. an $\C{E}$-stable)
conjugate of $\gm$, then $\gm'$ (resp. $\gm_H$) is compact.

(c) Let $\gm'\in G'(F)$ be a stable conjugate of $\gm$ with TJD  $\gm'=s'u'$.
Then $s'$ is a stable conjugate of $s$, $\varphi$ gives rise to an
inner twisting $G^0_s\to G^0_{s'}$ (defined up to an equivalence), and
$u'\in (G'^0_{s'})^{\sr}(F)$ is a stable conjugate of $u$.

(d) Let $\gm_H\in H(F)$ be an $\C{E}$-stable conjugate of $\gm$ with TJD $\gm_H=s_Hu_H$. Then
$s_H$ is an $\C{E}$-stable conjugate of $s$, $\C{E}$ gives rise to an endoscopic triple
$\C{E}_s$ for $G^0_s$ (defined up to an equivalence), and
$u_H\in (H^0_{s_H})^{\sr}(F)$ is an $\C{E}_s$-stable conjugate of
$u\in (G^0_{s})^{\sr}(F)$.

(e) Let  $\vk^*\in\mcB(G^*)$ be a hyperspecial vertex. Then there exists an embedding
$\iota^*:G_{\gm}\hra G^*$ stably conjugate to the inclusion $G_{\gm}\hra G$ such that
$\iota(s)\in G^*_{\vk^*}$.
\end{Lem}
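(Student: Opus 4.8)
Part (a) is immediate from \rl{sscent}(a) applied to the topological Jordan decomposition $\gm=su$: here $s$ is semisimple, $u$ commutes with $s$ (both lie in $\ov{\lan\gm\ran}$) so $u\in G_s(F)$, and $su=\gm\in G^{\sr}(F)$. For (b) I would use that a strongly regular semisimple element of $G(F)$ is compact if and only if it lies in the maximal bounded subgroup of the torus which is its centralizer, equivalently if and only if $\val(\chi(\gm))=0$ for every character $\chi$ of that torus over $\ov F$ --- a condition preserved by every isomorphism of tori over $\ov F$. A stable conjugate $\gm'$ of $\gm$ comes with $g\in G'(F^{sep})$ satisfying $g\varphi(\gm)g^{-1}=\gm'$, and then $\Int(g)\circ\varphi$ restricts to an isomorphism of tori $G_\gm\isom G'_{\gm'}$ over $F^{sep}$ carrying $\gm$ to $\gm'$; an $\C{E}$-stable conjugate $\gm_H$ comes, by the definition recalled in \re{gstr}, with an isomorphism $H_{\gm_H}\isom G_\gm$ over $F$ carrying $\gm_H$ to $\gm$. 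In either case the transported element is again compact.

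The real content is in (c), and the key observation is that the topological Jordan decomposition is transported by isomorphisms of tori defined over $F$, so it suffices to descend the conjugacy to such an isomorphism. Choose $g\in G'(F^{sep})$ with $g\varphi(\gm)g^{-1}=\gm'$ and set $\psi:=\Int(g)\circ\varphi\colon G_{F^{sep}}\isom G'_{F^{sep}}$, so $\psi(\gm)=\gm'$. A direct cocycle computation, using $\gm'\in G'(F)$ and that $\varphi$ is an inner twisting, shows that for every $\si\in\Gm$ one has ${}^\si\psi=\Int(d_\si)\circ\psi$ with $d_\si\in G'_{\gm'}(F^{sep})$. Since $s,u\in G_\gm$ and $\psi(G_\gm)=G'_{\gm'}$, and the torus $G'_{\gm'}$ is abelian, we get ${}^\si(\psi|_{G_\gm})=\Int(d_\si)|_{G'_{\gm'}}\circ(\psi|_{G_\gm})=\psi|_{G_\gm}$, so $\psi|_{G_\gm}$ is defined over $F$. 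Consequently $\psi(s),\psi(u)\in G'(F)$ commute, $\psi(s)$ has finite order prime to $p$ and $\psi(u)$ is topologically unipotent, so by uniqueness of the topological Jordan decomposition $s'=\psi(s)$ and $u'=\psi(u)$. The same elements $d_\si$, which lie in $({G'}^0_{s'})(F^{sep})$, show that $\psi$ restricts to an inner twisting $G^0_s\isom {G'}^0_{s'}$ compatible with $\varphi$ in the sense of \re{ssel}(a); hence $s'$ is a stable conjugate of $s$, and $u'=\psi(u)$ --- whose centralizer in ${G'}^0_{s'}$ equals $G'_{\gm'}$ by \rl{sscent}(a) --- is a stable conjugate of $u\in(G^0_s)^{\sr}(F)$.

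Part (d) runs along the same lines, now carrying the endoscopic data. By the definition of $\C{E}$-stable conjugacy there is an isomorphism $\iota\colon H_{\gm_H}\isom G_\gm$ over $F$, $\C{E}$-stably conjugate to the inclusion $H_{\gm_H}\hra H$, with $\iota(\gm_H)=\gm$; being an isomorphism of tori over $F$ it carries the topological Jordan decomposition of $\gm_H$ to that of $\gm$, so $\iota(s_H)=s$ and $\iota(u_H)=u$. Reading off the data defining $\C{E}$-stable conjugacy with the torus $T:=H_{\gm_H}$ and the element $s_H\in T(F)$ --- and applying (c) to a stable conjugate $\gm^*$ of $\gm$ in $G^*$ to see that the $\C{E}$-transfer of $s_H$ into $G^*$ is a stable conjugate of $s$ --- shows that $s_H$ is an $\C{E}$-stable conjugate of $s$; since moreover $\iota$ composed with the inclusion $G_\gm\hra G$ gives an embedding $T\hra G$ carrying $s_H$ to $s$, \re{endcent}(b),(d) produce the endoscopic triple $\C{E}_s$ for $G^0_s$, well defined up to equivalence. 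Finally, by \rl{sscent}(a) the centralizers $(H^0_{s_H})_{u_H}$ and $(G^0_s)_u$ coincide with $H_{\gm_H}$ and $G_\gm$, so the same $\iota$ --- which carries $s_H$ to $s$ and induces on the ambient groups the given $\C{E}$-stably conjugate embeddings --- is, by \re{endcent}(c), an $\C{E}_s$-stable conjugacy between $u_H\in(H^0_{s_H})^{\sr}(F)$ and $u\in(G^0_s)^{\sr}(F)$.

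For (e), choose a stable conjugate $s^*\in G^*(F)$ of $s$. Since $s=\gm_s$ has finite order prime to $p$, the finite cyclic group $\lan s^*\ran$ has order prime to $p$ and is therefore conjugate into the hyperspecial maximal compact subgroup $G^*_{\vk^*}$ (a standard fact of Bruhat--Tits theory), so we may assume $s^*\in G^*_{\vk^*}$; then $M^*:=(G^*)^0_{s^*}$ is quasi-split by \rl{cent}(b), and by \re{ssel} it is a quasi-split inner form of $M:=G^0_s$ via $\varphi$ together with the chosen stable conjugacy $s\sim s^*$. By (a), $u=\gm_u\in M^{\sr}(F)$ with $M_u=G_\gm$, so by \re{stconj}(c) the inclusion $G_\gm=M_u\hra M$ has a stable conjugate $\iota^*\colon G_\gm\hra M^*$ for this quasi-split inner twist; composing with $M^*\hra G^*$ gives a stable conjugate $\iota^*\colon G_\gm\hra G^*$ of $G_\gm\hra G$, and since $s$ lies in the centre of $M=G^0_s$ the embedding $\iota^*$ necessarily carries $s$ to the central element $s^*$, so $\iota^*(s)=s^*\in G^*_{\vk^*}$, as required.

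The step I expect to be the main obstacle is the bookkeeping underlying (c) and (d): the cocycle computation that makes the conjugating isomorphism descend to $F$ on the centralizer torus --- which is exactly where one uses that $G_\gm$ is abelian --- together with, in (d), the verification that the single isomorphism $\iota$ realizes at once the $\C{E}$-stable conjugacy of $(s_H,s)$, the passage to $\C{E}_s$, and the $\C{E}_s$-stable conjugacy of $(u_H,u)$, compatibly with the characters $\ka$ and the normalizations of \re{endcent}. In (e) the only genuinely non-formal input is the Bruhat--Tits fact quoted above; granting it, the reduction to \re{stconj}(c) via $M^*$ is straightforward.
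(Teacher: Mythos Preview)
Your arguments for (a)--(d) are correct and essentially coincide with the paper's, though you are somewhat more explicit in (c) (the paper simply invokes uniqueness of the TJD over $F^{sep}$ rather than first descending $\psi|_{G_\gm}$ to $F$); both routes work.

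The gap is in (e). The ``standard fact of Bruhat--Tits theory'' you invoke---that a finite cyclic group of order prime to $p$ in $G^*(F)$ is $G^*(F)$-conjugate into a \emph{given} hyperspecial parahoric $G^*_{\vk^*}$---is false. For a concrete counterexample take $G^*=\SL{2}$ over $\B{Q}_p$ with $p\equiv 3\pmod 4$ and $s^*=\begin{pmatrix}0&1\\-1&0\end{pmatrix}$. This element has order $4$ (prime to $p$), and its reduction acts on $\B{F}_p^2$ with no eigenline, so it fixes the standard hyperspecial vertex and no neighbouring vertex; hence its unique fixed point in the tree is that vertex, and it is not $\SL{2}(\B{Q}_p)$-conjugate into the hyperspecial parahoric attached to a vertex of the other type. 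What \emph{is} true, and what the lemma asserts, is that some \emph{stable} conjugate of $s$ lies in $G^*_{\vk^*}$; in the example, $\begin{pmatrix}0&p\\-p^{-1}&0\end{pmatrix}$ does the job.

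The paper secures this by a different mechanism that avoids the false step: since $G^0_s$ splits over $F^{nr}$ (\rl{cent}(a)), one can pick an \emph{unramified} maximal torus $\fa:T\hra G^0_s$ (say via \rl{maxtor}(b)); then \rl{maxtor}(c) produces a stably conjugate embedding $\fa^*:T\hra G^*$ with $\vk^*\in\fa^*(\C{B}(T))$, and one sets $s^*:=\fa^*(\fa^{-1}(s))\in G^*_{\vk^*}$. From there your remaining steps---$(G^*)^0_{s^*}$ quasi-split by \rl{cent}(b), then \re{stconj}(c) inside this centralizer, and $\iota^*(s)=s^*$ because $s$ is central in $G^0_s$---go through unchanged. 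So your overall strategy for (e) is salvageable, but the crucial placement of $s^*$ in $G^*_{\vk^*}$ must come from \rl{maxtor}(c) rather than from the claimed Bruhat--Tits fact.
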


\begin{proof}
(a) follows from \rl{sscent} (a).

(b) By definition, there exists an isomorphism $\iota:G_{\gm}\isom G'_{\gm'}$
(resp. $\iota_H:G_{\gm}\isom H_{\gm_H}$) such that  $\iota(\gm)=\gm'$ (resp.  $\iota_H(\gm)=\gm_H$).
Hence $\gm'$ (resp. $\gm_H$) is compact.

(c) Let $g\in G'(F^{sep})$ be such that $\gm'=g\varphi(\gm)g^{-1}$. Then by the uniqueness of
the TJD, we have $s'=g\varphi(s)g^{-1}$ and  $u'=g\varphi(u)g^{-1}$. Hence the map
$h\mapsto g\varphi(h)g^{-1}$ defines an inner twisting $\varphi_s:G_s^0\to G'^0_{s'}$ and maps
$u$ to $u'$. Moreover, the equivalence class of $\varphi_s$ is independent of $g$.

(d) By our assumption, there exists an embedding $\iota_H:G_{\gm}\hra H$ which is
$\C{E}$-stably conjugate to the inclusion $G_{\gm}\hra G$ and $\iota_H(\gm)=\gm_H$. By the uniqueness of the TJD, we get $\iota_H(s)=s_H$ and $\iota_H(u)=u_H$. Now the assertion follows from remarks \re{endcent} (c),(d).

(e) By \rl{cent} (a), $G_s^0$ splits over $F^{nr}$. Thus there exists an embedding of
a maximal torus $\fa:T\hra G_s^0$ such that $T$ is unramified (use for example \rl{maxtor} (b)).
By \rl{maxtor} (c), there exists an embedding $\fa^*:T\hra G^*$ stably conjugate to
$\fa:T\hra G^0_s\hra G$ such that $\vk^*\in \fa^*(\mcB(T))$.
Set $s^*:=\fa^*(\fa^{-1}(s))\in G^*_{\vk^*}$. Then $(G^*_{s^*})^0$ is a quasi-split inner form
of $G^0_{s}$ by \rl{cent} (b), hence there exists an embedding $\iota^*:G_{\gm}\hra  (G^*_{s^*})^0$
which is stably conjugate to the inclusion $G_{\gm}\hra G^0_{s}$. Then the composition
$\iota^*:G_{\gm}\hra (G^*_{s^*})^0\hra G^*$ is stably conjugate to the inclusion $G_{\gm}\hra G$,
and $\iota^*(s)=s^*$.
\end{proof}

\begin{Emp} \label{E:eetc}
{\bf Extended endoscopic triples for $G^0_s$.}
Let $\vk$ be an element of $\C{B}(G)$, $\ov{\fa}:\ov{T}\hra L_{\vk}$ an embedding of
a maximal torus, $s\in G_{\vk}$ a semisimple element of finite order prime to $p$,
$\ov{\fb}:\ov{T}\hra (L_{\vk})^0_{\ov{s}}$ an embedding of a maximal torus such that  the embedding
$\ov{\fb}:\ov{T}\hra (L_{\vk})^0_{\ov{s}}\hra L_{\vk}$ is conjugate to $\ov{\fa}$. As in \rn{stconj} (b), 
we fix a hyperspecial vertex $\vk^*\in\C{B}(G^*)$.

(a) Let $\fb:T\hra G_s^0$ be an embedding of a maximal torus which corresponds to $\ov{\fb}$ by
\rl{maxtor} (b), and let $\fb^*:T\hra G^*$ be an embedding which is stably conjugate to the composition
$\fb:T\hra G_s^0\hra G$ such that $\vk^*\in \fb^*(\C{B}(T))$ (use  \rl{maxtor} (c)). Set
$s^*:=\fb^*(\fb^{-1}(s))\in G^*(F)$. Then $s^*\in G^*_{\vk^*}$ is a stable conjugate of $s$,
 hence $(G^*_{s^*})^0$ is a quasi-split inner form of $G_s^0$, and  $\vk^*$ is a hyperspecial
vertex of $\C{B}((G^*_{s^*})^0\subset \C{B}(G)$ (use \re{ssel} (b) and \rl{cent} (b)).

(b) Assume that we are in the situation of \re{unr}. Let $s_H\in H_{\vk^H}$ be a stable conjugate
of $s\in G_{\vk}$, and fix a corresponding endoscopic triple $\C{E}_s$ for $G_s^0$ with
unramified endoscopic group $H_{s_H}$ (see \re{endcent} (b) and use \rl{cent} (b)).

When the endoscopic triple $\C{E}_s$ is consistent, we fix an extended endoscopic
triple $\wt{\C{E}}_s=(\C{E}_s,\eta_s,Spl_s,\fa_s^{(3)})$  for $G_s^0$ satisfying assumptions of \re{unr},
and denote by  $\phi^{H_{s_H}^0}_{\ov{\fb},\theta}$ the corresponding linear combination of
Deligne--Lusztig functions.
We also note that the embedding $\fb^*:T\hra (G^*_{s^*})^0$,
constructed in (a), coincides with the corresponding embedding from \re{unr} (c).

\end{Emp}

\begin{Rem} \label{R:ass}
If the group $G$ and the embedding $\ov{\fa}$ satisfy the assumptions of \rt{main}, then the group  $G^0_s$
and the embedding $\ov{\fb}$ as in \re{eetc} satisfy them as well. Indeed,  since $W_{G^0_s}\subset W_{G}$, we get
$p$ does not divide $|W_{G^0_s}|$. Also if $d\ov{\fa}^*(\ov{t})\in\ov{\mcG}^{\sr}(\fq)$, then
$d\ov{\fb}^*(\ov{t})\in\ov{\mcG}^{\sr}(\fq)\cap \ov{\mcG}_s(\fq)\subset\ov{\mcG}^{\sr}_s(\fq)$.
\end{Rem}

The following assertion will be proven in \re{pftrfac}.
\begin{Prop} \label{P:trfac}
In the notation of \re{eetc} (b), let $\gm_H\in H^{G-\sr}(F)$ and $\gm\in G^{\sr}(F)$ be two compact $\C{E}$-stable
conjugate elements with topological Jordan decompositions $\gm_H=s_H u_H$ and $\gm=s u$. Then
$\Dt^{\wt{\C{E}}}(\gm_H,\gm)=\inv_{\C{E}}(\fa_0^{(3)},\fa_{s}^{(3)})\Dt^{\wt{\C{E}}_s}(u_H,u)$.
\end{Prop}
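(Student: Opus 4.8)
The plan is to decompose the transfer factor $\Dt^{\wt{\C{E}}}(\gm_H,\gm)$ into its Langlands--Shelstad pieces $\Dt_I,\Dt_{II},\Dt_{III_1},\Dt_{III_2},\Dt_{IV}$ and compare each with the corresponding piece of $\Dt^{\wt{\C{E}}_s}(u_H,u)$, using the topological Jordan decompositions $\gm=su$, $\gm_H=s_Hu_H$ together with the fact (\rl{tjd} (d)) that $s_H$ is an $\C{E}$-stable conjugate of $s$ and $u_H$ is an $\C{E}_s$-stable conjugate of $u$. First I would fix the data: let $T:=G_\gm=G_u\cap G_s^0$ be the common centralizer (\rl{sscent} (a), \rl{tjd} (a)), so that $T$ is simultaneously a maximal torus of $G$ and of $G_s^0$; by \rl{cent} (a), $T$ is unramified and $\vk$ lies in $\C{B}(G_s^0)\subset\C{B}(G)$. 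Using \rl{tjd} (e) I would choose the embedding $\iota^*:T\hra G^*$ so that $\iota^*(s)=s^*\in G^*_{\vk^*}$, which makes $s^*$ match the element constructed in \re{eetc} (a). The key observation is that the root system $R(G_s^0,T)$ is exactly $\{\al\in R(G,T)\mid\al(s)=1\}$, and $R(G,T)\sm R(G_s^0,T)$ consists of roots $\al$ with $\al(s)\neq 1$, hence $\al(\gm)-1=\al(s)\al(u)-1$ is a unit times something invertible; I expect $\Dt_{II},\Dt_{IV}$, and the ``new'' part of $\Dt_I$ contributed by those extra roots to be trivial on the compact torus $T$ (after choosing $a$-data defined over $\C{O}$, so the relevant $H^1$ vanishes by Lang, as in \rl{trfaclie}), so that the product of these three pieces for $\gm$ versus $u$ differs only by the piece already accounted for in $\wt{\C{E}}_s$.

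The next step is the $\Dt_{III_1}$ piece, which is the source of the factor $\inv_{\C{E}}(\fa_0^{(3)},\fa_s^{(3)})$. By definition $\Dt_{III_1}^{\wt{\C{E}}}(\gm_H,\gm)=\inv_{\C{E}}(\fa_0^{(3)},\iota^{(3)})$ where $\iota^{(3)}=(\iota,\iota^*,\iota_H)$ is the triple of $\C{E}$-stably conjugate embeddings of $T$ attached to $\gm$, while $\Dt_{III_1}^{\wt{\C{E}}_s}(u_H,u)=\inv_{\C{E}_s}(\fa_s^{(3)},\iota_s^{(3)})$ with $\iota_s^{(3)}$ the analogous triple for $u$ inside $G_s^0$. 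By \re{inv} (vi), the invariant for the triple $\iota_s^{(3)}$ computed inside $G_s^0$ equals the invariant of the induced triple $\iota^{(3)}$ computed inside $G$, i.e. $\inv_{\C{E}_s}(\fa_s^{(3)},\iota_s^{(3)})=\inv_{\C{E}}(\fa_s^{(3)},\iota^{(3)})$ (here one uses \rl{cent} (b) to know $(G^*_{s^*})^0$ is quasi-split so that $\fa_s^{(3)}$ and $\iota^{(3)}$ really are triples of $\C{E}_s$-stably conjugate embeddings, and that the $\iota^*$ chosen via \rl{tjd} (e) is compatible). Then transitivity (\re{inv} (iv)) gives
\[
\inv_{\C{E}}(\fa_0^{(3)},\iota^{(3)})=\inv_{\C{E}}(\fa_0^{(3)},\fa_s^{(3)})\,\inv_{\C{E}}(\fa_s^{(3)},\iota^{(3)})=\inv_{\C{E}}(\fa_0^{(3)},\fa_s^{(3)})\,\Dt_{III_1}^{\wt{\C{E}}_s}(u_H,u),
\]
which produces the claimed correction factor once the remaining pieces are shown to agree.

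The remaining piece is $\Dt_{III_2}$, the Langlands pairing $\lan\inv(\chi_{\al}),\gm\ran$, taken over roots in $R(G,T)\sm R(H,T_H)$. Here I would split the index set $R(G,T)\sm R(H,T_H)$ into $R(G_s^0,T)\sm R(H_{s_H}^0,T_H)$ (which contributes exactly $\Dt_{III_2}^{\wt{\C{E}}_s}(u_H,u)$, since $\al(\gm)$ and $\al(u)$ differ by $\al(s)$, and for $\al$ with $\al(s)=1$ one has $\al(\gm)=\al(u)$) and its complement, consisting of roots $\al$ with $\al(s)\neq1$. On the complement I would invoke the vanishing statement announced in the Introduction and proved in Section 8 --- that $\Dt_{III_2}$ is trivial on tamely ramified topologically unipotent elements --- applied to $u$, together with the fact that for $\al(s)\neq1$ the relevant character $\chi_\al$ can be taken unramified (compare \re{chidata}, using $p\nmid|W_G|$) so these terms drop out. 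Thus $\Dt_{III_2}^{\wt{\C{E}}}(\gm_H,\gm)=\Dt_{III_2}^{\wt{\C{E}}_s}(u_H,u)$. Multiplying the five comparisons together yields $\Dt^{\wt{\C{E}}}(\gm_H,\gm)=\inv_{\C{E}}(\fa_0^{(3)},\fa_s^{(3)})\,\Dt^{\wt{\C{E}}_s}(u_H,u)$. I expect the main obstacle to be the $\Dt_{III_2}$ comparison: it requires the delicate input about vanishing of $\Dt_{III_2}$ on tamely ramified topologically unipotent elements and about choosing the $\chi$-data compatibly for $G$ and $G_s^0$ (so that their restrictions to the roots in $R(G_s^0,T)$ agree), and getting the bookkeeping of $\chi$-data and of the embedding $\eta$ versus $\eta_s$ exactly right is the technical heart of the argument; the $\Dt_{III_1}$ step, by contrast, is essentially a formal consequence of \re{inv} (vi) and transitivity.
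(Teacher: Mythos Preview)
Your handling of $\Dt_{III_1}$ via transitivity and \re{inv}~(vi) is essentially correct and matches the paper's first step: using \re{proptf}~(e) and \re{inv}~(iv),(vi) to strip off the factor $\inv_{\C{E}}(\fa_0^{(3)},\fa_s^{(3)})$ and reduce to the quasi-split case. But your direct factor-by-factor comparison of the remaining pieces has a genuine gap that the paper avoids by a different route.

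The problem is with $\Dt_I$ and $\Dt_{III_2}$. The factor $\Dt_I$ depends on the splitting of the quasi-split inner form (here $Spl_{G^*}$ versus the splitting $Spl_s$ of $(G^0_s)^*$ chosen in \re{eetc}~(b)), and $\Dt_{III_2}$ depends on the embedding $\eta$ versus $\eta_s$. These data are chosen independently; there is no reason a priori for the two $\Dt_I$'s or the two $\Dt_{III_2}$'s to match. Your claim that ``the new part of $\Dt_I$ contributed by the extra roots is trivial by Lang'' is not quite meaningful: $\Dt_I=\lan\inv(a),\ka\ran$ does not decompose as a product over roots, and even if it vanishes for $\wt{\C{E}}$ (which would require $\iota^*$ to land in $G^*_{\vk^*}$ integrally, not just $\iota^*(s)\in G^*_{\vk^*}$ as given by \rl{tjd}~(e)), you would separately need it to vanish for $\wt{\C{E}}_s$ with the independently chosen $Spl_s$. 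Similarly, $\Dt_{III_2}$ is a single Langlands pairing $\lan\inv(\chi),\gm\ran$, not a product indexed by roots, and relating $\inv(\chi)$ for $(G,\eta)$ to $\inv(\chi)$ for $(G_s^0,\eta_s)$ requires knowing how $\eta_s$ is positioned relative to $\eta$.

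The paper supplies exactly this missing compatibility by invoking the Langlands--Shelstad descent theorem \cite[Thm~1.6.A]{LS2} (following \cite[Lem.~8.1]{Ha}): after reducing to $G$ quasi-split with $\vk$ hyperspecial, and after using \rl{center} to replace $\Dt^{\wt{\C{E}}_s}(u_H,u)$ by $\Dt^{\wt{\C{E}}_s}(\gm_H,\gm)$ (since $s\in Z(G_s^0)(\mcO)$), the LS2 descent yields $\Dt^{\wt{\C{E}}}(\gm'_H,\gm')=\Dt^{\wt{\C{E}}_s}(\gm'_H,\gm')$ for all $\gm'\in T(F)$ sufficiently close to $s$. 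This single equality packages all the splitting/embedding compatibilities. The paper then compares $\Dt^{\wt{\C{E}}}(\gm_H,\gm)/\Dt^{\wt{\C{E}}}(\gm'_H,\gm')$ with the analogous ratio for $\wt{\C{E}}_s$: in each such ratio the splitting- and $\eta$-dependent parts of $\Dt_I$ and $\Dt_{III_2}$ cancel, and what remains is handled by \rco{tame} applied to the topologically unipotent element $\gm/\gm'$ and by the observation $\frac{\al(\gm)-1}{\al(\gm')-1}\in 1+\fm_{F_\al}$ for $\al\notin R(G_s,T)$. So the ingredient you identified as ``the main obstacle'' (\rco{tame}) is indeed used, but only after the LS2 descent has absorbed the genuinely hard comparison of $Spl_{G^*}$ with $Spl_s$ and of $\eta$ with $\eta_s$; your plan omits that step.
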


\subsection{Application to orbital integrals}

\begin{Lem} \label{L:redkap}
For each $\gm\in G^{\sr}(F)\cap G_{\vk}$ with the topological Jordan decomposition $\gm=s u$ and every
$\ka\in\wh{G_{\gm}}^{\Gm}$ we have an equality
\begin{equation} \label{Eq:redka}
O^{\ka}_{\gm}(\phi^G_{\ov{\fa},\theta})=\sum_{s'\in G_{\vk}}\lan\inv(\gm,\gm'),\ka\ran
\sum_{\ov{\fb}:\ov{T}\hra (L_{\vk})_{\ov{s}'}^0}
\theta(\ov{\fb}^{-1}(\ov{s}'))O^{\ka}_{u'}(\phi^{G_{s'}^0}_{\ov{\fb},\theta}),
\end{equation}
where

$\bullet$ $s'$ runs over a set of representatives of the $G_{\vk}$-equivalence classes, stably conjugate
to $s$, such that there exists a stable conjugate $\gm'\in G_{\vk}$ of $\gm$ with the
topological Jordan decomposition $\gm'=s'u'$;

$\bullet$ $\ov{\fb}$ runs over a set of representatives of the conjugacy classes of embeddings
$\ov{T}\hra (L_{\vk})_{\ov{s}'}^0$ such that the composition
$\ov{\fb}:\ov{T}\hra (L_{\vk})_{\ov{s}'}^0\hra L_{\vk}$ is $L_{\vk}(\fq)$-conjugate to $\ov{\fa}$.
\end{Lem}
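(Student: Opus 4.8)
The plan is to unfold both sides down to the combinatorics of $L_{\vk}(\fq)$ and of its centralizers of semisimple elements, and to match them by means of the Deligne--Lusztig character (reduction) formula \cite{DL}.

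\textbf{Step 1 (unfolding $O^\ka_{\gm}$).} By the definition of the $\ka$-orbital integral, $O^\ka_{\gm}(\phi^G_{\ov{\fa},\theta})=\sum_{\gm'}\lan\inv(\gm,\gm'),\ka\ran\,O_{\gm'}(\phi^G_{\ov{\fa},\theta})$, the sum over $G(F)$-conjugacy classes $\gm'$ stably conjugate to $\gm$. Since $\phi^G_{\ov{\fa},\theta}$ is supported on $G_{\vk}$, only the $\gm'$ whose $G(F)$-conjugacy class meets $G_{\vk}$ contribute, and by \rl{tjd} (b),(c) such a $\gm'$ is compact, with topological Jordan decomposition $\gm'=s'u'$, where $s'$ is stably conjugate to $s$ and $u'\in (G^0_{s'})^{\sr}(F)$ is stably conjugate to $u$, and $s',u'$ may be chosen in $G_{\vk}$.
\textbf{Step 2 (unfolding $O_{\gm'}$).} The function $\phi^G_{\ov{\fa},\theta}$ is supported on $G_{\vk}$ and is invariant under $G_{\vk}$-conjugation, being inflated from the class function $\ov{\phi}_{\ov{\fa},\theta}$ on $L_{\vk}(\fq)=G_{\vk}/G_{\vk^+}$; hence, decomposing $\{g\in G(F):g^{-1}\gm'g\in G_{\vk}\}$ into $(G_{\gm'}(F),G_{\vk})$-double cosets and using $dg(G_{\vk})=1$ gives
\[
O_{\gm'}(\phi^G_{\ov{\fa},\theta})=\sum_{\delta}v_{\delta}\,\ov{\phi}_{\ov{\fa},\theta}(\ov{\delta}),\qquad v_{\delta}:=dg_{\delta}\bigl(G_{\delta}(F)\cap G_{\vk}\bigr)^{-1},
\]
where $\delta$ runs over the $G_{\vk}$-conjugacy classes of elements of $G_{\vk}$ lying in the $G(F)$-conjugacy class of $\gm'$. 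As $\lan\inv(\gm,\cdot),\ka\ran$ depends only on the $G(F)$-conjugacy class, combining with Step 1 yields $O^\ka_{\gm}(\phi^G_{\ov{\fa},\theta})=\sum_{\delta}\lan\inv(\gm,\delta),\ka\ran\,v_{\delta}\,\ov{\phi}_{\ov{\fa},\theta}(\ov{\delta})$, with $\delta$ now running over all $G_{\vk}$-conjugacy classes of elements $\delta\in G_{\vk}$ stably conjugate to $\gm$.

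\textbf{Step 3 (Deligne--Lusztig reduction).} Write the topological Jordan decomposition $\delta=s_{\delta}u_{\delta}$; then $\ov{s}_{\delta}$ is semisimple of order prime to $p$, $\ov{u}_{\delta}$ is unipotent (its reduction has $p$-power order), and $\ov{\delta}=\ov{s}_{\delta}\,\ov{u}_{\delta}$ is the Jordan decomposition in $L_{\vk}(\fq)$. By \rl{cent} (a), $\vk\in\mcB(G^0_{s_{\delta}})$, $(G^0_{s_{\delta}})_{\vk}=G_{\vk}\cap G^0_{s_{\delta}}(F)$ with reductive quotient $(L_{\vk})^0_{\ov{s}_{\delta}}$, and $u_{\delta}\in (G^0_{s_{\delta}})_{\vk}$ reduces to $\ov{u}_{\delta}$. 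The Deligne--Lusztig character formula \cite{DL}, together with the fact that the restriction of a Deligne--Lusztig character to unipotent elements is the corresponding Green function (in particular independent of $\theta$), gives
\[
\ov{\phi}_{\ov{\fa},\theta}(\ov{\delta})=\frac{1}{|(L_{\vk})^0_{\ov{s}_{\delta}}(\fq)|}\sum_{x}\theta\bigl(\ov{\fa}^{-1}(x^{-1}\ov{s}_{\delta}x)\bigr)\,\phi^{G^0_{s_{\delta}}}_{\Int(x)\circ\ov{\fa},\,\theta}(u_{\delta}),
\]
where $x$ runs over $\{x\in L_{\vk}(\fq):x^{-1}\ov{s}_{\delta}x\in\ov{\fa}(\ov{T})(\fq)\}$, and $\Int(x)\circ\ov{\fa}$ is an embedding $\ov{T}\hra (L_{\vk})^0_{\ov{s}_{\delta}}$ whose composition with $(L_{\vk})^0_{\ov{s}_{\delta}}\hra L_{\vk}$ is $L_{\vk}(\fq)$-conjugate to $\ov{\fa}$.

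\textbf{Step 4 (reassembly).} It remains to re-organize the double sum of Steps 2--3 into the right-hand side of \form{redka}. One sorts the $\delta$'s according to the $G_{\vk}$-conjugacy class $s'$ of $s_{\delta}$, and the $x$'s according to the $(L_{\vk})^0_{\ov{s}'}(\fq)$-conjugacy class of the embedding $\ov{\fb}:=\Int(x)\circ\ov{\fa}$; for fixed $(s',\ov{\fb})$ the remaining sum over the relevant $\delta$'s is, by applying the expansion of Step 2 \emph{in reverse} to $G^0_{s'}$, its parahoric $(G^0_{s'})_{\vk}$ (normalized by $dg_{s'}((G^0_{s'})_{\vk})=1$, as in \re{unr}) and the Deligne--Lusztig function $\phi^{G^0_{s'}}_{\ov{\fb},\theta}$, exactly $O^\ka_{u'}(\phi^{G^0_{s'}}_{\ov{\fb},\theta})$. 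Three compatibilities are needed: (i) the volume factors agree, since for $\delta=s'u_{\delta}$ one has $G_{\delta}=(G^0_{s'})_{u_{\delta}}$ and $G_{\delta}(F)\cap G_{\vk}=(G^0_{s'})_{u_{\delta}}(F)\cap (G^0_{s'})_{\vk}$; (ii) the $\ka$-weights factor through the topological Jordan decomposition, $\lan\inv(\gm,\delta),\ka\ran=\lan\inv(\gm,\gm'),\ka\ran\,\lan\inv_{G^0_{s'}}(u',u_{\delta}),\ka\ran$, the second pairing taken for the common centralizer $G_{\gm'}=(G^0_{s'})_{u'}$, with $\ka$ transported along $G_{\gm}\cong G_{\gm'}$; (iii) $\theta(\ov{\fa}^{-1}(x^{-1}\ov{s}_{\delta}x))=\theta(\ov{\fb}^{-1}(\ov{s}'))$, while the constant $|(L_{\vk})^0_{\ov{s}'}(\fq)|^{-1}$ together with the sum over $x$ converts $L_{\vk}(\fq)$-conjugacy of the semisimple part into the intrinsic count of $(G^0_{s'})_{\vk}$-conjugacy classes underlying $O^\ka_{u'}$. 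Steps 1--3 are formal once one has the topological Jordan decomposition, \rl{cent} and \rl{sscent}, and the Deligne--Lusztig formula; the real work — and the only place I expect genuine difficulty — is the bookkeeping of Step 4, namely checking that these counts, and the two measure normalizations on $G$ and on $G^0_{s'}$, match exactly.
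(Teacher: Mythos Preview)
Your approach is essentially the paper's: unfold $O^\ka_\gm$ into a sum over $G_\vk$-conjugacy classes of stable conjugates of $\gm$, sort by the semisimple part $s'$ of the topological Jordan decomposition, apply the Deligne--Lusztig character formula \cite[Thm~4.2]{DL} at the level of $L_\vk(\fq)$, and reassemble as $\ka$-orbital integrals on the centralizers. The paper even uses the already-grouped form of the DL formula, $\ov{\phi}_{\ov{\fa},\theta}(\ov{s}'\ov{u}'')=\sum_{\ov{\fb}}\theta(\ov{\fb}^{-1}(\ov{s}'))\ov{\phi}_{\ov{\fb},\theta}(\ov{u}'')$, so the regrouping in your Step~4 is slightly more work than necessary.

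There is one concrete point in your ``bookkeeping'' that the paper handles explicitly and that you should not miss: once $s'$ is fixed, the $\delta$'s with $s_\delta=s'$ are indexed by $(G_\vk\cap G_{s'}(F))$-conjugacy classes, whereas unfolding $O^\ka_{u'}(\phi^{G^0_{s'}}_{\ov{\fb},\theta})$ gives $(G^0_{s'})_\vk=(G_\vk\cap G^0_{s'}(F))$-conjugacy classes. These are a priori different, since $G_{s'}$ need not be connected. The paper closes this gap by observing that if $u''$ and $u'''$ are stable conjugates (in $G^0_{s'}$) with $u'''=gu''g^{-1}$ for some $g\in G_{s'}(F)$, then in fact $g\in G^0_{s'}(F)$: one writes $u'''=hu''h^{-1}$ with $h\in G^0_{s'}(F^{sep})$, so $h^{-1}g\in G_{s'}\cap G_{u''}=G_\gm\subset G^0_{s'}$ by \rl{tjd}~(a). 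Your compatibility (i) and (ii) are then exactly as the paper has them.
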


\begin{Rem}
It follows from the proof that the  contribution of each $s'$ to \form{redka} is independent of a choice of
$\gm'$.
\end{Rem}

\begin{proof}
Since function $\phi^G_{\ov{\fa},\theta}$ is supported on $G_{\vk}$ and is
$G_{\vk}$-conjugation invariant, the left hand side of \form{redka} equals
\begin{equation} \label{Eq:redka1}
\sum_{\gm'\in G_{\vk}}\lan\inv(\gm,\gm'),\ka\ran \frac{1}{dg_{\gm'}(G_{\vk}\cap G_{\gm'}(F))}
\phi^G_{\ov{\fa},\theta}(\gm'),
\end{equation}
where $\gm'$ runs over a set of representatives of the $G_{\vk}$-conjugacy classes which are
stably conjugate to $\gm$.

Each such $\gm'$ is a compact element (by \rl{tjd} (b)), and $\gm'_s$
is stably conjugate to $s$ (by \rl{tjd} (c)). In other words, the summation in \form{redka1} can be
written in the form $\sum_{s'}\sum_{\gm'}$, where $s'$ is as in \form{redka}, and
$\gm'\in G_{\vk}$ runs over a set of representatives of the $G_{\vk}$-conjugacy
classes which are stably conjugate to $\gm$ such that $\gm'_s=s'$.

Fix $s'$ as in \form{redka}, and choose a stable conjugate $\gm'\in G_{\vk}$ of $\gm$ with TJD $\gm'=s'u'$.
Then the contribution of $s'$ to \form{redka1} equals
\begin{equation} \label{Eq:redka1'}
\lan\inv(\gm,\gm'),\ka\ran\sum_{u''}\lan\inv(s'u',s'u''),\ka\ran
\frac{1}{dg_{s'u''}(G_{\vk}\cap G_{s'u''}(F))}
\phi^G_{\ov{\fa},\theta}(s'u''),
\end{equation}
where $u''\in (G_{s'}^0)_{\vk}\subset G_{s'}^0(F)$ runs over a set of representatives of the
$(G_{\vk}\cap G_{s'}(F))$-conjugacy classes which are stably conjugate to $u'\in G_{s'}^0(F)$ (use
\rl{tjd} (c) and \rl{sscent} (b)).

Next we observe that if $u'''$ is a stably conjugate of $u''$ and $u'''=gu''g^{-1}$ for some
$g\in G_{s'}(F)$, then $g\in  G_{s'}^0(F)$, Indeed, by the assumption, there exists
$h\in  G_{s'}^0(F^{sep})$ such that $u'''=hu''h^{-1}=gu''g^{-1}$. Then $h^{-1}g$ belongs to
$G_{s'}\cap G_{u''}\subset G_{s'}^0$ (by \rl{tjd} (a)), thus
$h\in  G_{s'}^0(F^{sep})\cap  G_{s'}(F)=G_{s'}^0(F)$.

Using  \rl{cent} (a) we can therefore assume that
in \form{redka1'} $u''$ runs over a set of representatives of the
$(G_{s'}^0)_{\vk}$-conjugacy classes,  which are stably conjugate to $u'\in G_{s'}^0(F)$.

On the other hand, $O^{\ka}_{u'}(\phi^{G_{s'}^0}_{\ov{\fb},\theta})$ from the
right hand side of \form{redka} equals
\begin{equation} \label{Eq:redka2}
\sum_{u''\in (G_{s'}^0)_{\vk}}\lan\inv(u',u''),\ka\ran\sum_{\ov{\fb}:\ov{T}\hra (L_{\vk})_{\ov{s}'}^0}
\frac{1}{dg_{u''}((G_s^0)_{\vk}\cap G_{u''}(F))}
\phi^{G_s^0}_{\ov{\fb},\theta}(u''),
\end{equation}
where $u''$  runs over a set of representatives of the $(G_{s'}^0)_{\vk}$-conjugacy classes
which are $G_{s'}^0$-stably conjugate to $u'$.

For such a $u''$, we have $\lan\inv(u',u''),\ka\ran=\lan\inv(s'u',s'u''),\ka\ran$ and also
\[
(G_{s'}^0)_{\vk}\cap G_{u''}(F)=G_{\vk}\cap G_{s'}^0(F)\cap G_{u''}(F)=G_{\vk}\cap G_{s'u''}(F)
\] (by \rl{cent} (a) and \rl{tjd} (a)). Thus to finish the proof it suffices to show that
for every $s'$ and $u''$ as above, we have an equality
\begin{equation} \label{Eq:redka4}
\phi^G_{\ov{\fa},\theta}(s'u'')=\sum_{\ov{\fb}}\theta(\ov{\fb}^{-1}(\ov{s}'))
\phi^{G_s^0}_{\ov{\fb},\theta}(u'').
\end{equation}
Finally, formula  \form{redka4} is equivalent to the formula
$\ov{\phi}_{\ov{\fa},\theta}(\ov{s}'\ov{u}'')=\sum_{\ov{\fb}}
\theta(\ov{\fb}^{-1}(\ov{s}'))\ov{\phi}_{\ov{\fb},\theta}(\ov{u}'')$,
proven by Deligne--Lusztig (\cite[Thm 4.2]{DL}).
\end{proof}


\begin{Cor} \label{C:trfac}
In the notation of \re{eetc} (b), let  $\gm_H$ be an element of $H^{G-\sr}(F)\cap H_{\vk^H}$. Then
we have equalities
\begin{equation} \label{Eq:red1}
O^{\wt{\C{E}}}_{\gm_H}(\phi^G_{\ov{\fa},\theta})=\sum_{s'\in G_{\vk}}\inv_{\C{E}}(\fa_0^{(3)},\fa_{s'}^{(3)})
\sum_{\ov{\fb}:\ov{T}\hra
(L_{\vk})_{\ov{s}'}^0}\theta(\ov{\fb}^{-1}(\ov{s}'))O^{\wt{\C{E}}_{s'}}_{u_H}(\phi^{H_{s_H}^0}_{\ov{\fb},\theta})
\end{equation}
and
\begin{equation} \label{Eq:red2}
O^{st}_{\gm_H}(\phi^H_{\ov{\fa},\theta})=\sum_{s'\in G_{\vk}}\inv_{\C{E}}(\fa_0^{(3)},\fa_{s'}^{(3)})
\sum_{\ov{\fb}:\ov{T}\hra
(L_{\vk})_{\ov{s}'}^0}\theta(\ov{\fb}^{-1}(\ov{s}'))O^{st}_{u_H}(\phi^{H_{s_H}^0}_{\ov{\fb},\theta}),
\end{equation}
where

$\bullet$ $s'$  runs over a set of representatives of the $G_{\vk}$-conjugacy classes which are
$\C{E}$-stably conjugate to $s_H$;

$\bullet$ $\ov{\fb}$ runs over a set of representatives of the set of conjugacy classes of embeddings
$\ov{T}\hra (L_{\vk})_{\ov{s}'}^0$ such that the composition
$\ov{\fb}:\ov{T}\hra (L_{\vk})_{\ov{s}'}^0\hra L_{\vk}$ is $L_{\vk}(\fq)$-conjugate to $\ov{\fa}$;

$\bullet$ extended endoscopic triples $\wt{\C{E}}_{s'}$ are chosen in \re{eetc} (b).
\end{Cor}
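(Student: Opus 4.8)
The plan is to derive both formulas by combining the reduction formula for $\kappa$-orbital integrals (Lemma \ref{L:redkap}) with the computation of transfer factors under topological Jordan decomposition (Proposition \ref{P:trfac}). First I would prove \eqref{Eq:red1}. By definition and \re{transf} (b), $O^{\wt{\C{E}}}_{\gm_H}(\phi^G_{\ov{\fa},\theta})=\Dt^{\wt{\C{E}}}(\gm_H,\gm)O^{\ka_{\gm,\gm_H}}_{\gm}(\phi^G_{\ov{\fa},\theta})$ for any $\C{E}$-stable conjugate $\gm\in G^{\sr}(F)$ of $\gm_H$; such a $\gm$ can be taken in $G_{\vk}$ (otherwise, by \rl{Lie}-type support considerations, both sides vanish and there is nothing to prove — more precisely, by \rl{tjd} (d), $\gm$ is then compact with TJD $\gm=su$ where $s$ is $\C{E}$-stably conjugate to $s_H$, which can be arranged in $G_{\vk}$ using \rl{cent}). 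Now I would insert Lemma \ref{L:redkap} for $O^{\ka_{\gm,\gm_H}}_{\gm}(\phi^G_{\ov{\fa},\theta})$, and for each term in the resulting double sum over $s'$ and $\ov{\fb}$ rewrite $\Dt^{\wt{\C{E}}}(\gm_H,\gm)\lan\inv(\gm,\gm'),\ka_{\gm,\gm_H}\ran$ using \re{proptf} (b) as $\Dt^{\wt{\C{E}}}(\gm_H,\gm')$, and then apply Proposition \ref{P:trfac} to the $\C{E}$-stable conjugate pair $\gm_H=s_Hu_H$, $\gm'=s'u'$ to get $\Dt^{\wt{\C{E}}}(\gm_H,\gm')=\inv_{\C{E}}(\fa_0^{(3)},\fa_{s'}^{(3)})\Dt^{\wt{\C{E}}_{s'}}(u_H,u')$. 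Collecting the resulting factor $\Dt^{\wt{\C{E}}_{s'}}(u_H,u')O^{\ka_{u',u_H}}_{u'}(\phi^{H^0_{s_H}}_{\ov{\fb},\theta})$ and recognizing it (again via \re{transf} (b), now for the group $G^0_{s'}$ and its endoscopic datum $\wt{\C{E}}_{s'}$) as $O^{\wt{\C{E}}_{s'}}_{u_H}(\phi^{H^0_{s_H}}_{\ov{\fb},\theta})$ yields \eqref{Eq:red1}.

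For \eqref{Eq:red2} I would proceed along the same lines but more directly, since the left side is a stable orbital integral on $H$. Expanding $\phi^H_{\ov{\fa},\theta}=\sum_{\ov{\fa}_H}\inv_{\C{E}}(\fa_0^{(3)},\fa^{(3)})\phi^H_{\ov{\fa}_H,\theta}$ from \re{unr} (e), the function $\phi^H_{\ov{\fa}_H,\theta}$ is itself a Deligne--Lusztig function on $H$ supported on $H_{\vk^H}$, so I would apply Lemma \ref{L:redkap} with $\ka=1$ to each $O^{st}_{\gm_H}(\phi^H_{\ov{\fa}_H,\theta})$, using that $\gm_H$ is compact with TJD $\gm_H=s_Hu_H$ and that $s_H\in H_{\vk^H}$ (by \re{tjd}, as $\gm_H\in H_{\vk^H}$). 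This produces an inner double sum over stable conjugates $s'_H$ of $s_H$ in $H_{\vk^H}$ and over embeddings $\ov{\fb}_H:\ov{T}\hra(\ov{H})^0_{\ov{s}'_H}$, with the coefficient $\theta(\ov{\fb}_H^{-1}(\ov{s}'_H))$. The key bookkeeping point is to match these indices with those on the right side of \eqref{Eq:red2}: stable conjugates $s'$ of $s_H$ in $G_{\vk}$ correspond (via the endoscopic datum $\C{E}$ and \rl{cent} (b)) to stable conjugates $s'_H$ in $H_{\vk^H}$, and for a fixed such pair, the embeddings $\ov{\fb}:\ov{T}\hra(L_{\vk})^0_{\ov{s}'}$ that are $\C{E}_{s'}$-stably conjugate to the relevant $\ov{\fb}_H$ run exactly over the index set in \re{unr} (e) applied to the group $G^0_{s'}$, so the inner sum reassembles into $O^{st}_{u_H}(\phi^{H^0_{s_H}}_{\ov{\fb},\theta})$ with $\phi^{H^0_{s_H}}_{\ov{\fb},\theta}$ as in \re{eetc} (b). The coefficient $\inv_{\C{E}}(\fa_0^{(3)},\fa^{(3)})$ must be shown to split as $\inv_{\C{E}}(\fa_0^{(3)},\fa_{s'}^{(3)})\cdot\inv_{\C{E}_{s'}}(\fa_{s',0}^{(3)},\ldots)$, which follows from the transitivity property \re{inv} (iv) together with the compatibility \re{inv} (vi) of the invariant with passage to centralizers of semisimple elements.

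The main obstacle I expect is the second step — the index-matching in \eqref{Eq:red2}. One has to check carefully that the triple product structure on the coefficient $\inv_{\C{E}}$ is compatible with the one appearing in the definition of $\phi^{H^0_{s_H}}_{\ov{\fb},\theta}$ relative to the extended endoscopic triple $\wt{\C{E}}_{s'}$ fixed in \re{eetc} (b); this is exactly where properties \re{inv} (iv) and (vi) are used, and where the choices of $\fb^*$, $\fa_{s'}^{(3)}$ and the hyperspecial vertices $\vk^*$, $\vk^H$ made in \re{eetc} have to be shown consistent with those in \re{unr}. The analogous but easier point for \eqref{Eq:red1} is that Proposition \ref{P:trfac} already packages the relevant transfer-factor identity, so there the only work is the orbital-integral bookkeeping, which is routine given Lemma \ref{L:redkap} and \re{transf} (b). Throughout, the vanishing of both sides when $\gm_H$ has no compact $\C{E}$-stable conjugate (equivalently, when $s_H$ has no stable conjugate in $G_{\vk}$) must be noted to justify restricting attention to the generic case; this is immediate from the support of the Deligne--Lusztig functions and \rl{tjd}.
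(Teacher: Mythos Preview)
Your plan is correct and follows the paper's own proof essentially line for line: for \eqref{Eq:red1} the paper also reduces to \rl{redkap}, rewrites $\Dt^{\wt{\C{E}}}(\gm_H,\gm)\lan\inv(\gm,\gm'),\ka\ran$ via \re{proptf}~(b) and \rp{trfac}, and reassembles using \re{transf}~(b); for \eqref{Eq:red2} it likewise expands $\phi^H_{\ov{\fa},\theta}$, applies \rl{redkap} with $\ka=1$, and matches indices using \re{inv}~(iv),(vi). One small correction to your bookkeeping for \eqref{Eq:red2}: there is no bijection between the $s'$ in $G_{\vk}$ and stable conjugates $s'_H$ of $s_H$ in $H_{\vk^H}$ --- since $\vk^H$ is hyperspecial, \rl{cent}~(b) collapses the latter to the single class of $s_H$ itself, so after applying \rl{redkap} on the $H$-side the left side becomes a single sum over~$\ov{\fb}_H$; the matching with the triple sum $\sum_{s'}\sum_{\ov{\fb}}\sum_{\ov{\fb}_H}$ on the right then goes through \rl{maxtor}, identifying both index sets with conjugacy classes of $\fb_H:T\hra H^0_{s_H}$ such that $\vk^H\in\fb_H(\C{B}(T))$ and the composition $T\hra H$ is $\C{E}$-stably conjugate to~$\fa$.
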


\begin{proof}
{\bf Proof of equality \form{red1}.}
%
First we claim that both sides of \form{red1} vanish, if $\gm_H$ is not $\C{E}$-stably conjugate to an
element of $G_{\vk}$. The vanishing of the left hand side follows from the fact that
$\phi^G_{\ov{\fa},\theta}$ is supported on $G_{\vk}$. Assume that a contribution of some $s'$ to
the right hand side of \form{red1} is non-zero. Then there exists an $\C{E}_{s'}$-stably conjugate
$u'\in (G_{s'})_{\vk}$ of $u_H$. Hence
$\gm'=s'u'\in G_{\vk}$ is an $\C{E}$-stable conjugate of $\gm_H$ (by \rl{sscent} (c)), contradicting our assumption.

Thus we can assume that  $\gm_H$ is $\C{E}$-stably conjugate to an element of $\gm\in G_{\vk}$ with TJD $\gm=su$.
Notice that an element $\gm'\in G_{\vk}$ is stably conjugate to $\gm$ if and only if it is $\C{E}$-stably conjugate
to $\gm_H$. Hence, by the shown above, only $s'$'s which appear in  \form{redka} might have a non-trivial contribution
to the right hand side of  \form{red1}.

On the other hand, it follows from \rl{redkap} and  \re{transf} (b) that the left hand side of
\form{red1} equals
\[
\sum_{s'}\Dt^{\wt{\C{E}}}(\gm_H,\gm)\lan\inv(\gm,\gm'),\ka\ran\sum_{\ov{\fb}}
\theta(\ov{\fb}^{-1}(\ov{s}'))O^{\ka}_{u'}(\phi^{G_{s'}^0}_{\ov{\fb},\theta}),
\]
where $\ka=\ka_{\gm,\gm_H}$. Thus we have to show the equality
\[
\Dt^{\wt{\C{E}}}(\gm_H,\gm)\lan\inv(\gm,\gm'),\ka\ran O^{\ka}_{u'}(\phi^{G_{s'}^0}_{\ov{\fb},\theta})=
\inv_{\C{E}}(\fa_0^{(3)},\fa_{s'}^{(3)})O^{\wt{\C{E}}_{s'}}_{u_H}(\phi^{G_{s'}^0}_{\ov{\fb},\theta}),
\]
which in turn is equivalent to the equality
\[
\Dt^{\wt{\C{E}}}(\gm_H,\gm)\lan\inv(\gm,\gm'),\ka\ran=\inv_{\C{E}}(\fa_0^{(3)},\fa_{s'}^{(3)})\Dt^{\wt{\C{E}}_{s'}}(u_H,u').
\]
The latter follows from a combination of \re{proptf} (b) and \rp{trfac}.

{\bf Proof of equality \form{red2}.}
By the definition of $\phi^H_{\ov{\fa},\theta}$, the left hand side of \form{red2} equals
$\sum_{\ov{\fa}_H}\inv_{\C{E}}(\fa_0^{(3)},\fa^{(3)}) O^{st}_{\gm_H}(\phi^H_{\ov{\fa}_H,\theta})$, where $\ov{\fa}_H$ runs over a set
of representatives of the set of conjugacy classes of
embeddings  $\ov{\fa}_H:\ov{T}\hra \ov{H}$, which are $\C{E}$-stable conjugate to $\ov{\fa}$.
Using \rl{cent} (b) and \rl{redkap}, it is equal to
\begin{equation} \label{Eq:red3}
\sum_{\ov{\fb}_H}\inv_{\C{E}}(\fa_0^{(3)},\fa^{(3)})\theta(\ov{\fb}_H^{-1}(\ov{s}_H))
O^{st}_{u_H}(\phi^{H_{s_H}^0}_{\ov{\fb}_H,\theta}),
\end{equation}
where $\ov{\fb}_H$  runs over a set of representatives of the set of conjugacy classes of
embeddings $\ov{T}\hra \ov{H}_{\ov{s}_H}^0$ such that the composition
$\ov{\fb}_H:\ov{T}\hra \ov{H}_{\ov{s}_H}^0\hra \ov{H}$ is
$\C{E}$-stably conjugate to $\ov{\fa}$.

On the other hand, the right hand side of \form{red2} equals
\begin{equation} \label{Eq:red4}
\sum_{s'}\inv_{\C{E}}(\fa_0^{(3)},\fa_{s'}^{(3)})
\sum_{\ov{\fb}}\sum_{\ov{\fb}_H}\theta(\ov{\fb}^{-1}(\ov{s}'))\inv_{\C{E}_{s'}}(\fa_{s'}^{(3)},\fb^{(3)})
O^{st}_{u_H}(\phi^{H_{s_H}^0}_{\ov{\fb}_H,\theta}),
\end{equation}
where  $\ov{\fb}_H$ runs over conjugacy classes of embeddings
$\ov{T}\hra \ov{H}_{\ov{s}'}^0$, which are $\C{E}_{s'}$-stably conjugate to $\ov{\fb}$.
Since $\ov{\fb}^{-1}(\ov{s})=\ov{\fb}_H^{-1}(\ov{s}_H)$, and
\[
\inv_{\C{E}}(\fa_0^{(3)},\fa^{(3)})=\inv_{\C{E}}(\fa_0^{(3)},\fa_{s'}^{(3)})\inv_{\C{E}}(\fa_{s'}^{(3)},\fa^{(3)})=
\inv_{\C{E}}(\fa_0^{(3)},\fa_{s'}^{(3)})\inv_{\C{E}_{s'}}(\fa_{s'}^{(3)},\fb^{(3)})
\] (use \re{inv} (iv),(vi)),
it remains to show that in \form{red3} and \form{red4} the summation is taken over the same set of conjugacy classes.

By \rl{maxtor}, one gets that both these sets coincide with the set of conjugacy classes of embeddings
$\fb_H:T\hra H_{s_H}^0$ such that $\vk^H\in \fb_H(\C{B}(T))$ and the composition
$\fb_H:T\hra H_{s_H}^0\hra H$ is $\C{E}$-stably conjugate to $\fa$.
%
%
\end{proof}

\section{Proof of the Main Theorem}
\subsection{Preparations}

\begin{Lem} \label{L:tu}

(a) Let $\pi:G_0\to G$ be an isogeny of degree prime to $p$. Then $\pi$ induces a homeomorphism
$\pi(F)_{tu}:G_0(F)_{tu}\isom G(F)_{tu}$.

(b) Assume that $p$ does not divide the order of $Z(G^{sc})$. Then for each
$u\in G(F)_{tu}$ there exists $\vk\in\C{B}(G)$ such that $u\in G_{\vk}$ and the image
$\ov{u}\in L_{\vk}(\fq)$ is unipotent.
\end{Lem}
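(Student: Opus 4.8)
For part (a), I would argue as follows. An isogeny $\pi:G_0\to G$ of degree prime to $p$ has finite central kernel $Z$ of order prime to $p$. On $F$-points it induces a map $G_0(F)\to G(F)$ whose kernel is $Z(F)$. I would first check that $\pi(F)$ carries topologically unipotent elements to topologically unipotent elements, which is immediate since $\pi$ is a group homomorphism and the condition ``$u^{p^n}\to 1$'' is preserved. For surjectivity onto $G(F)_{tu}$, given $u\in G(F)_{tu}$, I would lift $u$ to some $u_0\in G_0(F^{sep})$; the obstruction to descending the lift lies in $H^1(F,Z)$, but since $|Z|$ is prime to $p$ and $u$ is topologically unipotent one can adjust the lift to make it topologically unipotent and Galois-fixed. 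Concretely, the group generated by $u_0$ maps with finite kernel (of order prime to $p$) onto $\overline{\langle u\rangle}$, so raising $u_0$ to a suitable power prime to $p$ (an appropriate $p$-adic unit power, using that $\overline{\langle u\rangle}$ is a pro-$p$ group) yields a canonical topologically unipotent lift, which is automatically Galois-stable by uniqueness, hence lies in $G_0(F)_{tu}$. Injectivity then follows because a nontrivial element of $Z(F)$ has order prime to $p$, so it cannot relate two topologically unipotent elements (their quotient would be both topologically unipotent and of order prime to $p$, hence trivial). Bicontinuity is clear since $\pi$ is finite flat. Alternatively, one can invoke \cite[Prop. 1.8.16]{KV} together with the fact that $\pi$ induces an isomorphism on Lie algebras and use the quasi-logarithm to transport the statement to $\mcG$, where it is obvious.

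For part (b), the plan is to use the topological Jordan decomposition is not needed since $u$ is already topologically unipotent; instead I would reduce to the simply connected case via (a). Choose the simply connected cover $G^{sc}\to G^{der}\hra G$; its kernel $Z(G^{sc})$ has order prime to $p$ by hypothesis, so by (a) (applied to $G^{sc}\to G^{der}$, and noting $G(F)_{tu}=G^{der}(F)_{tu}$ since a topologically unipotent element lies in the derived group modulo a central torus part which contributes nothing topologically unipotent — more precisely one reduces to $G^{sc}$) we may lift $u$ to a topologically unipotent $u_{sc}\in G^{sc}(F)_{tu}$. For a simply connected group, a topologically unipotent element is known to lie in a parahoric subgroup $G^{sc}_{\vk}$ with unipotent image in $L_{\vk}(\fq)$: indeed $\overline{\langle u_{sc}\rangle}$ is a pro-$p$ subgroup of $G^{sc}(F)$, hence is contained in a pro-$p$-Sylow of some parahoric, and for $G^{sc}$ the pro-unipotent radical $G^{sc}_{\vk^+}$ together with the unipotent radical of a Borel of $L_{\vk}(\fq)$ exhausts such a Sylow; thus $\overline{\langle u_{sc}\rangle}\subset G^{sc}_{\vk}$ with image in the unipotent radical of a Borel of $L_{\vk}(\fq)$, in particular $\overline{u_{sc}}$ is unipotent. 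Pushing forward along $G^{sc}\to G$, the point $\vk$ (viewed in $\C{B}(G)=\C{B}(G^{sc})$ up to the translation action) gives a parahoric $G_{\vk}$ containing $u$, and the image $\overline{u}\in L_{\vk}(\fq)$ is the image of the unipotent $\overline{u_{sc}}$ under $L_{\vk}^{sc}(\fq)\to L_{\vk}(\fq)$, hence unipotent.

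The main obstacle I anticipate is the reduction step in (b): identifying $G(F)_{tu}$ with (the image of) $G^{sc}(F)_{tu}$ requires care, because $G$ is neither semisimple nor simply connected in general, and one must verify that a topologically unipotent element of $G(F)$ indeed comes from $G^{sc}(F)$ after the prime-to-$p$ isogeny — equivalently that the relevant cocycle in $H^1(F,Z(G^{sc}))$ obstructing the lift vanishes for topologically unipotent elements. This is exactly where the hypothesis $p\nmid |Z(G^{sc})|$ is used, via the argument in (a); I would make sure the pro-$p$ nature of $\overline{\langle u\rangle}$ is exploited to pin down a unique topologically unipotent lift, so that Galois-descent is automatic. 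Everything else — the existence of the parahoric for simply connected groups and the unipotence of the image — is standard Bruhat--Tits theory (e.g. \cite{MP1}) and can be cited.
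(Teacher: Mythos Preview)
Your argument for (a) is essentially the paper's: injectivity because the only topologically unipotent element of the prime-to-$p$ group $\Ker\pi$ is $1$, and surjectivity by lifting $u$ to $\gm_0\in G_0(F^{sep})$ and extracting its topologically unipotent part. The paper phrases the latter as taking the topological Jordan decomposition $\gm_0=s_0u_0$ and observing $\pi(u_0)=u$ by uniqueness of the TJD; your ``suitable prime-to-$p$ power'' is the same operation. Uniqueness then forces $u_0$ to be Galois-fixed, as you say. (The alternative via quasi-logarithms is not what the paper does, and would need extra hypotheses to set up a quasi-logarithm over $\C{O}$.)

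For (b) there is a genuine gap in your reduction. The equality $G(F)_{tu}=G^{\der}(F)_{tu}$ is false in general: already for $G=\B{G}_m$ one has $G^{\der}=1$ while $G(F)_{tu}=1+\fm\neq 1$. More generally, the quotient $G/G^{\der}$ is a torus and has plenty of topologically unipotent $F$-points (its first congruence subgroup), so a topologically unipotent element of $G(F)$ need not come from $G^{\der}(F)$. Thus you cannot reduce to $G^{sc}$ via $G^{sc}\to G^{\der}\hra G$ alone.

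The paper fixes this by using instead the isogeny $\pi:G_0:=G^{sc}\times Z(G)^0\to G$. Its kernel embeds into $Z(G^{sc})$, hence has order prime to $p$ by hypothesis, so part (a) applies and gives $G_0(F)_{tu}\isom G(F)_{tu}$. Now $G_0$ has simply connected derived group, and for such groups the desired statement is exactly \cite[Cor.~2.3.3(b)]{KV}, which the paper simply cites. Your pro-$p$-Sylow sketch for the simply connected case is correct in spirit but unnecessary once you invoke that reference; the real content you were missing is the $Z(G)^0$ factor in the covering group.
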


\begin{proof}
(a) Since $\pi(F)_{tu}$ open and continuous, it remains to show that it is a bijection.
First we will show that $\pi(F)_{tu}$ is injective. Assume that $u,u'\in G_0(F)_{tu}$ satisfy $\pi(u)=\pi(u')$.
Then the quotient $u^{-1}u'$ is a topologically unipotent element of $\Ker\pi$, hence $u^{-1}u'=1$, because
$\Ker\pi$ is of order prime to $p$.

Now choose any $u\in  G(F)_{tu}$. By the injectivity, it will suffice to show the   existence of
$u_0\in G_0(F^{sep})_{tu}$ such that $\pi(u_0)=u$. Since $\pi$ is \'etale over $F$, there exists
$\gm_0\in G_0(F^{sep})$ such that $\pi(\gm_0)=u$. Since $u$ is compact and
$\Ker\pi$ is finite, we get that $\gm_0$ is compact. Let $\gm_0=s_0u_0$ be the TJD of $\gm_0$. Then $\pi(u_0)=u$,
by the uniqueness of the TJD, completing the proof.

(b) Consider the canonical isogeny  $\pi:G_0:=G^{sc}\times Z(G)^0\to G$. Since $\Ker\pi$ is a subgroup of $Z(G^{sc})$, its degree is prime to $p$. Hence by (a) we reduce to the case of $G_0$. Thus we can assume that the derived
group of $G$ is simply connected. In this case the assertion is shown in \cite[Cor. 2.3.3 (b)]{KV}.
\end{proof}

\begin{Lem} \label{L:ql}
Let  $G$ be semisimple and simply connected such that $p$ does not divide
the order of $W_G$.  Then there exists a quasi-logarithm $\Phi:G\to\C{G}$ defined
over $\C{O}$, and $\C{G}$ admits an invariant pairing $\lan\cdot,\cdot\ran$ non-degenerate over $\C{O}$.
\end{Lem}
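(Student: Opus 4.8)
The plan is to build the quasi-logarithm $\Phi$ and the invariant pairing essentially by hand, using the structure theory available for simply connected semisimple groups together with the hypothesis $p \nmid |W_G|$. Since $G$ is semisimple and simply connected, it is a product of its almost-simple factors, each of which is obtained by restriction of scalars from an absolutely almost-simple simply connected group; because $G$ splits over $F^{nr}$, these underlying groups are split, and $p \nmid |W_G|$ forces $p$ to avoid all the ``bad'' primes for each factor (in particular $p$ is very good). So after reducing to a single split absolutely almost-simple simply connected factor over an unramified extension — and noting that restriction of scalars along an unramified extension preserves being ``defined over $\mcO$'' — it suffices to treat $G$ split, simply connected, almost-simple over $F$, with $p$ very good.

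For the invariant pairing: when $p$ is very good the Killing form (or rather, a suitable $\mcO$-form of it coming from a Chevalley $\mcO$-model $\un{G}$ and its Lie algebra $\un{\mcG}$) is non-degenerate on the special fiber $\ov{\mcG} = \Lie L_{\vk}$ for a hyperspecial vertex $\vk$. Concretely, I would fix a Chevalley basis, take the associated symmetric invariant bilinear form normalized so that it is integral and its reduction mod $\fm$ stays perfect — this is exactly where $p$ very good enters, via the classical fact (Springer--Steinberg, or the $\mcG$ side of \cite{KV}) that $\ov{\mcG}$ carries a non-degenerate $L_{\vk}$-invariant form when $p$ is very good. This gives $\lan\cdot,\cdot\ran$ non-degenerate over $\mcO$.

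For the quasi-logarithm: I would use the criterion of \rl{qlo}, which says $\Phi$ is defined over $\mcO$ as soon as the induced map $[\Phi]\colon c_G \to c_{\mcG}$ is defined over $\mcO$. One standard construction of a quasi-logarithm on a simply connected group uses a faithful representation $\rho\colon G \hra \GL(V)$ with $d\rho$ identifiable (after the invariant form is in hand) and then averages, or alternatively uses the ``generalized Cayley map'' / the construction in \cite[1.8]{KV}: there a quasi-logarithm is produced from an invariant non-degenerate form together with a $G$-invariant function, and one checks $\Phi(1)=0$, $d\Phi_1 = \Id$. Since all the ingredients (the Chevalley model, the representation, the form) are defined over $\mcO$ by construction, the map $[\Phi]$ on Chevalley spaces $c_G = W_G\backslash T_G \to W_G\backslash \mcT_G = c_{\mcG}$ is manifestly $\mcO$-rational — here again $p \nmid |W_G|$ is used to know the $\mcO$-structures on $c_G$ and $c_{\mcG}$ (i.e.\ that $\mcO[T_G]^{W_G}$ reduces well, so that $c_{G,\mcO}$ and $c_{\mcG,\mcO}$ are the right integral models). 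Then \rl{qlo} upgrades this to $\Phi$ being defined over $\mcO$.

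The main obstacle I anticipate is producing the quasi-logarithm itself in a form where integrality is transparent: a naive exponential/logarithm will have $p$'s in denominators, so one genuinely needs the ``quasi''-logarithm flexibility from \cite{KV}, and the work is in checking that \emph{some} choice of the auxiliary data (representation, invariant polynomial) in that construction can be taken $\mcO$-integral and that the resulting $[\Phi]$ stays finite/non-degenerate after reduction — precisely the point where the very-good-prime hypothesis, packaged here as $p \nmid |W_G|$, is indispensable (it guarantees $\Lie G^{sc}$ behaves like in characteristic zero on the special fiber, cf.\ \rr{tu} and the results of \cite{KV} already cited). Everything else — the reductions via products and restriction of scalars, and the final invocation of \rl{qlo} — I expect to be routine.
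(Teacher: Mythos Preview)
The paper's own proof is a single sentence: since $p\nmid |W_G|$, the hypotheses of \cite[Lemma~1.8.12]{KV} are satisfied, and the assertion follows. In other words, this lemma is simply imported wholesale from \cite{KV}.

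Your proposal is, in effect, a sketch of how that cited lemma is established, and the outline is sound: the reduction to absolutely almost-simple split factors via restriction of scalars along unramified extensions, the observation that $p\nmid |W_G|$ forces $p$ to be very good (so a suitably normalized invariant form on a Chevalley $\mcO$-model remains non-degenerate on the special fiber), and the representation-theoretic construction of a quasi-logarithm as in \cite[1.8]{KV} are exactly the right ingredients. Your idea of invoking \rl{qlo} to reduce integrality of $\Phi$ to integrality of $[\Phi]$ on the Chevalley spaces is a legitimate organizing device, though in \cite{KV} the quasi-logarithm is produced directly as an $\mcO$-morphism (the faithful representation and the orthogonal projection onto $d\rho(\mcG)$ are already integral once the form is), so the detour through $c_G$ is not actually needed there. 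No genuine gap --- just more scaffolding than the paper chooses to deploy at this point.
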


\begin{proof}
Since  $p$ does not divide the order of $W_G$, the assumptions of
\cite[Lemma 1.8.12]{KV} are satisfied, hence the assertion follows.
\end{proof}

The proof of following two lemmas will be given in \re{pfisog} and \re{pfcenter} respectively.

\begin{Lem} \label{L:isog}
Let $\pi:G_0\to G$ be a quasi-isogeny (that is, $\pi$ induces an isomorphism of adjoint groups
$\pi^{ad}:(G_0)^{ad}\isom  G^{ad}$), $\wt{\C{E}}_0$
the extended endoscopic triple for $G_0$ induced by $\wt{\C{E}}$
(compare \cite[Lem 1.3.10]{KV}), and  $\pi_H:H_0\to H$ the corresponding
quasi-isogeny of $H$.

Then for each pair of $\wt{\C{E}}_0$-stably conjugate elements $\gm_H\in H_0^{G_0-\sr}(F)$ and  
$\gm\in G_0^{\sr}(F)$, we have an equality
$\Dt^{\wt{\C{E}}_0}(\gm_H,\gm)=\Dt^{\wt{\C{E}}}(\pi_H(\gm_H),\pi(\gm))$.
\end{Lem}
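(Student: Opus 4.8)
The plan is to reduce the statement, factor by factor, to the invariance properties of the ingredients already recorded in the excerpt, notably \re{inv} (v) and the behaviour of the Langlands--Shelstad factors under the quasi-isogeny $\pi$. First I would fix compatible data on both sides: a splitting $Spl_{G_0^*}$ lifting $Spl_{G^*}$ (possible since $\pi^{ad}$ is an isomorphism and $\pi^*:G_0^*\to G^*$ is again a quasi-isogeny), the triple $\fa_0^{(3)}$ for $\wt{\mcE}$ together with its lift $\fa_{0,0}^{(3)}$ for $\wt{\mcE}_0$ as in \re{inv} (v), and the embedding $\iota^*$ of $T=G_\gm$ into $G^*$ used in \re{setup}, lifted to $\iota_0^*:T_0\hra G_0^*$ where $T_0=(G_0)_\gm$. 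Note $\pi$ restricts to an isomorphism $T_0\isom T$ on these centralizers (since $\gm$ is strongly regular, $(G_0)_\gm$ is a maximal torus and $\pi$ is a quasi-isogeny), so roots, $a$-data and $\chi$-data transport between the two sides; I would choose the data on $G_0$ to be the pullbacks of the data on $G$.

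Next I would go through $\Dt=\Dt_I\Dt_{II}\Dt_{III_1}\Dt_{III_2}\Dt_{IV}$ term by term, using the descriptions in \re{deftr}. For $\Dt_{II}$ and $\Dt_{IV}$ this is immediate: the products run over $\Gm$-orbits in $R(G,T)\sm R(H,T_H)=R(G_0,T_0)\sm R(H_0,T_{0,H})$, and the quantities $\al(\gm)$, $a_\al$, $|\al(\gm)-1|$ are unchanged under $\pi$ on the torus. For $\Dt_{III_1}$, which equals $\inv_{\mcE}(\fa_0^{(3)},\iota^{(3)})$, the equality $\inv_{\mcE_0}(\fa_{0,0}^{(3)},\iota_0^{(3)})=\inv_{\mcE}(\fa_0^{(3)},\iota^{(3)})$ is exactly \re{inv} (v) applied to the two triples $\fa_0^{(3)}$ and $\iota^{(3)}$ simultaneously. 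For $\Dt_I$, the pairing $\lan\inv(a),\ka_{\gm,\gm_H}\ran$ is controlled by Tate--Nakayama duality for the torus $T$; since $\pi$ induces an isomorphism $T_0\isom T$ and $\ka$-data match up under the induced map $Z(\wh H)^\Gm\to\wh{T}^\Gm$ (compatibly with $\wh\pi$), $\inv(a)$ and $\ka$ transport, giving equality. The term $\Dt_{III_2}$, the Langlands pairing $\lan\inv(\chi_\al),\gm\ran$, whose definition is deferred to \ref{SS:abllc}, likewise depends only on the torus $T$, the roots, and the $\chi$-data, all of which are identified by $\pi$; so I would invoke its compatibility with the isomorphism $T_0\isom T$.

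The main obstacle I expect is the bookkeeping for $\Dt_I$ and $\Dt_{III_2}$: one must check that the maps $Z(\wh H)^\Gm\hra\wh{T_0}^\Gm$ and $Z(\wh H)^\Gm\hra\wh{T}^\Gm$ defining $\ka_{\gm,\gm_H}$ on the two sides are intertwined by the dual map $\wh\pi$ restricted to these tori, and that the Langlands--Shelstad cohomology class $\inv(\chi)\in H^1_c(W_F,\wh T)$ pulls back correctly — i.e. that the whole construction of the transfer factor is functorial for a quasi-isogeny along the chosen compatible data. Concretely this is the content of \cite[Lem 1.3.10]{KV} together with the invariance statement \re{inv} (v), and I would phrase the proof as: choose all data on $G_0$ as pullbacks of data on $G$, observe that $\pi$ is an isomorphism on the relevant centralizers, and then each of the five factors is literally unchanged. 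A secondary point worth a sentence: one must check that such compatible data actually exist, i.e. that $\iota^*$ lifts to $\iota_0^*$ and that $\fa_0^{(3)}$ lifts to $\fa_{0,0}^{(3)}$ — but these are precisely the lifting assertions built into \re{inv} (v) and the existence of stable conjugates from \re{stconj} (c), so no new work is needed.
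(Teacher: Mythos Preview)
Your approach matches the paper's own proof almost exactly: both proceed factor by factor, invoking \re{inv} (v) for $\Dt_{III_1}$ and reducing $\Dt_{II}$, $\Dt_{IV}$ to the identification of root data. There is, however, one incorrect claim that you should fix. You assert that $\pi$ restricts to an \emph{isomorphism} $T_0\isom T$ on the centralizers; for a general quasi-isogeny this is false (take $G_0=\SL{2}$, $G=\PGL{2}$, or $G_0=G^{sc}\times Z(G)^0\to G$ as in \rl{redtu}: on maximal tori $\pi$ is only an isogeny, possibly with nontrivial kernel and cokernel). Your arguments for $\Dt_I$ and $\Dt_{III_2}$ lean on this spurious isomorphism and therefore need to be rephrased.

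The correct justification, as in the paper, is: for $\Dt_I$, $\Dt_{II}$, $\Dt_{IV}$ one uses that $\pi^{ad}$ is an isomorphism, so that $G_0^{sc}=G^{sc}$ and hence $(T_0)_{sc}=T_{sc}$; the class $\la(T_{sc})$ and its pairing with $\ka$ live entirely on the simply connected side and are literally unchanged. For $\Dt_{III_2}$ one does not need $T_0\cong T$, but rather the \emph{functoriality} of the Langlands reciprocity map $\varphi_{T,F}$ in $T$: the morphism $\pi|_{T_0}:T_0\to T$ induces $\wh{\pi}:\wh{T}\to\wh{T_0}$, and functoriality gives $\lan\inv(\chi),\pi(\gm)\ran_T=\lan\wh{\pi}_*\inv(\chi),\gm\ran_{T_0}$, together with the (straightforward) compatibility $\wh{\pi}_*\inv(\chi)=\inv_0(\chi)$ for the pulled-back $\chi$-data. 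Once you replace the ``isomorphism $T_0\isom T$'' step by these two remarks, your proof coincides with the paper's.
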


\begin{Lem} \label{L:center}
In the situation of \re{unr}, let $\gm_H\in H^{G-\sr}(F)$ and  $\gm\in G^{\sr}(F)$ be $\wt{\C{E}}$-stably conjugate
elements. Then for every $z\in Z(G)(\C{O})\subset  Z(H)(\C{O})$, we have an equality
$\Dt^{\wt{\mcE}}(z\gm_H,z\gm)=\Dt^{\wt{\mcE}}(\gm_H,\gm)$.
\end{Lem}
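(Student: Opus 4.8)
The plan is to analyse the Langlands--Shelstad factorisation $\Dt^{\wt{\mcE}}=\Dt_I\Dt_{II}\Dt_{III_1}\Dt_{III_2}\Dt_{IV}$ (see \re{deftr}) one factor at a time. First I would record the elementary fact that, $z$ being central in both groups, the centralizers do not move: $T:=G_\gm=G_{z\gm}$ and $T_H:=H_{\gm_H}=H_{z\gm_H}$, and the isomorphism $T\isom T_H$ underlying the $\C{E}$-stable conjugacy of $\gm$ and $\gm_H$ carries $z$ to $z$ (since $z\in Z(G)\subseteq Z(H)$); hence $z\gm\in G^{\sr}(F)$ and $z\gm_H\in H^{G-\sr}(F)$ are again $\C{E}$-stably conjugate and $\Dt^{\wt{\mcE}}(z\gm_H,z\gm)$ is defined.

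Next I would check that four of the five factors are literally unaffected. The products $\Dt_{II}$ and $\Dt_{IV}$ are taken over roots $\al\in R(G,T)\sm R(H,T_H)\subseteq R(G,T)$, and since $z\in Z(G)=\bigcap_{\al\in R(G,T)}\Ker\al$ we have $\al(z)=1$, hence $\al(z\gm)=\al(\gm)$ for every such $\al$; so neither factor changes. The factors $\Dt_I=\lan\inv(a),\ka_{\gm,\gm_H}\ran$ and $\Dt_{III_1}=\inv(\fa_0^{(3)},\iota^{(3)})$ are built from the $a$-data for $T$, the chosen embedding $\iota^*:T\hra G^*$, the triple $\iota^{(3)}=(\iota,\iota^*,\iota_H)$ and the class $\ka_{\gm,\gm_H}\in\wh{T}^{\Gm}$ (see \re{gstr}), and all of these depend only on $T$, $T_H$ and the isomorphism between them, not on the element $\gm$; so they too are unchanged when $\gm$ is replaced by $z\gm$.

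It therefore remains to show $\Dt_{III_2}(z\gm_H,z\gm)=\Dt_{III_2}(\gm_H,\gm)$. Writing $\Dt_{III_2}(\cdot,\gm)=\lan\inv(\chi),\gm\ran$, which is a character of $T(F)$ in the variable $\gm$, this amounts to $\lan\inv(\chi),z\ran=1$. Here $\inv(\chi)\in H^1_c(W_F,\wh{T})$ depends only on $T$ and the $\chi$-data, and by \rr{tf}(a) I am free to take the $\chi$-data unramified --- this is legitimate since $G$, hence $H$, splits over $F^{nr}$, so every extension $F_\al/F_{\pm\al}$ is unramified (see \re{chidata}). I would then invoke the explicit description of the Langlands pairing recalled in \rss{abllc} together with the functoriality of the local Langlands correspondence for tori (the input \rl{abllc}): the restriction to $Z(G)(F)$ of the character $\lan\inv(\chi),\cdot\ran$ of $T(F)$ is the character of $Z(G)(F)$ attached by that correspondence to the image of $\inv(\chi)$ under $H^1_c(W_F,\wh{T})\to H^1_c(W_F,\wh{Z(G)})$. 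One checks --- and this is the substantive point --- that, for unramified $\chi$-data, this image is an unramified class; since $Z(G)$ is unramified (again because $G$ splits over $F^{nr}$), the corresponding character of $Z(G)(F)$ is then trivial on the maximal bounded subgroup, which contains $Z(G)(\C{O})\ni z$. Hence $\lan\inv(\chi),z\ran=1$, and multiplying the five factors gives the lemma.

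The step I expect to be the genuine obstacle is this last one: showing that the image of $\inv(\chi)$ in $H^1_c(W_F,\wh{Z(G)})$ is unramified. The other four factors are formal, but here one really needs the explicit form of the Langlands pairing $\Dt_{III_2}$ and the compatibility of local Langlands for tori with field extensions (\rl{abllc}), and one must be careful that $T=G_\gm$ may itself be ramified --- the unramifiedness one exploits is that of the image of $\inv(\chi)$ in $H^1_c(W_F,\wh{Z(G)})$, not of $\inv(\chi)$ itself. (A cheaper route is available through \rr{tf}(c): over $F^{nr}$ an unramified $\eta$ alters $\Dt$ only by an unramified character of $H(F)$, which is trivial on $Z(H)(\C{O})\ni z$; but the factor-by-factor argument above is more self-contained.)
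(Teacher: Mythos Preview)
Your treatment of $\Dt_I,\Dt_{II},\Dt_{III_1},\Dt_{IV}$ is fine and coincides with the paper's. The problem is entirely in the $\Dt_{III_2}$ step, and there is a genuine gap.

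You justify taking unramified $\chi$-data by saying that $G$ (hence $H$) splits over $F^{nr}$, ``so every extension $F_\al/F_{\pm\al}$ is unramified''. This is false: the fields $F_\al,F_{\pm\al}$ are defined by the $\Gm$-action on $R(G,T)$ with $T=G_\gm$, and that action can be ramified even when $G$ is split (e.g.\ $G=\GL{2}$ with $T$ the norm torus of a ramified quadratic extension). You yourself note later that ``$T=G_\gm$ may itself be ramified'', so the earlier sentence is internally inconsistent. If $F_\al/F_{\pm\al}$ is ramified then no unramified $\chi_\al$ exists, and with merely tamely ramified $\chi$-data \rp{abllc} only controls $T(F)_{tu}$, which does not contain $z\in Z(G)(\C{O})$.

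Your fallback --- push $\inv(\chi)$ to $H^1_c(W_F,\wh{Z(G)})$ and argue it becomes unramified there --- is plausible, but you do not prove it; you flag it as ``the substantive point'' and leave it. Without that computation the argument is incomplete. The paper avoids this entirely by a different reduction: it first invokes \cite[Lem.~3.5.A]{LS2}, which says there is a character $\la:Z(G)(F)\to\B{C}\m$, \emph{independent of the pair} $(\gm_H,\gm)$, with $\Dt(z\gm_H,z\gm)=\la(z)\Dt(\gm_H,\gm)$. One may then \emph{choose} a convenient $\gm$ with $G_\gm$ unramified (possible since $H$ is unramified), take unramified $\chi$-data for this particular $T$, and apply \rl{tame1} and \rp{abllc} directly to $z\in Z(G)(\C{O})\subset T(\C{O})$ to get $\la(z)=\lan\inv(\chi),z\ran=1$. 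The missing idea in your attempt is exactly this use of \cite{LS2} to decouple $\la$ from the given $\gm$.
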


\subsection{The topologically unipotent case}

\rt{main} asserts that we have an equality
\begin{equation} \label{Eq:mainr}
O^{\ka_{\gm,\gm_H}}_{\gm}(\phi^G_{\ov{\fa},\theta})=0
\end{equation}
for each pair of $\C{E}$-stably conjugate elements $\gm_H\in H^{G-\sr}(F)$ and $\gm\in G^{\sr}(F)$, if $H$ is ramified, and that
we have an equality
\begin{equation} \label{Eq:main}
O^{st}_{\gm_H}(\phi^H_{\ov{\fa},\theta})=O^{\wt{\mcE}}_{\gm_H}(\phi^G_{\ov{\fa},\theta}).
\end{equation}
for each $\gm_H\in H^{G-\sr}(F)$, if $H$ is unramified and the assumptions of \re{unr} are satisfied.

First we will show these equalities in the case when $\gm_H$ is topologically unipotent.

\begin{Thm} \label{T:topun}
Equalities \form{mainr} and \form{main} hold when $\gm_{H}$ is topologically unipotent.
\end{Thm}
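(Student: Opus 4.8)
The plan is to reduce the statement to the Lie-algebra version \rp{Lie} by means of quasi-logarithm maps, and then to match the two sides using Waldspurger's \rt{Wa}, the Springer hypothesis over $\fq$, and the sign computation \rp{weil}. First, using \rl{isog} and \rl{center} (and \rl{tu}, to lift topologically unipotent elements along isogenies of degree prime to $p$), I would reduce to the case where $G$ is semisimple and simply connected (the assertion being multiplicative in products and trivial for tori). By \rl{ql} one then fixes a quasi-logarithm $\Phi:G\to\mcG$ defined over $\mcO$ together with an invariant pairing on $\mcG$ non-degenerate over $\mcO$, and one picks an additive character $\psi$ non-degenerate over $\mcO$, obtaining Fourier transforms $\C{F}_G$ and (via \rco{qlo}) $\C{F}_H$. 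Using \rl{gstr}, the hypothesis of \rt{main} furnishes $\ov{t}_0\in\ov{\mcT}(\fq)$ with $d\ov{\fa}(\ov{t}_0)\in\mcL_{\vk}^{G-\sr}(\fq)$, so the Lie-algebra functions $\dt^G_{\ov{\fa},\ov{t}_0}$ and $\dt^H_{\ov{\fa},\ov{t}_0}$ of \re{notlie} are defined; by \rp{Lie}, $\dt^G_{\ov{\fa},\ov{t}_0}$ is $\C{E}$-unstable when $H$ is ramified and $\dt^H_{\ov{\fa},\ov{t}_0}$ is an endoscopic transfer of $\dt^G_{\ov{\fa},\ov{t}_0}$ when $H$ is unramified; applying \rt{Wa} (to the zero transfer in the ramified case) I get that $\C{F}_G(\dt^G_{\ov{\fa},\ov{t}_0})$ is $\C{E}$-unstable when $H$ is ramified, and that $e_{\psi}(H,G)\,\C{F}_H(\dt^H_{\ov{\fa},\ov{t}_0})$ is an endoscopic transfer of $\C{F}_G(\dt^G_{\ov{\fa},\ov{t}_0})$ when $H$ is unramified.

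The heart of the proof is the identity asserting that, on topologically unipotent elements, $\Phi$ transports $\phi^G_{\ov{\fa},\theta}$ to $\epsilon_G\,\C{F}_G(\dt^G_{\ov{\fa},\ov{t}_0})$ for an explicit sign $\epsilon_G\in\{\pm1\}$, and $\Phi^H$ transports $\phi^H_{\ov{\fa},\theta}$ to $\epsilon_H\,\C{F}_H(\dt^H_{\ov{\fa},\ov{t}_0})$. To prove it I observe that for $\gm\in G(F)_{tu}\cap G_{\vk}$ the reduction $\ov{\gm}\in L_{\vk}(\fq)$ is unipotent, so $\phi^G_{\ov{\fa},\theta}(\gm)=\ov{\phi}_{\ov{\fa},\theta}(\ov{\gm})$ is the value of the Green function $Q^{L_{\vk}}_{\ov{\fa}(\ov{T})}$, independent of $\theta$; while $\dt^G_{\ov{\fa},\ov{t}_0}$ is inflated from $\ov{\dt}_{\ov{\fa},\ov{t}_0}$ on $\mcL_{\vk}(\fq)$, so its Fourier transform, restricted to $\mcG_{\vk}$, is (up to the sign coming from the unitary normalization) the inflation of the finite Fourier transform of $\ov{\dt}_{\ov{\fa},\ov{t}_0}$. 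Since the $\mcO$-extension of $\Phi$ induces the reduction $\un{G}_{\vk}\to\mcG_{\vk}$, matching $G(F)_{tu}\cap G_{\vk}$ with the topologically nilpotent elements of $\mcG_{\vk}$ and, on special fibres, the unipotent variety of $L_{\vk}$ with the nilpotent cone of $\mcL_{\vk}$, the identity reduces to the Springer hypothesis over $\fq$ (see \cite{KV}): the finite Fourier transform of $\ov{\dt}_{\ov{\fa},\ov{t}_0}$ equals, up to the explicit sign $\epsilon_G$, the Green function. The same argument, applied summand-by-summand over the $\C{E}$-stably conjugate embeddings $\ov{\fa}_H$ — which index both $\phi^H_{\ov{\fa},\theta}$ and $\dt^H_{\ov{\fa},\ov{t}_0}$ with the same coefficients $\inv_{\C{E}}(\fa_0^{(3)},\fa^{(3)})$ — gives the $H$-side identity, with a sign $\epsilon_H$ depending only on $\ov{H}$ and $\ov{T}$.

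To conclude, take $\gm_H\in H^{G-\sr}(F)_{tu}$ and an $\C{E}$-stable conjugate $\gm\in G^{\sr}(F)$; by the compatibility of the topological Jordan decomposition with $\C{E}$-stable conjugacy (\rl{tjd}), $\gm$ is topologically unipotent, so \rp{qlogst} (b),(c) give that $\Phi^H(\gm_H)$ and $\Phi(\gm)$ are $\C{E}$-stably conjugate topologically nilpotent elements with $\Dt(\gm_H,\gm)=\Dt(\Phi^H(\gm_H),\Phi(\gm))$. Because $\Phi$ is conjugation-equivariant and $G_{\Phi(\gm)}=G_{\gm}$, and since topological unipotency is preserved under (stable) conjugacy, replacing $\phi^G_{\ov{\fa},\theta}$ by its restriction to $G(F)_{tu}$ changes none of the relevant orbital integrals and transporting by $\Phi$ identifies $O^{\ka_{\gm,\gm_H}}_{\gm}(\phi^G_{\ov{\fa},\theta})$ and $O^{\wt{\mcE}}_{\gm_H}(\phi^G_{\ov{\fa},\theta})$ with $\epsilon_G$ times the corresponding quantities for $\C{F}_G(\dt^G_{\ov{\fa},\ov{t}_0})$; similarly for $H$. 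In the ramified case $\C{F}_G(\dt^G_{\ov{\fa},\ov{t}_0})$ is $\C{E}$-unstable, hence $O^{\ka_{\gm,\gm_H}}_{\gm}(\phi^G_{\ov{\fa},\theta})=0$, which is \form{mainr}. In the unramified case, feeding in the transfer statement from \rt{Wa} one obtains
\[
O^{\wt{\mcE}}_{\gm_H}(\phi^G_{\ov{\fa},\theta})=\epsilon_G\,e_{\psi}(H,G)\,\epsilon_H\;O^{st}_{\gm_H}(\phi^H_{\ov{\fa},\theta}),
\]
so that \form{main} follows once $\epsilon_G\,e_{\psi}(H,G)\,\epsilon_H=1$.

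I expect this last sign identity to be the main obstacle. By \rp{weil} one has $e_{\psi}(H,G)=(-1)^{\rk_F(H)-\rk_F(G)}$, so what must be checked is that the product of the two Springer signs satisfies $\epsilon_G\epsilon_H=(-1)^{\rk_F(H)-\rk_F(G)}$; this requires writing each Springer sign explicitly in terms of the $\fq$-ranks of $L_{\vk}$, of $\ov{H}$, and of $\ov{T}$, and then using \rco{det} to pass between the $\fq$-ranks of the reductive quotients and the $F$-ranks of $G$ and $H$. A secondary delicate point is the precise form of the Springer hypothesis in the quasi-logarithm picture --- i.e.\ the identification over $\mcO$ of the Fourier transform on $\mcG(F)$ with the finite Fourier transform on $\mcL_{\vk}(\fq)$, with the correct normalizing sign --- which I would import from \cite{KV}.
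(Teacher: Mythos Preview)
Your proposal is correct and follows essentially the same route as the paper: reduce to $G$ semisimple simply connected via \rl{redtu}, then use the Springer hypothesis (\rt{Spr}) to write $\phi^G_{\ov{\fa},\theta}(u)=(-1)^{\rk_{\fq}(L_{\vk})-\rk_{\fq}(\ov{T})}\C{F}(\dt^G_{\ov{\fa},\ov{t}})(\Phi(u))$ on topologically unipotent elements (and similarly for $H$), transport via \rp{qlogst}, and combine \rp{Lie}, \rt{Wa}, and \rp{weil}. The sign identity you flag as the main obstacle is exactly what the paper checks: with $\epsilon_G=(-1)^{\rk_{\fq}(L_{\vk})-\rk_{\fq}(\ov{T})}$ and $\epsilon_H=(-1)^{\rk_{\fq}(\ov{H})-\rk_{\fq}(\ov{T})}$, the equalities $\rk_{\fq}(L_{\vk})=\rk_F(G)$ and $\rk_{\fq}(\ov{H})=\rk_F(H)$ (standard for unramified groups at hyperspecial or parahoric points) give $\epsilon_G\epsilon_H\,e_{\psi}(H,G)=1$ directly, without needing \rco{det}.
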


\begin{Lem} \label{L:redtu}
It will suffice to show the assertion of \rt{topun} in the case when $G$ is semisimple and
simply connected.
\end{Lem}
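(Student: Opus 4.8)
The plan is to replace $G$ by $G_0:=G^{sc}\times Z(G)^0$ via the canonical isogeny $\pi\colon G_0\to G$, to reduce equalities \form{mainr} and \form{main} for $G$ (with $\gm_H$ topologically unipotent) to the analogous equalities for $G_0$, and finally to split off the central torus $Z(G)^0$ and be left with the semisimple simply connected group $G^{sc}$. First I would record the relevant properties of $\pi$: since $\Ker\pi\subset Z(G^{sc})$ and $p\nmid|W_G|$ forces $p\nmid|Z(G^{sc})|$, the map $\pi$ is an isogeny of degree prime to $p$, and in particular a quasi-isogeny, so \rl{isog} applies and yields an induced extended endoscopic triple $\wt{\C{E}}_0$ for $G_0$, the corresponding isogeny $\pi_H\colon H_0\to H$ (which under our hypothesis on $p$ is again of degree prime to $p$), and the compatibility $\Dt^{\wt{\C{E}}_0}(\gm_{H,0},\gm_0)=\Dt^{\wt{\C{E}}}(\pi_H(\gm_{H,0}),\pi(\gm_0))$ for $\wt{\C{E}}_0$-stably conjugate elements. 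As in \rr{ass}, the hypotheses of \rt{main} for $(G,\ov{\fa},\theta)$ are then inherited by $G_0$ and by $G^{sc}$, after pulling $\ov{\fa}$ and $\theta$ back along $\pi$ and the projection $G_0\to G^{sc}$ and choosing a point $\vk_0\in\C{B}(G_0)$ over $\vk$.

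For the reduction to $G_0$: by \rl{tu}(a) the maps $\pi$ and $\pi_H$ restrict to homeomorphisms $G_0(F)_{tu}\isom G(F)_{tu}$ and $H_0(F)_{tu}\isom H(F)_{tu}$. Hence every topologically unipotent $\gm\in G^{\sr}(F)$ has a unique lift $\gm_0\in G_0(F)_{tu}$, and since centralizers of semisimple elements in $G^{sc}$ and in the torus $Z(G)^0$ are connected, $G_{0,\gm_0}$ is connected and isogenous to the torus $G_\gm$, hence a torus, so $\gm_0\in G_0^{\sr}(F)$; the same applies on the $H$-side. By \rl{tjd}(b),(c), stable and $\C{E}$-stable conjugates of topologically unipotent elements are again topologically unipotent, so all elements occurring in the orbital integrals, $\ka$-orbital integrals, and stable and $\wt{\C{E}}$-orbital integrals in \form{mainr} and \form{main} at a topologically unipotent $\gm_H$ lie in the topologically unipotent loci and lift through $\pi$ or $\pi_H$. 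One then checks, all with the help of \rl{tu}(a), that $\gm_0\mapsto\pi(\gm_0)$ induces bijections of $G_0(F)$-conjugacy (resp.\ stable, resp.\ $\C{E}_0$-stable) classes of topologically unipotent strongly regular elements onto the corresponding classes for $G$, compatibly with the invariants $\inv(\cdot,\cdot)$ and the characters $\ka$; that the supports and values of the Deligne--Lusztig functions match on the topologically unipotent loci, i.e.\ $\phi^{G_0}_{\ov{\fa}_0,\theta_0}=\phi^G_{\ov{\fa},\theta}\circ\pi$ there, and similarly for $H$; and that the Haar measures on the groups and on the relevant tori may be normalized compatibly under $\pi$ and $\pi_H$. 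Feeding these identifications, together with the compatibility of transfer factors from \rl{isog}, into the definitions of the relevant orbital integrals shows that each side of \form{mainr} and of \form{main} for $G$ at a topologically unipotent $\gm_H$ coincides with the corresponding side for $G_0$ at $\gm_{H,0}$; hence it suffices to prove \rt{topun} for $G_0$.

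It remains to pass from $G_0=G^{sc}\times Z(G)^0$ to $G^{sc}$. Writing $S:=Z(G)^0$, the direct product decomposition of $G_0$ induces compatible product decompositions of the dual group, of the endoscopic datum ($H_0=H_1\times S$ and $\wt{\C{E}}_0$ is the product of an extended endoscopic triple $\wt{\C{E}}_1$ for $G^{sc}$ with the trivial datum for $S$), of the parahoric subgroups, of the Deligne--Lusztig functions ($\phi^{G_0}_{\ov{\fa}_0,\theta_0}=\phi^{G^{sc}}_{\ov{\fa}_1,\theta_1}\boxtimes\phi^S_{\ov{\fa}_S,\theta_S}$, and similarly on the $H$-side), of all the orbital integrals, and of the transfer factors. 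For the torus $S$ one has $S^{\sr}(F)=S(F)$, trivial transfer factors, and $\phi^S_{\ov{\fa}_S,\theta_S}$ is the inflation to $S(\mcO)$ of a character of $\ov{S}(\fq)$, so \form{mainr} and \form{main} hold for $S$ for trivial reasons. By multiplicativity over the product $G^{sc}\times S$, the equalities \form{mainr} and \form{main} for $G_0$ then follow from those for the semisimple simply connected group $G^{sc}$, as claimed. The step I expect to be the main obstacle is the middle paragraph: controlling all the data --- conjugacy classes and their invariants, Haar measures, and the Deligne--Lusztig functions together with their supports --- under the map $\pi$, which is not surjective on $F$-points. This is tractable precisely because $\gm_H$ is assumed topologically unipotent, where \rl{tu}(a) supplies the needed bijection; on the whole group the reduction would fail.
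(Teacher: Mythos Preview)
Your proposal is correct and follows essentially the same two-step strategy as the paper: first pass from $G$ to $G_0=G^{sc}\times Z(G)^0$ via the isogeny $\pi$ (using \rl{tu}(a) for the bijection on topologically unipotent loci and \rl{isog} for transfer factors), then strip off the central torus $S=Z(G)^0$. The only difference is cosmetic: for the second step the paper invokes \rl{center} to see that the central torus contributes trivially to the transfer factors on $Z(G)(\C{O})$, whereas you argue directly via the product decomposition $G_0=G^{sc}\times S$ and the triviality of the torus case; both routes amount to the same thing. One small caution: your phrase ``trivial transfer factors'' for $S$ is literally true only on $S(F)_{tu}$ (because of $\Dt_{III_2}$), but since you are in the topologically unipotent case throughout this is harmless.
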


\begin{proof}
Again consider the canonical isogeny $\pi:G_0:=G^{sc}\times Z(G)^0\to G$.
First we claim that \rt{topun} for $G$ follows from that for $G_0$.

Indeed, the extended endoscopic triple $\wt{\C{E}}$ for $G$ gives rise to the extended endoscopic triple
$\wt{\C{E}}_0$ for $G_0$ (compare \cite[Lem 1.3.10]{KV}). Moreover, $\wt{\C{E}}_0$ satisfies the assumptions of \re{unr}, if $\wt{\mcE}$ satisfies them.  Also $\pi$ induces an homeomorphism $\mcB(G_0)\isom \mcB(G)$, hence $\vk\in\mcB(G)$ gives rise to a point $\vk_0\in\mcB(G_0)$. Furthermore, $\pi$ gives rise to 
 an isogeny $\pi_{\vk}:L_{\vk_0}\to L_{\vk}$; embedding $\ov{\fa}:\ov{T}\hra L_{\vk}$ gives rise to an embedding $\ov{\fa}_0:\ov{T}_0\hra L_{\vk_0}$, while character $\theta:\ov{T}(\fq)\to\B{C}\m$ defines a  character $\theta_0:\ov{T}_0(\fq)\overset{\ov{\pi}}{\lra}\ov{T}(\fq)\overset{\theta}{\lra}\B{C}\m$.

Next the kernel of $\pi$ and hence the kernel of the corresponding isogeny
$\pi_H:H_0\to H$ is  a subgroup of $Z(G^{sc})$. Since the order of $Z(G^{sc})$ divides
the order of $W_G$, the orders of $\pi$ and $\pi_H$ are  prime to $p$.
Therefore the maps $\pi(F):G_0(F)_{tu}\to G(F)_{tu}$ and $\pi_H(F):H_0(F)_{tu}\to H(F)_{tu}$
are homeomorphisms (by \rl{tu} (a)).

Notice that for every $\C{E}_0$-stably conjugate elements $u_H\in H_0^{G_0-\sr}(F)_{tu}$ and
$u\in G_0^{\sr}(F)_{tu}$,
we have equalities $\phi^{G_0}_{\ov{\fa}_0,\theta_0}(u)=\phi^{G}_{\ov{\fa},\theta}(\pi(u))$,
$\phi^{H_0}_{\ov{\fa}_0,\theta_0}(u_H)=\phi^{H}_{\ov{\fa},\theta}(\pi_H(u_H))$ and 
also $\Dt^{\wt{\C{E}}_0}(u_H,u)=\Dt^{\wt{\C{E}}}(\pi_H(u_H),\pi(u))$ (by \rl{isog}).
Therefore we have equality $O^{\ka_{u,u_H}}_{u_H}(\phi^{G_0}_{\ov{\fa},\theta})=
O^{\ka_{\pi(u),\pi_H(u_H)}}_{\pi_H(u_H)}(\phi^G_{\ov{\fa},\theta})$ in the ramified case, and equalities
$O^{st}_{u_H}(\phi^{H_0}_{\ov{\fa}_0,\theta_0})=O^{st}_{\pi_H(u_H)}(\phi^H_{\ov{\fa},\theta})$, 
$O^{\wt{\mcE}_0}_{u_H}(\phi^{G_0}_{\ov{\fa},\theta})=O^{\wt{\mcE}}_{\pi_H(u_H)}(\phi^G_{\ov{\fa},\theta})$
in the unramified case.

We see that \rt{topun} for $\wt{\C{E}}$ follows from that for $\wt{\C{E}}_0$.
Hence we can assume  that $G=G^{sc}\times S$ for some torus $S$. Then using similar (but easier)
arguments and \rl{center}, we reduce the assertion to the corresponding endoscopic tuple
$\wt{\C{E}}^{sc}$ over $G^{sc}$.
\end{proof}

Our proof is based on the following assertion conjectured by Springer,
which was deduced in \cite[Thm A.1]{KV} from results of Lusztig (\cite{Lu}) and Springer (\cite{Sp}).

\begin{Thm} \label{T:Spr}
Let  $L$ be a reductive group over a finite field $\fq$, $\Phi:L\to \C{L}$
a quasi-logarithm,  $\lan\cdot,\cdot\ran$ a non-degenerate invariant
pairing on $\C{L}$, $\ov{T}\subset L$ a maximal torus, $\theta$ a character of $\ov{T}(\fq)$,
$\ov{\psi}$ a character of $\fq$, and $\ov{t}$ an element of $\C{T}(\fq)\cap \C{L}^{\sr}(\fq)$.

Denote by $\dt_t$ the characteristic function of the $\Ad L(\fq)$-orbit of $t$,
by $\C{F}(\dt_t)$ its Fourier transform. Then  for every unipotent $u\in L(\fq)$, we have an equality 
\[
\Tr R^{\theta}_{T}(u)=(-1)^{\rk_{\fq}(L)-\rk_{\fq}(T)}q^{-\frac{1}{2}\dim(L/T)}\C{F}(\dt_t)(\Phi(u)),
\]

\end{Thm}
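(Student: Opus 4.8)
The plan is to reduce the statement to Springer's hypothesis on the Lie algebra of $L$, and then to prove the latter by recognising both sides as trace-of-Frobenius functions of a single perverse sheaf, the two sides being interchanged by the Fourier--Deligne transform. First, since $u$ is unipotent, the Deligne--Lusztig character formula (the result \cite[Thm 4.2]{DL} already used above) shows that $\Tr R^{\theta}_T(u)$ is independent of $\theta$ and equals the value $Q^L_T(u)$ of the Green function of $L$. Next, since $p\nmid|W_L|$, the quasi-logarithm $\Phi$ restricts to an $L$-equivariant isomorphism of the unipotent variety of $L$ onto the nilpotent cone $\C{N}\subset\C{L}$ (cf.\ \cite[1.8]{KV}), carrying $Q^L_T$ to the Lie-algebra Green function $Q^{\C{L}}_{\C{T}}$. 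So it suffices to prove, for every nilpotent $N\in\C{N}(\fq)$, the equality
\[
Q^{\C{L}}_{\C{T}}(N)=(-1)^{\rk_{\fq}L-\rk_{\fq}T}\,q^{-\frac12\dim(\C{L}/\C{T})}\,\C{F}(\dt_t)(N).
\]

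To prove this I would interpret both sides cohomologically. The regular semisimple orbit $\C{O}_t=\Ad(L)(t)$ is closed and isomorphic to $L/T$, so $\dt_t$ is --- up to the elementary bookkeeping of $\fq$-rational structures governed by $H^1(\fq,T)$ --- the trace-of-Frobenius function of the constant sheaf on $\C{O}_t$; letting the regular semisimple element run over the $\fq$-points of a fixed geometric orbit (equivalently, letting $T$ run over its $\fq$-forms) reassembles these into the restriction over $\C{L}^{\sr}$ of the Grothendieck--Springer sheaf $\wt S:=\wt\pi_!\qlbar$, where $\wt\pi\colon\wt{\C{L}}\to\C{L}$ is the Grothendieck simultaneous resolution, together with its $W_L$-equivariant structure. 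On the other hand, Springer's theorem identifies $Q^{\C{L}}_{\C{T}}$ with the trace-of-Frobenius function of the Springer sheaf $\mathrm{Spr}:=\wt S|_{\C{N}}$ with its $W_L$-equivariant structure. The geometric heart of the argument is that $\mathrm{FT}_{\ov\psi}$ --- formed using the invariant pairing $\lan\cdot,\cdot\ran$ --- carries $\wt S$ to itself, $W_L$-equivariantly and up to a shift, a Tate twist, and a twist by the sign character of $W_L$, exchanging $\wt S|_{\C{L}^{\sr}}$ with $\mathrm{Spr}$; the underlying fact is that $\mathrm{FT}_{\ov\psi}$ of the constant sheaf on the subbundle $\{(X,B):X\in\Lie B\}\subset\C{L}\times(L/B)$ is the constant sheaf on its annihilator $\{(X,B):X\text{ lies in the nilpotent radical of }\Lie B\}$, which is precisely $\wt{\C{N}}$. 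Passing to $\fq$-points by the Grothendieck--Lefschetz trace formula then turns this identity of sheaves into the required identity of functions, up to a sign and a power of $q$.

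It then remains to pin down these two constants. The sign is the value of the sign character of $W_L$ on the element $w\in W_L$ recording the $\fq$-type of $T$; by \rco{det} (equivalently \rl{det}) this value is $\det(w\mid X_*(T)\otimes\B{Q})=(-1)^{\rk_{\fq}L-\rk_{\fq}T}$, and it is exactly the twist that $\mathrm{FT}_{\ov\psi}$ introduces on the $W_L$-equivariant structure in passing from $\C{L}^{\sr}$ to $\C{N}$. The power $q^{-\frac12\dim(\C{L}/\C{T})}$ combines the unitary normalisation of $\C{F}$, the Tate twists built into $\mathrm{FT}_{\ov\psi}$, and the count of the orbit $\C{O}_t\cong L/T$; both are routine consequences of the standard formula for the Fourier transform of a smooth sheaf on a vector bundle, and can be double-checked by reducing to a group of semisimple rank one and computing a Gauss sum.

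The main obstacle is the cohomological statement of the second paragraph: that the Fourier transform of the orbital-integral sheaf is, up to the explicit sign and power of $q$, the Springer sheaf. This is Springer's hypothesis itself and is genuinely non-formal, resting on Springer's geometric construction of Weyl-group representations \cite{Sp} together with Lusztig's computation of the Fourier transforms of the relevant character sheaves \cite{Lu}. Accordingly, in the write-up I would deduce the statement directly from \cite[Thm A.1]{KV}, where exactly this reduction of the present theorem to \cite{Lu} and \cite{Sp} is carried out.
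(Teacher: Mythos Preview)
Your proposal is correct and lands on exactly the same reference as the paper: the paper does not prove this theorem at all but simply cites \cite[Thm~A.1]{KV}, noting that the result was deduced there from \cite{Lu} and \cite{Sp}. Your sketch of the Fourier--Deligne/Grothendieck--Springer mechanism and of the bookkeeping for the sign and the power of $q$ is a helpful unpacking of what that reference contains, but the bottom line is identical.
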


\begin{Emp}
\begin{proof}[Proof of \rt{topun}]

By \rl{redtu}, we can assume that  $G$ is semisimple and simply connected.
Then by \rl{ql}, there exists an invariant pairing $\lan\cdot,\cdot\ran$ on $\C{G}$, which is
non-degenerate over $\C{O}$,
and a quasi-logarithm $\Phi:G\to\C{G}$ defined over $\C{O}$. Let $\psi$ be an additive character $F\to\B{C}\m$
which is non-degenerate over $\C{O}$.

This data define an invariant pairing on $\mcL_{\vk}$ which is non-degenerate over $\fq$, a quasi-logarithm
$\ov{\Phi}_{\vk}:L_{\vk}\to \mcL_{\vk}$ (see \cite[Lem 1.8.7 (b)]{KV}), and
a non-trivial additive character $\ov{\psi}:\fq\to\B{C}\m$. By our assumption, there exists
$\ov{t}\in\C{T}(\fq)$ such that $d\ov{\fa}(\ov{t})\in \mcL_{\vk}^{G-\sr}(\fq)$ (use \rl{gstr}).
Then using the compatibility of the Fourier transforms
over $F$ and over $\fq$ one deduces from \rt{Spr} (copying the arguments of \cite[Lem 2.2.11]{KV} word-by-word) that for each $u\in G(F)_{tu}$, we have an equality
\begin{equation} \label{Eq:tu0}
\phi^G_{\ov{\fa},\theta}(u)=(-1)^{\rk_{\fq}(L_{\vk})-\rk_{\fq}(\ov{T})}
\C{F}(\dt^G_{\ov{\fa},\ov{t}}(\Phi(u)).
\end{equation}
Also let $\Phi^H:H\to\C{H}$ be the quasi-logarithm  induced by $\Phi$
(see \rp{qlogend}).

Assume first that $H$ is ramified. Then for every pair of $\C{E}$-stably conjugate elements 
$u_H\in H^{\sr}(F)_{\tu}$ and $u\in G^{\sr}(F)_{tu}$, the elements
$\Phi^H(u_H)\in \mcH^{\sr}(F)$ and $\Phi(u)\in G^{\sr}(F)$ are $\C{E}$-stably conjugate (by \rp{qlogst} (b))
and we have an equality

\begin{equation} \label{Eq:katu}
O^{\ka_{u,u_H}}_{u}(\phi^G_{\ov{\fa},\theta})=(-1)^{\rk_{\fq}(L_{\vk})-\rk_{\fq}(\ov{T})}
O^{\ka_{\Phi(u),\Phi^H(u_H)}}_{\Phi(u)}(\C{F}(\dt^G_{\ov{\fa},\ov{t}})).
\end{equation}
By \rp{Lie} (a), function $\C{F}(\dt^G_{\ov{\fa},\ov{t}})$ is $\C{E}$-unstable. Hence by \cite{Wa5} the Fourier
transform $\C{F}(\dt^G_{\ov{\fa},\ov{t}})$ is  $\C{E}$-unstable, thus the right hand side of \form{katu} vanishes.
Therefore the left hand side of \form{katu} vanishes as well.

Assume now that $H$ is unramified and $\wt{\C{E}}$ satisfies the assumption of \re{unr}.
Then it follows from \rp{qlogst} (b),(c)
that for every $u_H\in H(F)_{tu}$ we have an equality
\begin{equation} \label{Eq:tu1}
O^{\wt{\mcE}}_{u_H}(\phi^G_{\ov{\fa},\theta})=(-1)^{\rk_{\fq}(L_{\vk})-\rk_{\fq}(\ov{T})}
O^{\wt{\mcE}}_{\Phi^H(u_H)}(\C{F}(\dt^G_{\ov{\fa},\ov{t}})).
\end{equation}
By \rco{qlo}, $\Phi^H$ is defined over $\C{O}$.
Then equality \form{tu0} for $H$ implies that
\begin{equation} \label{Eq:tu2}
O^{st}_{u_H}(\phi^H_{\ov{\fa},\theta})=(-1)^{\rk_{\fq}(\ov{H})-\rk_{\fq}(\ov{T})}O^{st}_{\Phi^H(u_H)}
(\C{F}(\dt^H_{\ov{\fa},\ov{t}})).
\end{equation}

From this the unramified case of \rt{topun} follows. Indeed, by \rp{Lie} (b), $\dt^H_{\ov{\fa},\ov{t}}$
is an endoscopic transfer of  $\dt^G_{\ov{\fa},\ov{t}}$. Hence by \rt{Wa} and \rp{weil}
we conclude that $(-1)^{\rk_{F}(H)-\rk_{F}(G)}\C{F}(\dt^H_{\ov{\fa},\ov{t}})$ is an endoscopic
transfer of $\C{F}(\dt^G_{\ov{\fa},\ov{t}})$. Since $\rk_{\fq}(L_{\vk})=\rk_{F}(G)$ and
$\rk_{\fq}(\ov{H})=\rk_{F}(H)$, our assertion follows from a combination of equalities
\form{tu1} and  \form{tu2}.
\end{proof}
\end{Emp}

\subsection{The general case}

Now we are ready to prove \rt{main} in general.

\begin{Emp} \label{E:pfofa}
{\bf Proof of (a).}
Recall that we have to show equality \form{mainr} for each pair of  stably conjugate elements 
$\gm_H\in H^{G-\sr}(F)$ and $\gm\in G^{\sr}(F)$. Since $\phi^G_{\ov{\fa},\theta}$ is
supported on  $G_{\vk}$, we may assume that $\gm\in G_{\vk}$.

Let $\gm=su$ be the TJD of $\gm$. Then by \rl{redkap}, we have to show that
$O^{\ka_{\gm,\gm_H}}_{u'}(\phi^{G_{s'}^0}_{\ov{\fb},\theta})=0$ for each stable conjugate
$\gm'\in G_{\vk}$ of $\gm$ with TJD $\gm'=s'u'$.

Since $\gm$ is compact, the element $\gm_H$ is compact (see \rl{tjd} (b)), and we denote by $\gm_H=s_Hu_H$ the
TJD of $\gm_H$. Then $s'$ and $s_H$ are $\C{E}$-stable conjugate (by \rl{tjd} (d)),
so we can form an endoscopic triple $\C{E}_{s'}$ for $G^0_{s'}$ with endoscopic group $(H_{s_H}^0)^*$
 (see \re{endcent} (b)) .
Choose a stable conjugate $u_H^*\in (H_{s_H}^0)^*(F)$ of $u_H\in (H_{s_H}^0)^{\sr}(F)$.
Then $\ka_{\gm,\gm_H}=\ka_{u',u_H^*}$, so we have to show that

Since $H$ is ramified, we get that $H_{s_H}^0$ is ramified. Since $(H_{s_H}^0)^*$ is isomorphic to $H_{s_H}^0$
over $F^{nr}$ (see for example \cite[Prop. 10.1]{Land}), we get that  $(H_{s_H}^0)^*$ is ramified as well.
Since $u'$ is topologically unipotent, the equality $O^{\ka_{u',u^*_H}}_{u'}(\phi^{G_{s'}^0}_{\ov{\fb},\theta})=0$ follows from \rr{ass} and the ramified case of \rt{topun} for the group $G_{s'}$.
\end{Emp}
\begin{Emp} \label{E:pfofb}
{\bf Proof of (b).}
Recall that we have to show equality \form{main} for each $\gm_H\in H^{G-\sr}(F)$.
Assume first that $\gm_H$ is not stably conjugate to an element of $H_{\vk^H}$. Then
$\chi_{H}(\gm_H)\notin c_{H}(\mcO)$ (by \rl{surj} (b)), hence
$\nu\circ \chi_H(\gm_H)\notin c_{G}(\mcO)$ (because $\nu:c_H\to c_G$ is a finite morphism over $\mcO$).
Since $\chi_G(G_{\vk})\subset c_{G}(\mcO)$ (by  \rl{surj} (a)), we conclude that $\gm_H$ is not $\C{E}$-stably
conjugate to an element of $G_{\vk}$. Since  $\phi^G_{\ov{\fa},\theta}$ is supported on $G_{\vk}$, while
$\phi^H_{\ov{\fa},\theta}$ is supported on $H_{\vk^H}$, we conclude that both sides of \form{main} vanish.

By the proven above, we can assume that  $\gm_H$ is stably conjugate to an element of $H_{\vk^H}$.
Since both sides of \form{main} only depend on the stable conjugacy class of $\gm_H$,
we can assume that $\gm_H\in H_{\vk^H}$. Let $\gm_H=s_Hu_H$ be the TJD of $\gm_H$.
Then by \rco{trfac}, it will suffice to show that
$O^{st}_{u_H}(\phi^{H_{s_H}^0}_{\ov{\fb},\theta})= O^{\wt{\C{E}}_{s'}}_{u_H}(\phi^{H_{s_H}^0}_{\ov{\fb},\theta})$
for each $\C{E}$-stable conjugate element $s'\in G_{\vk}$ of $s_H$ and every embedding
$\ov{\fb}:\ov{T}\hra (L_{\vk})_{\ov{s}'}^0$, whose composition
$\ov{\fb}:\ov{T}\hra (L_{\vk})_{\ov{s}'}^0\hra L_{\vk}$ is $L_{\vk}(\fq)$-conjugate to $\ov{\fa}$.

Since $u_H$ is topologically unipotent, the assertion follows from
\re{eetc} (b), \rr{ass} and the unramified case of \rt{topun} for $G_{s'}$.
\end{Emp}

\section{Properties of the transfer factors}

In this section we will prove properties of the transfer factors, which were used in the previous sections.

\subsection{Local Langlands correspondence for tori} \label{SS:abllc}
\begin{Emp} \label{E:rec}
{\bf Langlands pairing.}
It was shown by Langlands \cite{La2} (compare also \cite{Lab}) that for every torus $T$ over $F$ there exists a
functorial reciprocity isomorphism
$\varphi_{T,F}:H_c^1(W_F,\wh{T})\isom\Hom_c(T(F),\B{C}\m)$, where on both sides ${\cdot}_c$ means
{\em continuous}. In particular, there is a canonical pairing
\[
\lan\cdot,\cdot\ran:H_c^1(W_F,\wh{T})\times T(F)\to \B{C}\m.
\]
 \end{Emp}
The goal of this subsection is to show the following result.

\begin{Prop} \label{P:abllc}
Assume that $T$  is  a tamely ramified (resp. unramified) torus over $F$, and that
$c\in  H_c^1(W_F,\wh{T})$ is a tamely ramified (resp. unramified) cohomology class, that is,
the restriction of   $c$ to the wild inertia subgroup $I^{wild}_F\subset W_F$ (resp. inertia subgroup
$I_F\subset W_F$) is trivial.

Then for each $t\in T(F)_{tu}$ (resp. $t\in T(\C{O})$) we have $\lan c,t\ran=1$.
\end{Prop}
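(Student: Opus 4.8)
The plan is to deduce the proposition from the functoriality of the Langlands reciprocity isomorphism $\varphi_{T,F}$ (see \cite{La2}, and \cite{Lab} for the formulation recalled in \re{rec}), by reducing to the case of quasi-trivial tori, where the statement is the classical ramification dictionary of local class field theory for $\B G_m$. Two properties of $\varphi_{T,F}$ are used. First, \emph{functoriality in $T$}: for a homomorphism $f\colon Q\to T$ of $F$-tori, with dual $\wh f\colon\wh T\to\wh Q$, one has $\lan\wh f_*(c),q\ran_Q=\lan c,f(q)\ran_T$ for all $c\in H^1_c(W_F,\wh T)$ and $q\in Q(F)$. Second, for an induced torus $Q=\prod_i\mathrm{Res}_{E_i/F}\B G_m$, the isomorphism $\varphi_{Q,F}$ is, via Shapiro's lemma, the product of the local class field theory isomorphisms attached to the $E_i$ (for a fixed normalization, which is immaterial here since we only invoke triviality on inertia subgroups).

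Now the reduction. Let $E/F$ be the splitting field of $T$; by hypothesis $E/F$ is unramified (resp. tamely ramified). Choose a quasi-trivial torus $Q$ split over $E$, together with a surjection $f\colon Q\surj T$ over $F$ whose kernel $S$ satisfies $H^1(F,S)=0$ (a flasque resolution in the sense of Colliot-Th\'el\`ene--Sansuc; recall that over the local field $F$ the relevant $H^1$ vanishes). Then $f\colon Q(F)\surj T(F)$, and $Q$, being split over $E$, is again unramified (resp. tamely ramified). Set $\wt c:=\wh f_*(c)\in H^1_c(W_F,\wh Q)$; since $\wh f$ acts only on coefficients, the restriction of $\wt c$ to $I_F$ (resp. $I^{wild}_F$) is $\wh f_*$ of the corresponding restriction of $c$, hence trivial, so $\wt c$ is again unramified (resp. tamely ramified). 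Next, lift $t$ to a topologically unipotent (resp. integral) point $q\in Q(F)$: in the unramified case the canonical $\mcO$-models give a surjection $\un Q_{\mcO}\to\un T_{\mcO}$ of smooth $\mcO$-tori with $\mcO$-torus kernel, so $Q(\mcO)\surj T(\mcO)$ by Lang's theorem; in the tamely ramified case, take any lift of $t$, correct it modulo $S(F)$ to a compact lift, and replace that lift by its topologically unipotent part — its topological Jordan decomposition maps to that of $t=t_u$, forcing the semisimple part into $S(F)$. By functoriality, $\lan c,t\ran=\lan\wt c,q\ran$.

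It remains to treat the quasi-trivial case. Write $Q=\prod_i\mathrm{Res}_{E_i/F}\B G_m$ with each $E_i/F$ unramified (resp. tamely ramified), so that $Q(\mcO)=\prod_i\mcO_{E_i}^\times$ and $Q(F)_{tu}=\prod_i(1+\fm_{E_i})$; by Shapiro's lemma $\lan\wt c,q\ran=\prod_i\lan\wt c_i,q_i\ran$, where $\wt c_i$ corresponds to a character of $W_{E_i}^{\mathrm{ab}}$ and the pairing is local class field theory for $E_i$. Since $I_{E_i}=I_F$ when $E_i/F$ is unramified (resp. the wild inertia satisfies $P_{E_i}=P_F$ when $E_i/F$ is tamely ramified), the triviality of $\wt c|_{I_F}$ (resp. $\wt c|_{I^{wild}_F}$) says precisely that $\wt c_i$ is trivial on $I_{E_i}$ (resp. on $P_{E_i}$); hence by local class field theory the associated character of $E_i^\times$ is trivial on $\mcO_{E_i}^\times$ (resp. on $1+\fm_{E_i}$, as the Artin map carries the principal units $1+\fm_{E_i}$ onto the image of the wild inertia $P_{E_i}$). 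Therefore $\lan\wt c_i,q_i\ran=1$ for every $i$, and so $\lan c,t\ran=1$.

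The delicate point is the lifting step: producing a lift of $t$ inside $Q(F)_{tu}$ (resp. $Q(\mcO)$). This is where both the flasque resolution — used to make $Q(F)\surj T(F)$, and hence to surject the maximal compact subgroups — and the topological Jordan decomposition — used to push the lift into the topologically unipotent locus — are essential. Everything else is formal, given Langlands'/Labesse's reciprocity isomorphism, its functoriality, and the elementary $\B G_m$ ramification dictionary.
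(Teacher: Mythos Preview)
Your route is genuinely different from the paper's: you reduce to quasi-trivial tori by functoriality of $\varphi_{T,F}$ in $T$ (via a flasque resolution $1\to S\to Q\to T\to 1$), whereas the paper reduces to split tori by functoriality in the base field (passing to the splitting field $F'$ and using that the diagram with $\mathrm{Res}_{W_F/W_{F'}}$ on cohomology and $N_{F'/F}$ on points commutes, \rl{abllc}, together with the surjectivity of $N_{F'/F}$ on topologically unipotent, resp.\ integral, points, \rl{norm}). Both end at local class field theory for $\B{G}_m$; your approach trades the field-extension compatibility of $\varphi$ for its functoriality in the torus and Shapiro's lemma.

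There is, however, a real gap in your tamely ramified lifting step. You assert that the surjection $Q(F)\surj T(F)$ ``hence surject[s] the maximal compact subgroups'', and then correct an arbitrary lift by an element of $S(F)$ to land in $Q(F)_b$. That implication is false already for the flasque resolution $1\to\B{G}_m\to R_{E/F}\B{G}_m\to R^1_{E/F}\B{G}_m\to 1$ with $E/F$ tamely ramified quadratic (here $S=\B{G}_m$ is quasi-trivial, hence flasque, and $H^1(F,S)=0$): one has $Q(F)_b=\C{O}_E^\times$, $T(F)_b=E^1$, and for $u\in\C{O}_E^\times$ the reduction of $u/\si(u)$ is $1$ since the residue extension is trivial, so $f(\C{O}_E^\times)\subset E^1\cap(1+\fm_E)=T(F)_{tu}$; in particular $-1\in T(F)_b$ is not hit and $Q(F)_b\to T(F)_b$ is not onto. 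What you actually need is the weaker inclusion $T(F)_{tu}\subset f(Q(F)_b)$, equivalently the surjectivity of $f\colon Q(F)_{tu}\to T(F)_{tu}$. This is true (e.g.\ by successive approximation along the Moy--Prasad filtration, using that $df$ is surjective on each graded piece; or by observing that the finite cokernel $T(F)_b/f(Q(F)_b)$ has order prime to $p$ so the pro-$p$ group $T(F)_{tu}$ must map trivially to it), but it is not the argument you give, and proving it is essentially the same amount of work as the paper's \rl{norm}. Your unramified case is fine as written; once the tame lifting is repaired along these lines, the rest of your argument goes through.
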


We start with a preliminary result of independent interest which
seem to be known to specialists.

\begin{Lem} \label{L:abllc}
For every finite extension $F'/F$ the following diagram is commutative.
\begin{equation} \label{Eq:abllc}
\CD
        H^1_c(W_{F},\wh{T})  @>Res_{W_F/W_{F'}}>>   H^1_c(W_{F'},\wh{T})\\
        @V\varphi_{T,F}VV                        @V\varphi_{T,F'}VV\\
         \Hom_c(T(F),\B{C}\m)  @>N_{F'/F}>>      \Hom_c(T(F'),\B{C}\m)
\endCD
\end{equation}
\end{Lem}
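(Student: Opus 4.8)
The plan is to deduce \rl{abllc} from the classical norm-compatibility of the reciprocity map of local class field theory, propagated to all tori by the functorial dévissage through quasi-trivial tori that underlies Langlands' construction of $\varphi_{T,F}$. As Labesse explained to us, this is exactly the sort of statement packaged cleanly by the abelianized-cohomology formalism of \cite{Lab}, and that is the route we actually follow; the sketch below indicates why it holds.

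\textbf{Step 1: the case $T=\B{G}_m$.} Here $\wh{T}=\B{C}\m$ with trivial $\Gm$-action, $H^1_c(W_F,\wh{T})=\Hom_c(W_F^{\mathrm{ab}},\B{C}\m)$, and $\varphi_{\B{G}_m,F}$ is composition with the Artin reciprocity homomorphism $\mathrm{rec}_F:F\m\to W_F^{\mathrm{ab}}$ (and similarly over $F'$). After these identifications, the commutativity of \form{abllc} for $\B{G}_m$ is precisely the identity $\mathrm{rec}_F\circ N_{F'/F}=\iota\circ\mathrm{rec}_{F'}$ of continuous homomorphisms $F'\m\to W_F^{\mathrm{ab}}$, where $\iota:W_{F'}^{\mathrm{ab}}\to W_F^{\mathrm{ab}}$ is induced by the inclusion $W_{F'}\hra W_F$ --- a standard fact of local class field theory.

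\textbf{Step 2: reduction to $T=\mathrm{Res}_{E/F}\B{G}_m$.} Both vertical maps of \form{abllc}, as well as the functors $T\mapsto H^1_c(W_F,\wh{T})$, $T\mapsto\Hom_c(T(F),\B{C}\m)$ and their $F'$-analogues, are functorial in $T$ and carry short exact sequences of tori to long exact sequences. Since, up to isogeny, every $F$-torus is built from quasi-trivial tori (finite products of tori $\mathrm{Res}_{E/F}\B{G}_m$) by products and exact sequences --- the Grothendieck group of $F$-tori being generated by the quasi-trivial ones --- the five-lemma reduces the statement to the case $T=\mathrm{Res}_{E/F}\B{G}_m$ for a finite separable extension $E/F$. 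For such $T$ one has $T(F)=E\m$, $\wh{T}=\Ind_{W_E}^{W_F}\B{C}\m$, and Shapiro's lemma identifies $\varphi_{\mathrm{Res}_{E/F}\B{G}_m,F}$ with $\varphi_{\B{G}_m,E}$ (one of the compatibilities built into $\varphi$). Writing $E\otimes_F F'=\prod_i E'_i$, we get $T(F')=\prod_i(E'_i)\m$ with $N_{F'/F}=\prod_i N_{E'_i/E}$, while the Mackey decomposition indexed by $W_E\bs W_F/W_{F'}$, which is in bijection with $\{E'_i\}_i$ (the coset of $g_i$ giving $W_{F'}\cap g_i^{-1}W_E g_i=W_{E'_i}$), turns $\mathrm{Res}_{W_F/W_{F'}}$ on $H^1_c(W_F,\wh{T})\cong H^1_c(W_E,\B{C}\m)$ into $\bigoplus_i\mathrm{Res}_{W_E/W_{E'_i}}$ on $\bigoplus_i H^1_c(W_{E'_i},\B{C}\m)$. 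The desired commutativity then follows from Step 1 applied to each extension $E'_i/E$.

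The main obstacle is the compatibility in Step 2 of Shapiro's isomorphism with the Mackey double-coset decomposition: one must verify that, under $H^1_c(W_F,\Ind_{W_E}^{W_F}\B{C}\m)\cong H^1_c(W_E,\B{C}\m)$, the restriction $\mathrm{Res}_{W_F/W_{F'}}$ becomes the sum of the restrictions $\mathrm{Res}_{W_E/W_{E'_i}}$, matched block-by-block with the decomposition $E\otimes_F F'=\prod_i E'_i$ and the corresponding splitting of the norm $N_{F'/F}$. This is a formal but somewhat laborious diagram chase, and it is precisely the bookkeeping that \cite{Lab} organizes conceptually; everything else --- the class field theory input of Step 1, the quasi-trivial resolutions, and the five-lemma --- is routine.
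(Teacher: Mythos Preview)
Your approach is correct in outline but takes a genuinely different route from the paper. The paper works directly with the homological description: it recalls that $\varphi_{T,F}$ is dual to Langlands' isomorphism $\phi_{T,F}:H_1(W_F,X_*(T))\isom T(F)$, characterized by the condition that for any splitting field $K\supset F$ the composite $H_1(W_F,X_*(T))\to T(F)\hra T(K)$ equals $\phi_{T,K}\circ Res_{W_F/W_K}$. Commutativity of \form{abllc} then reduces to compatibility of $\phi_{T,\bullet}$ with corestriction and the norm, which the paper deduces from the group-homology identity $Res_{A/B}\circ Cor_{A/B}=N_{A/B}$ (for $B$ normal in $A$ of finite index) together with the $\Gal(F'/F)$-equivariance of $\phi_{T,F'}$. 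This is short and avoids any d\'evissage in $T$, though as written it uses $W_{F'}\va W_F$, i.e.\ that $F'/F$ is Galois, which suffices for the application in \rp{abllc} but is narrower than the stated generality. Your Shapiro--Mackey argument, by contrast, handles arbitrary finite separable $F'/F$ uniformly.

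There is, however, a gap in your Step~2. The clause ``the Grothendieck group of $F$-tori being generated by the quasi-trivial ones --- the five-lemma reduces the statement'' is not a proof: a relation in the Grothendieck group does not furnish, for a given $T$, a finite resolution by quasi-trivial tori on which a five-lemma can actually be run (the obvious sequence $1\to T\to R_0\to R_0/T\to 1$ with $R_0=\mathrm{Res}_{K/F}T_K$ only shifts the problem to $R_0/T$, and iterating does not terminate). A cleaner fix bypasses the five-lemma entirely. With $R_0$ as above, $T(F)\subset R_0(F)$ is a closed subgroup of a locally compact abelian group, so restriction of characters $\Hom_c(R_0(F),\B{C}\m)\to\Hom_c(T(F),\B{C}\m)$ is surjective; transporting via the natural isomorphism $\varphi_{\bullet,F}$ (whose naturality in $T$ you already invoke), the map $H^1_c(W_F,\wh{R}_0)\to H^1_c(W_F,\wh{T})$ is surjective as well, and the equality of the two compositions around \form{abllc} for the quasi-trivial $R_0$ then forces their equality for $T$. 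With this correction your argument goes through.
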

\begin{proof}
The proof is based on the following well-known assertion

\begin{Lem} \label{L:equiv}
Let $A$ be a group, $B\subset A$ a normal subgroup of finite index, and $M$ an $A$-module.
Then for every $i\in\B{N}$,

(a) the homology group $H_i(B,M)$ is naturally an $A/B$-module;

(b) the composition $H_i(B,M)\overset{Cor_{A/B}}{\lra}H_i(A,M)\overset{Res_{A/B}}{\lra}H_i(B,M)$ is the norm
map $N_{A/B}:x\mapsto\sum_{a\in A/B} a(x)$.
\end{Lem}

\begin{proof}
The assertion for the general $i$ follows from that for $i=0$, in which case the assertion follows
from definitions.
\end{proof}

Now we come back to the proof of \rl{abllc}. We start from recalling Langlands construction of
the isomorphism $\varphi_{T,F}$ (compare \cite[p.126]{KS}).

For every splitting field $K\supset F$ of $T$, we denote by $\phi_{T,K}$ the composition
\[
H_1(W_K,X_*(T))\cong H_1(W_K,\B{Z})\otimes_{\B{Z}} X_*(T)\cong
(W_K)^{ab}\otimes _{\B{Z}} X_*(T)\isom K\m\otimes _{\B{Z}} X_*(T)=T(K),
\]
induced by the isomorphism $(W_K)^{ab}\isom K\m$ of the class field theory.

Langlands showed that there exists unique isomorphism of topological groups
$\phi_{T,F}:H_1(W_F,X_*(T))\isom T(F)$ such that for each splitting field $K\supset F$ of $T$,
the composition $H_1(W_F,X_*(T))\overset{\phi_{T,F}}{\lra}T(F)\hra T(K)$
decomposes as
\begin{equation} \label{Eq:CFT}
H_1(W_F,X_*(T))\overset{Res_{W_F/W_K}}{\lra}H_1(W_K,X_*(T))\overset{\phi_{T,K}}{\lra} T(K).
\end{equation}

Since $\B{C}\m$ is an injective $\B{Z}$-module, the topological isomorphism $\phi_{T,F}$ induces an
isomorphism $\varphi_{T,F}$ between
\[
 H_c^1(W_F,\wh{T})= H_c^1(W_F,\Hom(X_*(T),\B{C}\m))\cong\Hom_c(H_1(W_F,X_*(T)),\B{C}\m)
\]
and $\Hom_c(T(F),\B{C}\m)$.

By the definition of $\varphi_{T,F}$, the commutativity of \form{abllc}
is implied by the commutativity of the right square of the diagram
\begin{equation} \label{Eq:abllc2}
\CD
       H_1(W_{F'},X_*(T))   @<Res_{W_F/W_{F'}}<<  H_1(W_F,X_*(T))  @<Cor_{W_F/W_{F'}}<< H_1(W_{F'},X_*(T))\\
      @V\phi_{T,F'}VV                              @V\phi_{T,F}VV                       @V\phi_{T,F'}VV\\
           T(F')                  @<\text{inclusion}<<                 T(F)           @<N_{F'/F}<<           T(F').
\endCD
\end{equation}

We claim that the left square of \form{abllc2} is commutative. Indeed, for each splitting field
$K\supset F'$ of $T$, both compositions $H_1(W_F,X_*(T))\to T(F')\hra  T(K)$ coincide with
\form{CFT}. It follows that the commutativity of the right square of \form{abllc2} is equivalent to
the commutativity of the exterior square of \form{abllc2}.

Note that the map $\phi_{T,K}$ is $\Gal(K/F)$-equivariant (compare \rl{equiv} (a)),
hence the map $\phi_{T,F'}$ is  $\Gal(F'/F)$-equivariant. Therefore the  commutativity of the exterior
square of \form{abllc2} follows from \rl{equiv} (b).
\end{proof}

\begin{Lem} \label{L:norm}
Let $T$ be a tamely ramified (resp. unramified) torus over $F$, and let $F'/F$ be a finite tamely ramified
(resp. unramified) extension. Then the norm map  $N_{F'/F}:T(F')_{tu}\to T(F)_{tu}$
(resp. $N_{F'/F}:T(\C{O}_{F'})\to T(\C{O})$) is surjective.
\end{Lem}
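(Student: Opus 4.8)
The plan is to handle the unramified and the tamely ramified cases separately. In the unramified case, recall from \re{BT} (d) that $T$ has a canonical $\C{O}$-model $\C{T}$ which is an $\C{O}$-torus, with $\C{T}(\C{O})=T(\C{O})$ and $\C{T}(\C{O}_{F'})=T(\C{O}_{F'})$. Since $\C{O}_{F'}/\C{O}$ is finite \'etale, $\C{P}:=\mathrm{Res}_{\C{O}_{F'}/\C{O}}(\C{T}\otimes_\C{O}\C{O}_{F'})$ is again an $\C{O}$-torus, the norm map $\C{P}\to\C{T}$ is faithfully flat, and its kernel $\C{N}$ has free character lattice $X^*(\C{P})/X^*(\C{T})$, hence is an $\C{O}$-torus. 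Taking $\C{O}$-points of $1\to\C{N}\to\C{P}\to\C{T}\to 1$ yields an exact sequence $T(\C{O}_{F'})\lra T(\C{O})\to H^1_{\text{\'et}}(\C{O},\C{N})$ in which the first map is $N_{F'/F}$, and $H^1_{\text{\'et}}(\C{O},\C{N})\cong H^1(\fq,\C{N}_\fq)=0$: the isomorphism holds because $\C{N}$ is smooth and $\C{O}$ henselian (so $\fq$-points of $\C{N}$-torsors lift), and the vanishing is Lang's theorem applied to the connected group $\C{N}_\fq$. Hence $N_{F'/F}$ is onto.

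For the tame case I first reduce. For any torus $T$ over $F$ the group $T(F)_{tu}$ is the pro-$p$-part of the profinite abelian group of bounded elements of $T(F)$, so it is an abelian pro-$p$ group; and when $T$ is tame it coincides with the pro-unipotent radical $T(F)_{0+}$ of the connected parahoric subgroup $T(F)_0=\un{T}^{\circ}(\C{O})$, where $\un{T}^{\circ}$ is the connected N\'eron model of $T$ (a smooth affine $\C{O}$-group scheme of finite type with connected fibers), because the two finite quotients that can occur --- the N\'eron component group and $\ov{T}_{red}(\fq)$ --- have order prime to $p$; for the second, $|\ov{T}_{red}(\fq)|=|\det(F-1\mid X_*(\ov{T}_{red}))|$ with the Frobenius $F$ equal to $q$ times a finite-order matrix, so $F\equiv0\pmod p$ and the order is $\equiv\pm1\pmod p$. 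As the norm is transitive in towers and preserves topological unipotence, and as for tame $F'/F$ the extension of $F'$ over its maximal unramified subextension $F'_u/F$ is totally tamely ramified, hence of degree prime to $p$, it suffices to treat (a) $[F':F]$ prime to $p$, and (b) $F'/F$ unramified. In case (a), choose a finite Galois $E/F$ containing $F'$ and set $M=T(E)_{tu}$, $\Gamma=\Gal(E/F)$, $\Gamma'=\Gal(E/F')$, so that $T(F)_{tu}=M^{\Gamma}$, $T(F')_{tu}=M^{\Gamma'}$, and $N_{F'/F}$ is the corestriction $M^{\Gamma'}\to M^{\Gamma}$; by the identity $\mathrm{Cor}\circ\mathrm{Res}=[\Gamma:\Gamma']$ on $M^{\Gamma}$ (compare \rl{equiv} (b)) and the fact that $[F':F]=[\Gamma:\Gamma']$ is prime to $p$, this corestriction is surjective onto the pro-$p$ group $M^\Gamma$.

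Case (b) rests on the following claim: for a smooth affine commutative $\C{O}$-group scheme $\C{G}$ of finite type with connected special fiber and $L/F$ finite unramified of degree $f$, the $\Gal(L/F)$-module $\C{G}(\C{O}_L)$ is cohomologically trivial, so $N_{L/F}\colon\C{G}(\C{O}_L)\to\C{G}(\C{O})$ is onto. This follows by d\'evissage: the congruence kernel $K=\Ker(\C{G}(\C{O}_L)\to\ov{\C{G}}(\B{F}_{q^f}))$ has a $\Gal(L/F)$-stable filtration whose graded pieces are $\cong\Lie(\ov{\C{G}})\otimes_\fq\B{F}_{q^f}$, on which $\Gal(L/F)=\Gal(\B{F}_{q^f}/\fq)$ acts $\fq$-semilinearly (because $\C{G}$ and the filtration are base changed from $\C{O}$ and $L/F$ is unramified), hence --- by the normal basis theorem --- are free over $\fq[\Gal(L/F)]$ and cohomologically trivial; so $K=\varprojlim K/K_n$ is cohomologically trivial. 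And $\ov{\C{G}}(\B{F}_{q^f})$ is an extension of the torus-quotient part $\ov{T}(\B{F}_{q^f})$ by the unipotent-radical part $U(\B{F}_{q^f})$: $U(\B{F}_{q^f})$ is cohomologically trivial (normal basis again), while $\ov{T}(\B{F}_{q^f})$ has $\wh{H}^1=0$ (inflation from $H^1(\fq,\ov{T})=0$, Lang) and $\wh{H}^0=0$ (the norm $\ov{T}(\B{F}_{q^f})\to\ov{T}(\fq)$ is onto, by Lang for the norm-one subtorus), hence is cohomologically trivial by $2$-periodicity of cyclic cohomology. Applying the claim to $\C{G}=\un{T}^{\circ}$ (N\'eron models commute with the unramified base change $F'/F$) shows $T(F')_0=\un{T}^{\circ}(\C{O}_{F'})$ is cohomologically trivial, and then the exact sequence $1\to T(F')_{0+}\to T(F')_0\to\ov{T}_{red}(\B{F}_{q^f})\to 1$ shows $T(F')_{0+}$ is cohomologically trivial, so $N_{F'/F}\colon T(F')_{0+}\to T(F)_{0+}$ is onto; by the first reduction this is the assertion. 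I expect the main obstacle to be the d\'evissage in case (b) --- arranging the integral model so that the congruence graded pieces are genuinely $\fq$-semilinear Galois modules, which is exactly where unramifiedness of $F'/F$ and the normal basis theorem enter, together with the bookkeeping behind the identification $T(F)_{tu}=T(F)_{0+}$ for tame $T$; everything else is soft.
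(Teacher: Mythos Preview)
Your argument is correct. The unramified case matches the paper's proof (Weil restriction over $\C{O}$, smoothness of the norm, Hensel and Lang). Your prime-to-$p$ step via $\mathrm{Cor}\circ\mathrm{Res}=[F':F]$ on the pro-$p$ group $T(F)_{tu}$ is a repackaging of the paper's more direct observation that $t\mapsto t^{[F':F]}$ is a homeomorphism of $T(F)_{tu}$, since $N_{F'/F}$ restricted to the diagonal $T(F)\hookrightarrow T(F')$ is exactly this map. The substantive difference is your case~(b), unramified $F'/F$ with merely tame $T$. The paper, after enlarging $F'$ to split $T$ and passing to the maximal unramified subextension $F''$, treats $N_{F''/F}$ by invoking ``the proven above''; taken at face value that is the first paragraph, which used the $\C{O}$-torus structure available only when $T$ itself is unramified, so the passage to tame $T$ is left implicit. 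You fill this in by working with the connected N\'eron model $\un{T}^\circ$: you identify $T(F)_{tu}=T(F)_{0+}$ and prove by d\'evissage (congruence filtration, normal basis theorem, Lang for the reductive quotient of the special fibre) that $\un{T}^\circ(\C{O}_{F'})$ and then $T(F')_{0+}$ are cohomologically trivial $\Gal(F'/F)$-modules. This buys a fully explicit treatment of the tame case and a reusable cohomological-triviality lemma, at the cost of considerably more machinery than the paper's appeal to smoothness and Hensel.
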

\begin{proof}
Assume first that $T$ and $F'/F$ are unramified, and let $S:=R_{F'/F}T$ be the Weil
restriction of scalars of $T$.
Then  $T$ and  $S$  are smooth tori over $\C{O}$, the norm map $N_{F'/F}$ is a
surjective homomorphism $S\to T$ over $\C{O}$, whose kernel $\Ker N_{F'/F}$ is again a smooth torus over
$\C{O}$. In particular, $N_{F'/F}$ is smooth. Then the surjectivity of
$N_{F'/F}:S(F)_{tu}\to T(F)_{tu}$ follows from Hensel's lemma, while the surjectivity of
$N_{F'/F}:S(\C{O})\to T(\C{O})$ follows
from  a combination of Hensel's lemma and the surjectivity of
$N_{F'/F}:S(\fq)\to T(\fq)$ (Lang's theorem). This completes the proof in the unramified case.

Notice next that for each integer $n$ prime to $p$ the map $t\mapsto t^n$ induces a homeomorphism
$m_n(F):T(F)_{tu}\isom T(F)_{tu}$. Indeed, this follows from classical Hensel's lemma if
$T$ splits over $F$, and it follows from the assertion for $m_n(K)$,
where $K\supset F$ is the splitting field of $T$, in the general case. In particular,
we get the surjectivity of $N_{F'/F}:T(F')_{tu}\to T(F)_{tu}$,
if the degree $[F':F]$ is prime to $p$.

Assume now that  $T$ and $F'/F$ are tamely ramified. Enlarging $F'$, we may assume that
$T$ splits over $F'$. Choose the intermediate field $F\subset F''\subset F'$ such that
$F''/F$ is unramified, and $F'/F''$ is totally ramified. Then
the degree $[F':F'']$ is prime to $p$. Hence, by the proven above,
both $N_{F''/F}:T(F'')_{tu}\to T(F)_{tu}$ and $N_{F'/F''}:T(F')_{tu}\to T(F'')_{tu}$ are
surjective. Therefore their composition  $N_{F'/F}:T(F')_{tu}\to T(F)_{tu}$ is surjective as
well.
\end{proof}

\begin{Emp}
\begin{proof}[Proof of \rp{abllc}]
Let $F'\supset F$ be the splitting field of $T$. By \rl{norm}, there exists
$t'\in  T(F')_{tu}$ (resp. $t'\in T(\C{O}_{F'})$) such that $t=N_{F'/F}(t')$.
Then it follows from \rl{abllc} that  $\lan c,t\ran=\lan Res_{W_F/W_{F'}}(c),t'\ran$.
Extending scalars to $F'$ and  replacing $c$ by $Res_{W_F/W_{F'}}(c)$ and $t$ by $t'$,
we can assume that $T$ splits over $F$. Hence we reduce to the case $T=\B{G}_m$.

In this case, $\varphi_{T,F}: H_c^1(W_F,\B{C}\m)=\Hom_c((W_F)^{ab},\B{C}\m)\isom
\Hom_c(F\m,\B{C}\m)$ is induced by the isomorphism of the class field
theory, so our assertion is well known.
\end{proof}
\end{Emp}

\subsection{Transfer factors for tamely ramified topologically unipotent elements}

The following simple observation is crucial for our work.

\begin{Lem} \label{L:tame1}
If $\eta$, $T$ and $\chi$-data are tamely ramified (resp. unramified), then the cohomology
class  $\inv(\chi)\in H_c^1(W_F,\wh{T})$ is tamely ramified (resp. unramified) as well.
 \end{Lem}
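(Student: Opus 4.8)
The plan is to recall the explicit construction of the cohomology class $\inv(\chi)\in H^1_c(W_F,\wh T)$ given by Langlands and Shelstad in \cite[(3.5)]{LS} and to inspect its ramification directly. Recall that $\inv(\chi)$ is built out of the $\chi$-data $\{\chi_\al\}$ as follows: the set of roots $R(G,T)$ decomposes into $\Gm$-orbits, and for each orbit $\mcO$ with a chosen representative $\al$, one has the field extension $F_\al/F_{\pm\al}$ together with the character $\chi_\al:F_\al\m\to\B{C}\m$; via the reciprocity isomorphism $\varphi_{T_\al,F_{\pm\al}}$ of \re{rec} (for the induced torus $T_\al=\Res_{F_\al/F_{\pm\al}}\B{G}_m$, or rather its relevant Weil-restriction incarnation) one produces a class in $H^1_c(W_{F_{\pm\al}},\wh{T_\al})$, and then one applies corestriction (induction) $W_{F_{\pm\al}}\to W_F$ together with the embedding $\wh{T_\al}\hra\wh T$ coming from the $\Gm$-action on the coroot lattice. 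The class $\inv(\chi)$ is the product of these contributions over the orbits.

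First I would reduce, orbit by orbit, to a statement about a single character $\chi_\al$ and a single induced torus, so that the question becomes: if $\chi_\al$ is tamely ramified (resp. unramified) as a character of $F_\al\m$, then the class in $H^1_c(W_F,\wh T)$ obtained from it by the above recipe is tamely ramified (resp. unramified), i.e. restricts trivially to the wild inertia $I^{wild}_F$ (resp. the inertia $I_F$). The key point is that restriction to inertia commutes with the transition maps in the construction in a controlled way: $\Res_{W_F/I_F}$ (resp. $\Res_{W_F/I^{wild}_F}$) applied to an induced/corestricted class is governed, via the standard description of restriction of an induced representation (Mackey), by the restrictions of $\chi_\al$ to the inertia (resp. wild inertia) subgroups of the local fields $F_\al, F_{\pm\al}$. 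Under the reciprocity isomorphism of class field theory, the inertia subgroup $I_{F_\al}\subset W_{F_\al}^{ab}$ corresponds to $\C{O}_{F_\al}\m$ and the wild inertia to $1+\fm_{F_\al}$; hence "$\chi_\al$ unramified" means exactly "trivial on the image of $I_{F_\al}$" and "$\chi_\al$ tamely ramified" means exactly "trivial on the image of $I^{wild}_{F_\al}$". This is precisely the hypothesis, so each orbit contribution restricts trivially to $I_F$ (resp. $I^{wild}_F$), and therefore so does the product $\inv(\chi)$.

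To make the inertia bookkeeping rigorous I would use two inputs already available in the paper's framework: the compatibility of the reciprocity isomorphism with field extensions, i.e. \rl{abllc}, which tells us that $\Res_{W_F/W_{F'}}$ on the $\wh T$-side corresponds to the norm map on the character side; and the fact that when $F'/F$ is unramified, $I_{F'}=I_F$ inside the Weil groups, while when $F'/F$ is tamely ramified, $I^{wild}_{F'}=I^{wild}_F$. Concretely, for the unramified case one takes $F'$ to be a finite unramified extension over which $T$ (hence all the $F_\al$) splits; restricting to $W_{F'}$ one is reduced, via \rl{abllc}, to the split torus case, where $\inv(\chi)$ corresponds under $\varphi_{T,F'}$ to a product of characters each of which is a composition of an unramified $\chi_\al$ with a norm, hence unramified, hence trivial on $I_{F'}=I_F$. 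The tamely ramified case is identical with "unramified" replaced by "tamely ramified" and $I$ replaced by $I^{wild}$, using that the relevant field extensions $F_\al/F$ are tame and that norm maps and the embeddings $\wh{T_\al}\hra\wh T$ do not create wild ramification.

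The main obstacle I anticipate is purely one of organizing the functoriality correctly: one must track how the several operations entering the definition of $\inv(\chi)$ — choice of orbit representatives, passage from $\chi_\al$ to a class for the induced torus $T_\al$, corestriction $W_{F_{\pm\al}}\to W_F$, and the coroot-lattice embedding $\wh{T_\al}\hra\wh T$ — interact with restriction to $I_F$ and $I^{wild}_F$, and in particular verify that none of the fields $F_\al, F_{\pm\al}$ introduce ramification beyond that already assumed (this is where "tamely ramified torus" is used: all the $F_\al$ are tame over $F$). Once the identification of $I_{F_\al}$ with $\C{O}_{F_\al}\m$ (resp. $I^{wild}_{F_\al}$ with $1+\fm_{F_\al}$) under reciprocity is in hand — which is classical local class field theory — the argument is a formal, if slightly tedious, diagram chase, and I would present it at the level of "unwind the definition of \cite[(3.5)]{LS}, restrict to inertia, apply \rl{abllc} and the classical case $T=\B{G}_m$."
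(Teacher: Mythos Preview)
Your description of $\inv(\chi)$ is incomplete, and this is a genuine gap. The class $\inv(\chi)$ (the class denoted $\mathbf{a}$ in \cite[(3.5)]{LS}) is \emph{not} built from the $\chi$-data alone: by definition it is the cocycle $a_w\in\wh{T}$ determined by
\[
\eta\circ\eta^H_{T,\chi}(w)=a_w\cdot\eta_{T,\chi}(w),
\]
where $\eta_{T,\chi}:{}^L T\hra{}^L G$ and $\eta^H_{T,\chi}:{}^L T\hra{}^L H$ are the Langlands--Shelstad admissible embeddings attached to the $\chi$-data, and $\eta:{}^L H\hra{}^L G$ is the fixed $L$-embedding. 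So even if you show that both $\eta_{T,\chi}$ and $\eta^H_{T,\chi}$ send $w\in I_F^{wild}$ (resp.\ $w\in I_F$) to $(1,w)$, you still need $\eta(1,w)=(1,w)$ to conclude $a_w=1$. This is exactly where the hypothesis ``$\eta$ tamely ramified (resp.\ unramified)'' enters, and your argument never uses it. Your orbit-by-orbit induction picture captures at best the $r_p$-part of the embeddings $\eta_{T,\chi}$, $\eta^H_{T,\chi}$; it does not see $\eta$ at all.

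Separately, the route you propose is considerably more circuitous than necessary. The paper's argument simply unwinds the explicit formula $\eta_{T,\chi}(w)=r_p(w)\,n(\om_T(\si))\cdot w$ from \cite[(2.6)]{LS} (and likewise for $\eta^H_{T,\chi}$) and checks the two factors directly: $n(\om_T(\si))=1$ because the tameness (resp.\ unramifiedness) of $T$ forces $\si=1$; and $r_p(w)=\prod\chi_\al(v_0(u_i(w)))=1$ because $I_F^{wild}$ (resp.\ $I_F$) is normal in $W_F$ and contained in each $W_{F_\al}$, so $v_0(u_i(w))=v_0^{-1}(u_i^{-1}wu_i)v_0$ lies in $I_F^{wild}$ (resp.\ $I_F$), whence in $1+\fm_{F_\al}$ (resp.\ $\C{O}_{F_\al}\m$), where the tame (resp.\ unramified) $\chi_\al$ vanishes. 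No Shapiro-type identification, no appeal to \rl{abllc}, no reduction to split tori is needed; the whole thing is a two-line inspection of the cocycle. If you want to salvage your approach, you must first account for the $\eta$-contribution and then justify precisely why your ``class attached to $\chi_\al$ via reciprocity, then corestricted'' coincides with the $r_p$-contribution --- but at that point you will essentially have reproduced the direct computation anyway.
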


\begin{proof}
In this proof we will freely use notations of \cite{LS}.

Recall (see \cite[(2.6)]{LS}) that the $\chi$-data
$\{\chi_{\al}\}$ give rise to embeddings $\eta_{T,\chi}:{}^L T\hra {}^L G$ and
$\eta^H_{T,\chi}:{}^L T\hra {}^L H$ over $W_F$ and that
$\inv(\chi)\in H_c^1(W_F,\wh{T})$ is the cohomology class of the continuous cocycle
$a_{w}\in  Z_c^1(W_F,\wh{T})$ such that $\eta\circ\eta^H_{T,\chi}(w)=a_{w}\eta_{T,\chi}(w)$
for all $w\in W_F\subset {}^L T$ (compare \cite[(3.5)]{LS}).

Thus it suffices to show that for each $w\in I^{wild}_F$ (resp.  $w\in I_F$) we have
$\eta_{T,\chi}(w)=w\in W_F\subset {}^L G$ and $\eta^H_{T,\chi}(w)=w\in W_F\subset {}^L H$. In other words, it
will suffice to show that each element $r_p(w)n(\om_T(\si))\in\wh{T}$ from \cite[(2.6)]{LS}
equals one.
Since $T$ is tamely ramified (resp. unramified), we get that $\si=1$, hence $\om_T(\si)=1$,
thus $n(\om_T(\si))=1$. Hence it remains to show that $r_p(w)=1$.

In the notation of \cite[(2.5)]{LS}, it will suffice to show that each $\chi_{\al}(v_0(u_i(w)))=1$
(except that in \cite{LS} notation $\la$ is used instead of $\al$). Since $\chi_{\al}$ is
tamely ramified (resp. unramified), it will suffice to check that each $v_0(u_i(w))$ belongs to
$I^{wild}_F$ (resp.  $I_F$). However, $I^{wild}_F$
(resp.  $I_F$) is contained in  $W_{F_{\al}}$ and is a normal subgroup of $W_F$.
Therefore $v_0(u_i(w))$ is equal to $v_0^{-1}(u_i^{-1} w u_i) v_0$, hence it lies in
$I^{wild}_F$ (resp.  $I_F$).
\end{proof}


\begin{Cor} \label{C:tame}
Assume that $p$ does not divide the order of $W_G$ and that the group $G$ and the $\chi$-data
are tamely ramified. Then for every pair of $\mcE$-stably conjugate topologically unipotent 
elements $\gm_H\in H^{\sr}(F)$ and $\gm\in G^{\sr}(F)$, we have an equality $\Dt_{III_2}(\gm_H,\gm)=1$.
\end{Cor}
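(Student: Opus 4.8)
The plan is to unwind the definition of $\Dt_{III_2}$ and then invoke the two results assembled in this section. Writing $T:=G_{\gm}$, the definition of the transfer factors in \re{deftr} says that $\Dt_{III_2}(\gm_H,\gm)$ is the Langlands pairing $\lan\inv(\chi),\gm\ran$, where $\inv(\chi)\in H^1_c(W_F,\wh{T})$ is the cohomology class attached to the fixed (tamely ramified) $\chi$-data as in \re{chidata} and the pairing is the one recalled in \rss{abllc}; note that this value involves only $\gm$, not $\gm_H$. So everything reduces to showing $\lan\inv(\chi),\gm\ran=1$, and for that I would combine \rl{tame1}, which will tell us that $\inv(\chi)$ is a tamely ramified class, with \rp{abllc}, which says that a tamely ramified class pairs trivially with a topologically unipotent element.

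The heart of the argument is to verify the hypotheses of \rl{tame1}; the only nonformal one is that the torus $T=G_{\gm}$ is itself tamely ramified, and this is precisely the point at which the assumption $p\nmid|W_G|$ is used (topological unipotence of $\gm$ will be needed only afterwards). First I would choose a finite tamely ramified Galois extension $E/F$ over which $G$ splits, together with a maximal torus $S\subset G_E$ that is split over $E$, and some $g\in G(\ov{F})$ with $gSg^{-1}=T$. For $\si\in\Gal(\ov{F}/E)$ one has ${}^{\si}T=T$ and ${}^{\si}S=S$, hence $g^{-1}\,{}^{\si}g\in N_G(S)(\ov{F})$; sending $\si$ to the class of $g^{-1}\,{}^{\si}g$ in $W_G$ defines a continuous homomorphism $\Gal(\ov{F}/E)\to W_G$, and, since the action of $\Gal(\ov{F}/E)$ on $X^*(S)$ is trivial, the Galois action on $X^*(T)\cong X^*(S)$ restricted to $\Gal(\ov{F}/E)$ factors through it. As $E/F$ is tamely ramified we have $I^{wild}_F\subset\Gal(\ov{F}/E)$, so this produces a continuous homomorphism from the pro-$p$ group $I^{wild}_F$ to the finite group $W_G$ of order prime to $p$; such a homomorphism is trivial, hence $I^{wild}_F$ acts trivially on $X^*(T)$, i.e.\ $T$ is tamely ramified.

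The remaining hypotheses of \rl{tame1} are easy: the $\chi$-data are tamely ramified by assumption, and, because $G$ is tamely ramified, so is the quasi-split endoscopic group $H$ (its dual $\wh{H}=(\wh{G}_{\eta_0(\ka_0)})^0$ carries the restriction of the $\Gm_F$-action on $\wh{G}$, on which $I^{wild}_F$ acts trivially), so we may and do take the $L$-embedding $\eta$ tamely ramified by the tame analogue of \cite[Lem. 6.1]{Ha}. Now \rl{tame1} gives that $\inv(\chi)\in H^1_c(W_F,\wh{T})$ is tamely ramified. Finally $\gm$ lies in $T(F)=G_{\gm}(F)$ and is topologically unipotent (the condition $\gm^{p^n}\to 1$ being the same read in $G(F)$ or in $T(F)$), so $\gm\in T(F)_{tu}$; hence \rp{abllc} applies to the tamely ramified torus $T$ and the tamely ramified class $\inv(\chi)$ and yields $\lan\inv(\chi),\gm\ran=1$, i.e.\ $\Dt_{III_2}(\gm_H,\gm)=1$. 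I expect the tameness of $T$ to be the one step requiring real care; everything else is bookkeeping on top of \rl{tame1} and \rp{abllc}.
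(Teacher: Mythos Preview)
Your proposal is correct and follows essentially the same approach as the paper's proof, which is just the one-line observation that the assumptions force $T=G_{\gm}$ to be tamely ramified, so \rl{tame1} makes $\inv(\chi)$ tamely ramified and \rp{abllc} gives $\lan\inv(\chi),\gm\ran=1$. You have simply filled in the details the paper leaves implicit, in particular the argument (via the homomorphism $I^{wild}_F\to W_G$ and $p\nmid|W_G|$) for why $T$ is tamely ramified, and the remark on choosing $\eta$ tamely ramified.
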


\begin{proof}
The assumptions imply that the torus $T$ is tamely ramified. Hence the cohomology class
$\inv(\chi)$ is tamely ramified (by \rl{tame1}), so the assertion follows from \rp{abllc}.
\end{proof}

\subsection{Proofs}
\begin{Emp} \label{E:pfisog}
\begin{proof}[Proof of \rl{isog}]
Each embedding $\iota^*$, $a$-data and $\chi$-data for $G_{\pi(\gm)}$ naturally define the corresponding data
for  $(G_0)_{\gm}$, and we claim that we have an equality
$\Dt^{\wt{\C{E}}_0}_{\star}(\gm_H,\gm)=\Dt^{\wt{\C{E}}}_{\star}(\pi_H(\gm_H),\pi(\gm))$ for each  $\star\in\{I,II,III_1,III_2,IV\}$.
The assertion for $\Dt_I, \Dt_{II}$ and  $\Dt_{IV}$ obvious, because $\pi^{ad}$ is an isomorphism,
the assertion for  $\Dt_{III_1}$ follows from \re{inv} (v),
while the assertion for $\Dt_{III_2}$ follows from the functoriality of the reciprocity
isomorphism $\varphi_{T,F}$.
\end{proof}
\end{Emp}

\begin{Emp} \label{E:pfcenter}
\begin{proof}[Proof of \rl{center}]
By \cite[Lem 3.5.A]{LS2}, there exists a character
$\la:Z(G)(F)\to\B{C}\m$ such that an equality $\Dt(z\gm_H,z\gm)=\la(z)\Dt(\gm_H,\gm)$ holds
for all pairs of $\C{E}$-stably conjugate elements $\gm_H\in H^{G-\sr}(F)$ and  $\gm\in G^{\sr}(F)$
and all $z\in Z(G)(F)\subset  Z(H)(F)$.
We have to show that $\la(z)=1$.

Since $H$ is unramified, we can choose $\gm_H\in H^{\sr}(F)$ and $\gm\in G^{\sr}(F)$ such that $H_{\gm_H}\cong G_{\gm}$ is an
unramified torus, and take the $\chi$-data to be unramified.
We claim that $\Dt_{\star}(z\gm_H,z\gm)=\Dt_{\star}(\gm_H,\gm)$ for
all $\star\in\{I,II,III_1,III_2,IV\}$. The assertion for $\Dt_I, \Dt_{II},\Dt_{III_1}$ and  $\Dt_{IV}$
is obvious (for every $\gm_H$), because $z$ is central. Next
$\Dt_{III_2}(z\gm_H,\gm\dt)/\Dt_{III_2}(\dt_H,\dt)=\lan\inv(\chi),z\ran$, which is one by
\rp{abllc}.
\end{proof}
\end{Emp}

\begin{Emp} \label{E:qlogtr}
\begin{proof}[Proof of \rp{qlogst} (c)]
Since $G_{\Phi(\gm)}=G_{\gm}$, we can use the same embedding $\iota^*$ and the same $a$-data and $\chi$-data
in the definition of the transfer factors. Furthermore, since $p\neq 2$, we can assume that the $\chi$-data
are tamely ramified. Since $\gm$ is topologically unipotent, we conclude by
\rco{tame} that $\Dt_{III_2}(\gm_H,\gm)=1$.
Thus it will suffice to show the equality
$\Dt_{\star}(\gm_H,\gm)=\Dt_{\star}(\Phi^H(\gm_H),\Phi(\gm))$
for each $\star\in\{I,II,III_1,IV\}$.

Since $G_{\Phi(\gm)}=G_{\gm}$ and $H_{\Phi^H(\gm_H)}=H_{\gm_H}$, the equality for $\Dt_I$'s and $\Dt_{III_1}$'s
follow. Next we observe that
\[
\Dt_{IV}(\Phi^H(\gm_H),\Phi(\gm))/\Dt_{IV}(\gm_H,\gm)=\prod_{\al}
\left|\frac{d\al(\Phi(\gm))}{\al(\gm)-1}\right|^{1/2},
\]
and
\[
\Dt_{II}(\Phi^H(\gm_H),\Phi(\gm))/\Dt_{II}(\gm_H,\gm)=\prod_{\al}
\chi_{\al}\left(\frac{d\al(\Phi(\gm))}{\al(\gm)-1}\right),
\]
where the products are taken over certain roots of $G$ which are not roots of $H$.

However we have seen in \rp{qlogst} (a)  that each
$\frac{d\al(\Phi(\gm))}{\al(\gm)-1}$ belongs to $1+\frak{m}_{F_{\al}}$. It follows
that for each $\al$ we have $|\frac{d\al(\Phi(\gm))}{\al(\gm)-1}|=1$ and
$\chi_{\al}(\frac{d\al(\Phi(t))}{\al(t)-1})=1$ (since
$\chi_{\al}$ is tamely ramified).
\end{proof}
\end{Emp}

\begin{Emp} \label{E:pftrfac}
\begin{proof}[Proof of \rp{trfac}]
By \rl{tjd} (e), one can assume that the embedding $\iota^*:T\hra G^*$ satisfies $s^*:=\iota^*(s)\in G^*_{\vk^*}$.
Then $(G^*_{s^*})^0$ is quasi-split (by \rl{cent} (b)), so one can assume that $(G_s^0)^*=(G^*_{s^*})^0\subset G^*$.

By the observation of \re{proptf} (e), we have $\Dt^{\wt{\C{E}}}(\gm_H,\gm)=
\inv_{\mcE}(\fa_0^{(3)},\iota^{(3)})\Dt^{\wt{\C{E}}^*}(\gm_H,\gm^*)$ and similarly
$\Dt^{\wt{\C{E}}_s}(u_H,u)=\inv_{\mcE_s}(\fa_s^{(3)},\iota^{(3)})
\Dt^{\wt{\C{E}}^*_s}(u_H,u^*)$. Hence by \re{inv} (iv),(vi), it is enough to show that
$\Dt^{\wt{\C{E}}^*}(\gm_H,\gm^*)=\Dt^{\wt{\C{E}}^*_s}(u_H,u^*)$.

Replacing $G$ by $G^*$, $\wt{\mcE}$ by $\wt{\mcE}^*$ and $\gm$ by $\gm^*$, we can assume that
$G$ is quasi-split and that $\vk$ is hyperspecial. Then using \rl{center}, it remains to
show the equality
\begin{equation} \label{Eq:eqtf}
\Dt^{\wt{\C{E}}}(\gm_H,\gm)=\Dt^{\wt{\C{E}}_s}(\gm_H,\gm).
\end{equation}

Using \cite[Thm 1.6.A]{LS2} and arguing as in \cite[Lem 8.1]{Ha}, we see that the
equality $\Dt^{\wt{\C{E}}}(\gm'_H,\gm')=\Dt^{\wt{\C{E}}_s}(\gm'_H,\gm')$ holds for each
$\gm'\in T(F)$ sufficiently close to $s$ (and $\gm'_H=\iota_H(\gm')$).
In particular, it holds for some $\gm'\in T(F)$ such that $\gm'/s$ is
topologically unipotent and $\gm/\gm'\in G^{sr}(F)$. It remains to show the equality
\begin{equation} \label{Eq:trfac}
\Dt^{\wt{\C{E}}}(\gm_H,\gm)/\Dt^{\wt{\C{E}}}(\gm'_H,\gm')=\Dt^{\wt{\C{E}}_s}(\gm_H,\gm)
/\Dt^{\wt{\C{E}}_s}(\gm'_H,\gm').
\end{equation}

We choose the $\chi$-data for
$G_{\gm}\subset G$ to be tamely ramified and the $\chi$-data for $G_{\gm}\subset G_s^0$
to be the restrictions of those for $G_{\gm}\subset G$. It will suffice to show that
\form{trfac} holds when $\Dt$ is replaces by each $\Dt_{\star}$, where
$\star\in\{I,II,III_2,IV\}$ (see \re{proptf} (d)).

First we claim that for factors $\Dt_I$ and  $\Dt_{III_2}$ both sides of
\form{trfac} equal one. Indeed, the assertion for the $\Dt_I$'s follows from equalities
$G_{\gm}=G_{\gm'}=T$ and $H_{\gm_H}=H_{\gm'_H}=T_H$. Next for the $\Dt_{III_2}$'s \form{trfac}
has the form  $\Dt^{\wt{\C{E}}}_{III_2}(\gm_H/\gm'_H,\gm/\gm')=
\Dt^{\wt{\C{E}}_s}_{III_2}(\gm_H/\gm'_H,\gm/\gm')$. Since  $\gm/\gm'=u/(\gm/s)$ is topologically unipotent,
both sides equal one by \rco{tame}.

Next the quotients LHS/RHS of \form{trfac} for $\Dt_{II}$ and  $\Dt_{IV}$ are equal to
\[
\prod_{\al}\chi_{\al}\left(\frac{\al(\gm)-1}{\al(\gm')-1}\right)\text{  and }
\prod_{\al}\left|\frac{\al(\gm)-1}{\al(\gm')-1}\right|^{1/2},
\]
respectively, where the products are taken over certain roots of $R(G,T)\sm R(G_s,T)$.
However for each  $\al\in R(G,T)\sm R(G_s,T)$, we have
$\frac{\al(\gm)-1}{\al(s)-1}, \frac{\al(\gm')-1}{\al(s)-1}\in 1+\frak{m}_{F_{\al}}$, hence
$\frac{\al(\gm)-1}{\al(\gm')-1}\in 1+\frak{m}_{F_{\al}}$. Since each $\chi_{\al}$
is tamely ramified, we get that $\chi_{\al}\left(\frac{\al(\gm)-1}{\al(\gm')-1}\right)=1$ and  $|\frac{\al(\gm)-1}{\al(\gm')-1}|=1$.
\end{proof}
\end{Emp}

\end{document}